\pdfoutput=1
\documentclass[a4paper,11pt,oneside]{amsart}
\usepackage[T1]{fontenc}
\usepackage[utf8]{inputenc}
\usepackage[unicode]{hyperref}
\hypersetup{
bookmarks=true,
colorlinks=true,
citecolor=[rgb]{0,0,0.5},
linkcolor=[rgb]{0,0,0.5},
urlcolor=[rgb]{0,0,0.75},
pdfpagemode=UseNone,
pdfstartview=FitH,
pdfdisplaydoctitle=true,
pdftitle={Decoupling for moment manifolds},
pdfauthor={Guo, Zorin-Kranich},
pdflang=en-US
}

\usepackage[style=alphabetic,maxalphanames=4]{biblatex}
\addbibresource{ack-systems.bib}

\usepackage{esint}
\usepackage{tikz}

\usepackage[paperheight=279mm,paperwidth=18cm,textheight=26cm,textwidth=14cm,includehead]{geometry}
% Printable with 1:1 scaling on A4 and US letter paper
\usepackage{amsfonts,amssymb,amsmath,amsthm}
\usepackage{mathtools}
\usepackage{etoolbox}

\def\PZdefchar#1{
  \expandafter\def\csname frak#1\endcsname{\mathfrak{#1}}
  \expandafter\def\csname bf#1\endcsname{\mathbf{#1}}
  \expandafter\def\csname scr#1\endcsname{\mathcal{#1}}
  \expandafter\def\csname cal#1\endcsname{\mathcal{#1}}}
\def\PZdefloop#1{\ifx#1\PZdefloop\else\PZdefchar#1\expandafter\PZdefloop\fi}
\PZdefloop abcdefghijklmnopqrstuvwxyzABCDEFGHIJKLMNOPQRSTUVWXYZ\PZdefloop

\newcommand{\R}{\mathbb{R}}
\newcommand{\N}{\mathbb{N}}
\newcommand{\Z}{\mathbb{Z}}
\newcommand{\Uncert}{\mathcal{U}} % Uncertainty region
\def\tA{\tilde{A}}

\def\tGamma{\tilde{\Gamma}}
\def\tgamma{\tilde{\gamma}}
\def\C{\mathbb{C}}
\newcommand{\dif}{\mathrm{d}}
\newcommand{\level}{\mathcal{V}}
\newcommand{\sublevel}{\mathcal{S}}
\newcommand{\upset}{{\uparrow}} % This is a unary operator. The inner pair of braces in needed for correct spacing.
\newcommand{\one}{\mathbf{1}}

\numberwithin{equation}{section}
\newtheorem{theorem}{Theorem}
\numberwithin{theorem}{section}
\newtheorem{proposition}[theorem]{Proposition}
\newtheorem{lemma}[theorem]{Lemma}
\newtheorem{corollary}[theorem]{Corollary}
\theoremstyle{definition}
\newtheorem{definition}[theorem]{Definition}
\newtheorem{notation}[theorem]{Notation}

\theoremstyle{remark}
\newtheorem{remark}[theorem]{Remark}
\newtheorem{example}[theorem]{Example}

\newcommand{\Dset}{\mathcal{D}}

\DeclarePairedDelimiter\abs{\lvert}{\rvert}

\DeclarePairedDelimiter\norm{\lVert}{\rVert}
\DeclarePairedDelimiter\card{\lvert}{\rvert}
\DeclarePairedDelimiter\floor{\lfloor}{\rfloor}
% just to make sure it exists
\providecommand\given{}
% can be useful to refer to this outside \Set
\newcommand\SetSymbol[1][]{%
\nonscript\:#1\vert
\allowbreak
\nonscript\:
\mathopen{}}
\DeclarePairedDelimiterX\Set[1]\{\}{\renewcommand\given{\SetSymbol[\delimsize]}#1}
\DeclarePairedDelimiterXPP\EE[1]{\E}{\lparen}{\rparen}{}{\renewcommand\given{\SetSymbol[\delimsize]}#1} % Conditional expectation \EE{ f \given A }

\makeatletter
\newcommand\@avsum[2]{%
  {\sbox0{$\m@th#1\sum$}%
   \vphantom{\usebox0}%
   \ooalign{%
     \hidewidth
     \smash{\vrule height\dimexpr\ht0+1pt\relax depth\dimexpr\dp0+1pt\relax}%
     \hidewidth\cr
     $\m@th#1\sum$\cr
   }%
  }%
}
\newcommand{\avsum}{\mathop{\mathpalette\@avsum\relax}\displaylimits}
\newcommand\@avprod[2]{%
  {\sbox0{$\m@th#1\prod$}%
   \vphantom{\usebox0}%
   \ooalign{%
     \hidewidth
     \smash{\vrule height\dimexpr\ht0+1pt\relax depth\dimexpr\dp0+1pt\relax}%
     \hidewidth\cr
     $\m@th#1\prod$\cr
   }%
  }%
}
\newcommand{\avprod}{\mathop{\mathpalette\@avprod\relax}\displaylimits}
\newcommand{\@avL}[2]{%
\ooalign{{$\m@th#1\mbox{--}$}\cr {$\m@th#1 L$}\cr}}
\newcommand{\avL}{\mathpalette\@avL\relax}
\newcommand{\@avell}[2]{%
\ooalign{{$\m@th#1\mbox{--}$}\cr {$\m@th#1 \ell$}\cr}}
\newcommand{\avell}{\mathpalette\@avell\relax}
\newcommand{\@avD}{%
  \ooalign{{$\mathrm{D}$}\cr \hidewidth\raise.2ex\hbox{$\vert$}\hidewidth\cr}}
\newcommand{\avDec}{\@avD\mathrm{ec}}
\makeatother
\DeclareMathOperator{\rk}{rk}
\DeclareMathOperator{\lin}{lin}
\DeclareMathOperator{\Span}{span}
\DeclareMathOperator{\diam}{diam}
\DeclareMathOperator{\supp}{supp}
\DeclareMathOperator{\rank}{rank}
\newcommand{\BL}{\mathrm{BL}}

\newcommand{\Part}[2][]{\calP(\ifstrempty{#1}{}{#1,}#2)} % Partition of #1 at scale #2.

\newcommand{\RHS}{\mathrm{RHS}}
\newcommand{\LHS}{\mathrm{LHS}}

\begin{document}
\title[Decoupling for moment manifolds]{Decoupling for moment manifolds associated to\\ Arkhipov--Chubarikov--Karatsuba systems}
\author{Shaoming Guo}
\address[SG]{Department of Mathematics\\ Chinese University of Hong Kong\\ Hong Kong}
\curraddr{Department of Mathematics\\ University of Wisconsin--Madison\\ Madison\\ WI}
\author{Pavel Zorin-Kranich}
\address[PZ]{Mathematical Institute\\ University of Bonn\\ Germany}
\maketitle
\begin{abstract}
We prove $\ell^{p}L^{p}$ decoupling inequalities for a class of moment manifolds.
These inequalities imply optimal mean value estimates for multidimensional Weyl sums of the kind considered by Arkhipov, Chubarikov, and Karatsuba and by Parsell.

In our proofs we take a new point of view on the Bourgain--Demeter--Guth induction on scales argument.
This point of view substantially simplifies even the proof of $\ell^{2}L^{p}$ decoupling for the moment curve.
\end{abstract}

\section{Introduction}
The sharp $\ell^{2}L^{p}$ decoupling inequality for the moment curve was proved by Bourgain, Demeter, and Guth in \cite{MR3548534}. It implies asymptotically optimal mean value estimates for one-dimensional Weyl sums.
In a series of subsequent works \cite{MR3614930,MR3709122,arxiv:1804.02488}, sharp decoupling inequalities were proved for many moment manifolds (graphs of systems of monomials) of higher dimensions.
We continue this line of investigation and obtain sharp $\ell^{p}L^{p}$ decoupling inequalities that imply in particular asymptotically optimal mean value estimates for multidimensional Weyl sums considered in the work of Arkhipov, Chubarikov, and Karatsuba \cite{MR2113479}. For earlier works in the decoupling literature, in particular, works prior to Bourgain and Demeter \cite{MR3374964}, we refer to Wolff \cite{MR1800068}, \L aba and Wolff \cite{MR1956533}, \L aba and Pramanik \cite{MR2264215}, Garrigos and Seeger \cite{MR2546636}, \cite{MR2664568}, Bourgain \cite{MR3038558}, and references therein.

In order to keep our presentation self-contained, we include in Section~\ref{sec:multilinear} several arguments which have been used throughout decoupling literature.
These are formulated in a way that permits using them both in $\ell^{2}L^{p}$ and $\ell^{p}L^{p}$ decoupling inequalities.
In Section~\ref{sec:induction-on-scales} we simplify and extend the Bourgain--Demeter--Guth induction on scales argument.
Here the central result is Theorem~\ref{thm:PF-eigenvector}, which allows one to exploit the web of inequalities in Figure~\ref{fig:graph}.
A key input in the induction on scales argument is a transversality condition, which is verified in Section~\ref{sec:transverse}.
Section~\ref{sec:lower} shows that our upper bounds are $\epsilon$-close to the existing lower bounds.

\subsection{Notation and statement of the main result}
We begin with the description of the $\ell^{q}L^{p}$ decoupling problem.
For $d\in\Set{1,2,\dotsc}$ and a finite set of exponents $\calD \subset \N^{d} \setminus \Set{0}$, we are interested in functions with Fourier support near the graph of the function $\Phi : \R^{d} \to \R^{\calD}$, $t \mapsto (t^{\bfi})_{\bfi \in \calD}$.
Here and later boldface letters denote elements of $\N^{d}$, $\N=\Set{0,1,\dotsc}$, $\R^{\calD}$ is the product of $\abs{\calD}$ copies of $\R$ indexed by $\calD$, and we use the multiindex notation $t^{\bfi} := t_{1}^{i_{1}} \dotsm t_{d}^{i_{d}}$ for monomials.
For $\bfa = (a_{1},\dotsc,a_{d}) \in \N^{d}$, we write $\abs{a} := a_{1} + \dotsb + a_{d}$.
Following \cite{MR3132907}, we refer to $d$ as the \emph{dimension} of $\calD$, the cardinality $\rk\calD := \abs{\calD}$ as the \emph{rank} of $\calD$, and the maximal absolute value $\deg\calD := \max_{\bfi \in\calD} \abs{\bfi}$ as the \emph{degree} of $\calD$.
Deviating from the number-theoretic terminology, we call
\begin{equation}
\label{eq:homdim}
\calK(\calD) := \sum_{\bfi \in \calD} \abs{\bfi}
\end{equation}
the \emph{homogeneous dimension} of $\calD$.

For $\delta>0$ and a dyadic cube $\alpha\subseteq [0,1]^{d}$ with side length $\geq \delta$, let $\Part[\alpha]{\delta}$ denote the collection of smallest dyadic cubes with side length $\geq \delta$ that are contained in $\alpha$.
In the case $\alpha = [0,1]^{d}$ we omit $\alpha$ and write $\Part{\delta} := \Part[{[0,1]^{d}}]{\delta}$.
For a dyadic cube $\alpha \subseteq [0,1]^{d}$, we denote by $\Uncert(\alpha)$ an essentially minimal parallelepiped in $\R^{\calD}$ that contains $\Phi(\alpha)$, see Section~\ref{sec:scaling} for a more precise definition.

For $2 \leq q \leq p < \infty$ and $0 < \delta < 1$, let $\avDec(\calD, p, q, \delta)$ denote the infimum over all constants $C$ such that the inequality
\begin{equation}
\label{eq:dec-const-def}
\norm{\sum_{\theta \in \Part{\delta}} f_{\theta}}_{L^p(\R^{\calD})}
\leq
C \bigl( \avsum_{\theta \in \Part{\delta}} \norm{f_{\theta}}_{L^p(\R^{\calD})}^q \bigr)^{1/q}
\end{equation}
holds for any functions $f_{\theta}$ with $\supp \widehat{f_{\theta}} \subseteq \Uncert(\theta)$.
Here and later we denote averages by $\avsum_{\theta \in \calJ} := \abs{\calJ}^{-1} \sum_{\theta \in \calJ}$.
The vertical line in the notation $\avDec$ reminds of the average and indicates a change in the convention from previous works, where the sum in $\theta$ is not normalized.
Our convention is motivated by the more direct connection with the number of solutions to Vinogradov systems and by the need to use Jensen's inequality in the sum over $\theta$ that would produce extraneous terms without the normalization.
Since we are mostly interested in the case $p=q$, we will abbreviate $\avDec(\calD, p, \delta) := \avDec(\calD, p, p, \delta)$.

Now we describe the sets $\calD$ that we will consider.
For $\bfk = (k_{1},\dotsc,k_{d}) \in \N_{>0}^{d}$ let $\calD(\bfk) := \prod_{j=1}^{d} \Set{0,\dotsc,k_{j}} \subset \N^{d}$.
For $l\in\N$ we define \emph{level} and \emph{sublevel} sets
\begin{align}
\label{eq:level}
\level_{l} &:=
\Set{ \bfa \in \N^{d} \given \abs{\bfa} = l},\\
\label{eq:sublevel}
\sublevel_{l} &:=
\Set{ \bfa \in \N^{d} \given 1 \leq \abs{\bfa} \leq l}.
\end{align}
We write $\calD(\bfk,= l) := \calD(\bfk) \cap \level_{l}$ and $\calD(\bfk,\leq l) := \calD(\bfk) \cap \sublevel_{l}$.

Our main result is the following.
\begin{theorem}
\label{thm:main}
Let $d \geq 1$, $\bfk = (k_{1},\dotsc,k_{d}) \in \N^{d}$ with $1 \leq k_{1} \leq \dotsc \leq k_{d}$, and $1 \leq k$.
Then for every $2 \leq p < \infty$ and $\epsilon>0$ we have
\begin{equation}\label{eq:main-est}
\avDec(\calD(\bfk,\leq k), p, \delta)
\lesssim_{\epsilon}
\delta^{-\tgamma-\epsilon},
\end{equation}
where
\begin{equation}
\label{eq:tgamma}
\tgamma
=
\tgamma(\bfk,k,p)
=
\max \Bigl( \frac{d}{2}, \max_{(d+1)/2\leq j\leq d} \bigl( j + \frac{d-j}{p} - \frac{\calK(\calD((k_{1},\dotsc,k_{j}),\leq k))}{p} \bigr) \Bigr).
\end{equation}
\end{theorem}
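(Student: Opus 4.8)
The plan is to derive \eqref{eq:main-est} from the Bourgain--Demeter--Guth induction on scales, organized through Theorem~\ref{thm:PF-eigenvector}, using the multilinear decoupling tools of Section~\ref{sec:multilinear} as the engine and the transversality estimates of Section~\ref{sec:transverse} as the essential geometric input.

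\emph{Self-similarity.} First I would observe that $\calD = \calD(\bfk,\leq k)$ is stable under passing from a multiindex $\bfi$ to any componentwise smaller one (together with $0$): if $\bfj \leq \bfi \in \calD$ then $\abs{\bfj}\leq\abs{\bfi}\leq k$ and $j_m \leq i_m \leq k_m$. Hence under an affine change of variables $t \mapsto t_{0} + \Sigma t$ with $\Sigma$ diagonal (entries $\sigma_{1},\dots,\sigma_{d}$), the map $\Phi$ becomes $A\circ\Phi + v$ for an invertible linear map $A$ on $\R^{\calD}$ and a vector $v$. This yields the parabolic rescaling identity relating $\avDec(\calD,p,\cdot)$ localized to a subcube of side $\sigma$ at scale $\delta$ to $\avDec(\calD,p,\delta/\sigma)$, and the same for the truncations $\calD_{j} := \calD((k_{1},\dots,k_{j}),\leq k)$ viewed as $j$-dimensional exponent sets (obtained by forgetting the last $d-j$ coordinates); these are exactly the lower-dimensional manifolds that appear when the induction loses a dimension.

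\emph{The web.} For $1 \leq j \leq d$ let $\Gamma_{j}=\Gamma_{j}(p)$ be the infimal exponent with $\avDec(\calD_{j},p,\delta)\lesssim_{\epsilon}\delta^{-\Gamma_{j}-\epsilon}$; we want $\Gamma_{d}\leq\tgamma$. I would feed into the web of inequalities of Figure~\ref{fig:graph}: (i) base estimates, namely the $L^{2}$-orthogonality bound $\avDec(\calD_{j},2,\delta)=\delta^{-j/2}$ and flat (Hölder) decoupling, which supply the $d/2$ alternative and start the iteration; and (ii) a ball-inflation relation, obtained from the Brascamp--Lieb inequality applied to the non-degenerate datum of Section~\ref{sec:transverse}, bounding the $\nu$-linear decoupling constant for $\calD_{j}$ at a fine scale by the same constant at a coarser scale times a decoupling constant for a lower truncation $\calD_{j'}$ with $j'<j$. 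Combined with the Bourgain--Guth passage from the multilinear to the linear estimate (Section~\ref{sec:multilinear}) and with parabolic rescaling, this becomes a linear recursion among the $\Gamma_{j}(p)$ with nonnegative coefficients, whose data involves $p$ and the homogeneous dimensions $\calK(\calD_{j})$. Theorem~\ref{thm:PF-eigenvector} then identifies the best exponents consistent with the web with the value of a Perron--Frobenius eigenvector of the associated nonnegative matrix, and the remaining work is to compute this value and check that it equals the right-hand side of \eqref{eq:tgamma}. In that computation the restriction $(d+1)/2\leq j\leq d$ is automatic: for $j\leq d/2$ the term $j+\frac{d-j}{p}-\frac{\calK(\calD_{j})}{p}$, being affine in $1/p$, equals $j\leq\frac d2$ at $1/p=0$ and is $\leq\frac d2$ at $1/p=\frac12$ (using $\calK(\calD_{j})\geq j$ since $e_{1},\dots,e_{j}\in\calD_{j}$), hence never exceeds $\frac d2$.

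\emph{Main obstacles.} The genuinely hard step is the transversality input of Section~\ref{sec:transverse}: one must show that the Brascamp--Lieb datum attached to $\Phi$ and its truncations is non-degenerate with the sharp exponent, equivalently that for separated cubes the associated families of tangent-type subspaces are in general position with respect to the combinatorial structure of $\calD(\bfk,\leq k)$; this requires choosing the right ``tangent'' objects (indexed by sub-multiindices and mixed derivatives) and is where the arithmetic of the exponent set really enters. The second obstacle is bookkeeping: the multilinearity parameter $\nu$, the chain of intermediate scales, and the auxiliary exponents in the ball-inflation steps must be chosen so that every inequality in the web holds for all $2\leq p<\infty$ while the Perron--Frobenius value it forces does not exceed $\tgamma(\bfk,k,p)$ --- and since Section~\ref{sec:lower} shows $\tgamma$ is also a lower bound there is no slack, so these exponents must match exactly. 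An interpolation in $p$ may be convenient to reduce to the breakpoints of $p\mapsto\tgamma(\bfk,k,p)$, where the web closes most transparently.
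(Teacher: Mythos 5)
Your overall plan correctly names the ingredients (Bourgain--Guth reduction, Brascamp--Lieb transversality, ball inflation, Theorem~\ref{thm:PF-eigenvector}), but there is a genuine structural confusion in how you wire them together: you treat the web of Figure~\ref{fig:graph} as a recursion in the \emph{dimension} parameter $j\in\{1,\dots,d\}$, with nodes $\Gamma_j(p)$ for $\calD_j=\calD((k_1,\dots,k_j),\leq k)$, and you assert that ball inflation trades a fine-scale multilinear constant for $\calD_j$ against a lower-dimensional truncation $\calD_{j'}$, $j'<j$. That is not what the induction on scales does. The web in Figure~\ref{fig:graph} is indexed by \emph{degree} $l\in\{1,\dots,k\}$, not dimension: the matrix $\calM$ in Theorem~\ref{thm:PF-eigenvector} has size $2(k-1)\times 2(k-1)$, its nodes are the alternating quantities $q(l)$ and $t(l)$ from \eqref{eq:a_*}, and its eigenvector \eqref{eq:PF-EV} is given by $v_{q(l)}=\calK_l/(l+1)$, $v_{t(l)}=n_l$ with $\calK_l=\calK(\calD\cap\calS_l)$, $n_l=\rk(\calD\cap\calS_l)$. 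Ball inflation (Lemma~\ref{lem:ball-inflation}) moves the spatial scale from $\delta^{-lb}$ to $\delta^{-(l+1)b}$ using the Brascamp--Lieb inequality for the $l$-th order tangent spaces of the \emph{same} $d$-dimensional surface, and is then paired with lower-\emph{degree} decoupling (Corollary~\ref{cor:scaled+loc-decoupling} for $\calD_l=\calD\cap\calS_l$). Lower-\emph{dimensional} decoupling for $\bfP_j\calD$ enters entirely elsewhere, through the Bourgain--Guth dichotomy in Proposition~\ref{prop:bourgain-guth-arg} to control near-variety (non-transverse) contributions. The recursion \eqref{eq:tGamma-recursion} keeps these two mechanisms separate; conflating them means the linear system you would write down does not close, because the "lower-dimensional" constant does not appear as a cost in the ball-inflation step.

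Two smaller points. First, the closed form \eqref{eq:tgamma} is indexed by dimension $j$, but that is the \emph{output} of unwinding the recursion \eqref{eq:tGamma-recursion} (done in Section~\ref{sec:lower} via Corollary~\ref{cor:upper-bd=lower-bd} and \eqref{eq:gamma-j-large}); it should not be read as telling you what the iterative web looks like. Your verification that $j\leq d/2$ contributes $\leq d/2$ is correct and matches \eqref{eq:K>=d}--\eqref{eq:gamma-j-large}, but it belongs to this unwinding step, not to the construction of the web. Second, your suggestion to interpolate in $p$ to reduce to "breakpoints" of $\tgamma$ runs counter to the paper's design: all $2\leq p<\infty$ are treated uniformly in the iteration via the exponents $q_l,t_l$ and the weights $\alpha_l,\beta_l$ in \eqref{eq:alpha}--\eqref{eq:beta}, precisely to avoid the case analysis that interpolation at critical exponents would reintroduce.
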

Here and later we denote by $C_{\epsilon}$ finite constants that are allowed to depend on $\epsilon$ and may change from line to line. They are also always allowed depend on the parameteres $d, \bfk, k, p, E$, but never on $\delta$ and $f_{\theta}$.
The notation $A \lesssim_{\epsilon} B$ means that $A \leq C_{\epsilon} B$.

In order to illustrate Theorem~\ref{thm:main}, we compute the exponent \eqref{eq:tgamma} more explicitly in several important special cases.
\begin{example}
\label{ex:dec:Vin}
The case $d=1$, $k_{1}=k$ corresponds to the classical Vinogradov system.
In this case the maximum in \eqref{eq:tgamma} reduces to the term $j=d=1$, and we obtain
\[
\calK(\calD((k),\leq k)) = 1+\dotsc+k = \frac{k(k+1)}{2},
\quad
\tgamma
= \max\Bigl( \frac{1}{2}, 1 - \frac{k(k+1)}{2 p} \Bigr).
\]
This should be compared with the result in \cite{MR3548534}, which is stronger because it is an $\ell^{2}L^{p}$ decoupling.
The additional ingredient needed to prove $\ell^{2}L^{p}$ decoupling is explained in Appendix~\ref{sec:l2}.
Moreover, there is a difference in normalization: we split the moment curve in pieces of size $\delta$, whereas in \cite{MR3548534} pieces of size $\delta^{1/k}$ are used.
\end{example}

\begin{example}
\label{ex:dec:PV}
The case of arbitrary $d$ and $k_{1}=\dotsc=k_{d}=k$ is the Parsell--Vinogradov case treated in \cite{arxiv:1804.02488}.
In this case we have
\[
\calK_{j,k} := \calK(\calD(k_{1}=k,\dotsc,k_{j}=k),\leq k) = \sum_{l=0}^{k} l \binom{j+l-1}{j-1} = \frac{jk}{j+1} \binom{k+j}{j},
\]
hence
\[
\tgamma = \max\Bigl( \frac{d}{2}, \max_{(d+1)/2<j\leq d} \bigl( j + \frac{d-j}{p} - \frac{\calK_{j,k}}{p} \bigr) \Bigr).
\]
This is the same estimate as \cite[Theorem 1.2]{arxiv:1804.02488}, taking into account that we normalize sums over $\Part{\delta}$.
\end{example}

\begin{example}
\label{ex:dec:d=2,k1=1}
Let $d=2$ and $\bfk=(1, k_2)$ for some integer $k_2\ge 1$.
In this case
\[
\abs{\calD((1, k_2), =l)} =
\begin{cases}
1, & l \in \Set{ 0,k_{2}+1 },\\
2, & 1 \leq l \leq k_{2},\\
0, & \text{otherwise.}
\end{cases}
\]
Hence for $k\le k_2$ we obtain
\[
\calK(\calD((1, k_2), \le k))=2(1+2+\dotsb+k)=k(k+1),
\quad
\tgamma = \max\Bigl( 1, 2-\frac{k(k+1)}{p} \Bigr).
\]
In the case $k = k_2+1$, we obtain
\[
\calK(\calD((1, k_2), \le k))=2(1+2+\dots+k_2)+(k_2+1)=(k_2+1)^2,
\quad
\tgamma = \max\Bigl( 1, 2-\frac{(k_2+1)^2}{p} \Bigr).
\]
\end{example}

\begin{example}
\label{ex:dec:box}
Let $d\geq 1$ be arbitrary, $1 \leq k_{1}\leq \dotsb \leq k_{d}$, and $k\geq k_{1}+\dotsc+k_{d}$.
Then
\begin{align*}
\calK((k_{1},\dotsc,k_{j}),\leq k)
&=
\sum_{i_{1} = 0}^{k_{1}} \dots \sum_{i_{j} = 0}^{k_{j}} (i_{1}+\dotsb+i_{j})
\\ &=
\sum_{m=1}^{j} (k_{1}+1) \dotsm (k_{m-1}+1) \Bigl( \sum_{i_{m} \leq k_{m}} i_{m} \Bigr) (k_{m+1}+1) \dotsm (k_{j}+1)
\\ &=
\sum_{m=1}^{j} (k_{1}+1) \dotsm (k_{m-1}+1) \frac{k_{m}(k_{m}+1)}{2} (k_{m+1}+1) \dotsm (k_{j}+1)
\\ &=
\frac{1}{2} (k_{1}+1) \dotsm (k_{j}+1) \sum_{m=1}^{j} k_{m}.
\end{align*}
\end{example}

\begin{example}
\label{ex:dec:binary}
Specializing to $d=2$ in Example~\ref{ex:dec:box}, we obtain
\[
\tgamma = \max\Bigl( 1, 2-\frac{(k_{1}+1)(k_2+1)(k_{1}+k_{2})}{2p} \Bigr).
\]
\end{example}

\begin{example}
\label{ex:dec:cube}
Specializing to $k_{1}=\dotsb=k_{d}=k_{0}$ in Example~\ref{ex:dec:box}, we obtain
\[
\calK((k_{1},\dotsc,k_{j}),\leq k) = j (k_{0}+1)^{j} k_{0}/2,
\]
\[
\tgamma = \max \Bigl( \frac{d}{2}, \max_{(d+1)/2\leq j\leq d} \bigl( j + \frac{d-j}{p} - \frac{j (k_{0}+1)^{j} k_{0}}{2p} \bigr) \Bigr).
\]
The maximum over $j$ cannot be replaced by the term $j=d$ already in the case $d=3$, $k_{0}=2$.
\end{example}

\subsection{Consequences for multidimensional Vinogradov systems}
\label{sec:intro:optimality}
Let $s \in \Set{1,2,\dotsc}$ and consider the system of equations
\begin{equation}
\label{eq:Vinogradov-system}
\sum_{j=1}^{s} (x_{j})^{\bfi} = \sum_{j=1}^{s} (y_{j})^{\bfi},
\quad
\bfi \in \calD,
\end{equation}
in $2sd$ unknowns, where $x_{j},y_{j} \in \N^{d}$.
Given $X \geq 1$, let
\[
J_{s}(X; \calD) := \# \Set{ (x_{1},\dotsc,x_{s},y_{1},\dotsc,y_{s}) \in (\N^{d} \cap [1,X]^{d})^{2s} \given \text{\eqref{eq:Vinogradov-system} holds} }
\]
denote the number of solutions to \eqref{eq:Vinogradov-system} all of whose entries are bounded by $X$.
By the reduction in \cite[Section 4]{MR3548534}, it is known that
\begin{equation}
\label{eq:J-est}
J_{s}(X; \calD) \lesssim \avDec(\calD, 2s, X^{-1})^{2s}.
\end{equation}
The argument given in \cite[Section 4]{MR3548534} uses a localized version of Theorem~\ref{thm:main}, but it can be also carried out with the global version applied to the functions $f_{\theta}(x) = e(\Phi(c_{\theta}) \cdot x) \phi(x)$, where $\phi$ is a Schwartz function with $\supp \widehat{\phi} \subseteq B(0,\delta^{-k})$ and $c_{\theta} \in \theta$ are suitable rational points.

Thus Theorem~\ref{thm:main} has the following consequence.
\begin{corollary}
\label{cor:Vinogradov-system}
Let $d \geq 1$, $k\geq 1$, and $1 \leq k_{1} \leq \dotsb \leq k_{d}$ be integers.
Then, for every $\epsilon>0$, we have
\begin{equation}
\label{eq:Js-bound}
J_{s}(X; \calD(\bfk,\leq k))
\lesssim_{\epsilon}
X^{2s \tgamma+\epsilon},
\end{equation}
where $\tgamma=\tgamma(\bfk,k,2s)$ is given by \eqref{eq:tgamma}.
\end{corollary}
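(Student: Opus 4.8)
The plan is to deduce the corollary directly from Theorem~\ref{thm:main} and the reduction \eqref{eq:J-est}. First I would fix $s \in \Set{1,2,\dotsc}$ and set $p := 2s$; since $s \geq 1$ this gives $2 \leq p < \infty$, so Theorem~\ref{thm:main} is available for this exponent. I would dispose of the edge case $X = 1$ immediately: then $\N^{d} \cap [1,1]^{d}$ is a single point, $J_{s}(1;\calD(\bfk,\leq k)) = 1$, and there is nothing to prove. So I may assume $X > 1$, and then $\delta := X^{-1} \in (0,1)$ is an admissible scale in the definition of $\avDec$.

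The main step is a substitution with no real content. Applying Theorem~\ref{thm:main} with $p = 2s$ and $\delta = X^{-1}$ gives
\[
\avDec(\calD(\bfk,\leq k), 2s, X^{-1}) \lesssim_{\epsilon} X^{\tgamma + \epsilon}, \qquad \tgamma = \tgamma(\bfk,k,2s),
\]
and feeding this into \eqref{eq:J-est} yields
\[
J_{s}(X;\calD(\bfk,\leq k)) \lesssim \avDec(\calD(\bfk,\leq k), 2s, X^{-1})^{2s} \lesssim_{\epsilon} X^{2s\tgamma + 2s\epsilon}.
\]
Since $\epsilon>0$ is arbitrary and $s$ is fixed, I would then replace $\epsilon$ by $\epsilon/(2s)$ to reach \eqref{eq:Js-bound}; per the stated conventions the implicit constant may depend on $d,\bfk,k,s,\epsilon$ but not on $X$.

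The only ingredient that is not purely formal is \eqref{eq:J-est}, the reduction of \cite[Section~4]{MR3548534}, which I would take as given in view of the discussion preceding the corollary. If I had to reproduce it, the one point I would be careful about is the normalization: because the sum over $\theta$ in \eqref{eq:dec-const-def} is averaged whereas \cite{MR3548534,arxiv:1804.02488} use the unnormalized sum, one must keep track of the number of cubes in $\Part{\delta}$, which is comparable to $X^{d}$ when $\delta = X^{-1}$, and match it against the count of lattice points in $\N^{d} \cap [1,X]^{d}$. Beyond this bookkeeping I do not expect any obstacle; in particular no new analysis is needed here, all the work having gone into Theorem~\ref{thm:main}.
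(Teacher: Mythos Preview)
Your proposal is correct and matches the paper's approach: the paper presents Corollary~\ref{cor:Vinogradov-system} as an immediate consequence of Theorem~\ref{thm:main} combined with the reduction \eqref{eq:J-est}, without giving a separate proof. Your only additions are the routine bookkeeping (the case $X=1$, replacing $\epsilon$ by $\epsilon/(2s)$, and the remark on normalization), none of which introduces anything beyond what the paper implicitly intends.
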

The upper bound \eqref{eq:Js-bound} matches (up to the $\epsilon$ loss) the lower bound in \cite[Section 3]{MR3132907}.
This is proved in Section~\ref{sec:lower}.
In particular, the exponent \eqref{eq:tgamma} in Theorem~\ref{thm:main} is optimal when the exponent $p$ is an even integer.

Let us pause and mention a few special cases of our theorem.
Let $\bfk=(k_1, \dotsc, k_d)$.
As mentioned in Example~\ref{ex:dec:PV}, the case $\calD(\bfk, k)$ with $k\le \min\{k_1, \dotsc, k_d\}$ covers \emph{Parsell--Vinogradov systems}, see \cite{MR2149529,MR3132907,arxiv:1804.02488}.
We refer to the introduction of \cite{arxiv:1804.02488} for a discussion of applications of these systems.

The system \eqref{eq:Vinogradov-system} with $\calD=\calD(\bfk, \leq k)$ and $k=k_1+ \dotsb + k_d$ (Example~\ref{ex:dec:box}) was extensively studied by Arkhipov, Chubarikov, and Karatsuba, who summarized their results in the book \cite{MR2113479}.
In this case, since for sufficiently large $p$ the term $j=d$ dominates in \eqref{eq:tgamma}, Corollary~\ref{cor:Vinogradov-system} gives the bound
\[
J_{s}(X;\calD) \lesssim_{\epsilon} X^{\alpha+\epsilon},
\quad
\alpha = 2sd - \frac{1}{2} (k_{1}+1) \dotsm (k_{d}+1) (k_{1}+\dotsb+k_{d}),
\]
for sufficiently large $s$ and any $\epsilon>0$.
This can be compared with \cite[Theorem 4.3]{MR2113479} (with $r=d$, $n_{j}=k_{j}$, $k=s$, $P_{1}=\dotsb=P_{d}=X$), in which the exponent $\alpha$ is replaced by
\[
\alpha + \frac{1}{2} (k_{1}+1)\dotsm (k_{d}+1) (k_{1}+\dotsb+k_{d}) (1-1/(k_{1}+\dotsb+k_{d}))^{\lfloor s/((k_{1}+1)\dotsm(k_{d}+1)) \rfloor}.
\]
One situation in which the precise exponent is important occurs in \cite[Theorem 1.3]{MR3653250}.

When $d=2$ and $\calD=\calD(\bfk, k)$ with $k=k_1+ k_2$ (Example~\ref{ex:dec:binary}), the associated system \eqref{eq:Vinogradov-system} is called a \emph{simple binary system}, and it appeared in recent work in quantitative arithmetic geometry (Section 4.15 of \cite{MR2498064} and \cite{MR2817374}).
Moreover, it is a particular case of Prediville's systems \cite{MR3092339} with the generating polynomial $t_1^{k_1}t_2^{k_2}$.
Applications of exponential sum estimates associated to these systems have been carefully worked out in \cite{MR3092339}.

\subsection{Relation to previous works}
Theorem~\ref{thm:main} is proved by induction on the dimension $d \geq 1$ and degree $k \geq 1$.
The base case $k=1$ is given by $L^{2}$ orthogonality and interpolation.

The Bourgain--Guth argument originating in \cite{MR2860188} begins with splitting the left-hand side of \eqref{eq:dec-const-def} in Heisenberg uncertainty regions at a suitable scale.
Inside each region either transverse or non-transverse contributions dominate.
Non-transverse contributions come from neighborhoods of low degree varieties in $[0,1]^{d}$ and are handled using the inductive hypothesis with a lower $d$.
In the case of the paraboloid in \cite{MR3374964}, these low degree subvarieties were hyperplanes.
Higher degree varieties first appeared in \cite{MR3614930}; our treatment mostly follows \cite{arxiv:1804.02488}.

Transverse contributions are handled using an induction on scales argument.
For $k=2$, this argument was introduced by Bourgain and Demeter \cite{MR3374964} (see also the more streamlined exposition in \cite{MR3592159} and \cite{Demeter-ICM}), and it was extended to $k\geq 3$ by Bourgain, Demeter, and Guth \cite{MR3548534}.
This argument consists of three main ingredients:
\begin{enumerate}
\item ``ball inflation'' (Lemma~\ref{lem:ball-inflation}),
\item lower degree and smaller scale (by ``rescaling'') decoupling (Lemma~\ref{lem:scaled-decoupling}), and
\item a bootstrapping argument in which the former two ingredients are applied iteratively, yielding a gain over a trivial estimate.
\end{enumerate}
Ball inflation relies on a common generalization of multilinear Kakeya and Brascamp--Lieb inequalities.
Such an estimate was first proved in \cite{MR3783217}.
It is more convenient to use an endpoint version from \cite{arxiv:1807.09604}.
In order to apply it, one has to verify a transversality condition found in \cite{MR2377493}.
For moment manifolds, the transversality condition was reduced to a conjecture in linear algebra in \cite[Conjecture 3.1]{MR3709122} (in the case $\calD = \sublevel_{k}$ corresponding to Parsell--Vinogradov systems, a similar reduction can be made for arbitrary down-sets $\calD$, see Definition~\ref{def:up+down}).
For Parsell--Vinogradov systems this conjecture was verified in \cite{arxiv:1804.02488} using an extension of the Schwartz--Zippel lemma.
Our first contribution is the verification of the conjecture for the wider class of sets $\calD$ in Theorem~\ref{thm:main}, see Section~\ref{sec:transverse}.

In \cite{MR3592159} and \cite{MR3548534}, the bootstrapping argument is run at a certain critical exponent $p$, and results for other $p$'s follow by interpolation with easy endpoints at $p=2$ and $p$ near $\infty$.
In the higher-dimensional setting there are typically many critical exponents, which makes a case by case treatment difficult.
This problem was solved in \cite{arxiv:1804.02488}, where all values of $2 \leq p < \infty$ are treated directly.
We further unify these arguments by removing the distinction between small and large values of $p$ present in \cite{MR3592159} and \cite{arxiv:1804.02488}.

More importantly, we view the ``tree-growing'' procedure in previous works from a different perspective that is summarized in Figure~\ref{fig:graph}.
A similar idea (in the case $d=1$) independently appeared in a blog post by Terence Tao, from which we adopted the definition \eqref{eq:tilde-a}.
Putting all estimates in the induction on scales procedure on an equal footing allows us to replace a host of ad hoc calculations of \cite{arxiv:1804.02488} by Theorem~\ref{thm:PF-eigenvector} that describes the right Perron--Frobenius eigenvector of the matrix that contains all essential information about inequalities used.
Theorem~\ref{thm:PF-eigenvector} holds for arbitrary down-sets $\calD$ (see Definition~\ref{def:up+down}), thus reducing possible generalizations of Theorem~\ref{thm:main} to the verification of the transversality condition.

The exponent \eqref{eq:tgamma} is a compressed way to express the recursive upper bound \eqref{eq:tGamma-recursion} that comes out of our proof.
In Section~\ref{sec:lower}, we show that our upper bounds \eqref{eq:tgamma} and \eqref{eq:tGamma-recursion} coincide with the lower bound obtained in \cite{MR3132907}.
We hope that the argument in Section~\ref{sec:lower}, which is more streamlined than that of \cite{arxiv:1804.02488}, is also more robust and can be applied to more general translation-dilation invariant systems.

\subsection{\texorpdfstring{$\ell^{2}L^{p}$}{\9041\023\9000\262 Lp} decoupling}
\label{sec:intro:l2}
The decoupling constant $\avDec(\calD,p,q,\delta)$ is, with our normalization, a monotonically decreasing function of $q$.
Thus it should morally be easier to estimate it for large $q$.
On the other hand, the most important ingredient of the proof, Lemma~\ref{lem:ball-inflation}, in its current form only works for $q \leq p$.
Since the value of $q$ is not important for the purpose of estimating the number of solutions of Vinogradov systems \eqref{eq:J-est}, in hindsight it appears natural to consider $q=p$.

Nevertheless, all our proofs also work for other values of $2 \leq q \leq p$, see Appendix~\ref{sec:l2}.
However, the growth rate of the decoupling constant as $\delta\to 0$ may be worse than in the case $q=p$.
In the one-dimensional case $d=1$ we do obtain the same growth rate also for $q=2$, thus recovering the $\ell^{2}L^{p}$ decoupling inequalities in \cite{MR3548534} with a simpler induction on scales argument.
The reader primarily interested in the case $d=1$ should also notice that the treatment of transversality in Section~\ref{sec:transverse} drastically simplifies in this case, see Remark~\ref{rem:Vandermonde}.

\subsection{Open problems}
In view of the results in \cite{MR3132907}, it would be interesting to extend Theorem~\ref{thm:main} to arbitrary down-sets $\calD \subset \N^{d}\setminus\Set{0}$ (see Definition~\ref{def:up+down}).

More generally, one can ask which decoupling inequalities hold for general translation-dilation invariant systems of polynomials as in \cite{MR3132907}.
Most known examples are of this type \cite{MR3447712, arxiv:1609.04107, MR3614930, MR3592159, arxiv:1701.06732, arxiv:1804.02488, arxiv:1902.03450} or perturbations thereof.

Here we provide one simple example of a moment surface of dimension $d=2$, degree $3$, and rank $5$ for which the argument used in the current paper fails.
Let $\calD := \Set{ (1, 0), (2, 0), (3, 0), (0, 1), (1, 1)}$.
The associated surface is given by
\begin{equation}\label{nonexample}
\Set{ (t_1, t_2, t_1^2, t_1 t_2, t_1^3): (t_1, t_2)\in [0, 1]^2 }.
\end{equation} 
To apply the multilinear approach of Bourgain and Demeter \cite{MR3374964} and Bourgain, Demeter, and Guth  \cite{MR3548534}, one needs to verify a transversality condition (see \eqref{eq:BL-transversality}).
In order for transverse sets to exist, there has to exist a collection of $M \geq 1$ points $\{t_j\}_{j=1}^M\subset [0, 1]^2$ such that
\begin{equation}
\dim(V) \le \frac{5}{4} \frac{1}{M}\sum_{j=1}^M \dim(\pi_j(V)), \text{ for every } V\subset \R^5,
\end{equation}
where $\pi_j$ denotes the orthogonal projection onto $V^{(2)}(t_j)$, and $V^{(2)}(t_j)$ is the second order tangent space of the surface \eqref{nonexample} at $t_j$  (see \eqref{order_tangent}).
However, if one takes $V$ to be the span of three vectors $e_1, e_3, e_5$ from the standard basis in $\R^5$, it is not difficult to check that $\dim(\pi_j(V))=2$ for every $j$.
This prevents us from applying the ball inflation Lemma~\ref{lem:ball-inflation}, and a new idea seems to be needed to handle the surface \eqref{nonexample}.

\subsection*{Acknowledgement}
SG was partially supported by a direct grant for research (4053295) from the Chinese University of Hong Kong.
PZ was partially supported by the Hausdorff Center for Mathematics (DFG EXC 2047).
The authors are very grateful to the referee for their careful reading of the manuscript and numerous valuable suggestions, which significantly improved the exposition of the paper.

\section{Reduction of linear to multilinear decoupling}
\label{sec:multilinear}
We prove Theorem~\ref{thm:main} by induction on $d \geq 1$ and $k \geq 1$.
Since the formula \eqref{eq:tgamma} for the exponents does not reflect the inductive structure of this proof, it is more appropriate to use a different formula.
For a finite set of exponents $\calD \subset \N^{d}\setminus \{0\}$ with degree $k$, let
\begin{equation}
\label{eq:tGamma-recursion}
\tGamma_{\calD}(p) :=
\begin{cases}
0 & \text{if } d=0 \text{ or } k=0,\\
d \bigl( 1 - \frac{1}{p} \bigr) & \text{if } k=1,\\
\max( \max\limits_{1 \leq j \leq d} \tGamma_{\bfP_{j} \calD}(p) + \frac{1}{p}, \tGamma_{\calD \cap \calS_{k-1}}(\max(2,p \frac{\calK (\calD \cap \calS_{k-1})}{\calK (\calD)})) )
& \text{otherwise,}
\end{cases}
\end{equation}
where $\bfP_{j}$ denotes the projection onto $\N^{d-1}$ that deletes the $j$-th coordinate.
We will prove Theorem~\ref{thm:main} with $\tgamma$ replaced by $\tGamma_{\calD}(p)$.
In Section~\ref{sec:lower} we show that in fact $\tGamma_{\calD}(p) = \tgamma$.
We used formula \eqref{eq:tgamma} in Theorem~\ref{thm:main} because it is the shortest expression that we could find for these exponents.

The recursive formula \eqref{eq:tGamma-recursion} reflects the structure of the proof.
The base case of the inductive proof of Theorem~\ref{thm:main} is $k=1$, which essentially follows by interpolation between orthogonality at $p=2$ and Minkowski's inequality at $p=\infty$ (see Appendix~\ref{sec:k=1} for details).
One could also think of the trivial case $d=0$, in which the sum in \eqref{eq:dec-const-def} consists of one term, as a base case, although it is not included in the statement of Theorem~\ref{thm:main}.
These are also the base cases in the definition of $\tGamma$.

The application of lower-dimensional cases to non-transverse terms in the Bourgain--Guth argument is responsible for the lower dimensional term $\tGamma_{\bfP_{j} \calD}$ in \eqref{eq:tGamma-recursion}.
The use of lower degree decoupling in the induction on scales argument is responsible for the lower degree term $\tGamma_{\calD \cap \sublevel_{k-1}}$ in \eqref{eq:tGamma-recursion}.

Henceforth we will assume that Theorem~\ref{thm:main} is known with $\calD = \calD(\bfk, \leq k)$ replaced by $\calD(\bfk, \leq l)$ for any $1\leq l < k$.
If $d\geq 2$, then we also assume that Theorem~\ref{thm:main} is known with $\calD$ replaced by $\bfP_{j} \calD$ for any $1\leq j \leq d$ (the distinction between the cases $d=1$ and $d\geq 2$ will appear in Lemma~\ref{lem:dim-reduction-variety}, in which we deal with subvarieties of $\R^{d}$).
In the remaining part of Section~\ref{sec:multilinear} and in Section~\ref{sec:induction-on-scales} we view $d,\bfk,k,p$, and $\calD := \calD(\bfk,k)$ as fixed.

For $1 \leq l \leq k$, let $\calD_{l} := \calD \cap \sublevel_{l}$ and $n_{l} := \rk \calD_{l}$.
When $l=k$, we see that $\calD_{k}=\calD$ and $n_k=\rk \calD$.

\subsection{Transversality}
\label{sec:transversality}

Let $M$ be a positive integer and $1\leq l < k$.
For $1\le j\le M$, let $V_j \subset \R^{n_{k}}$ be a linear subspace of dimension $n_l$.
Let $\pi_j: \R^{n_{k}}\to V_j$ denote the orthogonal projection onto $V_j$.
The \emph{Brascamp--Lieb constant} $\BL((V_{j})_{j=1}^{M})$ is the smallest constant $C$ (possibly $\infty$) such that the inequality
\begin{equation}
\label{eq:BL-const-def}
\int_{\R^{n_{k}}} \prod_{j=1}^M f_j (\pi_j (x))^{\frac{n_{k}}{n_{l} M}} \dif x
\leq
C \prod_{j=1}^M \bigl( \int_{V_{j}} f_j (x) \dif x \bigr)^{\frac{n_{k}}{n_{l} M}}
\end{equation}
holds for all non-negative measurable functions $f_j: V_j\to \R$.
By scaling, $\frac{n_{k}}{n_{l} M}$ is the only exponent for which \eqref{eq:BL-const-def} can hold with a finite constant.
We recall a special case of the characterization of boundedness of Brascamp--Lieb multilinear forms due to Bennett, Carbery, Christ, and Tao.
\begin{theorem}[{\cite{MR2661170}}]
\label{thm:bcct}
The constant $\BL((V_{j})_{j=1}^{M})$ is finite if and only if
\begin{equation}
\label{eq:BL-transversality}
\dim(V) \le \frac{n_{k}}{n_{l} M} \sum_{j=1}^M \dim(\pi_j(V))
\end{equation}
holds for every linear subspace $V\subset \R^{n_{k}}$ with $0 < \dim V < n_{k}$.
\end{theorem}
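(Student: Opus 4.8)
The plan is to deduce Theorem~\ref{thm:bcct} from the general Brascamp--Lieb finiteness theorem of \cite{MR2661170}, of which it is the special case where all exponents equal $c := \tfrac{n_{k}}{n_{l}M}$ and all maps $\pi_{j}$ are surjective. The necessity of \eqref{eq:BL-transversality} I would check directly: fix a subspace $V$ with $0<\dim V<n_{k}$ and $\delta>0$, and apply \eqref{eq:BL-const-def} with $f_{j}=\one_{N_{j}}$, where $N_{j}\subset V_{j}$ is the intersection of the $\delta$-neighbourhood of $\pi_{j}(V)$ with a large ball $B(0,R)$. Since orthogonal projections are $1$-Lipschitz, every $x$ in the $\delta$-neighbourhood of $V$ in $\R^{n_{k}}$ with $\abs{x}\leq R$ has $\pi_{j}(x)\in N_{j}$, so the left side of \eqref{eq:BL-const-def} is $\gtrsim \delta^{n_{k}-\dim V}R^{\dim V}$, while the right side is $\lesssim C\prod_{j}\bigl(\delta^{n_{l}-\dim\pi_{j}(V)}R^{\dim\pi_{j}(V)}\bigr)^{c}$. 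Comparing powers of $R$ and letting $R\to\infty$ forces $\dim V\leq c\sum_{j}\dim\pi_{j}(V)$, which is \eqref{eq:BL-transversality}. Taking $V=\R^{n_{k}}$ here recovers the scaling identity $n_{k}=c\,n_{l}M$, which also pins down the only admissible value of $c$.

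For the sufficiency direction I would first invoke Lieb's theorem, which identifies $\BL((V_{j})_{j=1}^{M})$ with the supremum of the same multilinear form over centred Gaussian inputs; evaluating the Gaussian integrals, this turns the problem into showing that
\[
\sup_{B_{1},\dots,B_{M}>0}\ \frac{\prod_{j=1}^{M}(\det B_{j})^{c/2}}{\det\bigl(\sum_{j=1}^{M}c\,\pi_{j}^{*}B_{j}\pi_{j}\bigr)^{1/2}}
<\infty,
\]
the supremum running over positive definite quadratic forms $B_{j}$ on $V_{j}$. I would then prove finiteness of this supremum by induction on $n_{k}$. If some proper subspace $V$ is \emph{critical}, i.e. realises equality in \eqref{eq:BL-transversality}, then the datum factors through $V$: the restricted datum on $V$ and the quotient datum on $\R^{n_{k}}/V$ each satisfy \eqref{eq:BL-transversality} with strictly smaller ambient dimension, and the Gaussian supremum for the original datum is dominated by the product of the two smaller ones. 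If there is no proper critical subspace, then a maximising sequence for the Gaussian supremum cannot degenerate --- the absence of critical subspaces rules out the eigenvalues of the $B_{j}$ escaping to $0$ or $\infty$ in the directions that would be needed to make the ratio blow up --- so the supremum is attained and hence finite. Alternatively, one can run the heat-flow monotonicity argument of \cite{MR2661170} directly on \eqref{eq:BL-const-def}: evolving each $f_{j}$ by the heat semigroup $e^{t\Delta}$ on $V_{j}$ while using conservation of the $L^{1}$ masses, the time derivative of $\int_{\R^{n_{k}}}\prod_{j}(e^{t\Delta}f_{j})(\pi_{j}x)^{c}\dif x$ is non-negative precisely when \eqref{eq:BL-transversality} holds, and the $t\to\infty$ limit reduces matters to the Gaussian computation above.

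The hard part is the sufficiency direction, and within it the case with no proper critical subspace: establishing that the Gaussian optimisation problem is non-degenerate is the crux of \cite{MR2661170}, where it is handled either by the compactness argument just sketched or by the heat-flow monotonicity. By comparison, the reduction to Gaussians and the factorisation through critical subspaces are soft. The hypotheses available here --- equal exponents and surjective $\pi_{j}$ --- streamline the general statement: the scaling identity is automatic from the choice of $c$, and the separate ``dimension condition'' of the general theorem coincides with the transversality inequality \eqref{eq:BL-transversality}, so no extra case distinction is needed.
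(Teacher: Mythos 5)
The paper does not supply a proof of Theorem~\ref{thm:bcct}: it is invoked verbatim as a result of Bennett, Carbery, Christ, and Tao and handed off to the citation \cite{MR2661170}. There is therefore no ``paper's own proof'' to compare against; what you have written is a sketch of the argument in the cited reference itself. As such a sketch, it is essentially accurate. The necessity computation with indicator inputs supported on thickened slabs is correct: projections being $1$-Lipschitz gives the lower bound $\gtrsim\delta^{n_{k}-\dim V}R^{\dim V}$ for the left side, the right side scales like $R^{c\sum_{j}\dim\pi_{j}(V)}$ in $R$, and sending $R\to\infty$ at fixed $\delta$ forces the transversality inequality. The observation that taking $V=\R^{n_{k}}$ recovers the scaling identity, so that the separate scaling condition of the general BCCT theorem is automatic once $c=n_{k}/(n_{l}M)$ and all $\pi_{j}$ are surjective onto spaces of equal dimension $n_{l}$, is exactly the reduction needed to see that the general characterization collapses to the single inequality \eqref{eq:BL-transversality} here.

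Two small cautions on the sufficiency direction. First, the factorisation through a proper critical subspace $V$ is stated as if immediate; in \cite{MR2661170} it requires checking that the restricted datum on $V$ (with projections $\pi_{j}|_{V}$ onto $\pi_{j}(V)$) and the quotient datum on $\R^{n_{k}}/V$ each again satisfy the scaling and dimension conditions, and that the Gaussian constant is submultiplicative across this split; this is the content of their structural lemma on critical subspaces and is not quite ``soft,'' though it is routine once set up. Second, the one-line description of the heat-flow alternative --- that the time derivative of $\int\prod_{j}(e^{t\Delta}f_{j})(\pi_{j}x)^{c}\,\dif x$ is non-negative \emph{precisely when} \eqref{eq:BL-transversality} holds --- overstates matters: the flat-heat monotonicity is established for \emph{geometric} data (where $\sum_{j}c\,\pi_{j}^{*}\pi_{j}=\mathrm{Id}$), and general data satisfying \eqref{eq:BL-transversality} must first be transformed, or one must use a more elaborate anisotropic flow; the transversality condition by itself does not make the derivative sign-definite for arbitrary inputs. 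Since the paper treats the theorem as a black box, neither caveat affects anything downstream, but if this sketch were to be expanded into a full proof these are the two places that would need real work.
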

We need Brascamp--Lieb inequalities with different choices of $n_l$ because the graph of $\Phi$, which is a $d$-dimensional surface, can appear to be $n_l$-dimensional at certain scales.
More precisely, we use the $l$-th order tangent spaces
\begin{equation}\label{order_tangent}
V^{(l)}(t):=\lin \Set{\partial^{\bfj} \Phi(t) \given \bfj \in \calD_{l}},
\quad t\in [0, 1]^d.
\end{equation}
Notice that $\dim V^{(l)}(t) = n_{l}$ for all $t\in [0,1]^{d}$.
For $t=0$ this is easy to see, since $\partial^{\bfj}\Phi(0)$ is a non-zero multiple of the $\bfj$-th unit vector, and for other $t$ this follows using affine symmetry of the graph of $\Phi$.

\begin{definition}\label{0713fdefi1.2}
Sets $R_1, \dotsc, R_M\subset [0, 1]^d$ are called \emph{$\nu$-transverse} if for each $1 \leq l < k$ and every choice of $x_{j} \in R_{j}$ the $l$-th order tangential spaces $V^{(l)}(x_{j})$ satisfy
\[
\BL((V^{(l)}(x_{j}))_{j=1}^{M})
\leq \nu^{-1}.
\]
\end{definition}

This definition of transversality is motivated by Lemma~\ref{lem:ball-inflation}.

\begin{remark}
While it is easy to see that no $\nu$-tranverse tuples of non-empty sets exist for small $M$, it is a priori not clear how large $M$ has to be for such tuples to exist, or whether such $M$ exists at all.
For this reason varying degree of multilinearity $M$ was introduced in \cite{MR3709122}.
\end{remark}

The next lemma says that a tuple of dyadic cubes is transverse if it is not clustered near any low degree subvariety.

\begin{lemma}
\label{lem:not-clustered-implies-transverse}
There exists $\theta = \theta(\calD) > 0$ such that for every $K \in \N_{>0}$ there exists $\nu_{K} = \nu_{K}(\calD)$ such that for every tuple of cubes $R_1, \dotsc, R_M \in \Part{K^{-1}}$ at least one of the following statements holds.
\begin{enumerate}
\item\label{it:clustered} There exists a non-zero polynomial $P$ in $d$ variables of degree $\leq D(d,k) = k^{k^{d}}$ such that $2 R_{j} \cap Z_{P} \neq \emptyset$ for at least $\theta M$ many $j$'s, or
\item\label{it:transverse} the sets $R_{1},\dotsc,R_{M}$ are $\nu$-transverse.
\end{enumerate}
\end{lemma}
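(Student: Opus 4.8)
The plan is to prove the dichotomy by contradiction: if no non-zero polynomial of bounded degree passes through $2R_j$ for a $\theta$-fraction of the cubes, then the tangent spaces $V^{(l)}(x_j)$ must be ``spread out'' enough that the Brascamp--Lieb condition \eqref{eq:BL-transversality} holds, with a constant $\nu_K$ that we can quantify. By Theorem~\ref{thm:bcct}, transversality reduces to showing that for every non-trivial subspace $V\subset\R^{n_k}$, a good fraction of the projections $\pi_j(V)$ have large dimension. The key algebraic input here should be a Schwartz--Zippel-type statement (as in \cite{arxiv:1804.02488}, or the linear algebra conjecture from \cite[Conjecture 3.1]{MR3709122} whose verification for our class of $\calD$ is promised in Section~\ref{sec:transverse}): for a fixed subspace $V$, the set of $t\in[0,1]^d$ for which $\dim(\pi_{V^{(l)}(t)}(V))$ fails to attain its generic value is contained in the zero set of some non-zero polynomial whose degree is controlled by $D(d,k)$.

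Here is the structure I would follow. First, fix $1\le l<k$ and a subspace $V$. The function $t\mapsto \dim(\pi_{V^{(l)}(t)}(V))$ is a lower-semicontinuous integer-valued function, constant (equal to its maximum $r_V$) off a proper algebraic subvariety $Z_V$ of degree at most $D(d,k)$; this is where the linear-algebra/Schwartz--Zippel machinery of Section~\ref{sec:transverse} enters, bounding this degree uniformly in $V$. A priori there are infinitely many $V$ to consider, but by a standard compactness/stratification argument on the Grassmannian (the generic rank is a constructible function), finitely many varieties $Z_{V_1},\dotsc,Z_{V_N}$ suffice to witness all the failures — more precisely, one shows the product $P_V=\prod$ of a controlled number of such polynomials, or a generic-projection argument, reduces us to finitely many $P$. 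Second, suppose alternative \eqref{it:clustered} fails: then for \emph{every} non-zero polynomial $P$ of degree $\le D(d,k)$, fewer than $\theta M$ of the cubes $2R_j$ meet $Z_P$. Applying this to each relevant $P$, and summing over the bounded number of them, at most $C(\calD)\theta M$ of the indices $j$ are ``bad'' for \emph{some} witnessing polynomial. For the remaining $\ge (1-C\theta)M$ indices, every center $x_j$ lies off all the varieties $Z_V$, so $\dim(\pi_{V^{(l)}(x_j)}(V)) = r_V \ge \frac{n_l}{n_k}\dim(V)$ — the last inequality being exactly the generic-transversality statement that must be established in Section~\ref{sec:transverse}. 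Third, plug this into \eqref{eq:BL-transversality}: the sum $\sum_j \dim(\pi_j(V))$ is at least $(1-C\theta)M\cdot\frac{n_l}{n_k}\dim(V)$, which is $\ge \frac{n_l M}{n_k}\dim(V)$ once $\theta$ is chosen small enough relative to how much slack the generic inequality provides — but here one must be careful, since the generic inequality may be tight for some $V$. The standard fix, going back to \cite{MR2377493, MR3709122}, is to work instead with a slightly weakened ``$\nu$-transverse'' notion that only requires the Brascamp--Lieb constant to be bounded, not that \eqref{eq:BL-transversality} hold with equality; equivalently, one chooses $K$ large, takes the cubes small, and uses that the tangent spaces at nearby points are close, so that a Brascamp--Lieb datum that is transverse in the strict sense at the cube centers remains bounded (with constant $\nu_K$ depending on $K$) on the whole cubes. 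The quantitative bound $\nu_K(\calD)$ then comes from the quantitative (continuity/lower-semicontinuity) version of the Bennett--Carbery--Christ--Tao theorem together with the compactness of the Grassmannian.

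The main obstacle, I expect, is the uniformity in $V$: \eqref{eq:BL-transversality} quantifies over all subspaces $V\subset\R^{n_k}$, a non-compact-looking but in fact constructible-stratified parameter space, and one needs a \emph{single} finite list of exceptional polynomials (with degrees bounded by $D(d,k)=k^{k^d}$) whose avoidance simultaneously guarantees $\dim(\pi_j(V))\ge r_V$ for all $V$ at all relevant orders $l$. Establishing that such a finite list exists, with the stated explicit degree bound, is precisely the content that Lemma~\ref{lem:not-clustered-implies-transverse} is designed to package, and it rests on the verification of the transversality/linear-algebra conjecture for the sets $\calD(\bfk,\le k)$ announced for Section~\ref{sec:transverse}. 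A secondary technical point is the passage from ``transverse at points $x_j$'' to ``the sets $R_j$ are $\nu$-transverse'' in the sense of Definition~\ref{0713fdefi1.2}, which requires choosing the cube scale $K^{-1}$ small enough (depending on $K$, hence the $\nu_K$ in the statement) that the $l$-th order tangent spaces do not vary too much across a cube; this uses the smoothness of $\Phi$ on the compact box $[0,1]^d$ and the openness of the finiteness condition for Brascamp--Lieb constants.
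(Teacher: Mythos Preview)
Your overall structure is right, but there is one unnecessary complication and one genuine gap.

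The unnecessary complication is the Grassmannian stratification. The failure of alternative~\eqref{it:clustered} quantifies over \emph{all} polynomials of degree $\le D(d,k)$, so once $V$ is fixed and you have produced a single witness polynomial $P_V$ (a minor determinant of $\calM_V^{(l)}$), you may apply the hypothesis directly to that $P_V$. The BL condition \eqref{eq:BL-transversality} is verified one $V$ at a time, and the set of ``good'' indices $j$ is allowed to depend on $V$; no finite list of polynomials uniform in $V$ is needed. This is exactly how the paper proceeds.

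The genuine gap is the equality case. If the generic rank $r_V$ equals $\frac{n_l}{n_k}\dim V$ exactly, your estimate yields only $(1-C\theta)\dim V$ on the right of \eqref{eq:BL-transversality}, strictly less than $\dim V$; by Theorem~\ref{thm:bcct} the BL constant is then $+\infty$, so ``relax to $\nu$-transverse and use continuity'' cannot rescue you---there is no graceful degradation once \eqref{eq:BL-transversality} fails. The paper closes this via the dichotomy in Theorem~\ref{thm:rank}: for each $V$ and $l$, either (i) some minor of order $\lfloor\frac{n_l}{n_k}\dim V\rfloor+1$ is a nonzero polynomial, so off its zero set the rank \emph{strictly} exceeds $\frac{n_l}{n_k}\dim V$, and since $\dim V$, $n_l$, $n_k$ take only finitely many values this excess is bounded below uniformly, allowing a single $\theta=\theta(\calD)$ to absorb the $\theta$-fraction of bad indices; or (ii) a minor of order exactly $\frac{n_l}{n_k}\dim V$ vanishes \emph{nowhere} on $[0,1]^d$, so the required rank lower bound holds at every $x_j$ with no exceptional indices at all. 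Alternative~(ii) is precisely what handles the tight case; without it the argument does not close. Your final compactness/continuity step to obtain $\nu_K$ is correct and is what the paper does.
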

Here $Z_{P} := \Set{x\in \R^d \given P(x)=0 }$ denotes the zero set of a polynomial.\\

The proof of Lemma \ref{lem:not-clustered-implies-transverse} is based on the following theorem. 
\begin{theorem}\label{thm:rank}
For every $d\ge 1$, $k\ge 2$, $1\le l\le k-1$, and every linear subspace $V=\operatorname{span} \Set{ v_1, \dotsc, v_{H} } \subset \R^{n_k}$ with $0 < \dim V < n_{k}$, the matrix
\begin{equation}\label{eq:rank-matrix}
\calM_V^{(l)}(t)
:=
\bigl( v_1, \dotsc, v_{H} \bigr)^T \times \bigl( \partial^{\bfj} \Phi(t) \bigr)_{\bfj \in \calD_{l}}
\end{equation}
satisfies at least one of the following two statements:
\begin{enumerate}
\item\label{rank_inequality}
it has a minor determinant of order
\[
\floor[\big]{\frac{\dim(V)\cdot n_l}{n_k}}+1
\]
that does not vanish identically when viewed as a function of $t\in [0, 1]^d$, or
\item\label{rank_equality}
it has a minor determinant of order
\[
\floor[\big]{\frac{\dim(V)\cdot n_l}{n_k}}=\frac{\dim(V)\cdot n_l}{n_k}
\]
that vanishes at no point $t \in [0, 1]^d$.
\end{enumerate}
\end{theorem}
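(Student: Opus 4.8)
The plan is to pass to the generic rank of the polynomial matrix \eqref{eq:rank-matrix}. Write $m:=\dim V$ and $\rho:=mn_l/n_k$. Since the $\bfi$-th coordinate of $\partial^{\bfj}\Phi(t)$ is a constant multiple of $t^{\bfi-\bfj}$ when $\bfj\le\bfi\in\calD$ and $0$ otherwise, every entry of $\calM_V^{(l)}(t)$ is a polynomial in $t$, so there is a largest integer $r$ for which some minor of order $r$ does not vanish identically; this $r$ equals $\max_{t}\operatorname{rank}\calM_V^{(l)}(t)$, and since $\dim V^{(l)}(t)=n_l$ for every $t$ one also has $\operatorname{rank}\calM_V^{(l)}(t)=\dim\pi_t(V)$, $\pi_t$ being the orthogonal projection onto $V^{(l)}(t)$. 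With this notation the theorem reduces to two assertions: (i) $r\ge\rho$; and (ii) if $r=\rho$ (so that $\rho\in\Z$), then some minor of $\calM_V^{(l)}(t)$ of order $r$ vanishes at no point of $[0,1]^d$. Indeed, granting (i): if $r>\rho$, then $r\ge\floor{\rho}+1$ and any minor of order $\floor{\rho}+1$ that does not vanish identically gives alternative~\ref{rank_inequality}; if $r=\rho$, then (ii) gives alternative~\ref{rank_equality}; and $r<\rho$ is excluded.

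For (i) I would exploit the affine symmetry of the moment manifold. For $u\in\R^d$ one has $\Phi(t+u)=A_u\Phi(t)+b_u$ with $A_u\in\mathrm{GL}(\R^{n_k})$ unipotent and lower triangular in the grading by $\abs{\cdot}$, and $A_uA_v=A_{u+v}$; differentiating gives $(\partial^{\bfj}\Phi)(t+u)=A_u(\partial^{\bfj}\Phi)(t)$, and, since $(\partial^{\bfj}\Phi)(0)$ is a nonzero multiple of the $\bfj$-th coordinate vector, it follows that $\operatorname{rank}\calM_V^{(l)}(t)=\dim p_l(A_t^{\mathsf T}V)$, where $p_l\colon\R^{n_k}\to\R^{\calD_l}$ is the coordinate projection; in particular $r=\max_u\dim p_l(A_u^{\mathsf T}V)$. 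The idea is then to choose points $u_1,\dots,u_N$ and estimate $\dim V\le\sum_i\dim p_l(A_{u_i}^{\mathsf T}V)=\sum_i\operatorname{rank}\calM_V^{(l)}(u_i)\le Nr$ with $N$ small enough that $\dim V/N\ge\rho$. The required injectivity — that $v\mapsto(p_l(A_{u_i}^{\mathsf T}v))_i$ is injective on $\R^{n_k}$, hence on $V$, for a generic choice of enough $u_i$ — uses that $\calD$ is a down-set: each $\bfi\in\calD$ dominates a standard basis index $\bfe_s\in\calD_l$, so $p_l(A_u^{\mathsf T}v)$ vanishes identically in $u$ only for $v=0$. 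Bounding how many $u_i$ are needed is an analogue, for derivatives of monomials, of the Schwartz–Zippel lemma, and is where the combinatorics of $\calD(\bfk,k)$ enters; I expect this counting to be the technical core behind alternative~\ref{rank_inequality}.

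It remains to treat the critical case (ii), $r=\rho\in\Z$, where one must exhibit a single minor of order $r$ that is nonvanishing throughout the closed cube. Here I would combine the translation identity $\operatorname{rank}\calM_V^{(l)}(t+u)=\operatorname{rank}\calM_{A_u^{\mathsf T}V}^{(l)}(t)$ with the anisotropic scaling identity $\operatorname{rank}\calM_V^{(l)}((\lambda_jt_j)_j)=\operatorname{rank}\calM_{D_\lambda V}^{(l)}(t)$, $D_\lambda=\operatorname{diag}(\lambda^{\bfi})_{\bfi}$, to localize the analysis: by homogeneity the locus where the rank drops below $r$ can be transported onto the coordinate faces $\{t_j=0\}$, on which $\calM_V^{(l)}$ degenerates to a block matrix governed by the faces $\bfP_j\calD$ together with derivative data of lower degree, to which an inductive form of the theorem applies (induction on $d$ and on $k$). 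This should give $\operatorname{rank}\calM_V^{(l)}(t)=r$ for every $t\in[0,1]^d$. The remaining — and, I expect, the genuinely delicate — step is the passage from ``the rank is $r$ everywhere on $[0,1]^d$'' to ``one fixed minor is nonzero everywhere on $[0,1]^d$'', which fails for general constant-rank polynomial matrices and must use the monomial structure of $\calM_V^{(l)}$. My expectation is that the equality $r=mn_l/n_k$ forces $V$ to be aligned with a coordinate flag tightly enough that, after a change of basis of $V$ (row operations on $\calM_V^{(l)}$), some $r\times r$ submatrix becomes lower triangular with constant nonzero diagonal $\prod_{\bfj}\bfj!$ in the degree ordering, and is thus a nonzero constant. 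Turning the numerical equality $r=mn_l/n_k$ into this structural statement is, I believe, the main obstacle; everything else is either routine or reduces, through the two symmetries above, to lower-dimensional instances of the theorem. (For $d=1$ the matrices are of Vandermonde type, $r$ should always equal the maximal possible value $\min(m,n_l)>\rho$, and only alternative~\ref{rank_inequality} occurs, matching the simplification noted in Remark~\ref{rem:Vandermonde}; the critical case is a $d\ge2$ phenomenon.)
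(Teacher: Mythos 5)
Your reduction of the theorem to the two assertions (i) $r\geq\rho$ and (ii) if $r=\rho\in\Z$ then some $r\times r$ minor vanishes nowhere (where $r$ is the generic rank and $\rho=\dim V\cdot n_l/n_k$) is correct, and it does match the logical shape of the paper's argument. The genuine gap is in your proposed proof of~(i). The multilinear count $\dim V\le\sum_{i=1}^N\dim p_l(A_{u_i}^{\mathsf T}V)\le Nr$ requires the product map $v\mapsto(p_l(A_{u_i}^{\mathsf T}v))_{i=1}^N$ to be injective on $V$, which forces $N\ge\lceil\dim V/n_l\rceil$ by a target-dimension count. Thus the best your argument can produce is $r\ge\dim V/\lceil\dim V/n_l\rceil$, and this is strictly weaker than what Theorem~\ref{thm:rank} asserts whenever $\rho$ is not an integer. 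Concretely, take $d=1$, $\bfk=(5)$, $k=5$, $l=3$, so $n_k=5$, $n_l=3$, and let $\dim V=4$: your bound yields only $r\ge 4/\lceil 4/3\rceil=2$, whereas alternative~\ref{rank_inequality} requires $r\ge\lfloor 12/5\rfloor+1=3$; the true value is indeed $3$, by the Vandermonde observation of Remark~\ref{rem:Vandermonde}. (Your parenthetical claim that the map should be injective on all of $\R^{n_k}$ would require $Nn_l\ge n_k$ and would only weaken the count further.) So the dimension-counting framework as you set it up cannot reach the sharp exponent $n_l/n_k$; the $\lceil\cdot\rceil$ loss is fatal whenever $n_l\nmid\dim V$.

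The paper's proof of the generic rank bound (Theorem~\ref{thm:BLcond}, of which Theorem~\ref{thm:rank} is a short corollary) is structurally quite different. It fixes a monomial order, passes to the \emph{leading powers} $\bfa_h$ of the polynomials $\sum_{\bfi}v_{h,\bfi}t^{\bfi}$, and lower-bounds the rank of $\calM_V^{(l)}$ by that of the scalar Vandermonde-type matrix $(\bfa_h^{\bfj})_{\bfj\in\calD_l,\,h}$ (Lemma~\ref{lem:rank-est-by-leading-monomials}). That rank is then estimated sharply via an abstract Schwartz--Zippel counting lemma (Lemma~\ref{lem:SZ}) combined with a density-increase statement for up-sets on level sets of $\calD(\bfk)$ (Theorem~\ref{thm:upset-levels}, Corollary~\ref{cor:sublevel-up-set}), culminating in Lemma~\ref{lem:powers-rank-est}. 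Crucially this route also yields a rigid classification of equality cases in~\eqref{eq:BLcond}: up to coordinate swap one has $d=2$, $1=k_1<k_2$, and $V=\Span\{e_{(0,1)},\dotsc,e_{(0,k)}\}$ (conditions~\ref{thm:BLcond:d=2}, \ref{thm:BLcond:d=2'}). That rigidity is exactly what makes~(ii) tractable: in the equality case one can exhibit an explicit $l\times l$ minor whose determinant is a nonzero constant. Your sketch of~(ii), transporting the rank-drop locus to coordinate faces and then trying to upgrade constant rank to a single everywhere-nonvanishing minor, correctly flags the hard step, but supplies no mechanism; the paper's equality-case classification is precisely the mechanism that replaces it.
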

We recall that $\Phi$ is the vector of monomials of all orders in $\calD$.
In particular, $\calM_V^{(l)}(t)$ is a matrix is of order $H \times n_l$.

A more precise version of Theorem~\ref{thm:rank}, Theorem~\ref{thm:BLcond}, is proved in Section~\ref{sec:transverse}.
In this section we use Theorem~\ref{thm:rank} as a black box.

\begin{proof}[Proof of Lemma~\ref{lem:not-clustered-implies-transverse}.]
For a given $K$, there are finitely many choices of $R_{1},\dotsc,R_{M}$, and for each choice the set of possible $x_{j}\in R_{j}$ is compact.
Since Brascamp--Lieb constants depend continuously on data, see \cite{MR3783217} and \cite{MR3723636}, it suffices to show that if alternative \eqref{it:clustered} of Lemma~\ref{lem:not-clustered-implies-transverse} does not hold, then the Brascamp--Lieb constant is finite for each choice of $x_{j} \in R_{j}$.
To this end it suffices to verify the transversality condition \eqref{eq:BL-transversality}.

Fix a linear space $V\subset \R^{n_k}$ with basis $(v_1, \dotsc, v_{H})$ that is not the full space and not the trivial subspace.
We need to show that
\begin{equation}\label{0713f1.11}
\dim(V)\le \frac{n_{k}}{n_{l} M} \sum_{j=1}^M \dim(\pi_j(V)).
\end{equation}
Here $\pi_j(V)$ denotes the orthogonal projection of $V$ onto $V^{(l)}(x_j)$.
Observe that $\dim(\pi_j(V))$ equals the rank of the matrix $\calM_V^{(l)}(x_j)$.
There are two cases.
If alternative \eqref{rank_inequality} of Theorem~\ref{thm:rank} holds, then the matrix $\calM_V^{(l)}(x)$ has at least one minor determinant of order at least
\begin{equation}
\label{eq:rank-lower-bound}
\floor[\big]{\frac{\dim(V)\cdot n_l}{n_k}}+1
\end{equation}
that is a non-zero polynomial in $x$.
We denote this polynomial by $P$.
Since $P$ is the determinant of a square matrix of order at most $n_{k} \times n_{k}$ whose entries are polynomials of degree at most $k$, we have
\begin{equation}
\deg P
\leq
k^{n_{k}}
\leq
k^{k^{d}}.
\end{equation}
Since we assumed that alternative \eqref{it:clustered} of Lemma~\ref{lem:not-clustered-implies-transverse} does not hold, the polynomial $P$ does not vanish at $x_{j}$ for at least $M \cdot (1-\theta)$ many $j$'s.
Hence, for these $j$'s, the matrix $\calM_V^{(l)}(x_{j})$ has rank at least \eqref{eq:rank-lower-bound}.
Hence, the right hand side of \eqref{0713f1.11} is at least
\begin{equation}
\frac{n_k}{n_l} (1-\theta) \Bigl( \floor[\big]{ \frac{\dim(V)\cdot n_l}{n_k} } + 1 \Bigr).
\end{equation}
By choosing $\theta$ small enough, the last display can be made $\geq \dim(V)$.
This finishes the proof of the estimate \eqref{0713f1.11} in the first case.

The second case is that the alternative \eqref{rank_equality} of Theorem~\ref{thm:rank} holds.
In this case $\dim (\pi_{j}(V))$ is bounded below by $\frac{n_{l}}{n_{k}} \dim V$ for every $j$, and this immediately implies \eqref{0713f1.11}.
\end{proof}

It would be desirable to replace the above compactness argument using continuity of BL constants by an explicit estimate for BL constants.

\subsection{Dimensional reduction}
\label{sec:dim-reduction}

\begin{definition}
\label{def:dim-reduction-variety}
For $2 \leq q \leq p < \infty$ and $K \geq 1$, let $\avDec_{\mathrm{var}}(\calD,p,q,K^{-1})$ denote the smallest constant $C$ such that, for every non-zero polynomial $P$ of $d$ variables with degree $\leq D(d,k)$, every collection $\calG \subset \Part{K^{-1}}$ of cubes that intersect the zero set of $P$, and any measurable functions $f_{\beta} : \R^{\calD} \to \C$ with $\supp f_{\beta} \subseteq \Uncert(\beta)$, we have
\begin{equation}
\label{eq:dim-reduction-variety}
\norm[\big]{ \sum_{\beta\in \calG} f_{\beta} }_{L^p(\R^{\calD})}
\leq
C
\Bigl( \avsum_{\beta\in\Part{K^{-1}}} \one_{\beta\in \calG} \norm{f_{\beta}}_{L^p(\R^{\calD})}^{q} \Bigr)^{1/q}.
\end{equation}
\end{definition}

\begin{lemma}
\label{lem:dim-reduction-variety}
If $d\geq 2$, then for every $2 \leq q \leq p < \infty$ and $K \geq 1$ we have
\begin{equation}
\label{eq:Dec-subvariety}
\avDec_{\mathrm{var}}(\calD,p,q,K^{-1})
\lesssim
\max_{1 \leq j \leq d} \avDec(\bfP_{j}\calD,p,q,K^{-1}) K^{1/q}.
\end{equation}
If $d=1$, then for every $2 \leq q \leq p < \infty$ and $K \geq 1$ we have
\begin{equation}
\label{eq:Dec-subvariety:d=1}
\avDec_{\mathrm{var}}(\calD,p,q,K^{-1})
\lesssim
1.
\end{equation}
\end{lemma}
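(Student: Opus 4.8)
The plan is to reduce the decoupling over cubes clustered near $Z_P$ to a decoupling over the fibers of a coordinate projection, exploiting the fact that a variety of controlled degree cannot be too ``thick'' in every coordinate direction. First I would treat the case $d\geq 2$. Fix a non-zero polynomial $P$ of degree $\leq D(d,k)$ and a collection $\calG\subset\Part{K^{-1}}$ of cubes meeting $Z_P$. The key geometric observation is that for each point $x'\in[0,1]^{d-1}$, the fiber $\Set{x_d \given P(x',x_d)=0}$ — provided $P(x',\cdot)$ is not identically zero — has at most $\deg P$ roots, so the number of cubes $\beta\in\calG$ whose projection (deleting some fixed coordinate $j$) equals a given $(d-1)$-cube is $\lesssim_{D(d,k)} 1$, up to the exceptional set where $P(x',\cdot)\equiv 0$. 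Since $P\neq 0$, there is at least one coordinate $j$ for which $P$ does not vanish identically on any line in the $j$-th direction after factoring out — more precisely, after pulling out the content, at least one coordinate direction is ``good'' in the sense that only $O_{D}(1)$ cubes lie over each lower-dimensional cube; a short argument using that $P\not\equiv 0$ and the Schwartz--Zippel-type bound on the exceptional set handles the remaining directions and cubes, which contribute a negligible (bounded-cardinality-per-fiber) amount.

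Next I would set up the decoupling inequality itself. Writing $\bfP_j$ for the projection deleting the $j$-th coordinate, the functions $f_\beta$ with $\supp\widehat{f_\beta}\subseteq\Uncert(\beta)$ can be grouped according to $\bfP_j\beta$: since $\calD$ and $\bfP_j\calD$ are related by forgetting the monomials involving $t_j$, the parallelepiped $\Uncert(\beta)$ projects (under the corresponding coordinate projection on the frequency side) into $\Uncert_{\bfP_j\calD}(\bfP_j\beta)$, so applying the decoupling inequality $\avDec(\bfP_j\calD,p,q,K^{-1})$ in the lower-dimensional variables $t_1,\dots,\widehat{t_j},\dots,t_d$ — after a Fubini/slicing argument in the remaining frequency variables — gives
\[
\norm[\big]{\sum_{\beta\in\calG}f_\beta}_{L^p}
\lesssim
\avDec(\bfP_j\calD,p,q,K^{-1})
\Bigl(\avsum_{\gamma\in\Part[{[0,1]^{d-1}}]{K^{-1}}}\one_{\gamma\in\bfP_j\calG}\norm[\big]{\sum_{\beta\in\calG,\,\bfP_j\beta=\gamma}f_\beta}_{L^p}^q\Bigr)^{1/q}.
\]
Because only $O_D(1)$ cubes $\beta$ lie over each $\gamma$, the inner sum over $\beta$ with $\bfP_j\beta=\gamma$ costs only a constant by the triangle inequality (or by $\ell^q\hookrightarrow\ell^1$ on a bounded set). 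Converting the average over $\gamma\in\Part[{[0,1]^{d-1}}]{K^{-1}}$ (which has $\sim K^{d-1}$ elements) back to an average over $\Part{K^{-1}}$ (which has $\sim K^d$ elements) produces the factor $K^{1/q}$, since $|\Part{K^{-1}}|/|\Part[{[0,1]^{d-1}}]{K^{-1}}|\sim K$. This yields \eqref{eq:Dec-subvariety}, after taking the maximum over the admissible $j$ (we only need one good $j$ to exist, which it does since $P\neq 0$).

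For $d=1$, the variety $Z_P$ is a finite set of at most $\deg P\leq D(1,k)$ points, so $\calG$ has cardinality $\lesssim 1$; then \eqref{eq:Dec-subvariety:d=1} follows immediately from the triangle inequality together with $\ell^q\hookrightarrow\ell^1$ on a set of bounded cardinality. The main obstacle I anticipate is the bookkeeping in the $d\geq 2$ case: one must carefully justify the slicing argument showing that lower-dimensional decoupling in the variables $\neq t_j$ applies after Fourier restriction to the slices, and one must handle the degenerate fibers (where $P(x',\cdot)\equiv 0$ for $x'$ in some positive-dimensional set) by a separate induction on $d$ or by passing to an irreducible component / choosing $j$ so that this does not occur — this requires knowing that $P\neq 0$ forces the existence of a coordinate direction in which the ``vertical'' fibers are generically finite, which is where a Schwartz--Zippel estimate on the bad set enters.
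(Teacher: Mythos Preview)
Your overall architecture --- slice in the variables indexed by $\calD\setminus\bfP_j\calD$, apply the $(d-1)$-dimensional decoupling constant to the projections $\bfP_j\beta$, then reconcile the two normalizations to produce the factor $K^{1/q}$ --- is exactly what the paper does, and your treatment of the case $d=1$ is correct.

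The gap is in the geometric step. Your assertion that ``we only need one good $j$ to exist'' is false: for a single coordinate direction $j$ it is \emph{not} in general true that every fiber $\{\beta\in\calG:\bfP_j\beta=\gamma\}$ has cardinality $O_D(1)$. Take $d=2$ and $P(x_1,x_2)=x_1x_2$. For $j=1$ the fiber over $\gamma=\{x_2=0\}$ contains $\sim K$ cubes, and symmetrically for $j=2$; no direction is uniformly good. Over such a bad fiber your triangle-inequality step costs a factor $\sim K^{1-1/q}$, which destroys the bound. Factoring out the $x_j$-content does not help directly, because the cubes in $\calG$ meet $Z_P$, not the zero set of the primitive part.

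The paper resolves this not by choosing a single $j$ but by \emph{splitting} $\calG=\calG_1\cup\dotsb\cup\calG_d$ so that each $\calG_j$ has bounded $j$-multiplicity (i.e.\ every line in the $j$-th direction meets $O_D(1)$ cubes of $\calG_j$). This is Lemma~\ref{lem:variety-multiplicity}, proved by Wongkew's induction on $d$: one first removes the $O_D(1)$ cubes that contain an entire connected component of $Z_P$ (bounded by Milnor's theorem), and then slices by the coordinate hyperplanes and applies the inductive hypothesis on each slice. After this splitting one may assume $\calG$ has $j$-multiplicity one, so the projections $\bfP_j\beta$ are pairwise disjoint and your slicing argument goes through verbatim. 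Your suggested fix --- inducting on $d$ by pushing the bad fibers to a lower-dimensional variety --- can be made to work and is essentially an alternative route to Lemma~\ref{lem:variety-multiplicity}, but you have not carried it out; as written, the proof is incomplete at precisely the point you flagged as the main obstacle.
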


Lemma~\ref{lem:dim-reduction-variety} is proved by splitting the collection $\calG$ in subcollections with boundedly overlapping projections onto coordinate hyperplanes.

For $1\le j\le d$ let the \emph{$j$-multiplicity} of a collection $\calG \subset \Part{K^{-1}}$ be the largest number $\calM_{j}(\calG)$ of cubes from $\calG$ that a line parallel to the $j$-th coordinate axis can pass through.
\begin{lemma}[{\cite[Lemma 5.4]{arxiv:1804.02488}}]
\label{lem:variety-multiplicity}
Let $P$ be a non-zero polynomial of $d$ variables and $K\geq 1$.
Let $\calG \subset \Part{K^{-1}}$ be a collection of cubes that intersect the zero set of $P$.
Then we can split $\calG = \cup_{j=1}^{d} \calG_{j}$ in such a way that $\calM_{j}(\calG_{j}) \leq C(d, \deg(P))$, where $C(d, \deg(P))$ is a constant that depends only on the dimension $d$ and the degree of $P$.
\end{lemma}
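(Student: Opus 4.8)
The plan is to reduce Lemma~\ref{lem:variety-multiplicity} to a colouring statement about individual cubes, which is then proved by induction on $d$ with an auxiliary induction on $\deg P$. For the recursion to be self-contained it is convenient to prove the stronger assertion in which the hypothesis ``$\beta$ meets $Z_{P}$'' is weakened to ``$c\beta$ meets $Z_{P}$'' for a fixed dilation factor $c\geq 1$, with the constant allowed to depend also on $c$: a cube lying within distance $\delta$ of a variety has a bounded dilate meeting it, so this form passes between dimensions.

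First I would record the reduction. For $1\leq j\leq d$ call two cubes of $\Part{K^{-1}}$ \emph{$j$-equivalent} if they agree in every coordinate block except possibly the $j$-th; up to a factor $2^{d-1}$ the bound $\calM_{j}(\calG')\leq C$ says exactly that every $j$-equivalence class meets $\calG'$ in at most $C$ cubes. Hence it suffices to colour each $\beta\in\calG$ by some $j(\beta)\in\{1,\dots,d\}$ so that for every $j$ each $j$-equivalence class contains at most $C_{0}(d,\deg P,c)$ cubes coloured $j$; one then takes $\calG_{j}:=\{\beta:j(\beta)=j\}$.

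To define the colouring I would argue by induction. The base cases $d=1$ (where any line meets $\leq (c+1)\deg P$ cubes, since $Z_{P}$ has at most $\deg P$ points) and $\deg P=0$ (where $Z_{P}=\emptyset$) are immediate. Let $d\geq 2$, $\deg P\geq 1$; fix $\beta\in\calG$ and $y\in c\beta\cap Z_{P}$. If $\beta$ is ``near the singular locus'', made precise by requiring that a fixed dilate of $\beta$ meet $Z_{\partial_{i_{0}}P}$ for some $i_{0}$ with $\partial_{i_{0}}P\not\equiv 0$, then $\beta$ is among the cubes meeting the degree-$(\deg P-1)$ hypersurface $Z_{\partial_{i_{0}}P}$, and we borrow its colour, and the class bound, from the inductive hypothesis at the same $d$ and smaller degree. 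Otherwise we may assume $\nabla P(y)\neq 0$, and we colour $\beta$ by an index $j=j(\beta)$ at which $\abs{\partial_{i}P(y)}$ is largest; thus $Z_{P}$ is, near $y$, most transverse to the $x_{j}$-direction. To bound the number of cubes coloured $j$ in the $j$-equivalence class of $\beta$, note that each such cube carries a point $z\in Z_{P}$ that projects into the common $(d-1)$-dimensional face, has $x_{j}$-coordinate in that cube's $j$-th block, and at which $\abs{\partial_{j}P}$ is maximal; near $z$ the set $Z_{P}$ is a graph $x_{j}=g(\cdot)$ of controlled slope, and — crucially — a cube sitting near a point of $Z_{P}$ at which $\partial_{j}P$ vanishes is automatically coloured by a different index or falls under the singular-locus case, so these points $z$ have $x_{j}$-coordinates filling only $O(1)$ many $\delta$-intervals. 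Assembling the $\calG_{j}$ from all cases, using that $\calM_{j}$ is subadditive under unions, and noting that every recursion strictly decreases $d$ or $\deg P$, yields a finite $C(d,\deg P,c)$, hence Lemma~\ref{lem:variety-multiplicity}.

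The main obstacle is precisely the class bound in the last (``generic'') case, with a constant independent of $\delta$ and of the coefficients of $P$. A branch of $Z_{P}$ that is flat at $y$ can become nearly parallel to the $x_{j}$-axis within distance $\delta$ of $y$, so the slope bound coming from the implicit function theorem is not uniform on a $\delta$-scale. Making the argument work requires showing that exactly the cubes where this occurs are the ones caught either by the singular-locus recursion or by further recursions — now in $d-1$ variables — on the zero sets of the leading coefficients, discriminants, and contents of $P$ with respect to each variable, all of which are nonzero polynomials of degree bounded in terms of $\deg P$. This bookkeeping, together with the $\delta$-uniform separation estimates it produces (obtainable by a compactness argument in the spirit of \cite{arxiv:1804.02488}, or from a quantitative cylindrical algebraic decomposition), is the technical heart of the proof; the remaining points, in particular that lifting a partition from $\R^{d-1}$ to $\R^{d}$ costs only a bounded factor in each $\calM_{j}$, are routine.
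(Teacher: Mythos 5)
Your proposal is incomplete as written, and you say so yourself: the uniform-in-$\delta$, uniform-in-coefficients bound on the number of cubes coloured $j$ inside a single $j$-equivalence class in the ``generic'' case is exactly the point you leave open, and the suggested fix (recurse on leading coefficients, discriminants and contents, then invoke a compactness argument or a quantitative cylindrical algebraic decomposition) is itself a nontrivial undertaking that is not carried out. The worry you raise is genuine: at a smooth point of $Z_P$ the gradient can be nonzero yet arbitrarily small as one approaches the singular set, so there is no clean dichotomy between ``near the singular locus'' and ``$|\partial_j P|$ comparable to $|\nabla P|$ uniformly across the cube'', and the implicit-function-theorem slope estimate degrades on the cubes that straddle this transition. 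Moreover, even on a single $j$-tube $Z_P$ can enter and exit repeatedly, and controlling the number of such $\delta$-intervals requires exactly the quantitative algebraic input you defer. As it stands, the argument does not close.

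The paper's proof (following Wongkew) sidesteps the differential analysis entirely and is considerably shorter. It inducts only on $d$, not on $\deg P$. First, the cubes $\beta$ with $\partial\beta\cap Z_P=\emptyset$ each swallow a whole connected component of $Z_P$, and by Milnor's bound on the number of connected components of a real algebraic variety there are at most $C(d,\deg P)$ of them; they can be placed in any $\calG_j$. Every remaining cube has $\beta\cap H\cap Z_P\neq\emptyset$ for some $K^{-1}$-spaced grid hyperplane $H\perp e_j$. After discarding the at most $\deg P$ hyperplanes per direction on which $P$ vanishes identically (those cubes go into $\calG_j$ with multiplicity at most $\deg P$), the restriction $P|_H$ is a nonzero polynomial in $d-1$ variables of degree at most $\deg P$, and one simply applies the inductive hypothesis in dimension $d-1$ to the slice $\calG_H$. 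Only the degree bound and the connected-component count enter; there is no gradient comparison, no singular-locus analysis, and hence no $\delta$- or coefficient-uniformity issue. If you want to salvage your colouring scheme you would essentially have to reprove the content of a cell decomposition theorem in it; the hyperplane-slicing route is the efficient one here.
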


\begin{proof}[Proof of Lemma~\ref{lem:dim-reduction-variety} assuming Lemma~\ref{lem:variety-multiplicity}]
In the case $d=1$ we have $\card{\calG} \lesssim 1$, and the estimate~\eqref{eq:Dec-subvariety:d=1} follows from Minkowski's inequality.

Suppose now that $d \geq 2$.
By applying Lemma~\ref{lem:variety-multiplicity} to the collection of cubes $\calG$, we obtain at most $d\cdot C(d, \deg(P))$ many disjoint collections of $K$-cubes, each of which is of $j$-multiplicity one for some $1\le j\le d$.

Hence we may assume that $\calG$ has $j$-multiplicity one for some fixed $j$.
Let $\calD' := \calD \setminus \bfP_{j}\calD$.
Let $f_{\beta}$ with $\supp\widehat{f_{\beta}} \subseteq \Uncert(\beta)$ be arbitrary.
Then, for almost every $x' \in \R^{\calD'}$, the Fourier support of $f_{\beta}(\cdot,x')$ is contained in $\Uncert_{\bfP_{j}\calD}(\pi_{(j)}(\beta))$, where $\pi_{(j)} : \R^{d} \to \R^{d-1}$ is the projection that removes the $j$-th coordinate.
Moreover, the projections $\pi_{(j)}(\beta)$ of the cubes $\beta\in\calG$ are pairwise disjoint, since $\calG$ has $j$-multiplicity one.
It follows that
\[
\norm[\big]{ \sum_{\beta\in \calG} f_{\beta}(\cdot,x') }_{L^p(\R^{\bfP_{j}\calD})}
\leq
\avDec(\bfP_{j}\calD,p,q,K^{-1}) K^{1/q}
\Bigl( \avsum_{\beta\in\Part{K^{-1}}} \one_{\beta\in \calG} \norm{f_{\beta}(\cdot,x')}_{L^p(\R^{\bfP_{j}\calD})}^{q} \Bigr)^{1/q}.
\]
The factor $K^{1/q}$ appears because the sum over $\beta$ is normalized differently in different dimensions.
Taking the $L^{p}(\R^{\calD'})$ norm, using Minkowski's inequality, and recalling that $f_{\theta}$ are arbitrary, we obtain \eqref{eq:Dec-subvariety}.
\end{proof}

\begin{proof}[Proof of Lemma~\ref{lem:variety-multiplicity}.]
We include the proof of {\cite[Lemma 5.4]{arxiv:1804.02488}} for completeness.
The proof is inspired by an argument due to Wongkew \cite{MR1211391}.

The proof is by induction on the dimension $d$.
In the case $d=1$ we have $\calM_{1}(\calG) = \abs{\calG} \leq 2\deg P$.

Suppose that $d>1$ and that the result is already known with $d$ replaced by $d-1$.
Let
\[
\calG' := \Set{\beta \in \calG \given \partial\beta \cap Z_{P} = \emptyset }.
\]
Then each $\beta \in \calG'$ contains a distinct connected component of $Z_{P}$.
It follows from \cite[Theorem 2]{MR0161339} that the number of such connected components is at most $C(d, \deg P)$.
Hence $\abs{\calG'} \leq C(d,\deg P)$, and we can put the elements of $\calG'$ in any $\calG_{j}$.

It remains to treat $\calG \setminus \calG'$.
Let $\calH_{j}$ be the collection of affine hyperplanes perpendicular to the $j$-th coordinate direction spaced by $K^{-1}$.
Then
\[
\calG \setminus \calG'
=
\bigcup_{j=1}^{d} \bigcup_{H \in \calH_{j}} \calG_{H}
\text{ with }
\calG_{H}
:=
\Set{\beta\in\calG\setminus\calG' \given \beta \cap H \cap Z_{P} \neq \emptyset }.
\]
For each $j$ let $\calH_{j}' \subset \calH_{j}$ be the subset of hyperplanes on which $P$ vanishes identically.
Then $\abs{\calH_{j}'} \leq \deg P$, and we put all elements of $\calG_{H}$ for $H \in \calH_{j}'$ in $\calG_{j}$.
For the remaining hyperplanes $H \in \calH_{j} \setminus \calH_{j}'$, by the inductive hypothesis we have a decomposition
\[
\calG_{H} = \bigcup_{\substack{1 \leq l \leq d\\ l \neq j}} \calG_{H,l}
\]
such that the number of cubes $\beta \cap H$ with $\beta \in \calG_{H,l}$ intersecting any given line in the $l$-th coordinate direction is $O(1)$.
We put all elements of $\calG_{H,l}$ for $j\neq l$ and $H \in \calH_{j}\setminus\calH_{j}'$ in $\calG_{l}$.
\end{proof}

\subsection{Localization}
\label{sec:cutoff}
For a ball $B = B(c,R) \subset \R^\calD$ and $E > 0$ we consider the weights
\begin{equation}
\label{eq:w_B}
w_{B,E}(x)
:=
\bigl( 1 + \frac{\abs{x-c}}{R} \bigr)^{-E}.
\end{equation}
We think of the weight $w_{B,E}$ as an approximation of the characteristic function $\one_{B}$.
Typically we fix an exponent $E>\rk \calD$ and omit it from the notation: $w_{B} := w_{B,E}$.
All implicit constants are allowed to depend on $E$.

A key property of the weights \eqref{eq:w_B} is the inequality
\begin{equation}
\label{eq:1-w}
\one_B
\lesssim
\sum_{B' \in \calB(B,R)} w_{B'}
\lesssim
w_B,
\end{equation}
which holds for all balls $B \subset \R^{n}$ and all $0<R$ that are smaller than the radius of $B$.
Here and later $\calB(B,R)$ denotes a boundedly overlapping covering of a set $B$ by balls of radius $R$.
The implicit constants in \eqref{eq:1-w} do not depend on $B$ and $R$.

The following result allows one to deduce inequalities for $L^{p}(w_{B})$ norms from inequalities for $L^{p}(\one_{B})$ norms.
It is necessitated by the fact that the reverse of the inequalities in \eqref{eq:1-w} do not hold.
\begin{lemma}[{\cite[Lemma 4.1]{MR3592159}}]
\label{lem:1-w}
Let $\calW$ be the collection of all weights, that is, positive, integrable functions on $\R^n$.
Fix $R>0$ and $E>n$.
Let $O_1,O_2 : \calW\to[0,\infty]$ be any functions with the following properties.
\begin{enumerate}
\item\label{it:O:1<w} $O_1(\one_B)\leq O_2(w_{B,E})$ for all balls $B\subset \R^n$ with radius $R$
\item\label{it:O:subadd} $O_1(\alpha u+\beta v)\le \alpha O_1(u)+\beta O_1(v)$, for each $u,v\in\calW$ and $\alpha,\beta > 0$
\item\label{it:O:supadd} $O_2(\alpha u+\beta v)\ge \alpha O_2(u)+\beta O_2(v)$, for each $u,v\in\calW$ and $\alpha,\beta > 0$
\item\label{it:O:monotone} If $u\le v$, then $O_i(u)\le O_i(v)$.
\item\label{it:O:continuous} If $(u_{j})_{j=1}^{\infty} \subset \calW$ is a monotonically increasing sequence with $u_{j} \to u \in \calW$ as $j\to \infty$ pointwise almost everywhere, then $O_1(u) = \lim_{j\to \infty} O_1(u_{j})$.
\end{enumerate}
Then for each ball $B \subset \R^{n}$ with radius $R$ we have
\[
O_1(w_{B,E})
\lesssim_{n,E}
O_2(w_{B,E})
\]
The implicit constant depends only on $n$ and $E$.
\end{lemma}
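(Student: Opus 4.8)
The plan is to estimate $O_1(w_{B,E})$ by decomposing $w_{B,E}$ into a superposition of indicators of balls of radius $R$, applying the subadditivity of $O_1$, replacing each $O_1(\one_{B'})$ by $O_2(w_{B',E})$ via hypothesis \eqref{it:O:1<w}, and then recombining using the superadditivity and monotonicity of $O_2$. The one subtlety is that superadditivity alone does not permit pulling a constant out of $O_2$, so the first thing I would record is that properties \eqref{it:O:subadd} and \eqref{it:O:supadd} in fact force $O_1$ and $O_2$ to be positively $1$-homogeneous: taking $u=v$ in \eqref{it:O:subadd} gives $O_1(\mu u)\le\mu O_1(u)$ for every $\mu>0$, and applying this with $u$ replaced by $\lambda u$ and $\mu=1/\lambda$ gives the reverse inequality, so $O_1(\lambda u)=\lambda O_1(u)$; the same argument with \eqref{it:O:supadd} gives $O_2(\lambda u)=\lambda O_2(u)$. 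Thus constants may be moved freely through $O_1$ and $O_2$.

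Next I would fix a ball $B=B(c,R)$ and choose a countable boundedly overlapping cover $(B_i)_i$ of $\R^n$ by balls $B_i=B(c_i,R)$ of radius $R$ (for instance with centres on a translate of $R\Z^n$), and set $\lambda_i:=w_{B,E}(c_i)$. Two elementary pointwise estimates, both in the spirit of \eqref{eq:1-w}, are needed:
\[
w_{B,E}\le A_1\sum_i\lambda_i\one_{B_i},
\qquad
\sum_i\lambda_i w_{B_i,E}\le A_2\, w_{B,E},
\]
with $A_1=A_1(E)$ and $A_2=A_2(n,E)$. The first holds because every $x$ lies in some $B_i$, for which $\abs{c_i-c}\le\abs{x-c}+R$ and hence $\lambda_i\ge 2^{-E}w_{B,E}(x)$. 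The second is the discrete estimate $\sum_{c_i}(1+\abs{c_i-c}/R)^{-E}(1+\abs{x-c_i}/R)^{-E}\lesssim_{n,E}(1+\abs{x-c}/R)^{-E}$, which follows by comparing the sum to $R^{-n}\int_{\R^n}(1+\abs{y-c}/R)^{-E}(1+\abs{x-y}/R)^{-E}\dif y$ and splitting that integral at $\abs{y-c}=\abs{x-c}/2$, integrability being where $E>n$ enters. Both functions above are integrable, hence lie in the domain of $O_1,O_2$; note also $\sum_i\lambda_i<\infty$.

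The proof is then a chain of inequalities. Using monotonicity \eqref{it:O:monotone}, homogeneity, and countable subadditivity of $O_1$ (the passage to the infinite sum being justified by applying \eqref{it:O:continuous} to the partial sums), followed by hypothesis \eqref{it:O:1<w},
\[
O_1(w_{B,E})\ \le\ O_1\Bigl(A_1\sum_i\lambda_i\one_{B_i}\Bigr)\ \le\ A_1\sum_i\lambda_i O_1(\one_{B_i})\ \le\ A_1\sum_i\lambda_i O_2(w_{B_i,E}).
\]
Moving each $\lambda_i$ inside $O_2$ by homogeneity, then using countable superadditivity of $O_2$ (finite superadditivity from \eqref{it:O:supadd} together with \eqref{it:O:monotone} to reach the countable sum), the second pointwise bound, \eqref{it:O:monotone}, and homogeneity once more,
\[
A_1\sum_i\lambda_i O_2(w_{B_i,E})\ =\ A_1\sum_i O_2(\lambda_i w_{B_i,E})\ \le\ A_1\, O_2\Bigl(\sum_i\lambda_i w_{B_i,E}\Bigr)\ \le\ A_1\, O_2(A_2 w_{B,E})\ =\ A_1 A_2\, O_2(w_{B,E}).
\]
Since $A_1A_2$ depends only on $n$ and $E$, this is the assertion.

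The main obstacle is conceptual rather than computational: it is realising that the superadditivity hypothesis \eqref{it:O:supadd} already forces $O_2$ to be homogeneous, which is exactly what allows the constants produced by the two pointwise bounds to be absorbed on the $O_2$ side — without homogeneity the natural chain ends at $O_2$ evaluated at a constant multiple of $w_{B,E}$, for which superadditivity alone provides no upper bound in terms of $O_2(w_{B,E})$. The only other point requiring care is the decay estimate $\sum_i\lambda_i w_{B_i,E}\lesssim w_{B,E}$: one should weight each small ball $B_i$ by the value $w_{B,E}(c_i)$ of $w_{B,E}$ at its centre rather than grouping the $B_i$ into dyadic annuli around $c$, since the latter organization introduces a spurious logarithmic factor.
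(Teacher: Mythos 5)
Your proof is correct and follows essentially the same decomposition and chain of estimates as the paper's argument: the same two pointwise bounds (decompose $w_{B,E}$ into a sum of indicators $\lambda_i\one_{B_i}$ and reassemble $\sum_i\lambda_i w_{B_i,E}\lesssim w_{B,E}$), the same use of \eqref{it:O:continuous}/\eqref{it:O:subadd}/\eqref{it:O:1<w}/\eqref{it:O:supadd}/\eqref{it:O:monotone} in the same order. The one small cosmetic improvement in your write-up is making explicit that \eqref{it:O:subadd} and \eqref{it:O:supadd} each force positive $1$-homogeneity, which the paper leaves implicit by moving constants through the sub/superadditivity inequalities directly (e.g.\ bounding $C\sum\lambda_i O_2(w_{B_i,E})$ by $C^2 O_2(C^{-1}\sum\lambda_i w_{B_i,E})$); your formulation is perhaps slightly cleaner but the substance is identical.
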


\begin{proof}
Let $\calB := \calB(\R^n,R)$.
Note that
\[
w_B(x)
\leq
C \sum_{B'\in\calB} w_B(c_{B'}) \one_{B'}(x)
\]
and that
\[
\sum_{B'\in\calB}w_B(c_{B'}) w_{B'}(x)
\leq
C w_B(x)
\]
for a sufficiently large constant $C=C(n,E)>0$.
Hence,
\begin{align*}
O_{1}(w_{B})
&\leq
\sup_{\calB' \subset \calB \ \textrm{finite}} O_{1}\bigl( C \sum_{B'\in\calB'} w_B(c_{B'}) \one_{B'} \bigr)
&\text{by \eqref{it:O:continuous}}\\
&\leq
\sup_{\calB' \subset \calB \ \textrm{finite}} C \sum_{B'\in\calB'} w_B(c_{B'}) O_{1}( \one_{B'} )
&\text{by \eqref{it:O:subadd}}\\
&\leq
C \sup_{\calB' \subset \calB \ \textrm{finite}} \sum_{B'\in\calB} w_B(c_{B'}) O_{2}(w_{B'})
&\text{by \eqref{it:O:1<w}}\\
&\leq
C^{2} \sup_{\calB' \subset \calB \ \textrm{finite}} O_{2}\bigl( C^{-1} \sum_{B'\in\calB} w_B(c_{B'}) w_{B'}  \bigr)
&\text{by \eqref{it:O:supadd}}\\
&\leq
C^{2} O_{2}(w_{B}).
&\text{by \eqref{it:O:monotone}}
&\qedhere
\end{align*}
\end{proof}

\begin{remark}
\label{rem:1-w}
Lemma~\ref{lem:1-w} will be usually applied with functionals of the form
\begin{align}
O_1(v) &:= \norm{f}_{L^p(v)}^p \label{eq:O_1:practical}\\
O_2(v) &:= A (\sum_{i}\norm{f_i}_{L^p(v)}^{q})^{\frac{p}{q}}, \label{eq:O_2:practical}
\end{align}
where $1 \leq q \leq p$.
See for instance the proof of Corollary~\ref{cor:scaled+loc-decoupling} and the proof of Theorem~\ref{thm:multilinear-to-linear} (but not the proof Corollary~\ref{cor:rev-holder}). 
It is clear that conditions \eqref{it:O:subadd} and \eqref{it:O:monotone} hold for these choices.
Condition \eqref{it:O:supadd} follows from the reverse triangle inequality in $\ell_{\frac{q}{p}}$.
Condition \eqref{it:O:continuous} is very mild and follows from the monotone convergence theorem.
The main hypothesis is the condition \eqref{it:O:1<w}.
\end{remark}

We close this section with the following reverse H\"older inequality.
\begin{corollary}[{cf.\ \cite[Corollary 4.1]{MR3592159}}]
\label{cor:rev-holder}
For each $1 \leq t \leq p < \infty$, each $E>n$, each $R>0$ and $\delta>0$ with $R\delta \geq 1$, each function $f : \R^{n} \to \C$ with $\diam(\supp \hat{f}) \lesssim \delta$, and each ball $B \subset \R^n$ with radius $R$, we have
\begin{equation}
\label{eq:rev-holder}
\norm{f}_{\avL^p(w_{B,E})}
\lesssim
(R\delta)^{n/t-n/p}
\norm{f}_{\avL^t (w_{B,\frac{E t}{p}})},
\end{equation}
with the implicit constant independent of $R$, $\delta$, $B$, and $f$.
\end{corollary}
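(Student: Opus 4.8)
The plan is to reduce the claimed averaged-norm inequality on the weight $w_{B,E}$ to the single-ball version via Lemma~\ref{lem:1-w}, and to prove the single-ball version by interpolation between $t$ and $\infty$ using the Bernstein-type estimate afforded by the narrow Fourier support of $f$. First I would fix the functional $O_1(v) := \norm{f}_{\avL^p(v)}^p$ and $O_2(v) := C (R\delta)^{p(n/t-n/p)} \norm{f}_{\avL^t(v)}^p$ (with $C$ the implicit constant to be determined) and check that the hypotheses of Lemma~\ref{lem:1-w} apply, as explained in Remark~\ref{rem:1-w}: subadditivity of $O_1$, superadditivity of $O_2$, and monotonicity are immediate for these (here $O_2$ is just a positive multiple of a single $\avL^t$ norm raised to a power, so superadditivity in the relevant sense holds), and continuity along increasing sequences is the monotone convergence theorem. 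Note one must be slightly careful about which exponent $E$ appears in the weight inside $O_2$: the statement wants $w_{B, Et/p}$, which matches because Lemma~\ref{lem:1-w} compares $O_1(w_{B,E})$ with $O_2(w_{B,E})$, and inside $O_2$ we are taking an $\avL^t$ norm, so we should set things up with the comparison weight being $w_{B,E}$ whose "effective order" after the $L^t$ computation is $Et/p$; concretely I would apply Lemma~\ref{lem:1-w} with exponent $E' := Et/p$ in the role of its "$E$", observing $E' > n$ is guaranteed once $E > np/t \geq n$, or more simply run the lemma with $E$ and absorb the discrepancy using $w_{B,E} \leq w_{B,Et/p}$ together with monotonicity.

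The core is then hypothesis~\eqref{it:O:1<w} of Lemma~\ref{lem:1-w}: for a ball $B$ of radius $R$,
\[
\norm{f}_{\avL^p(\one_B)}
\lesssim
(R\delta)^{n/t - n/p}
\norm{f}_{\avL^t(w_{B,E'})}.
\]
To prove this I would use the standard localization trick. Since $\diam(\supp\hat f) \lesssim \delta$ and $R\delta \geq 1$, on the ball $B$ the function $f$ is, up to rapidly decaying tails, "constant at scale $\delta^{-1}$": writing $f = f * \check{\eta}$ for a Schwartz $\eta$ equal to $1$ on a neighborhood of $\supp\hat f$ and supported in a ball of radius $O(\delta)$, we get the pointwise bound
\[
\abs{f(x)}
\lesssim
\Bigl( \avsum_{B_j \in \calB(B, \delta^{-1})} \norm{f}_{L^\infty(B_j)}^t \one_{B_j}(x) \Bigr)^{1/t}
\quad \text{(morally)},
\]
and more usefully the reverse Hölder / Bernstein inequality on each ball $B_j$ of radius $\delta^{-1}$: $\norm{f}_{L^\infty(B_j)} \lesssim \delta^{n} \int \abs{f} w_{B_j, E'}$, hence by Hölder $\norm{f}_{L^\infty(B_j)}^t \lesssim \delta^{n} \int \abs{f}^t w_{B_j,E'}$ after renormalizing. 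Summing over the $\sim (R\delta)^n$ balls $B_j$ covering $B$ and using $\sum_j w_{B_j,E'} \lesssim w_{B,E'}$ from \eqref{eq:1-w} gives
\[
\int_B \abs{f}^p
\lesssim
\sum_j \norm{f}_{L^\infty(B_j)}^{p-t} \int_{B_j} \abs{f}^t
\lesssim
\bigl( \delta^{n} \norm{f}_{L^1(w_{B,E'})}^{\text{loc}} \bigr)^{(p-t)/t} \cdots,
\]
and after carefully tracking the normalization constants $\meas{B} \sim R^n$ and $\delta^{-n}$ one lands on exactly the exponent $(R\delta)^{n/t-n/p}$ of the averaged norms. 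Alternatively, and perhaps more cleanly, I would invoke the already-available single-ball reverse Hölder inequality at scale $\delta^{-1}$ on each $B_j$ and then a Minkowski/Hölder step to pass from the $B_j$'s to $B$; this is the route in \cite[Corollary 4.1]{MR3592159}.

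The main obstacle is purely bookkeeping: getting the power of $R\delta$ exactly right while passing between $L^\infty$ on small balls, $L^t$ on small balls, and then the global averaged $L^p$ and $L^t$ norms, since each of the three norms carries its own normalizing volume factor ($R^{-n}$ for the $B$-norms, $\delta^n$ for the $B_j$-norms, and the count $(R\delta)^n$ of small balls), and these must combine to give precisely $n/t - n/p$. There is no conceptual difficulty — the Fourier support hypothesis does all the work through Bernstein's inequality — but one has to be disciplined about the $w_{B,E}$ versus $w_{B,Et/p}$ distinction and about the fact that $\avL$-norms are normalized. Once \eqref{it:O:1<w} is verified, Lemma~\ref{lem:1-w} upgrades it from $\one_B$ to $w_{B,E}$ and the corollary follows.
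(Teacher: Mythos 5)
Your overall strategy — establish a single-ball estimate $O_1(\one_B)\leq O_2(w_{B,E})$ and then upgrade $\one_B$ to $w_{B,E}$ via Lemma~\ref{lem:1-w} — is the same as the paper's, though the paper obtains the single-ball estimate by one application of Young's convolution inequality at scale $\delta^{-1}$ directly on the big ball $B$, rather than by a Bernstein-on-small-balls covering argument; both routes give the same exponent.

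There is, however, a genuine gap in your choice of $O_2$. You take $O_2(v):=C(R\delta)^{p(n/t-n/p)}\norm{f}_{\avL^t(v)}^p$, which up to constants equals $\bigl(\int\abs{f}^t v\bigr)^{p/t}$, i.e.\ the $p/t$-th power of a \emph{linear} functional of $v$. For $p/t>1$ this is convex, hence \emph{sub}additive rather than superadditive: taking $\alpha=\beta=\tfrac12$, a weight $u$ with $\int\abs{f}^t u=1$, and $v=0$ gives $O_2(\tfrac12 u)=(\tfrac12)^{p/t}O_2(u)<\tfrac12 O_2(u)$. So hypothesis~\eqref{it:O:supadd} of Lemma~\ref{lem:1-w} fails and the lemma does not apply to your functionals. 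Your remark that the weight $w_{B,Et/p}$ ``matches'' because of an ``effective order after the $L^t$ computation'' is a symptom of the same confusion: plugging $v=w_{B,E}$ into your $O_2$ literally produces $\norm{f}_{\avL^t(w_{B,E})}$, not $\norm{f}_{\avL^t(w_{B,Et/p})}$ — the exponent does not change on its own. Both problems are resolved at once by the paper's choice
\[
O_2(v) := A\,(R\delta)^{p(n/t-n/p)}\, R^{-np/t}\,\Bigl(\int\abs{f}^t v^{t/p}\Bigr)^{p/t},
\]
which puts the power $t/p$ on $v$ \emph{inside} the integral. Then $O_2(v)$ is a fixed multiple of $\norm{\abs{f}^p v}_{L^{t/p}}$, the $L^{t/p}$ quasi-norm of a linear function of $v$; since $t/p\leq 1$, the reverse Minkowski inequality gives superadditivity. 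And plugging in $v=w_{B,E}$ now produces exactly $w_{B,E}^{t/p}=w_{B,Et/p}$, the weight in the statement. Hypothesis~\eqref{it:O:1<w} is then supplied by the estimate $\eta_B\lesssim w_{B,E}^{1/p}$ (hence $\eta_B^t\lesssim w_{B,Et/p}$) after the Young step; your Bernstein-on-balls route can also deliver it, provided you track the weight exponent through each small ball so that the aggregated weight is $w_{B,Et/p}$ rather than $w_{B,E}$.
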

\begin{notation}
Here and later we denote normalized $L^{p}$ norms by
\begin{equation}
\label{eq:avL}
\norm{f}_{\avL^{p}(B)} := \abs{B}^{-1/p} \norm{f}_{L^{p}(B)},
\quad
\norm{f}_{\avL^{p}(w_{B})} := \abs{B}^{-1/p} \norm{f}_{L^{p}(w_{B})}.
\end{equation}
\end{notation}
\begin{proof}[Proof of Corollary \ref{cor:rev-holder}.]
By translation and modulation we may assume that $B$ is centered in $0$ and $\supp \hat{f} \subset B(0,C\delta)$.

Let $\eta$ be a positive Schwartz function on $\R^n$ with $\one_{B(0,1)}\le \eta$ and such that $\supp (\widehat{\eta}) \subset B(0,1)$.
We can thus write
\[
\norm{f}_{L^p(B)}
\le
\norm{\eta_B f}_{L^p(\R^n)},
\]
where $\eta_B(\cdot):=\eta(\cdot/R)$.
Let $\theta$ be a Schwartz function on $\R^{n}$ such that $\hat{\theta}(\xi) = 1$ for $\abs{\xi} \leq 2C$.
Since
\[
\supp \widehat{\eta_{B} f}
\subseteq
\supp \widehat{\eta_{B}} + \supp \hat{f}
\subseteq
B(0,1/R) + B(0,C\delta)
\subseteq
B(0,2C\delta),
\]
we have that
\[
\eta_B f
=
(\eta_B f)* \theta_Q,
\]
where $\theta_Q(x):= \delta^{n}\theta(\delta x)$.
By Young's convolution inequality with exponents
\[
\frac{1}{p}=\frac{1}{t}+\frac{1}{r}-1=\frac{1}{t}-\frac1{r'},
\]
we can write
\[
\norm{\eta_B f}_{L^p(\R^n)}
\le
\norm{\eta_B f}_{L^t(\R^n)} \norm{\theta_Q}_{L^r(\R^n)}
\lesssim
\delta^{n/r'} \norm{f}_{L^t(\eta_B^{t})}.
\]
Rearranging this inequality and estimating $\eta_{B} \lesssim w_{B,E}^{1/p}$, we obtain
\[
\abs{B}^{-1/p} \norm{f}_{L^p(B)}
\lesssim_{n,E}
(R\delta)^{n/r'} \abs{B}^{-1/t} \norm{f}_{L^t(w_{B,E}^{t/p})}
\]
for any $E>0$.
Now we can apply Lemma~\ref{lem:1-w} with
\begin{align*}
O_{1}(v) &:= R^{-n} \int_{\R^{n}} \abs{f}^{p} v,\\
O_{2}(v) &:= A (R\delta)^{p(n/t-n/p)} R^{-n p/t} \Bigl( \int_{\R^{n}} \abs{f}^{t} v^{t/p} \Bigr)^{p/t},
\end{align*}
for some large constant $A$. 
\end{proof}

\subsection{Affine scaling}
\label{sec:scaling}
Let $\alpha \in \Part{\sigma}$ with $\sigma \in 2^{-\N}$.
Denote by $c_{\alpha}$ the lowest corner of $\alpha$ (with respect to coordinatewise ordering).
For a function $f$ on $\R^{\calD}$ let
\begin{equation}
\label{eq:scaling+modulation}
M_{\alpha}f(x) := e\bigl( \sum_{\bfi \in\calD} (c_{\alpha})^{\bfi} x_{\bfi} \bigr) f (L_{\alpha} x),
\end{equation}
where
\begin{equation}
\label{eq:affine-scaling:space}
\begin{split}
L_{\alpha} &= L_{\alpha,\mathrm{scale}}L_{\alpha,\mathrm{shear}},
\\
(L_{\alpha,\mathrm{scale}}(x))_{\bfj} &=
\sigma^{\abs{\bfj}} x_{\bfj},
\\
(L_{\alpha,\mathrm{shear}}(x))_{\bfj} &=
\sum_{\bfi \in\calD} \binom{\bfi}{\bfj} (c_{\alpha})^{\bfi-\bfj} x_{\bfi}.
\end{split}
\end{equation}
Then
\[
\widehat{M_{\alpha}f}(\xi) = \sigma^{-\calK(\calD)} \widehat{f}( \hat{M}_{\alpha}(\xi) ),
\]
where
\[
\hat{M}_{\alpha}(\xi) = L_{\alpha}^{-*}(\xi - ((c_{\alpha})^{\bfi})_{\bfi}).
\]
Let $\Uncert_{\calD}(\alpha) := \hat{M}_{\alpha}([-2,2]^{\calD})$; this is essentially the smallest parallelepiped that contains the moment surface over the cube $\alpha$.
We omit the subscript $\calD$ from $\Uncert_{\calD}$ unless several $\calD$'s are involved.

\begin{definition}
We will denote by $f_{\theta}$ functions such that $\supp \widehat{f_{\theta}} \subseteq \Uncert_{\calD}(\theta)$.
Given $0 < \delta \leq 1$ and a collection of functions $f_{\theta}$ with $\theta \in \Part{\delta}$, we write
\[
f_{\alpha} := \sum_{\theta \in \Part[\alpha]{\delta}} f_{\theta}
\]
for dyadic cubes $\alpha$ with side length $\geq \delta$.
\end{definition}

\begin{lemma}
\label{lem:scaled-decoupling}
Let $1 \leq q \leq p < \infty$, $\epsilon>0$, $0<\delta\leq \sigma \leq 1$ (with $\sigma \in 2^{-\N}$), and $\alpha \in \Part{\sigma}$.
Then
\begin{equation}
\label{eq:scaled-decoupling}
\norm{\sum_{\theta\in \Part[\alpha]{\delta}} f_{\theta}}_{L^p(\R^{\calD})}
\leq
\avDec(\calD_{k}, p, q, \delta/\sigma)
\Bigl( \avsum_{\theta\in \Part[\alpha]{\delta}} \norm{f_{\theta}}_{L^p(\R^{\calD})}^q \Bigr)^{1/q}.
\end{equation}
\end{lemma}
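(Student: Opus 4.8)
The plan is to deduce \eqref{eq:scaled-decoupling} from the definition \eqref{eq:dec-const-def} of the decoupling constant at the smaller scale $\delta' := \delta/\sigma$ (note $0 < \delta' \leq 1$) by means of the affine rescaling operator $M_{\alpha}$ of Section~\ref{sec:scaling}. I would first record the relevant properties of $M_{\alpha}$, all of which are read off from the formulas \eqref{eq:scaling+modulation}--\eqref{eq:affine-scaling:space} and encode the affine self-similarity of the graph of $\Phi$ over dyadic subcubes of $[0,1]^{d}$. Since $\sigma \in 2^{-\N}$, the renormalization $t \mapsto \sigma^{-1}(t - c_{\alpha})$ maps $\alpha$ onto $[0,1]^{d}$ and carries $\Part[\alpha]{\delta}$ bijectively onto $\Part{\delta'}$; write $\theta \mapsto \theta'$ for this bijection, so that $\abs{\Part[\alpha]{\delta}} = \abs{\Part{\delta'}}$. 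Dually, for every $\theta \in \Part[\alpha]{\delta}$ the one operator $M_{\alpha}^{-1}$ maps the class of functions $f$ with $\supp\widehat{f} \subseteq \Uncert_{\calD}(\theta)$ bijectively onto the class of functions $g$ with $\supp\widehat{g} \subseteq \Uncert_{\calD}(\theta')$. Finally, since $M_{\alpha}$ is a modulation composed with the linear map $L_{\alpha}$ and $\abs{\det L_{\alpha}} = \sigma^{\calK(\calD)}$ (the shear component $L_{\alpha,\mathrm{shear}}$ being unipotent), $M_{\alpha}$ rescales every $L^{p}(\R^{\calD})$ norm by the factor $\sigma^{-\calK(\calD)/p}$ and $M_{\alpha}^{-1}$ by $\sigma^{\calK(\calD)/p}$.

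Granting these, the argument is a change of variables. Given functions $f_{\theta}$ with $\supp\widehat{f_{\theta}} \subseteq \Uncert_{\calD}(\theta)$, set $g_{\theta'} := M_{\alpha}^{-1} f_{\theta}$; then $\supp\widehat{g_{\theta'}} \subseteq \Uncert_{\calD}(\theta')$ and $\sum_{\theta \in \Part[\alpha]{\delta}} f_{\theta} = M_{\alpha}\bigl( \sum_{\theta' \in \Part{\delta'}} g_{\theta'} \bigr)$. Taking $L^{p}(\R^{\calD})$ norms, applying the definition of $\avDec(\calD_{k},p,q,\delta')$ from \eqref{eq:dec-const-def} to the admissible family $(g_{\theta'})_{\theta' \in \Part{\delta'}}$ (recall $\calD_{k} = \calD$), and rewriting the norms $\norm{g_{\theta'}}_{L^{p}(\R^{\calD})}$ in terms of $\norm{f_{\theta}}_{L^{p}(\R^{\calD})}$, one arrives at \eqref{eq:scaled-decoupling}: the factors $\sigma^{\pm\calK(\calD)/p}$ from the two uses of $M_{\alpha}^{\pm1}$ cancel, and the two normalized averages $\avsum$ coincide because $\abs{\Part[\alpha]{\delta}} = \abs{\Part{\delta'}}$.

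The only step that is not routine bookkeeping is the dual bijection, that is, the claim that $M_{\alpha}^{-1}$ carries the parallelepiped $\Uncert_{\calD}(\theta)$ onto $\Uncert_{\calD}(\theta')$ for every $\theta \in \Part[\alpha]{\delta}$. Concretely this is the cocycle identity $M_{\theta} = M_{\alpha} \circ M_{\theta'}$ among the operators of Section~\ref{sec:scaling}, which unpacks into the relations $L_{\theta} = L_{\theta'} L_{\alpha}$ for the linear parts and $\Phi(c_{\theta}) = L_{\alpha}^{*}\Phi(c_{\theta'}) + \Phi(c_{\alpha})$ for the translations; the latter is a special case of the affine invariance $\Phi(c_{\alpha} + \sigma t) = L_{\alpha}^{*}\Phi(t) + \Phi(c_{\alpha})$ of the moment manifold (valid since $\calD$ is a down-set). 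Verifying these is a short but slightly delicate computation with the definitions \eqref{eq:scaling+modulation}--\eqref{eq:affine-scaling:space}; it is the one place where the geometry of the graph of $\Phi$ enters, everything else being the standard rescaling mechanism for decoupling constants.
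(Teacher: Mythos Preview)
Your proposal is correct and follows exactly the same approach as the paper's proof: write $f_{\theta} = M_{\alpha} g_{\theta'}$ and use that both sides of \eqref{eq:scaled-decoupling} scale identically under $M_{\alpha}$. The paper states this in two sentences without further detail; your identification of the cocycle identity $M_{\theta} = M_{\alpha} \circ M_{\theta'}$ (equivalently, the affine invariance $\Phi(c_{\alpha} + \sigma t) = L_{\alpha}^{*}\Phi(t) + \Phi(c_{\alpha})$, which indeed uses that $\calD \cup \{0\}$ is a down-set) as the one substantive verification is exactly the content the paper is taking for granted.
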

\begin{proof}
We can write $f_{\theta} = M_{\alpha} g_{\theta'}$ with $\theta'\in\Part{\delta/\sigma}$ and $\supp \widehat{g_{\theta'}} \subseteq \Uncert(\theta')$, where $M_{\alpha}$ is defined by \eqref{eq:scaling+modulation}.
Both sides of the inequality \eqref{eq:scaled-decoupling} scale in the same way under $M_{\alpha}$, and the conclusion follows.
\end{proof}

\begin{corollary}
\label{cor:scaled+loc-decoupling}
Let $1 \leq l \leq k$, $1 \leq q \leq p < \infty$, $\epsilon>0$, $0<\delta\leq \sigma \leq 1$ (with $\sigma \in 2^{-\N}$), and $\alpha \in \Part{\sigma}$.
Then, for every ball $B \subset \R^{\calD}$ of radius $\delta^{-l}$, we have
\begin{equation}
\label{eq:scaled+loc-decoupling}
\norm{\sum_{\theta\in \Part[\alpha]{\delta}} f_{\theta}}_{L^p(w_{B})}
\lesssim
\avDec(\calD_{l}, p, q, \delta/\sigma)
\Bigl( \avsum_{\theta\in \Part[\alpha]{\delta}} \norm{f_{\theta}}_{L^p(w_{B})}^q \Bigr)^{1/q}.
\end{equation}
\end{corollary}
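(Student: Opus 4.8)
The plan is to deduce Corollary~\ref{cor:scaled+loc-decoupling} from the global scaled decoupling inequality in Lemma~\ref{lem:scaled-decoupling} by passing from the characteristic function of a ball to the weight $w_{B}$ using the machinery of Lemma~\ref{lem:1-w}. The key observation is that a ball $B$ of radius $\delta^{-l}$ is the right scale at which the pieces $f_{\theta}$ (whose Fourier support lies in $\Uncert(\theta)$, a parallelepiped whose shortest dimensions have length comparable to $\delta^{l}$ after the affine rescaling to unit scale) look like functions with Fourier support of diameter $\lesssim \delta^{l}$; thus on a ball of radius $\delta^{-l}$ each $f_{\theta}$ behaves like a constant-modulus function, and one can freely localize.

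First I would reduce to the case $l=k$ by observing that $\calD_{l} = \calD \cap \sublevel_{l}$ and $\avDec(\calD_{l}, p, q, \delta/\sigma) \geq \avDec(\calD_{k}, p, q, \delta/\sigma)$ up to the role of $l$ in the ball radius — actually the point is more subtle: the decoupling constant we get should be $\avDec(\calD_{l}, \dotsc)$, not $\avDec(\calD_{k}, \dotsc)$, because on a ball of radius only $\delta^{-l}$ (rather than $\delta^{-k}$), the higher-order terms $\bfi \in \calD$ with $\abs{\bfi} > l$ are not resolved, so the surface effectively looks like the graph of $\Phi$ restricted to the exponents in $\calD_{l}$. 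So the correct strategy is: after applying $M_{\alpha}$ as in Lemma~\ref{lem:scaled-decoupling} to rescale $\theta \in \Part[\alpha]{\delta}$ to $\theta' \in \Part{\delta/\sigma}$, the ball $B$ of radius $\delta^{-l}$ becomes (under the dual linear map $\hat M_{\alpha}$) a parallelepiped that we may cover by balls at the appropriate anisotropic scales; the directions corresponding to $\abs{\bfi} > l$ are so short that $f_{\theta}$ is essentially constant there, and one is left with a genuine decoupling problem for $\calD_{l}$ at scale $\delta/\sigma$ on a ball of radius $(\delta/\sigma)^{-l}$ in $\R^{\calD_{l}}$.

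The cleaner way to organize this, and the one I would actually write, is to apply Lemma~\ref{lem:1-w} directly on $\R^{\calD}$ with $n = \rk\calD$, $R = \delta^{-l}$, and the functionals
\[
O_1(v) := \norm[\big]{\textstyle\sum_{\theta\in\Part[\alpha]{\delta}} f_{\theta}}_{L^p(v)}^{p},
\qquad
O_2(v) := C\, \avDec(\calD_{l},p,q,\delta/\sigma)^{p}\Bigl(\avsum_{\theta\in\Part[\alpha]{\delta}} \norm{f_{\theta}}_{L^p(v)}^{q}\Bigr)^{p/q},
\]
as in Remark~\ref{rem:1-w}. Conditions \eqref{it:O:subadd}, \eqref{it:O:supadd}, \eqref{it:O:monotone}, \eqref{it:O:continuous} are routine (subadditivity of $L^p$, reverse triangle inequality in $\ell^{q/p}$, monotone convergence), exactly as noted in that remark. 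The substantive point is condition \eqref{it:O:1<w}: for a ball $B$ of radius $\delta^{-l}$ one must show
\[
\norm[\big]{\textstyle\sum_{\theta} f_{\theta}}_{L^p(\one_B)} \lesssim \avDec(\calD_{l},p,q,\delta/\sigma)\Bigl(\avsum_{\theta}\norm{f_{\theta}}_{L^p(w_{B,E})}^q\Bigr)^{1/q},
\]
and this is where the honest work lies: one covers $\R^{\calD}$ by translates of $B$, uses that $\sum_{\theta} f_{\theta}$ restricted to $B$ can be compared — via a Fourier-support argument and the fact that $\Uncert(\theta)$ projected onto the "low-order" coordinates $\calD_l$ has the shape of $\Uncert_{\calD_l}(\theta)$ while in the remaining coordinates it is contained in a ball of radius $\lesssim \delta^{l}$, so that on $B$ the function is morally a function on $\R^{\calD_l}$ — to the global inequality $\avDec(\calD_{l}, p, q, \delta/\sigma)$ from Lemma~\ref{lem:scaled-decoupling} (applied with $\calD$ replaced by $\calD_l$, which is legitimate since $\calD_l$ has degree $l \leq k$), and then restores the weight on the right-hand side using \eqref{eq:1-w}.

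The main obstacle is precisely this Fourier-analytic localization: making rigorous the statement that on a ball of radius $\delta^{-l}$ the terms $f_\theta$ are insensitive to the exponents $\bfi\in\calD\setminus\calD_l$, i.e.\ that one may legitimately "forget" the high-order coordinates and apply $\calD_l$-decoupling. Concretely one writes $f_\theta = M_\alpha g_{\theta'}$ with $\theta'\in\Part{\delta/\sigma}$ as in Lemma~\ref{lem:scaled-decoupling}; then $\hat M_\alpha(B)$ is an anisotropic neighbourhood, thin of width $\lesssim (\delta/\sigma)^{l}$ in the coordinates $\bfi$ with $\abs{\bfi}>l$ and of width $\gtrsim (\delta/\sigma)^{-l}$ in the coordinates $\bfi\in\calD_l$; covering $\R^\calD$ by such parallelepipeds and noting that $\widehat{g_{\theta'}}$ lives in $\Uncert(\theta')$, whose extent in the short directions is $\lesssim (\delta/\sigma)^{l}$, one finds that each $g_{\theta'}$ is essentially a tensor of a function on $\R^{\calD_l}$ with a fixed bump in the complementary variables, reducing everything to $\avDec(\calD_l,p,q,\delta/\sigma)$. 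This anisotropic covering and reverse-Hölder step (combined with Corollary~\ref{cor:rev-holder} if one needs to pass between $L^p$ scales in the short directions) is the routine but slightly technical heart of the proof; everything else is bookkeeping through Lemma~\ref{lem:1-w}.
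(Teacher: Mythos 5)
Your high-level framework is right: Lemma~\ref{lem:1-w} with the functionals from Remark~\ref{rem:1-w} is exactly how the paper organizes the proof, and you correctly identify that the whole content is the verification of condition~\eqref{it:O:1<w}, i.e.\ the inequality for $L^p(\one_B)$. But that substantive step is where your argument has a genuine gap, and the paper handles it by two clean tricks that you circle around without landing.

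First, you start the reduction to $l=k$ and then abandon it as ``more subtle''; in fact it is not subtle at all and is the key structural step. Because of the triangular structure of the shear $L_{\theta,\mathrm{shear}}^{-*}$ (the $\bfj$-component only involves coordinates $\bfi\preceq\bfj$), the orthogonal projection of $\Uncert_{\calD}(\theta)$ onto $\R^{\calD_l}$ is $\Uncert_{\calD_l}(\theta)$. So one simply writes $\R^{\calD}=\R^{\calD_l}\times\R^{\calD\setminus\calD_l}$, fixes $x'\in\R^{\calD\setminus\calD_l}$, and observes that $f_\theta(\cdot,x')$ has Fourier support in $\Uncert_{\calD_l}(\theta)$; the corollary with $l=k$ applied to $\calD_l$ (which has degree $l$) gives the fiberwise estimate, and Minkowski finishes. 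Second, for the core case $l=k$, the paper does not need any anisotropic covering: it multiplies by a bump $\phi_B$ with $\one_B\lesssim\phi_B\lesssim w_{B,E}$ and $\supp\widehat{\phi_B}\subseteq B(0,c\,\delta^k)$, observes that $\supp\widehat{\phi_B f_\theta}\subseteq\Uncert(\hat\theta)$ where $\hat\theta$ is the dyadic parent, and applies Lemma~\ref{lem:scaled-decoupling} at scale $2\delta$; the weight $w_B$ on the right appears precisely through $\phi_B\lesssim w_{B,E}$. Your alternative route -- rescaling by $M_\alpha$, covering $L_\alpha(B)$ by anisotropic boxes, and asserting that each $g_{\theta'}$ is ``essentially a tensor of a function on $\R^{\calD_l}$ with a fixed bump'' -- is not correct as stated: $g_{\theta'}$ is an arbitrary function with Fourier support in a parallelepiped (which, because of the shear, is not axis-aligned), not a tensor, and making the intuition rigorous would require redoing the very fibering argument you set aside, plus handling the shear in $L_\alpha$. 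As written, the heart of the proof is asserted but not proved.
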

\begin{proof}
Writing $\R^{\calD} = \R^{\calD_{l}} \times \R^{\calD \setminus \calD_{l}}$ and considering the fibers over each $x' \in \R^{\calD \setminus \calD_{l}}$ separately, we may assume $l=k$.

By Lemma~\ref{lem:1-w}, \eqref{eq:scaled+loc-decoupling} will follow from
\[
\norm{\sum_{\theta\in \Part[\alpha]{\delta}} f_{\theta}}_{L^p(B)}
\lesssim
\avDec(\calD_{k}, p, q, \delta/\sigma)
\Bigl( \avsum_{\theta\in \Part[\alpha]{\delta}} \norm{f_{\theta}}_{L^p(w_{B})}^q \Bigr)^{1/q}.
\]
Let $\phi_{B}$ be a smooth bump function adapted to $B$, in the sense that $\one_{B} \lesssim \phi_{B} \lesssim w_{B,E}$ for every $E$, with $\supp \widehat{\phi_{B}} \subseteq B(0,\delta^{-l})$.
Then
\[
\norm{\sum_{\theta\in \Part[\alpha]{\delta}} f_{\theta}}_{L^p(B)}
\lesssim
\norm{\sum_{\theta\in \Part[\alpha]{\delta}} \phi_{B}f_{\theta}}_{L^p(\R^{\calD})}.
\]
For each $\theta\in \Part[\alpha]{\delta}$, we have
\[
\supp \widehat{\phi_{B}f_{\theta}}
\subseteq
\supp \widehat{\phi_{B}} + \supp \widehat{f_{\theta}}
\subseteq
\Uncert(\hat{\theta}),
\]
where $\hat{\theta}$ denotes the dyadic parent of $\theta$.
Hence, applying Lemma~\ref{lem:scaled-decoupling} with $\delta$ replaced by $2\delta$, we obtain
\begin{align*}
\norm{\sum_{\theta\in \Part[\alpha]{\delta}} \phi_{B}f_{\theta}}_{L^p(\R^{\calD})}
&\leq
\avDec(\calD_{k}, p, q, 2\delta/\sigma)
\Bigl( \avsum_{\theta'\in \Part[\alpha]{2\delta}} \norm{\sum_{\theta\in\Part[\theta']{\delta}}\phi_{B}f_{\theta}}_{L^p(\R^{\calD})}^q \Bigr)^{1/q}
\\ &\lesssim
\avDec(\calD_{k}, p, q, \delta/\sigma)
\Bigl( \avsum_{\theta\in \Part[\alpha]{\delta}} \norm{f_{\theta}}_{L^p(w_{B})}^q \Bigr)^{1/q}.
\qedhere
\end{align*}
\end{proof}

\subsection{Bourgain--Guth argument}
\label{sec:bourgain-guth}
We will use a few pieces of notation that will help us to keep multilinear expressions short.
\begin{notation}
For a sequence of real numbers $\Set{A_i}_{i=1}^M$, we abbreviate $\avprod A_{i} := \bigl(\prod_{i=1}^{M} A_{i}\bigr)^{1/M}$.
We also write
\[
L^{p}_{x \in \R^{\calD}}F(x) := \norm{ F }_{L^{p}(\R^{\calD})},
\quad
\ell^{q}_{\theta\in\calJ} B_{\theta} := ( \sum_{\theta\in\calJ} \abs{B_{\theta}}^{q} )^{1/q},
\quad\text{and }
\avell^{q}_{\theta\in\calJ} B_{\theta} := ( \avsum_{\theta\in\calJ} \abs{B_{\theta}}^{q} )^{1/q}.
\]
\end{notation}

\begin{definition}
For a positive integer $K$ and $0 < \delta < K^{-1}$, we denote by $\avDec(\calD, p, q, \delta, K, \nu)$ the smallest constant $C$ such that the inequality
\begin{equation}
\label{eq:multilin-dec-const-KM}
L^{p}_{x\in\R^{\Dset}} \avprod \norm{ \sum_{\theta \in \Part[R_i]{\delta}} f_{\theta} }_{\avL^{p}(B(x,K))}
\le C
\avprod \avell^{q}_{\theta \in \Part[R_i]{\delta}} \norm{f_\theta}_{L^p(\R^{\Dset})}
\end{equation}
holds for all $\nu$-transverse tuples $R_{1},\dotsc,R_{M} \in \Part{K^{-1}}$ with $1\leq M \leq K^{d}$.

In the case $q=p$ we write $\avDec(\calD, p, \delta, K, \nu) := \avDec(\calD, p, q, \delta, K, \nu)$.
\end{definition}
This kind of multi-linear decoupling constant with varying degree of multilinearity $M$ first appeared in \cite{MR3709122} in the decoupling literature.
It can be contrasted with Wooley's efficient congruencing approach to Vinogradov mean value estimates, which only uses bilinear, rather than multilinear, expressions, see the auxiliary mean value in \cite[(2.1)]{arxiv:1708.01220}.
For the cubic moment curve, a bilinear approach to decoupling inequalities is also taken in \cite{arxiv:1906.07989}.

\begin{theorem}
\label{thm:multilinear-to-linear}
For any $2 \leq p < \infty$ and $\epsilon>0$, there exists $K\geq 1$ such that for all $0 < \delta < 1$ we have
\begin{equation}
\avDec(\calD, p, \delta)
\lesssim
\delta^{-\tGamma'-\epsilon}
+ \log_{+} \delta \max_{\delta\le \delta'\le 1}(\delta/\delta')^{-\tGamma'-\epsilon} \avDec(\calD, p, \delta', K, \nu_{K}),
\end{equation}
where
\begin{equation}
\label{eq:tGamma'}
\tGamma' := \max_{1\leq j\leq d} \tGamma_{\bfP_{j}\calD}(p) + \frac{1}{p}.
\end{equation}
\end{theorem}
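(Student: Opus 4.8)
The plan is to run the standard Bourgain--Guth argument, using the ingredients already assembled in this section. Fix $2 \le p < \infty$, $\epsilon > 0$, and choose $K$ large (depending on $p,\epsilon$) to be specified later. Given functions $f_\theta$ with $\theta \in \Part{\delta}$, one partitions the physical-space ball of radius roughly $\delta^{-k}$ into balls $B$ of radius $K$, and on each such ball estimates $\norm{\sum_\theta f_\theta}_{\avL^p(B)}$ pointwise in two regimes. Inside $B$ the functions $f_\theta$ with $\theta$ in a fixed $K^{-1}$-cube $R$ are essentially constant relative to the others, so $\sum_\theta f_\theta = \sum_{R \in \Part{K^{-1}}} f_R$ with $f_R := \sum_{\theta \in \Part[R]{\delta}} f_\theta$, and one compares the full sum against the largest few pieces. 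Either a bounded number of cubes $R$ dominate in which case we pass to a multilinear expression over a $\nu_K$-transverse subtuple, or all cubes $R$ that are non-negligible are clustered near the zero set of a low-degree polynomial $P$ as in Lemma~\ref{lem:not-clustered-implies-transverse}. The first alternative is controlled by $\avDec(\calD, p, \delta, K, \nu_K)$ after summing the constant-on-$B$ behavior across the $B$'s and using the reverse H\"older inequality Corollary~\ref{cor:rev-holder} to upgrade $\avL^p(B)$ to the global $L^p$ average; the second is controlled by $\avDec_{\mathrm{var}}(\calD, p, K^{-1})$, which by Lemma~\ref{lem:dim-reduction-variety} and the inductive hypothesis in lower dimension is bounded by $\max_j \avDec(\bfP_j \calD, p, K^{-1}) K^{1/p} \lesssim_\epsilon K^{\tGamma' + \epsilon}$, since $\tGamma' = \max_j \tGamma_{\bfP_j \calD}(p) + 1/p$ and $K^{-1}$ plays the role of the scale.

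The standard trilemma-style bookkeeping then reads: at the top scale $\delta$ one trades a factor of $K^{\tGamma'+\epsilon}$ (from the low-degree/non-transverse term) for passing from scale $\delta$ to scale $K\delta$ inside the dominant cube $R$, where one recurses. Iterating this $O(\log_{1/K}\delta)$ times, the non-transverse contributions accumulate a geometric product which, because the exponent $\tGamma'$ is exactly calibrated so that $(K^{\tGamma'+\epsilon})^{\#\text{steps}}$ telescopes to $\delta^{-\tGamma'-\epsilon}$ (up to the $\log_+\delta$ factor absorbing the number of terms), gives the first term $\delta^{-\tGamma'-\epsilon}$ on the right-hand side. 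Whenever instead the multilinear alternative is reached, at some intermediate scale $\delta'$ with $\delta \le \delta' \le 1$, one stops and invokes $\avDec(\calD, p, \delta', K, \nu_K)$; the prefactor accumulated up to that point is $(\delta/\delta')^{-\tGamma'-\epsilon}$, again by the telescoping, and one maximizes over $\delta'$, producing the second term. The $\log_+\delta$ factor in front accounts for summing over the scale at which the stopping occurs, and the $K$-dependence of all implicit constants is harmless since $K$ is fixed once $p,\epsilon$ are.

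Several routine technical points must be handled carefully. First, the pointwise-on-$B$ comparison of $\sum_R f_R$ with its largest pieces: one uses that there are only $O(K^d)$ cubes $R$, so either $\norm{\sum_R f_R}_{\avL^p(B)} \lesssim K^d \max_R \norm{f_R}_{\avL^p(B)}$, or (after discarding the $O(1)$ largest) the remaining sum is comparable to the full sum and one has at least $M \gtrsim$ some lower bound of comparably-sized pieces; the dichotomy in Lemma~\ref{lem:not-clustered-implies-transverse} is then applied to the cubes $R$ carrying these comparable pieces. Second, one must move between the local norms $\avL^p(B(x,K))$ appearing in the definition of the multilinear constant and the local norms at the larger scale $\delta^{-1}$ or $\delta^{-k}$; this is where $L^p(w_B)$ weights, the localization Lemma~\ref{lem:1-w}, and Corollary~\ref{cor:scaled+loc-decoupling} enter, exactly as in the cited references. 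Third, in the non-transverse term one rescales the cube $R$ of side $K^{-1}$ to $[0,1]^d$ using $M_\alpha$ as in Lemma~\ref{lem:scaled-decoupling} so that the recursion is genuinely a recursion for $\avDec(\calD, p, \cdot)$ at scale $K\delta/1$.

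The main obstacle is the bookkeeping of the iteration: one must verify that the exponent $\tGamma'$, which is built only from the lower-dimensional data $\tGamma_{\bfP_j \calD}(p)$ plus $1/p$, is exactly the right rate so that the per-step loss $K^{\tGamma'+\epsilon}$ composes over all $\log_{1/K} \delta$ steps into precisely $\delta^{-\tGamma'-\epsilon}$ (and into $(\delta/\delta')^{-\tGamma'-\epsilon}$ when the iteration halts early), with all $K$-dependent constants collapsing into the $\log_+\delta$ prefactor and the implicit constant. This is a purely combinatorial/arithmetic check once the two alternatives of the Bourgain--Guth step are set up, but it is the heart of why the statement has exactly this form; the rest is a faithful transcription of the argument in \cite{MR3592159} and \cite{arxiv:1804.02488}, simplified by the uniform treatment of all $p$.
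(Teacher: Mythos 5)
Your proposal follows essentially the same route as the paper: a single-step Bourgain--Guth dichotomy (Proposition~\ref{prop:bourgain-guth-arg}) producing small, variety, and transverse contributions; rescaling via Lemma~\ref{lem:scaled-decoupling} and dimensional reduction via Lemma~\ref{lem:dim-reduction-variety}; and then iteration (Corollary~\ref{cor:bourgain-guth-arg:scaled}) with a stopping time at the scale where the multilinear term is invoked, with the two recursive branches calibrated so that the per-step loss telescopes into $\delta^{-\tGamma'-\epsilon}$ and the $\log_+\delta$ factor counts the possible stopping scales. Two small inaccuracies in the write-up are worth flagging, though they do not change the argument. First, the first paragraph states the dichotomy backwards: ``a bounded number of cubes dominate'' is the case handled \emph{trivially} (it contributes to the first, recursive term via the maximum estimate $\eqref{eq:BG':small}$), whereas the transverse/multilinear alternative arises precisely when \emph{many} cubes carry comparable weight and Lemma~\ref{lem:not-clustered-implies-transverse} says they are \emph{not} all clustered near a variety; your third paragraph has this right, so the first paragraph appears to be a slip. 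Second, the reverse H\"older inequality (Corollary~\ref{cor:rev-holder}) does not actually enter this proof: the multilinear constant $\eqref{eq:multilin-dec-const-KM}$ is defined directly in terms of $\avL^p(B(x,K))$ averages, and the passage from $\one_B$-localized to $w_B$-weighted norms is done with Lemma~\ref{lem:1-w}, not with reverse H\"older, which is reserved for the entering step $\eqref{eq:multlin<A}$ of the induction-on-scales argument in the next section.
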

Here and later
\begin{equation}
\label{eq:log+}
\log_{+}\delta := \max(\abs{\log\delta}, 1).
\end{equation}
Theorem~\ref{thm:multilinear-to-linear} is obtained by iterating Corollary~\ref{cor:bourgain-guth-arg:scaled}, which is a rescaled version of the following Proposition~\ref{prop:bourgain-guth-arg}.
This iteration goes back to \cite{MR2860188}.
\begin{proposition}
\label{prop:bourgain-guth-arg}
For every $2 \leq q \leq p < \infty$, $K\ge 2$, and $0<\delta<K^{-1}$ we have
\begin{equation}
\begin{split}
\label{eq:BG-arg}
\norm{f}_{L^p(\R^{\Dset})}
&\lesssim
K^{d/q} \Bigl( \avsum_{\alpha\in \Part{K^{-1}}} \norm{f_{\alpha}}_{L^p(\R^{\Dset})}^{q} \Bigr)^{1/q}\\
&+
(\log K) \avDec_{\mathrm{var}}(\calD,p,q,K^{-1/k}) \Bigl( \avsum_{\beta\in \Part{K^{-1/k}}} \norm{f_{\beta}}_{L^p(\R^{\Dset})}^{q} \Bigr)^{1/q}\\
&+ C_{K} \avDec(\calD, p, q, \delta, K, \nu_{K}) \Bigl( \avsum_{\theta\in\Part{\delta}} \norm{f_{\theta}}_{L^p(\R^{\Dset})}^{q} \Bigr)^{1/q}.
\end{split}
\end{equation}
Recall that $\avDec_{\mathrm{var}}$ was introduced in \eqref{eq:dim-reduction-variety}.
Morever, $C_{K}$ is a constant depending on $K,d,p$, whose growth rate in $K$ is not important.
\end{proposition}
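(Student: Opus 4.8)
The plan is to run the Bourgain--Guth broad/narrow argument, localized to balls of radius $K$. The scale $K$ is forced: for $\alpha\in\Part{K^{-1}}$ the Fourier support $\Uncert(\alpha)$ of $f_{\alpha}$ has diameter $\lesssim K^{-1}$, so each $f_{\alpha}$ is roughly constant on balls of radius $K$ (the locally-constant heuristic, cf.\ Corollary~\ref{cor:rev-holder}), while a $K^{-1/k}$-cap is \emph{fully} resolved at radius $(K^{-1/k})^{-k}=K$; this is why the middle term sits at scale $K^{-1/k}$, the coarsest scale at which $\avDec_{\mathrm{var}}$ can still be localized to our balls. Fix a finitely overlapping cover of $\R^{\calD}$ by radius-$K$ balls $B$, so $\norm{f}_{L^{p}(\R^{\calD})}^{p}\lesssim\sum_{B}\norm{f}_{L^{p}(B)}^{p}$, and for a large constant $C=C(d)$ set $\calS(B):=\Set{\alpha\in\Part{K^{-1}}\given \norm{f_{\alpha}}_{\avL^{p}(w_{B})}\ge K^{-C}\max_{\alpha'}\norm{f_{\alpha'}}_{\avL^{p}(w_{B})}}$. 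Call $B$ \emph{broad} if $\calS(B)$ contains a $\nu_{K}$-transverse sub-tuple and \emph{narrow} otherwise. Iterating Lemma~\ref{lem:not-clustered-implies-transverse} — each application either peels off a $\ge\theta$-fraction of the current collection near the zero set of some polynomial of degree $\le D(d,k)$, or exhibits that collection as $\nu_{K}$-transverse — shows that if $B$ is narrow then $\calS(B)\subseteq\bigcup_{i=1}^{m}\Set{\alpha\given 2\alpha\cap Z_{P_{i}}\ne\emptyset}$ for polynomials $P_{i}$ of degree $\le D(d,k)$ and some $m\lesssim\log K$ (here we use that $\theta=\theta(\calD)$ is fixed, so the collection is exhausted after $\lesssim\log_{(1-\theta)^{-1}}(K^{d})\lesssim\log K$ steps).

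For a narrow ball $B$, let $\calG\subseteq\Part{K^{-1/k}}$ be the $K^{-1/k}$-cubes that contain at least one cube of $\calS(B)$, and write $f=\sum_{\beta\in\calG}f_{\beta}+\sum_{\beta\notin\calG}f_{\beta}$. Every $\alpha$ whose $K^{-1/k}$-parent lies outside $\calG$ lies outside $\calS(B)$, so
\[
\norm[\big]{\sum_{\beta\notin\calG}f_{\beta}}_{L^{p}(B)}
\le\sum_{\alpha\notin\calS(B)}\norm{f_{\alpha}}_{L^{p}(w_{B})}
<K^{d-C}\max_{\alpha'}\norm{f_{\alpha'}}_{L^{p}(w_{B})}
\le\max_{\alpha'}\norm{f_{\alpha'}}_{L^{p}(w_{B})}
\le K^{d/q}\bigl(\avsum_{\alpha}\norm{f_{\alpha}}_{L^{p}(w_{B})}^{q}\bigr)^{1/q}
\]
once $C\ge d$, the last step merely passing from an $\ell^{\infty}$ to a normalized $\ell^{q}$ average over the $K^{d}$ cubes of $\Part{K^{-1}}$. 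The collection $\calG$ splits into $m\lesssim\log K$ disjoint subcollections, the $i$-th consisting (after a harmless fattening of cubes) of $K^{-1/k}$-cubes meeting $Z_{P_{i}}$; localizing $\avDec_{\mathrm{var}}(\calD,p,q,K^{-1/k})$ to the radius-$K$ ball $B$ exactly as in Corollary~\ref{cor:scaled+loc-decoupling} (using $K=(K^{-1/k})^{-k}$) and summing over $i$ gives
\[
\norm[\big]{\sum_{\beta\in\calG}f_{\beta}}_{L^{p}(B)}
\lesssim(\log K)\,\avDec_{\mathrm{var}}(\calD,p,q,K^{-1/k})\bigl(\avsum_{\beta}\norm{f_{\beta}}_{L^{p}(w_{B})}^{q}\bigr)^{1/q}.
\]
Summing the $p$-th powers of these two bounds over all narrow $B$, using $\sum_{B}w_{B}\lesssim 1$ and Minkowski's inequality in $\ell^{p/q}$ (this is the one place $q\le p$ is needed), reproduces the first two terms of \eqref{eq:BG-arg} with global $L^{p}$ norms on the right.

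For a broad ball $B$, fix a $\nu_{K}$-transverse tuple $R_{1},\dotsc,R_{M}\in\calS(B)$ with $1\le M\le K^{d}$. Local constancy of the $f_{\alpha}$ at scale $K$ and the definition of $\calS(B)$ give, for $x\in B$,
\[
\abs{f(x)}\le\sum_{\alpha}\abs{f_{\alpha}(x)}
\lesssim K^{d}\max_{\alpha}\norm{f_{\alpha}}_{\avL^{p}(w_{B})}
\le K^{d+C}\norm{f_{R_{j}}}_{\avL^{p}(w_{B})}
\lesssim K^{d+C}\norm{f_{R_{j}}}_{\avL^{p}(B(x,K))}
\qquad(1\le j\le M),
\]
hence $\abs{f(x)}\lesssim K^{d+C}\avprod_{j}\norm{f_{R_{j}}}_{\avL^{p}(B(x,K))}$ and $\norm{f}_{L^{p}(B)}\lesssim K^{d+C}\bigl(\int_{B}\bigl(\avprod_{j}\norm{f_{R_{j}}}_{\avL^{p}(B(x,K))}\bigr)^{p}\dif x\bigr)^{1/p}$. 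There are at most $2^{K^{d}}$ possible tuples $(R_{j})$; grouping the broad balls by their tuple, summing the integrals over all balls sharing a tuple, extending each such integral to all of $\R^{\calD}$ by bounded overlap, and invoking the definition of $\avDec(\calD,p,q,\delta,K,\nu_{K})$ together with the elementary bound $\avprod_{j}\avell^{q}_{\theta\in\Part[R_{j}]{\delta}}\norm{f_{\theta}}_{L^{p}(\R^{\calD})}\le K^{d/q}\bigl(\avsum_{\theta\in\Part{\delta}}\norm{f_{\theta}}_{L^{p}(\R^{\calD})}^{q}\bigr)^{1/q}$, one bounds the total broad contribution to $\norm{f}_{L^{p}(\R^{\calD})}^{p}$ by a constant times $\bigl(C_{K}\avDec(\calD,p,q,\delta,K,\nu_{K})\bigr)^{p}\bigl(\avsum_{\theta}\norm{f_{\theta}}_{L^{p}(\R^{\calD})}^{q}\bigr)^{p/q}$ with $C_{K}\lesssim 2^{K^{d}/p}K^{O(1)}$; this growth rate in $K$ is immaterial. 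Adding the broad and narrow contributions and taking $p$-th roots yields \eqref{eq:BG-arg}.

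The core difficulty is organizational rather than any single hard estimate. Three points need care: (i) running the peeling so that $\calS(B)$ is either exhausted by $\lesssim\log K$ low-degree varieties or shown to contain a $\nu_{K}$-transverse tuple, using the uniformity of $\theta$ to keep the iteration count $\lesssim\log K$ (this is the source of the $\log K$ in the middle term); (ii) the repeated passage between $L^{p}(B)$, $L^{\infty}(B)$, $\avL^{p}(w_{B})$ and $\avL^{p}(B(x,K))$ norms, and the localization of $\avDec_{\mathrm{var}}$ and of $\avDec(\,\cdot\,,\delta,K,\nu_{K})$ to radius-$K$ balls, all handled by the locally-constant heuristic and Lemma~\ref{lem:1-w}/Corollary~\ref{cor:rev-holder}; and (iii) — the genuinely delicate point — phrasing the narrow case as a \emph{reconstruction} $f=\sum_{\beta\in\calG}f_{\beta}+(\text{absorbable error})$, so that the middle term appears with the full $K^{-1/k}$-cube pieces $f_{\beta}$ rather than with partial sums of them.
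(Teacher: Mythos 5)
Your overall architecture is essentially the one the paper uses: localize at radius $K$, peel off $\lesssim\log K$ variety layers via Lemma~\ref{lem:not-clustered-implies-transverse} (producing the $(\log K)\avDec_{\mathrm{var}}$ term), absorb cubes outside the significant set into the trivial $K^{d/q}$ term, and use the multilinear constant for the transverse residue. The narrow/variety part is handled essentially as in the paper, and your count of $\lesssim\log K$ peeling steps and the reduction $f=\sum_{\beta\in\calG}f_{\beta}+(\text{absorbable})$ are correct. There are two structural differences, one harmless, one not.

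The harmless difference: the paper does not dichotomize balls into broad and narrow. For \emph{every} $B'=B(x,K)$ it splits $\norm{f}_{\avL^p(B')}$ into small $+$ variety $+$ transverse (where the transverse piece may be zero), and defines $\calS_0(B')$ with the \emph{unweighted} norm $\norm{f_\alpha}_{\avL^p(B')}$. Your dichotomy achieves the same outcome.

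The genuine gap is in your broad case. The crux is the last link of the chain
\[
\abs{f(x)} \lesssim K^{d+C}\norm{f_{R_j}}_{\avL^p(w_B)} \lesssim K^{d+C}\norm{f_{R_j}}_{\avL^p(B(x,K))}\qquad(x\in B),
\]
and this final inequality is false. The left side is a weighted average over the whole space, the right side an average over a single $K$-ball; with $\supp\widehat{f_{R_j}}\subseteq\Uncert(R_j)$ one only gets the \emph{reverse} inequality from Bernstein. Concretely, a modulated bump of spatial width $K$ centered at distance $MK$ from $c_B$ has $\norm{\cdot}_{\avL^p(w_B)}\sim M^{-E}$ but $\norm{\cdot}_{\avL^p(B(c_B,K))}\sim e^{-cM^2}$, so the ratio is unbounded. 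The problem is structural in the fixed-cover formulation: you determine the tuple $(R_j)$ from a fixed ball $B$ and then need a pointwise bound involving $\norm{f_{R_j}}_{\avL^p(B(x,K))}$ with $x$ moving inside $B$; passing from a product of $B$-averages to the average (over $x$) of products is the \emph{wrong} direction of H\"older/Jensen. The paper sidesteps this entirely: it never makes a pointwise estimate on $f(x)$; it works with the identity $\norm{f}_{L^p}=L^p_x\norm{f}_{\avL^p(B(x,K))}$, lets the selection $\calS_0(B(x,K))$ and the residue $\calT(B(x,K))$ vary with $x$, and then bounds the transverse term by the triangle inequality
\[
\sum_{\alpha\in\calT(B')}\norm{f_\alpha}_{\avL^p(B')}\lesssim K^C\min_{\alpha\in\calT(B')}\norm{f_\alpha}_{\avL^p(B')}\leq K^C\max_{\calT\ \nu_K\text{-transv.}}\prod_{\alpha\in\calT}\norm{f_\alpha}_{\avL^p(B')}^{1/\abs{\calT}},
\]
with all norms on the same (moving) ball $B'$, followed by $L^p_x$ and a sum over the $\leq2^{K^d}$ possible transverse collections (the source of $C_K$). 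If you want to keep a fixed-cover presentation, you would still need to estimate $\norm{f}_{\avL^p(B')}$ by a product over the \emph{same} ball $B'$ and let the selected tuple depend on $B'$; the pointwise route does not close.
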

\begin{proof}[Proof of Proposition~\ref{prop:bourgain-guth-arg}]
For each $B' = B(x,K)$ we will cover the portion of $[0,1]^{d}$ with large contribution to $\norm{f_{[0,1]^{d}}}_{L^{p}(B')}$ by a small number of ``lower dimensional'' collections $\calG_{m}(B') \subset \Part{K^{-1/k}}$, each of which is close to a subvariety of bounded degree, and a transverse collection $\calT(B') \subset \Part{K^{-1}}$.
We choose the cubes in the collections $\calG_{m}(B')$ to have side length $K^{-1/k}$ in order to use lower dimensional decoupling at spatial scale $K$ for these collections.

In the following inductive algorithm we will construct collections $\calS_{m}(B') \subseteq \Part{K^{-1}}$ and $\calG_{m}(B') \subseteq \Part{K^{-1/k}}$.
Initialise
\begin{equation}
\label{eq:initial-stock}
\calS_{0}(B') := \Set{ \alpha \in \Part{K^{-1}} \given \norm{f_\alpha}_{\avL^{p}(B')} \ge K^{-d} \max_{\alpha' \in \Part{K^{-1}}} \norm{f_{\alpha'}}_{\avL^{p}(B')} }.
\end{equation}
For $m\geq 0$ we repeat the following algorithm.
Suppose that $\calS_{m}(B') \neq \emptyset$ and there is a polynomial $Q = Q_{m,B'} \in \R[\xi_{1},\dotsc,\xi_{d}]$ with $\deg Q \leq D(d,k)$ (as defined in alternative \ref{it:clustered} of Lemma~\ref{lem:not-clustered-implies-transverse}) such that
\begin{equation}
\label{eq:stock-near-variety}
\abs{\Set{ \alpha\in \calS_{m}(B') \given 2\alpha \cap Z_{Q_{m,B'}} \neq \emptyset}}
>
\theta \abs{\calS_{m}(B')},
\end{equation}
where $\theta > 0$ is given by Lemma~\ref{lem:not-clustered-implies-transverse}.
Then we choose one such $Q_{m,B'}$, let
\begin{align*}
\calG_{m}(B')
&:=
\Set{ \beta\in \Part{K^{-1/k}} \given 2\beta \cap Z_{Q_{m,B'}} \neq \emptyset} \setminus (\calG_{0}(B') \cup \dotsb \cup \calG_{m-1}(B')),\\
\calS_{m+1}(B')
&:=
\calS_{m}(B') \setminus \bigcup_{\beta \in \calG_{m}(B')} \Part[\beta]{K^{-1}},
\end{align*}
and repeat the algorithm.

Since in each step we remove at least a fixed proportion $\theta$ of $\calS_{m}(B')$, this algorithm terminates after $O(\log K)$ steps.
In the end we set
\[
\calT(B') := \calS_{m}(B').
\]
If $\calT(B') \neq \emptyset$, then in the last step of the selection algorithm there was no polynomial $Q$ with $\deg Q \leq D(d,k)$ for which \eqref{eq:stock-near-variety} holds.
Thus alternative~\ref{it:clustered} of Lemma~\ref{lem:not-clustered-implies-transverse} is violated for the collection $\calT(B')$.
Therefore, alternative~\ref{it:transverse} of Lemma~\ref{lem:not-clustered-implies-transverse} holds, that is, the cubes in $\calT(B')$ are $\nu_{K}$-transverse for some $\nu_K>0$.

We estimate
\begin{align}
\norm{ f }_{\avL^{p}(B')}
\label{eq:BG':small}&\leq
\sum_{\alpha \in \Part{K^{-1}} \setminus \calS_{0}(B')} \norm{ f_{\alpha} }_{\avL^{p}(B')}\\
\label{eq:BG':variety}&+
\sum_{m \lesssim \log K} \norm{ \sum_{\beta \in \calG_{m}(B')} f_{\beta} }_{\avL^{p}(B')}\\
\label{eq:BG':transverse}&+
\sum_{\alpha \in \calT(B')} \norm{ f_{\alpha} }_{\avL^{p}(B')}
\end{align}
By definition of $\calS_{0}(B')$, we obtain
\[
\eqref{eq:BG':small}
\lesssim
\max_{\alpha' \in \Part{K^{-1}}} \norm{ f_{\alpha'} }_{\avL^{p}(B')}.
\]
By Definition~\ref{def:dim-reduction-variety} and Lemma~\ref{lem:1-w}, we have
\begin{align*}
\eqref{eq:BG':variety}
&\lesssim_{\epsilon}
\sum_{m\lesssim \log K} \avDec_{\mathrm{var}}(\calD,p,q,K^{-1/k})
\avell^{q}_{\beta \in \Part{K^{-1/k}}} \one_{\beta \in \calG_{m}(B')} \norm{ f_{\beta} }_{\avL^{p}(w_{B'})}
\\ &\lesssim
(\log K) \avDec_{\mathrm{var}}(\calD,p,q,K^{-1/k})
\avell^{q}_{\beta \in \Part{K^{-1/k}}} \norm{ f_{\beta} }_{\avL^{p}(w_{B'})}.
\end{align*}
If $\calT(B') \neq \emptyset$, then, since by the definition of $\calS_{0}(B')$ all $\norm{f_\alpha}_{\avL^{p}(B')}$, $\alpha \in \calT(B')$, are comparable up to a factor $K^{C}$, we obtain
\[
\eqref{eq:BG':transverse}
\lesssim
K^{C}
\min_{\alpha \in \calT(B')} \norm{f_\alpha}_{\avL^{p}(B')}
\leq
K^{C} \max_{\substack{\calT \subseteq \Part{K^{-1}}\\ \nu_{K}-\text{transverse}}} \prod_{\alpha \in \calT} \norm{ f_{\alpha} }_{\avL^{p}(B')}^{1/\abs{\calT}}.
\]
It follows that
\begin{align}
\notag
\norm{f }_{L^{p}(\R^{\Dset})}
&=
L^{p}_{x\in\R^{\Dset}} \norm{f }_{\avL^{p}(B(x,K))}\\
\label{eq:BG:small} & \lesssim
L^{p}_{x\in\R^{\Dset}} \max_{\alpha \in \Part{K^{-1}}} \norm{f_{\alpha} }_{\avL^{p}(B(x,K))}\\
\label{eq:BG:variety}&+
(\log K) \avDec_{\mathrm{var}}(\calD,p,q,K^{-1/k})
L^{p}_{x\in\R^{\Dset}} \avell^{q}_{\beta \in \Part{K^{-1/k}}} \norm{ f_{\beta}  }_{\avL^{p}(w_{B(x,K)})}\\
\label{eq:BG:transverse}&+
K^{C} L^{p}_{x\in\R^{\Dset}} \max_{\substack{\calT \subseteq \Part{K^{-1}}\\ \nu_{K}-\text{transverse}}} \prod_{\alpha\in\calT} \norm{ f_{\alpha}  }_{\avL^{p}(B(x,K))}^{1/\abs{\calT}}.
\end{align}
The terms \eqref{eq:BG:small} and \eqref{eq:BG:variety} can be estimated as claimed in \eqref{eq:BG-arg} using Minkowski's inequality.
In the last term, using \eqref{eq:multilin-dec-const-KM}, we estimate
\begin{align*}
& \eqref{eq:BG:transverse}
\leq
K^{C} \Bigl( \sum_{\substack{\calT \subseteq \Part{K^{-1}}\\ \nu_{K}-\text{transverse}}} \Bigl( L^{p}_{x\in\R^{\Dset}} \prod_{\alpha\in\calT} \norm{ f_{\alpha}  }_{\avL^{p}(B(x,K))}^{1/\abs{\calT}} \bigr)^{p} \Bigr)^{1/p}\\
& \leq
K^{C} \avDec(\calD, p, q, \delta, K, \nu_{K})
\bigl( \sum_{\substack{\calT \subseteq \Part{K^{-1}}\\ \nu_{K}-\text{transverse}}} \prod_{\alpha\in\calT} \bigl( \avsum_{\theta \in \Part[\alpha]{\delta}} \norm{ f_{\theta}  }_{L^{p}(\R^{\Dset})}^{q} \bigr)^{\frac{p}{q \abs{\calT}}} \bigr)^{1/p}\\
& \leq
K^{C} 2^{K^{d}/p} \avDec(\calD, p, q, \delta, K, \nu_{K})
\bigl( \max_{\substack{\calT \subseteq \Part{K^{-1}}\\ \nu_{K}-\text{transverse}}} \prod_{\alpha\in\calT} \bigl( \avsum_{\theta \in \Part{\delta}} \norm{ f_{\theta}  }_{L^{p}(\R^{\Dset})}^{q} \bigr)^{\frac{p}{q \abs{\calT}}} \bigr)^{1/p}\\
& \leq
K^{C} 2^{K^{d}/p} \avDec(\calD, p, q, \delta, K, \nu_{K})
\bigl( \avsum_{\theta \in \Part{\delta}} \norm{ f_{\theta}  }_{L^{p}(\R^{\Dset})}^{q} \bigr)^{\frac{1}{q}}.
\qedhere
\end{align*}
\end{proof}

\begin{remark}
The loss of $K^{C}$ in the estimate for \eqref{eq:BG':transverse} can be replaced by $\log K$ by splitting \eqref{eq:initial-stock} into $O(\log K)$ collections with comparable $\norm{f_\alpha}_{\avL^{p}(B')}$; this is also useful in the setting of \cite{MR3374964}.
It would be interesting to know whether the more substantial loss of $2^{K^{d}/p}$ in the estimate for \eqref{eq:BG:transverse} can be avoided.
\end{remark}

\begin{corollary}
\label{cor:bourgain-guth-arg:scaled}
In the situation of Proposition~\ref{prop:bourgain-guth-arg}, we have
\begin{multline}
\label{eq:BG-arg:scaled}
\avDec(\calD, p, \delta)
\lesssim_{\epsilon} \max\Bigl(
K^{d/p} \avDec(\calD, p, K \delta),
K^{\tGamma'/k+\epsilon} \avDec(\calD, p, K^{1/k} \delta),\\
C_{K} \avDec(\calD, p, \delta, K, \nu_{K}) \Bigr).
\end{multline}
\end{corollary}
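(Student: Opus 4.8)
The plan is to derive \eqref{eq:BG-arg:scaled} by combining Proposition~\ref{prop:bourgain-guth-arg} with the rescaling Lemma~\ref{lem:scaled-decoupling} (or rather its localized form Corollary~\ref{cor:scaled+loc-decoupling}) and the dimensional reduction estimate of Lemma~\ref{lem:dim-reduction-variety}. First I would apply Proposition~\ref{prop:bourgain-guth-arg} with $q=p$ to an arbitrary collection $f_\theta$, $\theta\in\Part{\delta}$, obtaining three terms on the right-hand side of \eqref{eq:BG-arg}. Each term will be re-expressed as a decoupling constant at a coarser scale times the $\avell^p$-norm of the $f_\theta$'s, so that taking the supremum over all admissible $f_\theta$ yields the desired bound on $\avDec(\calD,p,\delta)$.

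For the first term, $K^{d/p}(\avsum_{\alpha\in\Part{K^{-1}}}\norm{f_\alpha}_{L^p}^p)^{1/p}$, I would use that each $f_\alpha=\sum_{\theta\in\Part[\alpha]{\delta}}f_\theta$ is a sum over $\Part[\alpha]{\delta}$ with Fourier support in $\Uncert(\theta)$, so Lemma~\ref{lem:scaled-decoupling} (with $\sigma=K^{-1}$) gives $\norm{f_\alpha}_{L^p}\le\avDec(\calD,p,K\delta)(\avsum_{\theta\in\Part[\alpha]{\delta}}\norm{f_\theta}_{L^p}^p)^{1/p}$; summing in $\ell^p$ over $\alpha$ and unwinding the nested averages produces exactly the factor $K^{d/p}\avDec(\calD,p,K\delta)$. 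For the second term I would use Lemma~\ref{lem:dim-reduction-variety}, which bounds $\avDec_{\mathrm{var}}(\calD,p,p,K^{-1/k})$ by $\max_j\avDec(\bfP_j\calD,p,p,K^{-1/k})K^{1/p}$; by the already-known cases of Theorem~\ref{thm:main} for the lower-dimensional sets $\bfP_j\calD$ (invoked with $\epsilon$ rescaled appropriately, using $\tGamma_{\bfP_j\calD}(p)$), this is $\lesssim_\epsilon K^{(\tGamma'-1/p)/k}K^{1/p}=K^{\tGamma'/k+(1-1/k)/p}\lesssim K^{\tGamma'/k+\epsilon}$ up to the harmless $\log K$ factor and the $\epsilon$-loss, and then again Lemma~\ref{lem:scaled-decoupling} at scale $\sigma=K^{-1/k}$ converts the $\avsum$ over $\Part{K^{-1/k}}$ into $\avDec(\calD,p,K^{1/k}\delta)$ times the full $\avell^p$ norm. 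The third term is already of the form $C_K\avDec(\calD,p,\delta,K,\nu_K)(\avsum_\theta\norm{f_\theta}_{L^p}^p)^{1/p}$, so nothing further is needed.

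The main obstacle I anticipate is the bookkeeping around the normalization of the averages: the sums $\avsum_{\theta\in\Part[\alpha]{\delta}}$ and $\avsum_{\theta\in\Part{\delta}}$ differ by a factor that is a power of $K$ (since $\abs{\Part{\delta}}/\abs{\Part[\alpha]{\delta}}$ depends on the side length of $\alpha$, which differs between the $K^{-1}$-cubes and the $K^{-1/k}$-cubes), and one must track these factors carefully so that they combine with the explicit $K^{d/p}$ and $K^{1/p}$ powers rather than producing spurious losses; this is precisely the point of the $\avDec$ convention emphasized in the introduction. A secondary point is that Corollary~\ref{cor:scaled+loc-decoupling} must be used in place of Lemma~\ref{lem:scaled-decoupling} wherever weights $w_{B'}$ appear (as in the second term of \eqref{eq:BG-arg} after localization), together with Lemma~\ref{lem:1-w} to pass between $\one_B$ and $w_B$ norms, but since all the functionals involved are of the standard form in Remark~\ref{rem:1-w} this is routine. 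Once the powers of $K$ are matched, taking the supremum over all $f_\theta$ and writing the result as a maximum of the three contributions gives \eqref{eq:BG-arg:scaled}.
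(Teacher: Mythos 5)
Your overall strategy is exactly the paper's (which itself just says: apply Lemma~\ref{lem:scaled-decoupling} and Lemma~\ref{lem:dim-reduction-variety} to the first two terms of~\eqref{eq:BG-arg} with $q=p$ and take the supremum over $f_\theta$). The handling of the first and third terms is fine, including the observation that the nested normalized averages $\avsum_{\alpha}\avsum_{\theta\in\Part[\alpha]{\delta}}$ collapse to $\avsum_{\theta\in\Part{\delta}}$.

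However, there is a concrete arithmetic error in your treatment of the second term. You quote Lemma~\ref{lem:dim-reduction-variety} as giving
\[
\avDec_{\mathrm{var}}(\calD,p,p,K^{-1/k})\lesssim\max_j\avDec(\bfP_j\calD,p,p,K^{-1/k})\,K^{1/p},
\]
but the lemma, stated at generic scale $L^{-1}$, reads $\avDec_{\mathrm{var}}(\calD,p,p,L^{-1})\lesssim\max_j\avDec(\bfP_j\calD,p,p,L^{-1})\,L^{1/p}$. Substituting $L=K^{1/k}$ gives the factor $K^{1/(kp)}$, not $K^{1/p}$. Your version produces the extra power $K^{(1-1/k)/p}$, and you then write $K^{\tGamma'/k+(1-1/k)/p}\lesssim K^{\tGamma'/k+\epsilon}$; this absorption is false, since for $k\geq 2$ and $p<\infty$ the exponent $(1-1/k)/p$ is a fixed positive number that dominates any small $\epsilon$. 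Because the proof of Theorem~\ref{thm:multilinear-to-linear} iterates~\eqref{eq:BG-arg:scaled} roughly $k\log\delta/\log K$ times, this extra factor would propagate to an additional loss of $\delta^{-(k-1)/p}$ in the final decoupling exponent, spoiling the result. With the correct factor $K^{1/(kp)}$, the computation instead gives
\[
\max_j\avDec(\bfP_j\calD,p,p,K^{-1/k})\,K^{1/(kp)}
\lesssim_\epsilon K^{(\max_j\tGamma_{\bfP_j\calD}(p)+1/p)/k+\epsilon}
=K^{\tGamma'/k+\epsilon},
\]
which matches~\eqref{eq:BG-arg:scaled}. Your secondary concern about $w_B$-weighted norms is unnecessary: Proposition~\ref{prop:bourgain-guth-arg} already presents~\eqref{eq:BG-arg} in terms of plain $L^p(\R^{\calD})$ norms, so no further localization lemma is needed at this stage.
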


\begin{proof}
Use Lemma~\ref{lem:scaled-decoupling} and Lemma \ref{lem:dim-reduction-variety} in the first two terms on the right-hand side of \eqref{eq:BG-arg}, and take the supremum over all $f_{\theta}$.
\end{proof}

\begin{proof}[Proof of Theorem~\ref{thm:multilinear-to-linear}]
Observe $d/p \leq \tGamma'$.
Choose $K \in 2^{k\N}$ so large that the implicit constant on the right-hand side of \eqref{eq:BG-arg:scaled} is bounded by $K^{\epsilon}$.
For $\delta < K^{-1}$ iterate the inequality \eqref{eq:BG-arg:scaled} (at most) $\floor[\big]{k \frac{\log \delta}{\log K}}$ times and use a trivial estimate for $\avDec$ at the end.
\end{proof}

\section{Induction on scales}
\label{sec:induction-on-scales}
Let $\eta$ be the optimal exponent in Theorem~\ref{thm:main}, that is, the infimum of all $a>0$ such that the estimate
\[
\avDec(\calD,p,\delta) \lesssim_{a} \delta^{-a}
\]
holds.
By Minkowski's inequality we can see that $\eta < \infty$.

If $\eta \leq \tGamma'$, where $\tGamma'$ was defined in \eqref{eq:tGamma'}, then Theorem~\ref{thm:main} holds, so we assume without loss of generality that $\eta > \tGamma'$.
In this case, Theorem~\ref{thm:main} will follow if we can show that $\eta \leq \tGamma''$, where the latter quantity is defined by
\begin{equation}
\label{eq:tGamma''}
\tGamma'' := \max_{1 \leq l < k}
\tGamma_{\calD \cap \calS_{l}}(\max(2,p \frac{\calK (\calD \cap \calS_{l})}{\calK (\calD)})).
\end{equation}
After reducing to the multilinear quantities \eqref{eq:A} below using Theorem~\ref{thm:multilinear-to-linear}, this is accomplished by iterating several estimates that are summarized in Figure~\ref{fig:graph}, following \cite{MR3548534}.

Throughout this section, $R_{1},\dotsc,R_{M} \in \Part{K^{-1}}$ will denote a tuple of $\nu_{K}$-transverse cubes.
For scale parameters $\delta$, $0<b \leq 1$, and $0<s\leq k$ such that $0 < \delta^{b} \leq K^{-1}$ and Lebesgue exponents $1 \leq t \leq p$, define
\begin{equation}
\label{eq:A}
\tA_{p,t} (b, s)
:=
L^{p}_{x\in\R^{\Dset}} \avprod \avell^{t}_{J \in \Part[R_i]{\delta^{b}}} \norm{f_{J} }_{\avL^t(w_{B(x,\delta^{-s})})}.
\end{equation}
This convention for $\tA$ differs from previous articles in that we use an average sum over $J$.
This convention makes $\tA_{p,t}$ monotonically increasing in $t$.

The goal of the iterative procedure is to increase the parameter $b$ in \eqref{eq:A} using already established lower degree decoupling inequalties.
Decoupling inequalities of different degrees have to be applied at different Lebesgue exponents $t$.
This leads to repeated use of H\"older's inequality to pass between different exponents, and as a result the number of terms in our estimates doubles in each step of the iteration.
It is therefore more convenient to formulate the iteration as a bootstrapping argument involving the quantities \eqref{eq:a_*} and \eqref{eq:tilde-a}, in which all relevant information is distilled.

\subsection{Notation}
Let $\calK_l := \calK(\calD \cap \calS_{l})$.
For $1 \leq l \leq k$ define
\begin{align*}
q_{l} &:= \max\{2, p \frac{\calK_{l}}{\calK_{k}}\},\\
t_{l} &:= \max\{2, p \frac{n_{l}}{n_{k}}\}.
\end{align*}
Define $\alpha_l$ and $\beta_l$ by
\begin{align}
\label{eq:alpha}
\frac{1}{\frac{n_{l}}{n_{k}}}
&=
\frac{\alpha_l}{\frac{n_{l+1}}{n_{k}}}+\frac{1-\alpha_l}{\frac{\calK_{l}}{\calK_{k}}},
& 1 \leq l < k,\\
\label{eq:beta}
\frac{1}{\frac{\calK_{l}}{\calK_{k}}}
&=
\frac{1-\beta_l}{\frac{\calK_{l-1}}{\calK_{k}}}+\frac{\beta_l}{\frac{n_{l}}{n_{k}}},
& 1 < l < k,
\end{align}
and $\beta_{1}:=1$.
We claim that $\alpha_{l},\beta_{l} \in [0,1]$ for $1 \leq l < k$.
This will follow from
\begin{equation}
\label{eq:plpk<nlnk}
\frac{\calK_{l}}{\calK_{k}} \leq \frac{n_{l}}{n_{k}},
\quad
0 \leq l \leq k.
\end{equation}
Indeed,
\[
\eqref{eq:plpk<nlnk}
\iff
\frac{\calK_{k}}{\calK_{l}} \geq \frac{n_{k}}{n_{l}}
\iff
\frac{\calK_{k}-\calK_{l}}{\calK_{l}} \geq \frac{n_{k}-n_{l}}{n_{l}}.
\]
Now we write the left-hand side of the last inequality as
\[
\frac{\sum_{j=l+1}^{k} j (n_{j}-n_{j-1})}{\sum_{j=1}^{l} j (n_{j}-n_{j-1})}
\geq
\frac{\sum_{j=l+1}^{k} (n_{j}-n_{j-1})}{\sum_{j=1}^{l} (n_{j}-n_{j-1})}
=
\frac{n_{k}-n_{l}}{n_{l}}.
\]
This finishes the proof of \eqref{eq:plpk<nlnk}.

The induction on scales argument will involve the quantities
\[
\tA_{t(l)}(b) := \tA_{p,t_{l}}(b,lb),
\]
\[
\tA_{q(l)}(b) := \tA_{p,q_{l}}(b,(l+1)b).
\]
Here $t(l)$ and $q(l)$ are formal expressions and can be read ``of type $t$ with degree $l$'' and ``of type $q$ with degree $l$''.
For $0<b<1$ and $*=t(l),q(l)$ let
\begin{equation}
\label{eq:a_*}
a_{*}(b) := \inf \Set{ a \given \tA_{*}(b) \lesssim_{a,K} \delta^{-a} \RHS\eqref{eq:multilin-dec-const-KM} \text{ for all } K }.
\end{equation}

\subsection{Entering the iterative procedure}
First we estimate the left-hand side of \eqref{eq:multilin-dec-const-KM} by the quantities involved in the iterative procedure.
For $1 \leq l \leq k$, $1 \leq t \leq p < \infty$, $0<b \leq s$, and $\delta$ sufficiently small so that $\delta^{-s}\geq K$, we have
\begin{equation}
\label{eq:multlin<A}
\begin{split}
\LHS\eqref{eq:multilin-dec-const-KM}
&=
L^{p}_{x\in \R^{\calD}} \avprod \norm{ f_{R_{i}} }_{\avL^{p}(B(x,K))}
\\ &\lesssim
L^{p}_{x\in \R^{\calD}} \avprod \norm{ f_{R_{i}} }_{\avL^{p}(B(x,\delta^{-s}))}
\\ &\leq
L^{p}_{x\in \R^{\calD}} \avprod \sum_{J \in \Part[R_{i}]{\delta^{b}}} \norm{ f_{J} }_{\avL^{p}(B(x,\delta^{-s}))}
\\ &\lesssim
\delta^{-bd-(s-b)n_{k}(1/t-1/p)} L^{p}_{x\in \R^{\calD}} \avprod \avell^{t}_{J \in \Part[R_{i}]{\delta^{b}}} \norm{ f_{J} }_{\avL^{t}(w_{B(x,\delta^{-s})})}
\\ &\leq
\delta^{-Cb} \tA_{*}(b).
\end{split}
\end{equation}
Here we have used the reverse H\"older inequality (Corollary~\ref{cor:rev-holder}) to estimate the $\avL^{p}$ norm by the $\avL^{t}$ norm at the cost of increasing the weight.

By Theorem~\ref{thm:multilinear-to-linear} and the assumption that $\eta > \tGamma'$, the estimate \eqref{eq:multlin<A} implies that
\begin{equation}
\label{eq:a*:BG}
\eta \leq Cb + a_{*}(b).
\end{equation}

\subsection{Ball inflation}
\label{sec:ball-inflation}
\begin{lemma}[Ball inflation]
\label{lem:ball-inflation}
Let $1\le l < k$, $1 \leq t < \infty$, and $0 < p \leq t \frac{n_{k}}{n_{l}}$.
Let $\rho \leq K^{-1}$ and let $B \subset \R^{\calD}$ be a ball of radius $\rho^{-(l+1)}$.
Then we have
\begin{equation}
\label{eq:ball-inflation}
\avL^{p}_{x \in B} \avprod \ell^{t}_{J \in \Part[R_i]{\rho}} \norm{f_{J}}_{\avL^{t}(w_{B(x,\rho^{-l})})}\\
\lesssim \nu^{-n_{l}/(t n_{k})}
\avprod \ell^{t}_{J \in \Part[R_i]{\rho}} \norm{f_{J}}_{\avL^{t}(w_B)}
\end{equation}
\end{lemma}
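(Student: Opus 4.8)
\textbf{Proof plan for the ball inflation lemma (Lemma~\ref{lem:ball-inflation}).}

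The plan is to reduce the statement to the endpoint multilinear Kakeya--Brascamp--Lieb inequality from \cite{arxiv:1807.09604}, exploiting the transversality condition encoded in Definition~\ref{0713fdefi1.2}. First I would decompose each $f_J$ (with $J \in \Part[R_i]{\rho}$) according to a tiling of the frequency parallelepiped $\Uncert(J)$: after the affine rescaling adapted to $R_i$, the set $\Uncert(J)$ is essentially a box of dimensions comparable to $\rho^{|\bfj|}$ in the $\bfj$-th coordinate, which at spatial scale $\rho^{-(l+1)}$ can be partitioned into pieces that are plates whose dual boxes are comparable to $\rho^{-l} \times \dots$ times the $(l)$-th order tangent space $V^{(l)}(x_J)$ at a center $x_J \in R_i$. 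The point is that modulo rapidly decaying tails, $|f_J|$ restricted to a ball of radius $\rho^{-l}$ is controlled by an average of characteristic functions of translates of tubes dual to $V^{(l)}(x_J)$, so that $\|f_J\|_{\avL^t(w_{B(x,\rho^{-l})})}$ is (up to constants) a spatially averaged version of a function built from such tubes.

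The core step is then to apply the Brascamp--Lieb/multilinear Kakeya inequality to the $M$-fold average $\avprod$ over the transverse directions $R_1,\dots,R_M$. Concretely, after the reduction of the previous paragraph, the left-hand side of \eqref{eq:ball-inflation} raised to an appropriate power becomes an integral over $B$ of $\prod_{i=1}^M g_i(\pi_i(x))^{n_k/(n_l M)}$-type expressions, where $\pi_i$ is the orthogonal projection onto $V^{(l)}(x_{R_i})$ and $g_i$ aggregates the contributions of the $f_J$ with $J \subseteq R_i$; the exponent matching works precisely because $0 < p \le t\, n_k/n_l$, which is what lets us interpolate the needed $L^{n_k/n_l}$-type Kakeya bound with a trivial $L^1$ bound. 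The Brascamp--Lieb constant of the data $(V^{(l)}(x_{R_i}))_{i=1}^M$ is at most $\nu^{-1}$ by $\nu$-transversality, which produces exactly the factor $\nu^{-n_l/(t n_k)}$ after taking the $1/t$-th power on each side. Passing from the sharp $L^{n_k/n_l}$ Kakeya--Brascamp--Lieb inequality to the weighted, localized form with weights $w_B$ is handled by the now-standard approximation of $\one_B$ by $w_{B,E}$ together with Lemma~\ref{lem:1-w}; the localization to balls of radius $\rho^{-l}$ inside $B$ of radius $\rho^{-(l+1)}$ is compatible with the uncertainty principle at these scales.

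The main obstacle I expect is bookkeeping the geometry carefully enough that the tubes one feeds into the Kakeya inequality genuinely point in the directions $V^{(l)}(x_{R_i})$ uniformly over the choice of center in $R_i$ (not just at one point): this is where one must invoke that $\dim V^{(l)}(t) = n_l$ for all $t$ and that the tangent spaces vary slowly on cubes of side $K^{-1}$, so that $V^{(l)}(x_J) \approx V^{(l)}(x_{R_i})$ for $J \subseteq R_i$ up to a bounded tilt that can be absorbed into the bounded-overlap and weight constants. A secondary technical point is that $f_J$ is not literally a sum over tubes but has Schwartz tails; handling this via the weights $w_{B(x,\rho^{-l})}$ and the triangle inequality in $\ell^t$ is routine but must be done before the Kakeya inequality is applied. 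Once the geometric setup is in place, the inequality itself is a direct citation of the endpoint result in \cite{arxiv:1807.09604} combined with Theorem~\ref{thm:bcct} via $\nu$-transversality.
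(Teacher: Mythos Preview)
Your overall strategy---reduce to the endpoint Kakeya--Brascamp--Lieb inequality (Theorem~\ref{thm:KBL}) and invoke $\nu$-transversality---is correct and is what the paper does. Two aspects of your plan, however, differ from the paper's execution and one of them is a genuine misidentification of the difficulty.

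First, the frequency decomposition of $f_{J}$ into plates is unnecessary. The paper never subdivides $\Uncert(J)$. Instead it works directly with the function $x \mapsto \norm{f_{J}}_{\avL^{t}(w_{B(x,\rho^{-l})})}$: for each $J$ it covers $B$ by tiles $T_{J}$ (boxes of dimensions $\rho^{-l}$ in the $V^{(l)}(t_{J})$ directions and $\rho^{-(l+1)}$ in the orthogonal directions), sets $F_{J}(x) := \sup_{y \in T_{J}(x)} \norm{f_{J}}_{\avL^{t}(w_{B(y,\rho^{-l})})}$, and feeds the tile-constant functions $F_{J}$ directly into Theorem~\ref{thm:KBL}. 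The substance is then reduced to the single pointwise bound $\norm{F_{J}}_{\avL^{t}(2B)} \lesssim \norm{f_{J}}_{\avL^{t}(w_{B})}$, which is verified by writing $f_{J} = f_{J} * \psi_{J}$ with $\widehat{\psi_{J}} \equiv 1$ on $\Uncert(J)$ and using $\abs{\psi_{J}} * w_{B(y,\rho^{-l})} \lesssim \tilde{w}_{J} * w_{B(x,\rho^{-l})}$ for $y$ in the tile through $x$. No wave-packet decomposition appears; your approach would also work but is a detour.

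Second, and more importantly, the ``main obstacle'' you anticipate---arguing that $V^{(l)}(x_{J}) \approx V^{(l)}(x_{R_{i}})$ uniformly over $J \subseteq R_{i}$---is not an obstacle at all, because Theorem~\ref{thm:KBL} is stated for \emph{families} $\calV_{i}$ of subspaces, not for a single subspace per index $i$. One takes $\calV_{i} = \Set{ V^{(l)}(t_{J}) \given J \in \Part[R_{i}]{\rho} }$; the hypothesis $\sup_{V_{j} \in \calV_{j}} \BL((V_{j})) \le \nu^{-1}$ is exactly the content of Definition~\ref{0713fdefi1.2}, which bounds the BL constant for \emph{every} choice $x_{j} \in R_{j}$. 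So there is no need to tilt or approximate tangent spaces. Your plan as written seems to aim at the classical single-subspace Brascamp--Lieb inequality (Theorem~\ref{thm:bcct}), which would indeed require such an approximation; using the Kakeya version removes this issue entirely. (Your reduction to $p = t\, n_{k}/n_{l}$ via monotonicity of the left side in $p$ matches the paper exactly.)
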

Lemma~\ref{lem:ball-inflation} extends \cite[Theorem 6.6]{MR3548534}, \cite[Lemma 6.5]{MR3709122}, and \cite[Lemma 4.4]{arxiv:1804.02488} with an almost identical proof.
The additional flexibility in the choice of exponents allows us to also handle smaller values of $p$ by the same argument as large values.
Some of the cited results feature $\ell^{q}L^{t}$ ball inflation for $q<t$.
These results can be recovered from the $\ell^{t}L^{t}$ ball inflation inequality in Lemma~\ref{lem:ball-inflation}, see Corollary~\ref{cor:ball-inflation:ell-r}.

The proof of Lemma~\ref{lem:ball-inflation} relies on an extension of the Brascamp--Lieb inequality \eqref{eq:BL-const-def}, in which each subspace $V_{j}$ is replaced by a family of subspaces.
This extension goes back to \cite[Theorem 1.15]{MR2275834}, and a version that is sufficiently general for our needs was first obtained in \cite[Theorem 1.2]{MR3783217}.
It is nevertheless more convenient to use an endpoint result, Theorem~\ref{thm:KBL} below, which was first obtained using methods from \cite{MR2746348} in \cite[Theorem 8.1]{MR3738255}.
We refer to \cite{arxiv:1807.09604} for a newer exposition of its proof that incorporates the insights of \cite{MR3019726}.

\begin{theorem}[Kakeya--Brascamp--Lieb]
\label{thm:KBL}
Let $M$ be a positive integer and $1\leq l < k$.
For $1\le j\le M$, let $\calV_{j}$ be a family of linear subspaces of $\R^{n_{k}}$ of dimension $n_l$.
For a linear subspace $V \subseteq \R^{n_{k}}$, let $\pi_{V}: \R^{n_{k}}\to V$ denote the orthogonal projection onto $V$.
Assume that
\[
A := \sup_{V_{1} \in \calV_{1}, \dotsc, V_{M} \in \calV_{M}}\BL((V_{j})_{j=1}^{M}) < \infty.
\]
Then, for any non-negative integrable functions $f_{j,V_{j}} : V_{j} \to \R$ with $V_{j}\in\calV_{j}$, we have
\begin{equation}
\label{eq:KBL}
\int_{\R^{n_{k}}} \prod_{j=1}^M \Bigl( \sum_{V_{j} \in \calV_{j}} f_{j,V_{j}} (\pi_{V_{j}} (x)) \Bigr)^{\frac{n_{k}}{n_{l} M}} \dif x
\lesssim
A \prod_{j=1}^M \Bigl( \sum_{V_{j} \in \calV_{j}} \int_{V_{j}} f_{j,V_{j}} (x) \dif x \Bigr)^{\frac{n_{k}}{n_{l} M}}.
\end{equation}
\end{theorem}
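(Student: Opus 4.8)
The plan is to deduce Theorem~\ref{thm:KBL} from the exact Brascamp--Lieb inequality \eqref{eq:BL-const-def} by the polynomial method of Guth \cite{MR2746348}; this is essentially the route of \cite[Theorem 8.1]{MR3738255}, which I would carry out in the streamlined form of \cite{MR3019726, arxiv:1807.09604}.

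First I would reduce \eqref{eq:KBL} to a finite incidence inequality. Approximating, one may assume that the families $\calV_{j}$ are finite, and by a standard reduction (via the layer-cake representation) it suffices to treat $f_{j,V_{j}} = \one_{E_{j,V_{j}}}$ for measurable $E_{j,V_{j}} \subseteq V_{j}$. Discretising $\R^{n_{k}}$ into unit cubes, the sets $\pi_{V_{j}}^{-1}(E_{j,V_{j}})$ become unions of ``plates'' --- unit neighbourhoods of affine translates of the $(n_{k}-n_{l})$-dimensional orthogonal complements $V_{j}^{\perp}$ --- and, writing $\mathbb{T}_{j}$ for the resulting family of plates in direction $j$ and $Q$ for a unit cube, \eqref{eq:KBL} takes the schematic form
\[
\sum_{Q} \avprod \Bigl( \# \Set{ T \in \mathbb{T}_{j} \given Q \cap T \neq \emptyset } \Bigr)^{n_{k}/n_{l}}
\lesssim
A \avprod (\# \mathbb{T}_{j})^{n_{k}/n_{l}},
\]
where $\# \mathbb{T}_{j}$ plays the role of $\sum_{V_{j} \in \calV_{j}} \int_{V_{j}} f_{j,V_{j}}$; the essential point is that the only geometric hypothesis is the uniform bound $A = \sup \BL((V_{j})_{j=1}^{M})$ on the Brascamp--Lieb constants of the direction data, and that the non-degeneracy forced by $A < \infty$ confines the whole configuration to a bounded region, so the count is genuinely finite.

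Next I would run the polynomial partitioning induction on the ambient dimension $n_{k}$. Denoting the left-hand side by $S$, the polynomial ham-sandwich theorem provides a polynomial $P$ of appropriately chosen degree whose zero set bisects the weighted count $\sum_{Q}(\cdots)$ at every stage of an iterated decomposition. The unit cubes lying in the open cells of $\R^{n_{k}} \setminus Z_{P}$ then carry at most half of $S$ in each cell, so iterating and summing a geometric series bounds their total contribution by the trivial single-cube estimate. For the cubes within $O(1)$ of $Z_{P}$ one separates, in each direction $j$, the plates transverse to $Z_{P}$ --- each meeting only $O(\deg P)$ of the relevant cubes by B\'ezout's theorem --- from those nearly tangent to $Z_{P}$; projecting the latter along a generic line gives a configuration of the same type in $\R^{n_{k}-1}$ whose Brascamp--Lieb constant is still $\lesssim A$, by the stability of Brascamp--Lieb constants under linear changes of variable (\cite{MR3783217, MR3723636}), so that the inductive hypothesis applies. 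Optimising $\deg P$ then closes the induction and yields \eqref{eq:KBL} with the asserted first power of $A$. One could instead substitute the Borsuk--Ulam argument of \cite{MR3019726} for the polynomial partitioning step.

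I expect the main obstacle to be obtaining only a \emph{single} power of $A$ in \eqref{eq:KBL}. The naive approach --- partitioning each $\calV_{j}$ into subfamilies on which the subspace is essentially constant and invoking \eqref{eq:BL-const-def} for each choice of subfamilies --- loses a power of $\#\calV_{j}$ upon recombination, so it does not suffice. Circumventing this loss is exactly what the polynomial (or Borsuk--Ulam) method achieves, but it becomes available only once one knows that the Brascamp--Lieb constant stays $\lesssim A$ under the localisations performed in the induction (restriction to cells and projection near the zero set); this is precisely what the cited continuity and stability properties of Brascamp--Lieb constants guarantee.
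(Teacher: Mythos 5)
Theorem~\ref{thm:KBL} is not proved in the paper at all; it is quoted as a black box, with \cite[Theorem 8.1]{MR3738255} cited as the source and \cite{arxiv:1807.09604} as a more recent exposition that incorporates the Borsuk--Ulam insight of \cite{MR3019726}. Your sketch identifies the same sources and the same polynomial-partitioning strategy, so you and the paper are in agreement about where this result comes from; but since the paper contains no proof of its own, there are no details of the authors' to compare against, and your proposal is best read as an outline of the external literature rather than as something parallel to anything in this paper.

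On the sketch itself, one assertion deserves a concrete caveat. You claim that the inductive step --- separating transverse from tangent plates near $Z_P$ and projecting the tangent family to a configuration in $\R^{n_k-1}$ --- preserves the bound $\BL \lesssim A$ because of the continuity and stability results in \cite{MR3783217, MR3723636}. That does not follow. Those references establish continuity and local boundedness of the Brascamp--Lieb constant as a function of the datum $(V_j)_j$ for a \emph{fixed} ambient dimension and a \emph{fixed} exponent tuple; the inductive step you describe changes both the ambient dimension and the data simultaneously. What actually closes the dimensional induction in \cite{MR3738255} and \cite{arxiv:1807.09604} is a separate structural statement about how the Brascamp--Lieb datum restricts along a hypersurface (or, in the Borsuk--Ulam variant, how the associated functional factors under the decomposition), and this has to be proved, not imported from the continuity literature. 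This is the one place where your sketch makes a substantive claim beyond ``follow \cite{MR3738255}'', and it is the weak point: a reader trying to fill in the details with only the tools you cite would get stuck precisely there.
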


\begin{proof}[Proof of Lemma~\ref{lem:ball-inflation}]
Since the left-hand side of \eqref{eq:ball-inflation} is monotonically increasing in $p$, it suffices to consider $p = t \frac{n_{k}}{n_{l}}$.
In this case, $\frac{p}{t M} = \frac{n_{k}}{n_{l} M}$.
Our goal is to control the expression
\begin{equation}
\label{ne4}
\fint_{x\in B} \prod_{i=1}^{M} \Bigl( \sum_{J_i} \norm{ f_{J_i}  }_{\avL^{t}(w_{B(x,\rho^{-l})})}^t \Bigr)^{\frac{n_{k}}{n_{l} M}},
\end{equation}
where $\fint_{B} := \abs{B}^{-1}\int_{B}$ denotes the average integral, and where $J_{i}$ ranges over $\Part[R_{i}]{\rho}$.
For each cube $J \in \Part[R_{i}]{\rho}$ with center $t_{J}$, we cover $B$ with a family $\calT_J$ of disjoint tiles $T_J$, which are rectangular boxes with $n_{l}$ short sides of length $\rho^{-l}$ pointing in the directions of $V_l(t_J)$ and $n_{k}-n_{l}$ longer sides of length $\rho^{-l-1}$ pointing in the directions $V_l(t_J)^{\perp}$.
Moreover, we can assume that these tiles are contained in the ball $3B$.
We let $T_J(x)$ be the tile containing $x$, and for $x\in \cup_{ T_J\in\calT_J} T_J$ we define
\[
F_J(x) := \sup_{y \in T_J(x)} \norm{ f_J  }_{\avL^{t}(w_{B(y, \rho^{-l})})}.
\]
Then
\[
\fint_{x\in B} \prod_{i=1}^{M} \Bigl( \sum_{J_i} \norm{ f_{J_i}  }_{\avL^{t}(w_{B(x,\rho^{-l})})}^t \Bigr)^{\frac{n_{k}}{n_{l} M}}
\leq
\fint_{B} \prod_{i=1}^{M} \bigl( \sum_{J_i} F_{J_i}^t \bigr)^{\frac{n_{k}}{n_{l} M}}.
\]
Since the function $F_J$ is constant on each tile $T_J\in\calT_J$, it can be treated as if it was constant in the direction $V_{l}(t_{J})^{\perp}$ on $B$.
By Theorem~\ref{thm:KBL}, we obtain
\begin{equation}
\label{eq:5}
\fint_{B} \prod_{i=1}^{M} \bigl( \sum_{J_i} F_{J_i}^t \bigr)^{\frac{n_{k}}{n_{l} M}}
\lesssim
\nu^{-1} \prod_{i=1}^{M} \Bigl( \sum_{J_i} \fint_{2B} F_{J_i}^t \Bigr)^{\frac{n_{k}}{n_{l} M}}.
\end{equation}
The most convenient way to justify the inequality \eqref{eq:5} is to observe that it is scale-invariant, so we may think of $B$ as a ball of radius $1$ and of $F_{J}$ as functions that are constant in the direction $V_{l}(t_{J})^{\perp}$ on $B$.
Then the average integrals in \eqref{eq:5} can be replaced by non-normalized integrals.
The only difference from \eqref{eq:KBL} is then that the integrals on the right-hand side of \eqref{eq:5} are taken over a subset of $\R^{n_{k}}$ rather than $V_{l}(t_{J})$, but this does not make a difference, since the integration domain has unit size.

It remains to check that for each $J=J_i$ we have
\begin{equation}
\label{FJbound}
\norm{ F_J }_{\avL^{t}(2B)}
\lesssim
\norm{ f_J  }_{\avL^{t}(w_B)}.
\end{equation}
Once this is established, it follows that \eqref{ne4} is dominated by
\[
\nu^{-1} \Bigl[ \avprod \bigl( \sum_{J_i} \norm{f_{J_i}}_{\avL^{t}(w_B)}^t \bigr)^{1/t} \Bigr]^{p},
\]
as desired.

In order to prove \eqref{FJbound}, fix a Schwartz function $\psi$ on $\R^{\calD}$ such that $\widehat{\psi} \equiv 1$ on $[-2,2]^{\calD}$.
Recall \eqref{eq:affine-scaling:space} and let
\[
(L_{J,\mathrm{scale},l}(x))_{\bfj} := \rho^{\min(\abs{\bfj},l+1)} x_{\bfj}.
\]
Define $L^{1}$ normalized bump functions $\psi_{J}$ by
\[
\widehat{\psi_{J}}(\xi) := \widehat{\psi}(L_{J,\mathrm{scale},l}^{-*}L_{J,\mathrm{shear}}^{-*}(\xi - ((c_{J})^{\bfi})_{\bfi})).
\]
Then $\widehat{\psi_{J}} \equiv 1$ on $\Uncert(J)$, so that $f_{J} = f_{J} * \psi_{J}$.
Moreover, $\widehat{\psi_{J}}$ has moral Fourier support of size $\gtrsim \rho^{l}$ in the directions of $V_{l}(t_{J})$ and of size $\approx \rho^{l+1}$ in the orthogonal directions.
Fix $x=(x_\gamma)_{\gamma\in\Dset}$ and $y\in T_J(x)$.
Then
\begin{align*}
\norm{ f_{J}  }_{L^t(w_{B(y, \rho^{-l})})}^t
&=
\int \abs{f_{J} * \psi_{J}}^t(u) w_{B(y, \rho^{-l})}(u) \dif u
\\ &\leq
\norm{\psi_{J}}_{L^{1}}^{t-1}
\int (\abs{f_{J}}^{t} * \abs{\psi_{J}})(u) w_{B(y, \rho^{-l})}(u) \dif u
\\ &\lesssim
\int \abs{f_{J}}^{t}(u) (\abs{\psi_{J}}*w_{B(y, \rho^{-l})})(u) \dif u.
\end{align*}
Now, $\abs{\psi_{J}}*w_{B(y, \rho^{-l})} \lesssim \tilde{w}_{J}*w_{B(x, \rho^{-l})}$, where $\tilde{w}_{J}$ is an $L^{1}$ normalized cutoff function centered at $0$ adapted to the dimensions of $T_{J}(x)$.
Taking a supremum over $y$, we obtain
\[
F_{J}(x)^t
\lesssim
\int \abs{f_{J}}^{t}(u) (\tilde{w}_{J}*w_{B(x, \rho^{-l})})(u) \dif u.
\]
Integrating in $x$, we obtain \eqref{FJbound}.
\end{proof}

Note that for $1\leq l < k$ we have
\begin{equation}
\label{eq:alpha-ineq}
\frac{1}{t_{l}}
\geq
\frac{\alpha_l}{t_{l+1}}+\frac{1-\alpha_l}{q_{l}}.
\end{equation}
Indeed, in the case $t_{l} = 2$ this is immediate, while in the case $t_{l} = p n_{l}/n_{k}$ we can apply \eqref{eq:alpha}.

For $1 \leq l < k$, by Lemma~\ref{lem:ball-inflation} with $\rho=\delta^{b}$ and H\"older's inequality together with \eqref{eq:alpha-ineq} in place of the usual scaling condition (this inequality suffices because we are dealing with norms on normalized measure spaces), we obtain
\begin{equation}
\begin{split}
\label{eq:est1}
\tA_{t(l)}(b)
& =
L^{p}_{x\in\R^{\Dset}} \avprod \avell^{t_{l}}_{J \in \Part[R_i]{\delta^{b}}} \norm{f_{J} }_{\avL^{t_{l}}(w_{B(x,\delta^{-lb})})}
\\ & =
L^{p}_{x'\in\R^{\Dset}} \avL^{p}_{x\in B(x',\delta^{-(l+1)b})} \avprod \avell^{t_{l}}_{J \in \Part[R_i]{\delta^{b}}} \norm{f_{J} }_{\avL^{t_{l}}(w_{B(x,\delta^{-lb})})}
\\ &\lesssim
L^{p}_{x'\in\R^{\Dset}} \avprod \avell^{t_{l}}_{J \in \Part[R_i]{\delta^{b}}} \norm{f_{J} }_{\avL^{t_{l}}(w_{B(x',\delta^{-(l+1)b})})}
\\ &=
\tA_{p,t_{l}}(b,(l+1)b)
\\ & \leq
\tA_{p, t_{l+1}}(b,(l+1)b)^{\alpha_{l}}
\tA_{p, q_{l}}(b,(l+1)b)^{1-\alpha_{l}}
\\ & =
\tA_{t(l+1)}(b)^{\alpha_{l}} \tA_{q(l)}(b)^{1-\alpha_{l}}.
\end{split}
\end{equation}
This implies
\begin{equation}
\label{eq:a*:ball-inflation}
a_{t(l)}(b) \leq \alpha_{l} a_{t(l+1)}(b) + (1-\alpha_{l}) a_{q(l)}(b).
\end{equation}
This is the first family of inequalities that we will be iterating.

\subsection{Lower degree decoupling}
The second type of estimate does not use transversality.
Instead, we just apply a (lower-degree) linear decoupling inequality on each cube $J \in \Part[\delta^{b}]{R_{j}}$ individually.

Let $1 \leq l < k$.
Similarly to \eqref{eq:alpha-ineq}, we have
\begin{equation}
\label{eq:beta-ineq}
\frac{1}{q_{l}}
\geq
\frac{1-\beta_l}{q_{l-1}}+\frac{\beta_l}{t_{l}}.
\end{equation}
Using Corollary~\ref{cor:scaled+loc-decoupling} with $(\delta^{b},\delta^{(l+1)b/l})$ in place of $(\sigma,\delta)$, the inductive hypothesis that the decoupling inequality holds with exponent \eqref{eq:tGamma-recursion} with $\calD$ replaced by $\calD_{l}$, and H\"older's inequality with \eqref{eq:beta-ineq}, we obtain
\begin{equation}
\begin{split}
\label{eq:est2}
\tA_{q(l)}(b)
&=
L^{p}_{x\in\R^{\Dset}} \avprod \avell^{q_{l}}_{J \in \Part[R_i]{\delta^{b}}} \norm{f_{J} }_{\avL^{q_{l}}(w_{B(x,\delta^{-(l+1)b})})}
\\ &\lesssim_{\epsilon}
\delta^{-b (\tGamma_{\calD_{l}}(q_{l})+\epsilon)/l}
L^{p}_{x\in\R^{\Dset}} \avprod \avell^{q_{l}}_{J \in \Part[R_i]{\delta^{(l+1)b/l}}} \norm{f_{J} }_{\avL^{q_{l}}(w_{B(x,\delta^{-(l+1)b})})}
\\ &=
\delta^{-b (\tGamma_{\calD_{l}}(q_{l})+\epsilon)/l}
\tA_{p,q_{l}}(\frac{l+1}{l} b,(l+1)b)\\
& \leq
\delta^{-b (\tGamma_{\calD_{l}}(q_{l})+\epsilon)/l}
\tA_{p,t_{l}}(\frac{l+1}{l} b,(l+1)b)^{\beta_{l}}
\tA_{p,q_{l-1}}(\frac{l+1}{l} b,(l+1)b)^{1-\beta_{l}}\\
& =
\delta^{-b (\tGamma_{\calD_{l}}(q_{l})+\epsilon)/l}
\tA_{t(l)}(\frac{(l+1)b}{l})^{\beta_{l}} \tA_{q(l-1)}(\frac{(l+1)b}{l})^{1-\beta_{l}}.
\end{split}
\end{equation}
This implies
\begin{equation}
\label{eq:a*:lower-deg-dec}
a_{q(l)}(b) \leq
\frac{b}{l}\tGamma_{\calD_{l}}(q_{l})
+ \beta_{l} a_{t(l)}((l+1)b/l)
+ (1-\beta_{l}) a_{q(l-1)}((l+1)b/l)
\end{equation}
for $0<b<l/(l+1)$.
This is the second family of inequalities that we will be iterating.

\subsection{Exiting the iterative procedure: linear decoupling}
We can use H\"older's inequality to eliminate all multilinearity and use the rescaled linear decoupling estimate, Lemma~\ref{lem:scaled-decoupling}.
For $1 \leq t \leq p$ and $1 \leq l \leq k$, this gives the bound
\begin{equation}
\label{eq:A<prod}
\begin{split}
\tA_{p,t}(b,s)
&=
L^{p}_{x} \avprod \avell^{t}_{J \in \Part[R_{i}]{\delta^{b}}} \norm{ f_{J} }_{\avL^{t}(w_{B(x,\delta^{-lb})})}
\\ &\leq
\avprod \avell^{t}_{J \in \Part[R_{i}]{\delta^{b}}} L^{p}_{x} \norm{ f_{J} }_{\avL^{p}(w_{B(x,\delta^{-lb})})}
\\ &=
\avprod \avell^{p}_{J \in \Part[R_{i}]{\delta^{b}}} \norm{ f_{J} }_{L^{p}(\R^{\calD})}
\\ &\leq
\avDec(\calD,p,\delta^{1-b})
\avprod \avell^{p}_{J \in \Part[R_{i}]{\delta}} \norm{ f_{J} }_{p}
\\ &\lesssim_{\epsilon}
\delta^{-\eta(1-b)-\epsilon}
\avprod \avell^{p}_{J \in \Part[R_{i}]{\delta}} \norm{ f_{J} }_{p}.
\end{split}
\end{equation}
The factor $\delta^{\eta b}$ is a gain over the trivial estimate for \eqref{eq:multilin-dec-const-KM} that arises if one starts with H\"older's inequality on the left-hand side.
This shows
\begin{equation}
\label{eq:a*:linear-dec}
a_{*}(b) \leq \eta(1-b).
\end{equation}

\subsection{Reduction to a finite system of inequalities}
So far we have obtained several families of inequalities for the quantities \eqref{eq:a_*}, the most important of which are summarized in Figure~\ref{fig:graph}.

\begin{figure}
\centering
\begin{tikzpicture}[bend angle=20,auto,swap]
\matrix[row sep=10mm,column sep=16mm] {
% First row:
\node (q1) {$q(1)$}; & \node (q2) {$q(2)$}; & \node (q3) {$q(3)$}; & \node (qdots) {\dots}; & \node (qk-1) {$q(k-1)$}; & \\
% Second row:
\node (p1) {$t(1)$}; & \node (p2) {$t(2)$}; & \node (p3) {$t(3)$}; & \node (pdots) {\dots}; & \node (pk-1) {$t(k-1)$}; & \node (pk) {$t(k)$}; \\
};
\draw [->] (q1) edge [bend right] node {$\frac{2}{1}$} (p1)
(q2) edge [bend right] node {$\frac{3\beta_{2}}{2}$} (p2)
(q3) edge [bend right] node {$\frac{4\beta_{3}}{3}$} (p3)
(qk-1) edge [bend right] node {$\frac{k\beta_{k-1}}{k-1}$} (pk-1)
(q2) edge node {$\frac{3 (1-\beta_{2})}{2}$} (q1)
(q3) edge node {$\frac{4 (1-\beta_{3})}{3}$} (q2)
(qdots) edge (q3)
(qk-1) edge (qdots)
(p1) edge node {$\alpha_{1}$} (p2)
(p2) edge node {$\alpha_{2}$} (p3)
(p3) edge (pdots)
(pdots) edge (pk-1)
(pk-1) edge node {$\alpha_{k-1}$} (pk)
(p1) edge [bend right] node {$1-\alpha_{1}$} (q1)
(p2) edge [bend right] node {$1-\alpha_{2}$} (q2)
(p3) edge [bend right] node {$1-\alpha_{3}$} (q3)
(pk-1) edge [bend right] node {$1-\alpha_{k-1}$} (qk-1);
\end{tikzpicture}
\caption{Relations between quantities \eqref{eq:a_*} given by estimates \eqref{eq:a*:ball-inflation} and \eqref{eq:a*:linear-dec}.
  The weights on the edges are the products of the factors in front of $a_{*}$ and $b$ in those estimates.
  The tree of estimates in \cite{MR3548534} is the universal covering of this graph starting in the upper left corner.}
\label{fig:graph}
\end{figure}
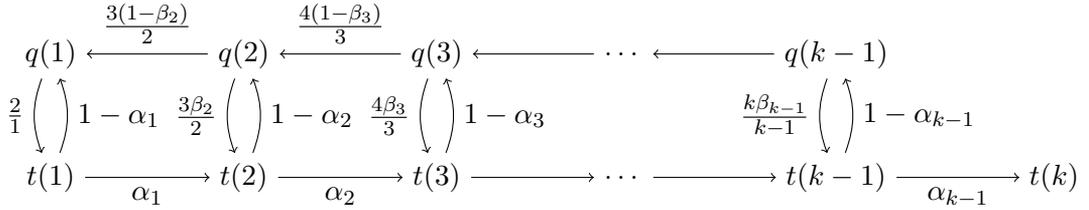

We eliminate the dependence on $b$ by considering the quanitities
\begin{equation}
\label{eq:tilde-a}
\tilde{a}_{*} := \liminf_{b \to 0} \frac{\eta - a_{*}(b)}{b},
\end{equation}
which were introduced in a blog post by Terence Tao.
The linear decoupling \eqref{eq:a*:linear-dec}, linear to multilinear reduction \eqref{eq:a*:BG}, ball inflation \eqref{eq:a*:ball-inflation}, and lower degree decoupling \eqref{eq:a*:lower-deg-dec} inequalities imply
\begin{align}
\label{eq:tilde-a*:linear-dec}
\tilde{a}_{*} &\geq \eta,
\\ \label{eq:tilde-a*:BG}
\tilde{a}_{*} &\leq C,
\\ \label{eq:tilde-a*:ball-inflation}
\tilde{a}_{t(l)} &\geq
\alpha_{l} \tilde{a}_{t(l+1)}
+ (1-\alpha_{l}) \tilde{a}_{q(l)},
& 1 \leq l < k,
\\ \label{eq:tilde-a*:lower-deg-dec}
\tilde{a}_{q(l)} &\geq
- \frac{1}{l}\tGamma_{\calD_{l}}(q_{l})
+ \frac{l+1}{l} \bigl( \beta_{l} \tilde{a}_{t(l)}
+ (1-\beta_{l}) \tilde{a}_{q(l-1)} \bigr),
& 1 \leq l < k.
\end{align}
Notice that $\tGamma_{\calD_{l}}(q_{l}) \leq \tGamma'' < \eta$ for $1 \leq l < k$.
In the case $l=k-1$ this holds by definition \eqref{eq:tGamma-recursion}, and other cases follow by an inductive argument.
Hence the last estimate \eqref{eq:tilde-a*:lower-deg-dec} implies
\begin{equation}
\label{eq:tilde-a*:lower-deg-dec'}
\tilde{a}_{q(l)} \geq
- \tGamma''
+ \frac{l+1}{l} \bigl( \beta_{l} \tilde{a}_{t(l)}
+ (1-\beta_{l}) \tilde{a}_{q(l-1)} \bigr).
\end{equation}
We claim that the inequalities \eqref{eq:tilde-a*:linear-dec}, \eqref{eq:tilde-a*:BG}, \eqref{eq:tilde-a*:ball-inflation}, and \eqref{eq:tilde-a*:lower-deg-dec'} imply $\eta \leq \tGamma''$.

The inequalities \eqref{eq:tilde-a*:ball-inflation} and \eqref{eq:tilde-a*:lower-deg-dec'} form a linear system of the form
\begin{equation}
\label{eq:linear-ineq-system}
\tilde{\bfa} \geq \calM \tilde{\bfa} + v_{1} + v_{2}\tilde{a}_{t(k)},
\end{equation}
where $\tilde{\bfa}=(\tilde{a}_{q(1)},\dotsc,\tilde{a}_{q(k-1)},\tilde{a}_{t(1)},\dotsc,\tilde{a}_{t(k-1)})$ and $\calM$ is a $2(k-1) \times 2(k-1)$-matrix given by
\begin{align*}
(\calM v)_{q(l)} &:= (1-\alpha_{l})v_{t(l)} + \frac{l+2}{l+1} (1-\beta_{l+1})v_{q(l+1)},
&& 1 \leq l \leq k-2\\
(\calM v)_{q(l)} &:= (1-\alpha_{l})v_{t(l)},
&& l = k-1\\
(\calM v)_{t(l)} &:= \frac{l+1}{l} \beta_{l} v_{q(l)} + \alpha_{l-1} v_{t(l-1)},
&& 2 \leq l \leq k-1\\
(\calM v)_{t(l)} &:= \frac{l+1}{l} \beta_{l} v_{q(l)},
&& l = 1.
\end{align*}
Recall that $\alpha_{l}$ and $\beta_{l}$ were defined in \eqref{eq:alpha} and \eqref{eq:beta}, respectively.
Here we also use the conventions $\beta_{1}=1$, $\alpha_{0}=0$, $\beta_{k}=1$.

It is an observation going back to \cite{MR3479572} in the context of Vinogradov's mean value theorem, and made more explicit in \cite{arxiv:1512.03272}, that the matrix $\calM$ should have a positive eigenvalue that is $\geq 1$ in order to extract useful information from \eqref{eq:linear-ineq-system}.
In our case, the matrix $\calM$ is irreducible (with period $2$, as can be seen upon removing the vertex $t(k)$ in Figure~\ref{fig:graph}) and has non-negative entries.
The Perron--Frobenius theorem tells that $\calM$ has a unique positive right eigenvector, which dominates its asymptotics in the sense that the corresponding eigenvalue equals the spectral radius of $\calM$.
Since we can compute this so-called Perron--Frobenius eigenvector explicitly, we will not actually have to apply Perron--Frobenius theory, but it motivated our approach.
\begin{theorem}
\label{thm:PF-eigenvector}
Let $v \in \R^{2(k-1)}$ be the vector given by
\begin{equation}
\label{eq:PF-EV}
v_{q(l)} := \frac{\calK_{l}}{l+1},
\quad
v_{t(l)} := n_{l}
\end{equation}
for $1 \leq l < k$.
Then $\calM v=v$.
\end{theorem}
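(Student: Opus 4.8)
The statement is an explicit algebraic identity, so the plan is to verify $\calM v = v$ one coordinate at a time. The workhorse is the elementary identity
\[
\calK_{l} - \calK_{l-1} = l\,(n_{l} - n_{l-1}), \qquad 1 \leq l \leq k,
\]
where $n_{0} = \calK_{0} = 0$; it holds because each $\bfi \in \calD$ with $\abs{\bfi} = l$ contributes exactly $l$ to the left-hand side and there are $n_{l} - n_{l-1}$ such $\bfi$. In particular $\calK_{1} = n_{1}$. Moreover $\calD$, being a down-set of degree $k$, contains an element of degree $l$ for every $1 \leq l \leq k$, so $0 = n_{0} < n_{1} < \dots < n_{k}$ and $0 = \calK_{0} < \dots < \calK_{k}$; combined with \eqref{eq:plpk<nlnk} this makes all the denominators that appear below strictly positive, so that \eqref{eq:alpha} and \eqref{eq:beta} may be solved for $\alpha_{l}$ and $\beta_{l}$.

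The two boundary coordinates are immediate. For $t(1)$ one has $(\calM v)_{t(1)} = 2\beta_{1}v_{q(1)} = 2\cdot\tfrac{\calK_{1}}{2} = \calK_{1} = n_{1} = v_{t(1)}$, using $\beta_{1} = 1$. For $q(k-1)$, solving \eqref{eq:alpha} with $l = k-1$ and using $\calK_{k} - \calK_{k-1} = k(n_{k}-n_{k-1})$ gives $1 - \alpha_{k-1} = \calK_{k-1}/(k\,n_{k-1})$, hence $(\calM v)_{q(k-1)} = (1-\alpha_{k-1})n_{k-1} = \calK_{k-1}/k = v_{q(k-1)}$.

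For the interior coordinates I would exploit a small coincidence of denominators. Solving \eqref{eq:beta} for $\beta_{l}$ and \eqref{eq:alpha} at index $l-1$ for $\alpha_{l-1}$ gives
\[
\beta_{l} = \frac{\tfrac{\calK_{k}}{\calK_{l-1}} - \tfrac{\calK_{k}}{\calK_{l}}}{\tfrac{\calK_{k}}{\calK_{l-1}} - \tfrac{n_{k}}{n_{l}}},
\qquad
\alpha_{l-1} = \frac{\tfrac{\calK_{k}}{\calK_{l-1}} - \tfrac{n_{k}}{n_{l-1}}}{\tfrac{\calK_{k}}{\calK_{l-1}} - \tfrac{n_{k}}{n_{l}}},
\]
which share the denominator $\tfrac{\calK_{k}}{\calK_{l-1}} - \tfrac{n_{k}}{n_{l}}$. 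Clearing it in the identity to be proved, $(\calM v)_{t(l)} = \tfrac{\beta_{l}\calK_{l}}{l} + \alpha_{l-1}n_{l-1} = n_{l}$ for $2 \leq l \leq k-1$, the terms $\tfrac{n_{k}}{n_{l}}\cdot n_{l}$ and $\tfrac{n_{k}}{n_{l-1}}\cdot n_{l-1}$ both collapse to $n_{k}$ and cancel, and what is left is exactly $\calK_{l} - \calK_{l-1} = l(n_{l}-n_{l-1})$. In the same way, for the coordinates $q(l)$ with $1 \leq l \leq k-2$ one solves \eqref{eq:alpha} for $1-\alpha_{l}$ and \eqref{eq:beta} for $1-\beta_{l+1}$; these again share a common denominator $\tfrac{\calK_{k}}{\calK_{l}} - \tfrac{n_{k}}{n_{l+1}}$, and clearing it in $(\calM v)_{q(l)} = (1-\alpha_{l})n_{l} + \tfrac{l+2}{l+1}(1-\beta_{l+1})\tfrac{\calK_{l+1}}{l+2} = \tfrac{\calK_{l}}{l+1}$, after the analogous cancellations $\tfrac{\calK_{k}}{\calK_{l}}\calK_{l} = \tfrac{\calK_{k}}{\calK_{l+1}}\calK_{l+1} = \calK_{k}$ and $\tfrac{n_{k}}{n_{l}}n_{l} = \tfrac{n_{k}}{n_{l+1}}n_{l+1} = n_{k}$, leaves $\calK_{l+1} - \calK_{l} = (l+1)(n_{l+1}-n_{l})$.

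These four cases account for all $2(k-1)$ coordinates of $\calM v = v$, so this finishes the proof. There is no conceptual obstacle; the points needing care are purely organisational — the index bookkeeping, the boundary conventions $\alpha_{0} = 0$, $\beta_{1} = \beta_{k} = 1$, and the verification that the denominators one divides by when inverting \eqref{eq:alpha}--\eqref{eq:beta} do not vanish, which is precisely where the strict monotonicity of $l \mapsto n_{l}$ and $l \mapsto \calK_{l}$ enters.
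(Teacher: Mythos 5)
Your proposal is correct and follows essentially the same route as the paper: verify $\calM v = v$ coordinate by coordinate, solve \eqref{eq:alpha}--\eqref{eq:beta} for $\alpha_l,\beta_l$, and reduce each identity to $\calK_{l+1}-\calK_{l}=(l+1)(n_{l+1}-n_{l})$, which follows from the definition of $\calK$ in \eqref{eq:homdim}. The only cosmetic difference is that you split off the boundary coordinates $t(1)$ and $q(k-1)$ explicitly, while the paper absorbs them via the conventions $\beta_{1}=\beta_{k}=1$ and reduces the $q$- and $t$-equations to a single common identity before invoking the same key fact.
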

Since the \emph{right} eigenvector $v$ in \eqref{eq:PF-EV} is positive, it is in fact the right Perron--Frobenius eigenvector of $\calM$.
The \emph{left} Perron--Frobenius eigenvector of $\calM$ is essentially given by \cite[Lemma 8.2]{arxiv:1804.02488}, but it does not seem to be quite as useful as the right one.

\begin{proof}
We have to verify $v_{q(l)} = (\calM v)_{q(l)}$ for $1\leq l < k$.
Since $\beta_{k}=1$, this is equivalent to
\[
v_{q(l)} = (1-\alpha_{l})v_{t(l)} + \frac{l+2}{l+1} (1-\beta_{l+1})v_{q(l+1)}
\]
for $1 \leq l < k$.
We also have to verify $v_{t(l)} = (\calM v)_{t(l)}$ for $1\leq l < k$.
For $l=1$ this is easy using that $\beta_{1}=1$.
For $1<l<k$ this can be written as
\[
v_{t(l)} = \frac{l+1}{l} \beta_{l} v_{q(l)} + \alpha_{l-1} v_{t(l-1)}.
\]
Substituting the definitions \eqref{eq:PF-EV}, identities that we have to verify can be equivalently written as
\begin{align*}
\calK_{l} - (1-\beta_{l+1})\calK_{l+1}
&= (l+1) (1-\alpha_{l}) n_{l},
& 1 \leq l < k,\\
n_{l+1} - n_{l}\alpha_{l}
&= \frac{1}{l+1} \beta_{l+1} \calK_{l+1},
& 0 \leq l < k-1.
\end{align*}
Using the definition of $\beta_{l+1}$ and $\alpha_{l}$ on the respective left-hand sides, we see that this is equivalent to
\begin{align*}
\beta_{l+1} \frac{n_{k}}{n_{l+1}} \frac{\calK_{l} \calK_{l+1}}{\calK_{k}}
&= (l+1) (1-\alpha_{l}) n_{l},\\
(1-\alpha_{l}) \frac{\calK_{k}}{\calK_{l}} \frac{n_{l} n_{l+1}}{n_{k}}
&= \frac{1}{l+1} \beta_{l+1} \calK_{l+1}.
\end{align*}
Both these identities are equivalent to
\[
\beta_{l+1} n_{k} \calK_{l} \calK_{l+1}
= (l+1) (1-\alpha_{l}) n_{l} n_{l+1} \calK_{k}.
\]
Using
\[
(1-\alpha_{l}) \Bigl( \frac{\calK_{k}}{\calK_{l}} - \frac{n_{k}}{n_{l+1}} \Bigr) = \frac{n_{k}}{n_{l}} - \frac{n_{k}}{n_{l+1}}
\]
and
\[
\beta_{l+1} \Bigl( \frac{n_{k}}{n_{l+1}} - \frac{\calK_{k}}{\calK_{l}} \Bigr) = \frac{\calK_{k}}{\calK_{l+1}} - \frac{\calK_{k}}{\calK_{l}},
\]
we see that our claim becomes equivalent to
\[
\calK_{l+1}-\calK_{l}
= (l+1) (n_{l+1}-n_{l}).
\]
This is a consequence of \eqref{eq:homdim}.
\end{proof}

We will also need the following identity to handle the contribution of the constant terms $\tGamma''$ in \eqref{eq:tilde-a*:lower-deg-dec'}.
\begin{lemma}
\label{lem:total-loss}
Let $d\geq 1$, $k\geq 2$, and let $v$ be given by Theorem~\ref{thm:PF-eigenvector}.
Then
\begin{equation}
\label{eq:total-loss}
\sum_{l=1}^{k-1} \frac{v_{q(l)}}{l}
=
v_{t(k-1)} \alpha_{k-1}.
\end{equation}
\end{lemma}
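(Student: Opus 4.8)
The plan is to evaluate both sides of \eqref{eq:total-loss} in closed form and check that they agree. The natural variables are the level counts $m_{j} := \abs{\calD \cap \level_{j}}$ for $1 \leq j \leq k$: directly from the definitions \eqref{eq:level}, \eqref{eq:sublevel}, \eqref{eq:homdim} together with $n_{l} = \rk(\calD \cap \sublevel_{l})$ and $\calK_{l} = \calK(\calD \cap \sublevel_{l})$, one has $n_{l} = \sum_{j=1}^{l} m_{j}$ and $\calK_{l} = \sum_{j=1}^{l} j m_{j}$ for every $1 \leq l \leq k$. Since $\calD$ has degree $k$, one also records that $m_{k} \geq 1$; this will be the only nontrivial input (and it is needed for $\alpha_{k-1}$ to be well defined at all, since \eqref{eq:alpha} with $l=k-1$ is vacuous when $m_{k}=0$).

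For the left-hand side I would substitute $v_{q(l)} = \calK_{l}/(l+1)$ and $\calK_{l} = \sum_{j \leq l} j m_{j}$, interchange the order of summation, and use the telescoping identity $\sum_{l=j}^{k-1} \frac{1}{l(l+1)} = \frac{1}{j} - \frac{1}{k}$ to obtain
\[
\sum_{l=1}^{k-1} \frac{v_{q(l)}}{l}
=
\sum_{l=1}^{k-1} \frac{\calK_{l}}{l(l+1)}
=
\sum_{j=1}^{k-1} j m_{j} \Bigl( \frac{1}{j} - \frac{1}{k} \Bigr)
=
n_{k-1} - \frac{\calK_{k-1}}{k}.
\]

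For the right-hand side, $v_{t(k-1)} = n_{k-1}$, so I need to compute $n_{k-1}\alpha_{k-1}$. Specializing the defining relation \eqref{eq:alpha} to $l = k-1$ (where $n_{l+1} = n_{k}$, so the first term on the right is simply $\alpha_{k-1}$), solving for $\alpha_{k-1}$, and clearing denominators gives
\[
n_{k-1}\alpha_{k-1}
=
\frac{n_{k-1}\calK_{k} - n_{k}\calK_{k-1}}{\calK_{k} - \calK_{k-1}}.
\]
Substituting $\calK_{k} = \calK_{k-1} + k m_{k}$ and $n_{k} = n_{k-1} + m_{k}$, the numerator collapses to $m_{k}(k n_{k-1} - \calK_{k-1})$ and the denominator to $k m_{k}$; cancelling the factor $m_{k}$ (legitimate because $m_{k} \geq 1$) yields $n_{k-1}\alpha_{k-1} = n_{k-1} - \calK_{k-1}/k$, which matches the value computed for the left-hand side, so \eqref{eq:total-loss} follows.

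Every step is elementary, so I do not anticipate a genuine obstacle; the only points that require care are the bookkeeping in the telescoping sum, the two algebraic collapses on the right-hand side, and the observation that $\deg\calD = k$ guarantees $m_{k} \neq 0$, which is what makes the final cancellation valid.
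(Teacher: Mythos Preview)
Your proof is correct and follows essentially the same approach as the paper: both sides are evaluated to $n_{k-1}-\calK_{k-1}/k$, the left-hand side via the same telescoping sum (the paper writes $n_{j}-n_{j-1}$ where you write $m_{j}$), and the right-hand side by solving \eqref{eq:alpha} for $\alpha_{k-1}$ and simplifying. The only cosmetic difference is that the paper rearranges the expression for $n_{k-1}\alpha_{k-1}$ as $n_{k-1}+n_{k-1}\frac{n_{k}/n_{k-1}-1}{1-\calK_{k}/\calK_{k-1}}$ before simplifying, whereas you clear denominators and cancel the factor $m_{k}$ explicitly; both routes are equally elementary.
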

\begin{proof}
Left-hand side of \eqref{eq:total-loss} equals
\begin{equation*}
\begin{split}
\sum_{l=1}^{k-1} \frac{1}{l(l+1)} \calK_{l}
& =
\sum_{j=1}^{k-1} j (n_{j}-n_{j-1}) \sum_{l=j}^{k-1} \bigl( \frac{1}{l} - \frac{1}{l+1} \bigr)\\
& =
\sum_{j=1}^{k-1} j (n_{j}-n_{j-1}) \bigl( \frac{1}{j} - \frac{1}{k} \bigr)
=
n_{k-1} - \frac{1}{k} \calK_{k-1}.
\end{split}
\end{equation*}
Right-hand side of \eqref{eq:total-loss} equals
\begin{equation*}
\begin{split}
n_{k-1}\alpha_{k-1}
& =
n_{k-1} \frac{\frac{n_{k}}{n_{k-1}} - \frac{\calK_{k}}{\calK_{k-1}}}{1 - \frac{\calK_{k}}{\calK_{k-1}}}
=
n_{k-1} + n_{k-1} \frac{\frac{n_{k}}{n_{k-1}} - 1}{1 - \frac{\calK_{k}}{\calK_{k-1}}}\\
& =
n_{k-1} + \frac{n_{k} - n_{k-1}}{\calK_{k-1} - \calK_{k}} \calK_{k-1}
=
n_{k-1} - \frac{1}{k} \calK_{k-1}
\end{split}
\end{equation*}
as well.
\end{proof}

\subsection{Conclusion of the proof of Theorem~\ref{thm:main}}
We consider the inequality
\[
\sum_{l=1}^{k-1} v_{t(l)} \cdot \eqref{eq:tilde-a*:ball-inflation}
+
\sum_{l=1}^{k-1} v_{q(l)} \cdot \eqref{eq:tilde-a*:lower-deg-dec'},
\]
where $v$ is given by Theorem~\ref{thm:PF-eigenvector}.
Since $v$ is a fixed vector of the matrix $\calM$, the summands $\tilde{a}_{t(l)}$ and $\tilde{a}_{q(l)}$ with $l<k$ cancel (here we use \eqref{eq:tilde-a*:linear-dec} and \eqref{eq:tilde-a*:BG} to ensure that we only cancel out finite quantities).
This gives
\[
0 \geq - \sum_{l=1}^{k-1} \frac{v_{q(l)}}{l}\tGamma'' + v_{t(k-1)}\alpha_{k-1} \tilde{a}_{t(k)}.
\]
By Lemma~\ref{lem:total-loss} this implies
\[
\tilde{a}_{t(k)} \leq \tGamma'',
\]
and using \eqref{eq:tilde-a*:linear-dec} we obtain $\eta \leq \tGamma''$.
As explained at the beginning of Section~\ref{sec:induction-on-scales}, this finishes the proof of Theorem~\ref{thm:main}.

\section{Transversality}
\label{sec:transverse}
In this section we prove the following more precise version of Theorem~\ref{thm:rank}.

\begin{theorem}[{cf.\ \cite[Theorem 10.8]{arxiv:1804.02488}}]
\label{thm:BLcond}
Let $d\geq 1$, $\bfk\in\N^{d}$, and $1 \leq l < k \leq k_{1} + \dotsb + k_{d}$ be positive integers.
Let $V = \Span \Set{v_{1},\dotsc,v_{H}} \subseteq \R^{\calD_{k}}$ be a linear subspace.
Then the rank of the matrix~\eqref{eq:rank-matrix} over the field of rational functions in $d$ variables satisfies
\begin{equation}
\label{eq:BLcond}
\rank_{\R(x_1, \dotsc, x_d)} \calM_V^{(l)}(x_{1},\dotsc,x_{d})
\geq
\frac{\abs{\calD_{l}}}{\abs{\calD_{k}}} \dim V.
\end{equation}
Equality in \eqref{eq:BLcond} can only hold in the following cases:
\begin{enumerate}
\item\label{thm:BLcond:full/empty} $\dim V \in \Set{0, \abs{\calD_{k}}}$, or
\item\label{thm:BLcond:d=2} $d=2$, $1=k_{1} < k_{2}$, and $V$ is spanned by the unit vectors with coordinates $(0,1),\dotsc,(0,k)$, or
\item\label{thm:BLcond:d=2'} $d=2$, $1=k_{2} < k_{1}$, and $V$ is spanned by the unit vectors with coordinates $(1,0),\dotsc,(k,0)$.
\end{enumerate}
\end{theorem}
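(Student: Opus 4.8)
The plan is to reformulate the rank statement in terms of a concrete polynomial curve and then induct on the pair $(d,k)$, in parallel with the recursion \eqref{eq:tGamma-recursion}.

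First I would reduce to a statement about one polynomial curve. We may assume $v_{1},\dotsc,v_{H}$ are linearly independent, so $\dim V=H$, and for $1\le i\le H$ put $g_{i}(t):=\langle v_{i},\Phi(t)\rangle=\sum_{\bfi\in\calD_{k}}(v_{i})_{\bfi}t^{\bfi}$; then $g_{1},\dotsc,g_{H}$ are linearly independent inside the $n_{k}$-dimensional space $W:=\Span_{\R}\Set{t^{\bfi}\given\bfi\in\calD_{k}}$. Since the $(i,\bfj)$ entry of $\calM_{V}^{(l)}(t)$ equals $\langle v_{i},\partial^{\bfj}\Phi(t)\rangle=\partial^{\bfj}g_{i}(t)$, we have
\[
\calM_{V}^{(l)}(t)=\bigl(\partial^{\bfj}g_{i}(t)\bigr)_{1\le i\le H,\ \bfj\in\calD_{l}},
\]
so that $\rank_{\R(t)}\calM_{V}^{(l)}=\max_{t}\rank\calM_{V}^{(l)}(t)$ is exactly the generic rank of the evaluation map $\mathrm{ev}_{t}\colon\Span_{\R}\Set{g_{1},\dotsc,g_{H}}\to\R^{\calD_{l}}$, $g\mapsto(\partial^{\bfj}g(t))_{\bfj\in\calD_{l}}$. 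In these terms \eqref{eq:BLcond} asserts that this generic rank is $\ge\tfrac{n_{l}}{n_{k}}H$, and the last clause asks to identify all $V$ for which it equals $\tfrac{n_{l}}{n_{k}}H$.

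I would then induct on $d\ge 1$, and for fixed $d$ on $k$. The base case $d=1$ is the moment curve, where $\calD_{l}=\Set{1,\dotsc,l}$ and $\calD_{k}=\Set{1,\dotsc,k}$: because $W$ has no constant term, $g\mapsto g'$ is injective on $\Span_{\R}\Set{g_{1},\dotsc,g_{H}}$, and $\ker\mathrm{ev}_{t}$ is carried to the space of those $p$ in the fixed $H$-dimensional space $U:=\Span_{\R}\Set{g_{1}',\dotsc,g_{H}'}\subset\R[t]_{\le k-1}$ that vanish to order $\ge l$ at $t$. The latter is the intersection of $U$ with a moving subspace of codimension $l$, which for generic $t$ has the expected dimension $\max(0,H-l)$ --- this is where the Vandermonde structure enters, cf.\ Remark~\ref{rem:Vandermonde} --- so the generic rank is $\min(H,l)\ge\tfrac{l}{k}H$, with equality only in the trivial cases $\dim V\in\Set{0,n_{k}}$. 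For the inductive step with $d\ge 2$ I would lower the dimension by slicing in one variable: freezing a generic value of $t_{d}$ and splitting $\calD_{l}$ into the columns with $j_{d}=0$ and those with $j_{d}\ge 1$, one bounds the rank below by a sum of two contributions, one controlled by the $(d-1)$-variable instance of the theorem for the projection $\bfP_{d}\calD$ and one by the degree-$(k-1)$ instance for $\calD\cap\calS_{k-1}=\calD(\bfk,\le k-1)$, matching the two terms in \eqref{eq:tGamma-recursion}. The numerical identity $\calK_{l}-\calK_{l-1}=l\,(n_{l}-n_{l-1})$, a consequence of \eqref{eq:homdim}, is what makes the two inductive lower bounds add up to exactly $\tfrac{n_{l}}{n_{k}}H$.

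The hard part will be twofold. First, a priori the column space of $\calM_{V}^{(l)}$ need not be the direct sum of the column spaces of the two blocks (columns with $j_{d}=0$ versus columns with $j_{d}\ge 1$), so the two inductive sub-rank bounds do not simply add; to make them add one must choose $t_{d}$ --- and then the remaining coordinates --- so that the two sub-rank minors assemble into a single nonvanishing minor of $\calM_{V}^{(l)}$ of the target order. This is precisely where the ordinary Schwartz--Zippel lemma is insufficient: the minors in play are polynomials of prescribed and in general unequal degrees in the different variables, and the admissible evaluation points are constrained to the staircase $\calD(\bfk,\le k)$ rather than to a product box, so one needs the extension of Schwartz--Zippel alluded to above. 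Second, the equality characterization must be propagated through the induction: one shows that equality at level $(d,k)$ forces either $\dim V\in\Set{0,n_{k}}$ or one of the two configurations that first arise at $d=2$ when the surface degenerates --- that is, $1=k_{1}<k_{2}$ with $V$ spanned by the unit vectors indexed by $(0,1),(0,2),\dotsc$, or the symmetric case $1=k_{2}<k_{1}$ --- and one then checks directly, using the triangular falling-factorial structure of the resulting one-variable Wronskian, that these do produce equality. I expect the Schwartz--Zippel step and the bookkeeping needed to single out these degenerate subspaces to be the main obstacles; the remaining arithmetic is routine given \eqref{eq:homdim}.
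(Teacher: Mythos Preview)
Your approach is genuinely different from the paper's, and it has a real gap in the inductive step. The paper does not induct on $(d,k)$ for Theorem~\ref{thm:BLcond}. Instead it first chooses a monomial order so that the leading exponents $\bfa_{1},\dotsc,\bfa_{H}\in\calD_{k}$ of the $g_{i}$ are pairwise distinct, then shows (Lemma~\ref{lem:rank-est-by-leading-monomials}) that
\[
\rank_{\R(t)}\calM_{V}^{(l)}\ \ge\ \rank_{\R}\bigl(\bfa_{h}^{\bfj}\bigr)_{\bfj\in\calD_{l},\,1\le h\le H},
\]
and finally bounds this Vandermonde-type rank purely combinatorially: an abstract Schwartz--Zippel lemma for down-sets (Lemma~\ref{lem:SZ}) together with a density-increase statement for up-sets across the level sets $\level_{m}^{\bfk}$ (Theorem~\ref{thm:upset-levels}, Corollary~\ref{cor:sublevel-up-set}) gives exactly $\rank\ge\tfrac{n_{l}}{n_{k}}H$, and the equality cases are read off from the equality cases in Corollary~\ref{cor:sublevel-up-set}. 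The inductions on $d$ happen inside those combinatorial lemmas, \emph{after} the reduction to the set $\calA=\{\bfa_{1},\dotsc,\bfa_{H}\}\subset\calD_{k}$, not on $\calM_{V}^{(l)}$ itself.

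The concrete problem with your plan is the numerics of the splitting. The target ratio in \eqref{eq:BLcond} is $n_{l}/n_{k}=\abs{\calD_{l}}/\abs{\calD_{k}}$, a ratio of cardinalities; the identity $\calK_{l}-\calK_{l-1}=l(n_{l}-n_{l-1})$ concerns homogeneous dimensions and never enters this statement. If you split the columns of $\calM_{V}^{(l)}$ by $j_{d}=0$ versus $j_{d}\ge 1$ and freeze a generic $t_{d}$, the $(d-1)$-variable hypothesis would at best yield $\tfrac{\abs{\sublevel_{l}^{\bfk'}}}{\abs{\sublevel_{k}^{\bfk'}}}\cdot H'$ for some $H'\le H$ (with $\bfk'=(k_{1},\dotsc,k_{d-1})$), while the degree-$(k-1)$ hypothesis would yield a term of the form $\tfrac{n_{l'}}{n_{k-1}}\cdot H''$; there is no identity forcing these to add to $\tfrac{n_{l}}{n_{k}}H$, and in general they do not. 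Moreover, after freezing $t_{d}$ the sliced polynomials $h_{i}(t')=g_{i}(t',t_{d})$ pick up constant terms from the monomials $t_{d}^{i_{d}}$ and no longer live in the $(d-1)$-dimensional analogue of $W$, so they are not admissible inputs for the inductive hypothesis. Finally, you are conflating two unrelated recursions: \eqref{eq:tGamma-recursion} is a \emph{maximum} of two bounds on the decoupling exponent and is proved \emph{using} Theorem~\ref{thm:BLcond} as a black box; it is not a template for proving \eqref{eq:BLcond}. Your base case $d=1$ is correct, but the inductive step as described does not go through; the paper's reduction to leading exponents is precisely what turns the rank question into a combinatorial one in which the ratio $n_{l}/n_{k}$ appears naturally.
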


\begin{proof}[Proof of Theorem~\ref{thm:rank} assuming Theorem~\ref{thm:BLcond}]
By Theorem~\ref{thm:BLcond}, the matrix $\calM_V^{(l)}(\bft)$ has a minor of order $\geq \frac{n_{l}}{n_{k}} \dim V$ whose determinant is a non-vanishing polynomial.
If the order of this minor is $> \frac{n_{l}}{n_{k}} \dim V$, then we are in the case \ref{rank_inequality} of Theorem~\ref{thm:rank}.

Otherwise, we may assume that one of the equality conditions in Theorem~\ref{thm:BLcond} holds.
The equality case~\ref{thm:BLcond:full/empty} of Theorem~\ref{thm:BLcond} is excluded by the assumption $0 < \dim V < n_{k}$.
In the equality case~\ref{thm:BLcond:d=2} of Theorem~\ref{thm:BLcond} notice that \eqref{eq:BLcond} does not depend on the choice of the spanning set of $V$.
Hence, in addition to having $d=2$ and $1=k_{1}<k_{2}$, we may assume $H=k$ and $v_{h}$ is the $(0,h)$-th unit vector.
Then
\[
\calM_V^{(l)}(\bft) = ( \partial^{\bfj} t_{2}^{h} )_{\bfj \in \calD_{l},1\leq h \leq k}.
\]
This matrix contains the $l\times l$ submatrix
\[
( \partial_{2}^{j} t_{2}^{h} )_{1 \leq j, h \leq l},
\]
and the determinant of the latter matrix is not just a non-trivial polynomial, but a non-vanishing constant.
Since in this case $l = \frac{H \cdot 2l}{2k} = \frac{n_{l}}{n_{k}} \dim V$, we are in the case~\ref{rank_equality} of Theorem~\ref{thm:rank}.

The equality case~\ref{thm:BLcond:d=2'} of Theorem~\ref{thm:BLcond} is similar to the equality case~\ref{thm:BLcond:d=2}.
\end{proof}

\subsection{Reduction to a Vandermonde type matrix}
Expanding the definition \eqref{eq:rank-matrix}, we obtain
\begin{equation}
\label{eq:rank-matrix-expanded}
\calM_V^{(l)}(\bft)
=
\bigl( v_1, \dotsc, v_{H} \bigr)^T \times \bigl( \partial^{\bfj} \Phi(\bft) \bigr)_{\bfj \in \calD_{l}}
=
( \partial^{\bfj} \sum_{\bfi\in\calD_{k}} v_{h,\bfi} \bft^{\bfi} )_{\bfj \in \calD_{l},1\leq h \leq H}.
\end{equation}
In Lemma~\ref{lem:rank-est-by-leading-monomials} below, we will give a lower bound for the rank of the right-hand side of \eqref{eq:rank-matrix-expanded} in terms of the leading powers of the polynomials that appear there.
To this end we need a few order theoretic notions.
\begin{definition}
A \emph{monomial order} is a translation-invariant total order relation $\leq$ on $\N^{d}$.
Translation invariance means that for every $\bfa,\bfb,\bfc \in \N^{d}$ we have $\bfa \leq \bfb \implies \bfa + \bfc \leq \bfb + \bfc$.

The \emph{leading power} of a non-zero polynomial $f(\bfx)$ in $d$ variables with respect to a given monomial order $\leq$ is the largest $\bfa \in \N^{d}$ with respect to $\leq$ such that the coefficient of $\bfx^{\bfa}$ in $f$ does not vanish.
\end{definition}

An example of a monomial order is the \emph{lexicographic order} on $\N^{d}$, which is defined by $\bfi < \bfi'$ if and only for some $1 \leq q \leq d$ we have $i_1 = i_1', \dotsc, i_{q-1} = i_{q-1}'$, and $i_q < i_q'$.

\begin{definition}
\label{def:product-order}
We denote the \emph{product order} on $\N^{d}$ by $\preceq$.
More explicitly, for $\bfa = (a_{1},\dotsc,a_{d}),\bfb = (b_{1},\dotsc, b_{d}) \in\N^{d}$ we write $\bfa \preceq \bfb$ if and only if $a_{1}\leq b_{1}, \dotsc, a_{d}\leq b_{d}$.
\end{definition}

\begin{definition}
\label{def:up+down}
Let $(P,\preceq)$ be a partially ordered set.
A subset $\Dset \subseteq P$ is called a \emph{down-set} if for every $p\in P$ and $d\in\Dset$ with $p\preceq d$ we have $p\in\Dset$.
A subset $U \subseteq P$ is called an \emph{up-set} if for every $p\in P$ and $u\in U$ with $u \preceq p$ we have $p\in U$.
For a subset $B\subset P$ we write $\upset B := \Set{p\in P \given (\exists b\in B) b \preceq p}$; this is the smallest up-set containing $B$.
\end{definition}
%The notation $\upset B$ is taken from \cite{MR1902334}.

\begin{lemma}
\label{lem:rank-est-by-leading-monomials}
Let $f_{1},\dotsc,f_{H} \in \R[x_{1},\dotsc,x_{d}]$ be polynomials in $d$ variables. Let $\leq$ be a monomial order and let $\bfa_{1},\dotsc,\bfa_{H}$ be the leading powers of $f_{1},\dotsc,f_{H}$ with respect to this monomial order.
Let $\sublevel \subset \N^{d} \setminus \Set{0}$ be a finite down-set with respect to $\preceq$.
Then
\begin{equation}
\label{eq:1}
\rank_{\R(x_1, \dotsc, x_d)}
( \partial^{\bfj} f_{h} )_{\bfj \in \sublevel,1\leq h \leq H}
\geq
\rank_{\R}
( \bfa_{h}^{\bfj} )_{\bfj \in \sublevel,1\leq h \leq H}.
\end{equation}
\end{lemma}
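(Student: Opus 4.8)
The plan is to reduce \eqref{eq:1}, in two specialization steps, to an elementary identity for generalized Vandermonde matrices over $\R$.

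\emph{Step 1: passing to leading monomials.} First I would show
\[
\rank_{\R(x_{1},\dotsc,x_{d})} ( \partial^{\bfj} f_{h} )_{\bfj\in\sublevel, 1\leq h\leq H}
\geq
\rank_{\R(x_{1},\dotsc,x_{d})} ( \partial^{\bfj} \bfx^{\bfa_{h}} )_{\bfj\in\sublevel, 1\leq h\leq H},
\]
so that it suffices to treat the case $f_{h}=\bfx^{\bfa_{h}}$. For this I would use that the given monomial order, restricted to the finitely many exponents occurring in $f_{1},\dotsc,f_{H}$, is induced by a weight vector $w\in\Z^{d}$: a nontrivial nonnegative linear relation among the differences $\bfa_{h}-\bfb$ (over monomials $\bfx^{\bfb}$ of $f_{h}$ with $\bfb\neq\bfa_{h}$) would be a nontrivial nonnegative combination of elements that are positive in the ordered group $(\Z^{d},\leq)$, which is impossible, so Gordan's theorem supplies a suitable $w$. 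Writing $c_{h}\neq 0$ for the leading coefficient of $f_{h}$, the polynomials $g_{h}(\bfx,u) := u^{w\cdot\bfa_{h}} f_{h}(u^{-w_{1}}x_{1},\dotsc,u^{-w_{d}}x_{d})$ then lie in $\R[x_{1},\dotsc,x_{d},u]$ and satisfy $g_{h}(\bfx,0)=c_{h}\bfx^{\bfa_{h}}$. The matrix $G(\bfx,u):=(\partial^{\bfj}_{\bfx}g_{h}(\bfx,u))_{\bfj,h}$ has polynomial entries and specializes at $u=0$ to $(c_{h}\partial^{\bfj}\bfx^{\bfa_{h}})_{\bfj,h}$, so $\rank_{\R(\bfx,u)}G\geq\rank_{\R(\bfx)}(\partial^{\bfj}\bfx^{\bfa_{h}})_{\bfj,h}$ by lower semicontinuity of rank under specialization. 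On the other hand $G(\bfx,u)$ is obtained from $(\partial^{\bfj}f_{h})(u^{-w_{1}}x_{1},\dotsc,u^{-w_{d}}x_{d})$ by left and right multiplication with the diagonal matrices $\operatorname{diag}(u^{w\cdot\bfa_{h}})$ and $\operatorname{diag}(u^{-w\cdot\bfj})$, which are invertible over $\R(\bfx,u)$; and since $x_{i}\mapsto u^{-w_{i}}x_{i}$ is a $\R(u)$-automorphism of $\R(\bfx,u)$ and adjoining the transcendental $u$ leaves rank unchanged, $\rank_{\R(\bfx,u)}G=\rank_{\R(x_{1},\dotsc,x_{d})}(\partial^{\bfj}f_{h})_{\bfj,h}$. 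Chaining these inequalities gives the displayed reduction.

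\emph{Step 2: evaluation at $(1,\dotsc,1)$.} For a monomial, $\partial^{\bfj}\bfx^{\bfa}=(\bfa)_{\bfj}\,\bfx^{\bfa-\bfj}$, where $(\bfa)_{\bfj}:=\prod_{i=1}^{d}a_{i}(a_{i}-1)\dotsm(a_{i}-j_{i}+1)$ is a product of falling factorials, understood to be $0$ when $\bfj\not\preceq\bfa$. Evaluating at $\bfx=(1,\dotsc,1)$ and again using semicontinuity of rank under specialization, $\rank_{\R(\bfx)}(\partial^{\bfj}\bfx^{\bfa_{h}})_{\bfj,h}\geq\rank_{\R}((\bfa_{h})_{\bfj})_{\bfj\in\sublevel, h}$, so it remains to prove $\rank_{\R}((\bfa_{h})_{\bfj})_{\bfj\in\sublevel, h}\geq\rank_{\R}(\bfa_{h}^{\bfj})_{\bfj\in\sublevel, h}$ (in fact these will turn out equal). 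Here is where the down-set hypothesis on $\sublevel$ enters. Expanding each falling factorial through the signed Stirling numbers of the first kind, $(a_{i})_{j_{i}}=\sum_{m=0}^{j_{i}}s(j_{i},m)a_{i}^{m}$ with $s(j_{i},j_{i})=1$ and $s(j_{i},0)=0$ for $j_{i}\geq 1$, gives
\[
(\bfa)_{\bfj}=\sum_{\bfm\preceq\bfj}\Bigl(\prod_{i=1}^{d}s(j_{i},m_{i})\Bigr)\bfa^{\bfm},
\]
in which the coefficient of $\bfa^{\bfj}$ is $1$ and the coefficient of $\bfa^{0}=1$ vanishes for $\bfj\neq 0$. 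Since $\sublevel\subset\N^{d}\setminus\{0\}$ is a $\preceq$-down-set, for every $\bfj\in\sublevel$ all exponents $\bfm$ occurring in this expansion already lie in $\sublevel$. Hence the matrix $T:=(\prod_{i}s(j_{i},m_{i}))_{\bfj,\bfm\in\sublevel}$ is, with respect to any linear extension of $\preceq$ to $\sublevel$, triangular with unit diagonal, hence invertible, and $((\bfa_{h})_{\bfj})_{\bfj\in\sublevel,h}=(\bfa_{h}^{\bfm})_{\bfm\in\sublevel,h}\,T^{T}$ has the same rank as $(\bfa_{h}^{\bfm})_{\bfm\in\sublevel,h}$. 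Combining Steps 1 and 2 yields \eqref{eq:1}.

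The inputs used in Step 2 are routine linear algebra (semicontinuity of matrix rank under specialization, and the Stirling change of basis). I expect the only point that requires genuine care to be Step 1: the abstract monomial order must be realized by a concrete weight vector on the relevant finite set of monomials — merely passing to the top total-degree parts of the $f_{h}$ would not work for a non-graded order — and the rank of the derivative matrix must be tracked carefully through the rescaling by $\operatorname{diag}(u^{\pm w\cdot\,\cdot})$ and the degeneration $u\to 0$.
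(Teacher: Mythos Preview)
Your proof is correct. Step~2 is essentially identical to the paper's: both pass from the derivative matrix of monomials to the scalar matrix of falling factorials $((\bfa_{h})_{\bfj})$ and then use that the Stirling transition is unitriangular on the down-set $\sublevel$ (the paper phrases this last move as ``row operations'').

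Step~1 is where you and the paper genuinely diverge. You realize the monomial order on the finitely many relevant exponents by a weight vector $w$ and run a Gr\"obner-type degeneration $u\to 0$, tracking the rank through the diagonal rescalings and the field automorphism $x_{i}\mapsto u^{-w_{i}}x_{i}$. The paper instead multiplies the $\bfj$-th row by the unit $\bfx^{\bfj}\in\R(\bfx)^{\times}$, so that each entry becomes $\bfx^{\bfj}\partial^{\bfj}f_{h}$; since $\bfx^{\bfj}\partial^{\bfj}$ acts diagonally on monomials without changing the exponent, every minor of the ``pure monomial'' matrix $(\bfx^{\bfj}\partial^{\bfj}\bfx^{\bfa_{h}})$ is a single monomial, and translation-invariance of the order shows that this monomial is precisely the leading monomial of the corresponding minor of $(\bfx^{\bfj}\partial^{\bfj}f_{h})$. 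This gives $\rank(\bfx^{\bfj}\partial^{\bfj}f_{h})\geq\rank(\bfx^{\bfj}\partial^{\bfj}\bfx^{\bfa_{h}})$ directly, with no weight vector and no extra variable. Your route is more conceptual (it is the standard flat-degeneration picture) but needs the auxiliary fact that a translation-invariant total order on $\N^{d}$ extends to a group order on $\Z^{d}$, which underlies your Gordan argument; the paper's route is shorter and entirely self-contained.
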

\begin{proof}
Multiplying the $\bfj$-th row by the non-zero field element $\bfx^{\bfj}$, we obtain
\[
\rank_{\R(x_1, \dotsc, x_d)}
( \partial^{\bfj} f_{h} )_{\bfj \in \sublevel,1\leq h \leq H}
=
\rank_{\R(x_1, \dotsc, x_d)}
( \bfx^{\bfj} \partial^{\bfj} f_{h} )_{\bfj \in \sublevel,1\leq h \leq H}.
\]
The latter rank is
\[
\geq
\rank_{\R(x_1, \dotsc, x_d)}
( \bfx^{\bfj} \partial^{\bfj} \bfx^{\bfa_{h}} )_{\bfj \in \sublevel, 1 \leq h \leq H}.
\]
Indeed, every minor of the latter matrix is a monomial, and if it does not vanish, then it is the leading monomial of the corresponding minor of the former matrix.

Multiplying the $h$-th row by $\bfx^{-\bfa_{h}}$, we obtain the matrix
\[
\begin{pmatrix}
(a_{h,1} \dotsm (a_{h,1}-j_{1}+1)) \dotsm (a_{h,d} \dotsm (a_{h,d}-j_{d}+1))
\end{pmatrix}_{\bfj \in \sublevel,1\leq h \leq H},
\]
all of whose entries are scalars.
The rank does not change under row operations, and by row operations this matrix can be brought into the form
\[
\begin{pmatrix}
\bfa_{h}^{\bfj}
\end{pmatrix}_{\bfj \in \sublevel, 1\leq h \leq H}.
\]
Here we used that $\sublevel$ is a down-set.
The rank of the latter matrix over the field of rational functions $\R(x_{1},\dotsc,x_{d})$ coincides with its rank over $\R$.
\end{proof}

\begin{remark}
\label{rem:Vandermonde}
In the case $d=1$, the matrix on the right-hand side of \eqref{eq:1} has full rank by the Vandermonde determinant formula, provided that the leading powers $\bfa_{1},\dotsc,\bfa_{H}$ are distinct.
\end{remark}
In the remaining part of Section~\ref{sec:transverse} we estimate the rank on the right-hand side of \eqref{eq:1} from below for general $d$ by refining the arguments in \cite{arxiv:1804.02488}.

\subsection{An abstract Schwartz--Zippel type lemma}

The following result extends and simplifies \cite[Lemmas 10.5 and 10.6]{arxiv:1804.02488}.
We obtain \cite[Lemma 10.5]{arxiv:1804.02488} as the special case $\Dset = \Set{a\in \N^{d} \given \abs{a} \leq k}$ and \cite[Lemma 10.6]{arxiv:1804.02488} as the special case $\Dset = \Set{0,\dotsc,k}^{d}$.
The latter case, with sets $R_{*}$ defined as in the proof of Lemma~\ref{lem:powers-rank-est}, recovers several, but not all, versions of the Schwartz--Zippel lemma in \cite{MR3788163}.

\begin{lemma}
\label{lem:SZ}
Let $d\geq 1$ be an integer and $\Dset \subset \N^{d}$ a finite down-set with respect to $\preceq$.
Let $A \subseteq \Dset$ and $B \subseteq \N^{d}$.
Suppose that for every $\bfb = (b_1, \ldots, b_d) \in B$ there is a family of inductively defined subsets $R_{*;*;\bfb} \subset \N$ with the following properties.
\begin{enumerate}
\item\label{lem:SZ:1}
For every $1\leq l\leq d$ and every
\[
n_d \in \N \setminus R_{d; \bfb}, n_{d-1} \in \N \setminus R_{d-1; n_{d}; \bfb}, \dotsc, n_{l+1} \in \N \setminus R_{l+1; n_{l+2}, \ldots, n_d; \bfb},
\]
we have $\abs{R_{l; n_{l+1}, \ldots, n_d; \bfb}} \leq b_l$.
\item\label{lem:SZ:2}
If for some $\bfa=(a_1, \ldots, a_d)\in A$ we have
\[
a_d \notin R_{d; \bfb}, a_{d-1} \notin R_{d-1; a_d; \bfb}, \ldots, a_{2} \notin R_{2; a_3, \ldots, a_d; \bfb},
\]
then $a_1 \in R_{1; a_2, a_3, \ldots, a_d; \bfb}$.
\end{enumerate}
Then
\begin{equation}\label{geomineqCk}
\abs{A} \leq \abs{\Dset\setminus \upset B}.
\end{equation}
\end{lemma}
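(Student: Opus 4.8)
The plan is to prove Lemma~\ref{lem:SZ} by induction on the dimension $d$. In the base case $d=1$ the down-set $\Dset$ is an initial segment $\Set{0,\dotsc,N-1}$ of $\N$; property~\ref{lem:SZ:1} with $l=d=1$ gives $\abs{R_{1;\bfb}}\leq b_1$ for every $\bfb=(b_1)\in B$, and property~\ref{lem:SZ:2}, whose hypothesis is vacuous when $d=1$, gives $A\subseteq R_{1;\bfb}$. Hence $\abs{A}\leq N$, and $\abs{A}\leq\min_{\bfb\in B}b_1$ if $B\neq\emptyset$; in either case the right-hand side equals $\abs{\Dset\setminus\upset B}$.

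For the inductive step I would peel off the last coordinate. For $c\in\N$ put $\Dset^{(c)}:=\Set{\bfa'\in\N^{d-1} \given (\bfa',c)\in\Dset}$, and similarly $A^{(c)}$; each $\Dset^{(c)}$ is a finite down-set in $\N^{d-1}$ and the family $(\Dset^{(c)})_{c}$ is nested decreasing. Let $B^{(c)}:=\Set{\bfb'\in\N^{d-1} \given (\bfb',b_d)\in B \text{ and } c\notin R_{d;(\bfb',b_d)} \text{ for some } b_d\in\N}$. For each $\bfb'\in B^{(c)}$, choosing $b_d$ with $(\bfb',b_d)\in B$ and $c\notin R_{d;(\bfb',b_d)}$, the subsets $R'_{l;n_{l+1},\dotsc,n_{d-1}}:=R_{l;n_{l+1},\dotsc,n_{d-1},c;(\bfb',b_d)}$ verify the hypotheses of the lemma in dimension $d-1$ for the truncated tuple $(b_1,\dotsc,b_{d-1})$; this is a direct check against properties~\ref{lem:SZ:1} and~\ref{lem:SZ:2}, specializing $n_d:=c$. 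The inductive hypothesis then yields $\abs{A^{(c)}}\leq\abs{\Dset^{(c)}\setminus\upset B^{(c)}}$ for each $c$ (when $B^{(c)}=\emptyset$ this is just $\abs{A^{(c)}}\leq\abs{\Dset^{(c)}}$), and summing over $c$ gives $\abs{A}\leq\sum_{c}\abs{\Dset^{(c)}\setminus\upset B^{(c)}}$.

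The remaining point, which I expect to be the main obstacle, is the reassembly inequality $\sum_{c}\abs{\Dset^{(c)}\setminus\upset B^{(c)}}\leq\abs{\Dset\setminus\upset B}$. Writing $W:=\bigcup_{c}(\upset B^{(c)})\times\Set{c}\subseteq\N^{d}$ (with $\upset$ taken in the ambient space), this is equivalent to $\abs{\Dset\cap\upset B}\leq\abs{\Dset\cap W}$, and I would prove it by constructing an injection $\Dset\cap\upset B\hookrightarrow\Dset\cap W$ that fixes the first $d-1$ coordinates, that is, column by column. Fix $\bfp'\in\N^{d-1}$; if no element of $B$ has projection $\preceq\bfp'$ the column of $\upset B$ over $\bfp'$ is empty and there is nothing to do. Otherwise, the set of $c$ with $(\bfp',c)\in\Dset$ is an initial segment $\Set{0,\dotsc,C-1}$ (since $\Dset$ is a down-set), the set of $c$ with $(\bfp',c)\in\upset B$ is an up-set $\Set{c\in\N \given c\geq c_{0}}$ where $c_{0}=\min\Set{b_d \given (\bfb',b_d)\in B,\ \bfb'\preceq\bfp'}$ is attained by some $\bfb^{*}\in B$, and every $c\notin R_{d;\bfb^{*}}$ satisfies $(\bfp',c)\in W$. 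So one must inject $\Set{c_{0},\dotsc,C-1}$ into $\Set{0,\dotsc,C-1}\setminus R_{d;\bfb^{*}}$; since property~\ref{lem:SZ:1} gives $\abs{R_{d;\bfb^{*}}}\leq c_{0}$, one has $\abs{\Set{c_{0},\dotsc,C-1}\cap R_{d;\bfb^{*}}}\leq\abs{\Set{0,\dotsc,c_{0}-1}\setminus R_{d;\bfb^{*}}}$, so the elements of $\Set{c_{0},\dotsc,C-1}$ lying in $R_{d;\bfb^{*}}$ can be mapped injectively into $\Set{0,\dotsc,c_{0}-1}\setminus R_{d;\bfb^{*}}$ while the other elements are kept fixed. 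Doing this in every column yields the injection. The delicate point is precisely that the induction forces on us the per-slice sets $B^{(c)}$ governed by the \emph{arbitrary} exceptional sets $R_{d;\bfb}$ rather than by the ``ideal'' truncations $\Set{0,\dotsc,b_d-1}$, so the comparison is not termwise; one must use the monotonicity of $(\Dset^{(c)})_{c}$ together with the size bound $\abs{R_{d;\bfb}}\leq b_d$ to push the misplaced contributions down.
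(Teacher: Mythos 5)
Your proof is correct and follows essentially the same route as the paper's: induction on $d$, slicing along the last coordinate (your $\Dset^{(c)}$, $A^{(c)}$, $B^{(c)}$ are exactly the paper's $S_c\Dset$, $S_c A$, $\bfP B_c$), applying the inductive hypothesis slice by slice with the same definition of the auxiliary sets $R'$, and then reassembling column by column. The only difference is presentational: the paper writes the reassembly bound as a chain of inequalities using $|R_{d;\bfb}|\leq b_d$, whereas you package the same count as an explicit column-preserving injection $\Dset\cap\upset B\hookrightarrow\Dset\cap W$, which is equivalent.
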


\begin{remark}
\label{rem:schwartz-zippel:sharpness}
The estimate \eqref{geomineqCk} is sharp since it is possible to take $A = \Dset \setminus \upset B$.
In fact, if $(a_1, \dotsc, a_d) \in A$, then by definition for every $(b_1, \ldots, b_d) \in B$ we have $0 \leq a_i < b_i$ for some $1\leq i \leq d$.
Therefore the sets $R_{i;*;\bfb} := \Set{0, \dotsc, b_i - 1}$ satisfy the hypothesis~\ref{lem:SZ:2}.
Also, these sets clearly satisfy the hypothesis~\ref{lem:SZ:1}.
\end{remark}

\begin{proof}[Proof of Lemma~\ref{lem:SZ}]
We will prove \eqref{geomineqCk} by induction on $d$.

We verify the induction basis $d=1$.
For every $\bfb = (b_{1}) \in B$, by hypothesis~\ref{lem:SZ:1} we have $\abs{R_{1;;\bfb}} \leq b_{1}$, and by hypothesis~\ref{lem:SZ:2} for every $\bfa = (a_{1}) \in A$ we have $a_{1} \in R_{1;;\bfb}$, so that $\abs{A} \leq b_{1}$.
Hence
\[
\abs{A}
\leq
\min(\abs{\calD},\min_{\bfb \in B} b_{1})
=
\abs{\calD \cap \Set{0,\dotsc,\min_{\bfb \in B} b_{1}-1}}
=
\abs{\Dset \setminus \upset B},
\]
which shows \eqref{geomineqCk}.
This finishes the proof in the case $d=1$.

From now on we assume that $d>1$ and that the lemma is already known with dimension $d$ replaced by $d-1$.
The conclusion \eqref{geomineqCk} is equivalent to the statement
\begin{equation}\label{geomineq2}
\abs{\Dset \setminus A} \geq \abs{\Dset \cap \upset B}.
\end{equation}

For $j \in \N$ and a subset $\tilde{A} \subset \N^{d}$, let
\begin{equation}
\label{eq:slice-map}
S_j \tilde{A} := \Set{\bfa'\in \N^{d-1} \given (\bfa',j) \in \tilde{A}}
\end{equation}
denote the \emph{$j$-th slice} of $\tilde{A}$.
For any subset $\tilde{B} \subset \N^{d}$ define the projection
\[
\bfP \tilde{B} :=
\Set{\bfb' \in \N^{d-1} \given \exists b_d \text{ s.t. } (\bfb', b_d) \in \tilde{B}}.
\]

Fix $j\in\N$.
The slice $S_{j} \Dset$ is a finite down-set in $(\N^{d-1},\preceq)$.
Let $B_j := \Set{\bfb \in B: j \notin R_{d; \bfb}}$.
Then the sets $S_{j}A \subseteq S_{j}\calD$ and $\bfP B_j \subseteq \N^{d-1}$ satisfy the hypothesis of Lemma~\ref{lem:SZ} in dimension $d-1$.
Indeed, for each $\bfb'\in \bfP B_{j}$ fix a $b_{d}=b_{b}(\bfb')$ such that $\bfb := (\bfb',b_{d}(\bfb')) \in B_{j}$ and let
\[
R'_{l;n_{l+1},\dotsc,n_{d-1};\bfb'} := R_{l;n_{l+1},\dotsc,n_{d-1},j;\bfb},
\quad
1 \leq l \leq d-1.
\]
Then hypothesis~\ref{lem:SZ:1} for the sets $R'_{*;*;\bfb'}$ follows directly from hypothesis~\ref{lem:SZ:1} for the sets $R_{*;*;\bfb}$.
Let now $\bfa' = (a_{1},\dotsc,a_{d-1}) \in S_{j}A$, so that $\bfa = (a_{1},\dotsc,a_{d}) := (\bfa',j) \in A$.
If the conditions
\[
a_{d-1} \not\in R'_{d-1;\bfb'},\dotsc,a_{2}\not\in R'_{2;a_{3},\dotsc,a_{d-1};\bfb'}
\]
hold, then we also have
\begin{align*}
j=a_{d} &\not\in R_{d;\bfb},\\
a_{d-1} &\not\in R_{d-1;a_{3},\dotsc,a_{d};\bfb}=R'_{d-1;a_{3},\dotsc,a_{d-1};\bfb'},\\
&\vdots\\
a_{2} &\not\in R_{2;a_{3},\dotsc,a_{d};\bfb}=R'_{2;a_{3},\dotsc,a_{d-1};\bfb'},
\end{align*}
and by hypothesis~\ref{lem:SZ:2} we obtain $a_{1} \in R_{1;a_{2},\dotsc,a_{d};\bfb} = R'_{1;a_{2},\dotsc,a_{d-1};\bfb'}$.
This shows that hypothesis~\ref{lem:SZ:2} holds for the sets $R'_{*;*;\bfb'}$.

By the inductive hypothesis we obtain
\begin{align}
\notag
\abs{\Dset \setminus A}
&=
\sum_{j\in\N} \abs{S_{j}\Dset \setminus S_{j}A}\\
\label{eq:D-A-slice-bound}
&\geq
\sum_{j\in\N} \abs{S_{j}\Dset \cap \upset \bfP B_j}.
\end{align}
It remains to show that the last sum is bounded below by $\abs{\calD \cap \upset B}$.
Indeed,
\begin{align*}
\eqref{eq:D-A-slice-bound}
&=
\sum_{\bfc' \in \N^{d-1}} \abs{\Set{j\in\N \given \bfc' \in S_{j}\Dset \cap \upset \bfP B_j}}\\
&=
\sum_{\bfc' \in \N^{d-1}} \abs{\Set{j\in\N \given (\bfc',j) \in \Dset, \exists \bfb = (\bfb',b_{d}) \in B_j : \bfb' \preceq \bfc'}}\\
&=
\sum_{\bfc' \in \N^{d-1}} \abs{\Set{j\in\N \given (\bfc',j) \in \Dset, \exists \bfb = (\bfb',b_{d}) \in B : \bfb' \preceq \bfc', j \notin R_{d;\bfb}}}\\
&\geq
\sum_{\bfc' \in \N^{d-1}} \max_{\bfb = (\bfb',b_{d}) \in B : \bfb' \preceq \bfc'} \abs{\Set{j\in\N \given (\bfc',j) \in \Dset, j \notin R_{d;\bfb}}}\\
&\geq
\sum_{\bfc' \in \N^{d-1}} \max_{\bfb = (\bfb',b_{d}) \in B : \bfb' \preceq \bfc'}
\max(\abs{\Set{j\in\N \given (\bfc',j) \in \Dset}} - b_{d}, 0)\\
&=
\sum_{\bfc' \in \N^{d-1}} \max_{c_{d} : (\bfc',c_{d}) \in \upset B}
\max(\abs{\Set{j\in\N \given (\bfc',j) \in \Dset}} - c_{d}, 0)\\
&=
\sum_{\bfc' \in \N^{d-1}} \abs{ \Set{ c_{d} \given (\bfc',c_{d}) \in \calD \cap \upset B}}.
\qedhere
\end{align*}
\end{proof}

\subsection{Inequalities for level sets}
For $\bfk\in\N^{d}$ and $l\in\Z$, we define sublevel sets by
\[
\sublevel_{l}^{\bfk} := \Set{ \bfa \in \N^{d} \given \bfa \preceq \bfk \text{ and } 1 \leq \abs{\bfa} \leq l},
\]
and level sets by
\[
\level_{l}^{\bfk} := \Set{ \bfa \in \N^{d} \given \bfa \preceq \bfk \text{ and } \abs{\bfa} = l}.
\]
For $\bfk=(k_{1},\dotsc,k_{d}) \in \N^{d}$, we write $\bfk' := (k_{1},\dotsc,k_{d-1}) \in \N^{d-1}$.

Denote the cardinality of a level set by $\Lambda^{\bfk}_{l} := \abs{\level_{l}^{\bfk}}$.
It can be computed by the following algorithm.
Initialize
\begin{equation}
\label{eq:level-recursive}
\Lambda^{()}_{l} =
\begin{cases}
1, & l=0,\\
0, & l\neq 0.
\end{cases}
\end{equation}
Then we apply the recursive definition
\begin{equation}
\label{eq:level-recursive2}
\Lambda^{(\bfk',k_{d})}_{l} = \sum_{j=0}^{k_{d}} \Lambda^{\bfk'}_{l-j}.
\end{equation}
The subscripts $l$ and $l-j$ are allowed to be negative here.\\

The following estimate generalizes \cite[(10.30)]{arxiv:1804.02488} and is crucial for setting up the induction in Theorem~\ref{thm:upset-levels}.
\begin{lemma}
\label{lem:level-set-concavity}
Let $d\in\N$ and $\bfk\in\N^{d}$.
Then, for every $a,b,a',b' \in \Z$ with $a+b = a'+b'$ and $b' \geq \max(a,b)$, we have
\[
\Lambda^{\bfk}_{a}\Lambda^{\bfk}_{b} \geq \Lambda^{\bfk}_{a'}\Lambda^{\bfk}_{b'}.
\]
\end{lemma}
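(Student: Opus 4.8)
The plan is to reduce Lemma~\ref{lem:level-set-concavity} to the assertion that the sequence $(\Lambda^{\bfk}_{l})_{l\in\Z}$, with the convention $\Lambda^{\bfk}_{l}=0$ for $l<0$, is non-negative, log-concave (i.e.\ $(\Lambda^{\bfk}_{l})^{2}\geq\Lambda^{\bfk}_{l-1}\Lambda^{\bfk}_{l+1}$ for all $l$), and supported exactly on the integer interval $\{0,1,\dots,K\}$, where $K:=\abs{\bfk}=k_{1}+\dots+k_{d}$. Granting this, here is the argument. The desired inequality is symmetric in $a$ and $b$, so I may assume $a\leq b$; then $b'\geq\max(a,b)=b$, and $a+b=a'+b'$ forces $a'=a+b-b'\leq a$, hence $a'\leq a\leq b\leq b'$. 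If $\Lambda^{\bfk}_{a'}\Lambda^{\bfk}_{b'}=0$ the inequality is trivial, since its left-hand side is non-negative. Otherwise $\Lambda^{\bfk}_{a'},\Lambda^{\bfk}_{b'}>0$, so $0\leq a'$ and $b'\leq K$ by the description of the support, hence all four indices and every integer between them lie in $\{0,\dots,K\}$ and the associated values of $\Lambda^{\bfk}$ are strictly positive. Put $r_{l}:=\Lambda^{\bfk}_{l+1}/\Lambda^{\bfk}_{l}$ for $0\leq l\leq K-1$; log-concavity means that $(r_{l})_{l=0}^{K-1}$ is non-increasing. With $s:=a-a'=b'-b\geq 0$ a telescoping computation gives
\[
\frac{\Lambda^{\bfk}_{a}\Lambda^{\bfk}_{b}}{\Lambda^{\bfk}_{a'}\Lambda^{\bfk}_{b'}}
=
\prod_{j=0}^{s-1}\frac{r_{a'+j}}{r_{b+j}},
\]
and each factor is $\geq 1$: indeed $a'+j\leq b+j$ because $a'\leq b$, and both indices lie in $\{0,\dots,K-1\}$ (as $a'+j$ ranges over $[a',a-1]$ and $b+j$ over $[b,b'-1]$), so $r_{a'+j}\geq r_{b+j}$. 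This proves the lemma.

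It remains to establish the three structural properties of $(\Lambda^{\bfk}_{l})_{l}$. Unwinding the definition of $\level^{\bfk}_{l}$ (or iterating the recursion \eqref{eq:level-recursive}--\eqref{eq:level-recursive2}) yields the generating-function identity
\[
\sum_{l\in\Z}\Lambda^{\bfk}_{l}x^{l}
=
\prod_{j=1}^{d}\bigl(1+x+\dots+x^{k_{j}}\bigr).
\]
All coefficients on the right are non-negative, and its support is the Minkowski sum $\{0,\dots,k_{1}\}+\dots+\{0,\dots,k_{d}\}=\{0,\dots,K\}$, an integer interval. For log-concavity I would invoke the classical fact that the class of finitely supported, non-negative, log-concave sequences with no internal zeros is closed under convolution — equivalently, that a product of polynomials each having a coefficient sequence in this class again has such a coefficient sequence — applied to the $d$ factors $1+x+\dots+x^{k_{j}}$, whose coefficient sequences $(1,\dots,1)$ manifestly lie in this class. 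If a self-contained proof of this closure property is preferred, one can attach to each such sequence its banded lower-triangular Toeplitz matrix: membership in the class is equivalent to all minors of order $\leq 2$ of this matrix being non-negative, and by the Cauchy--Binet formula each order-$\leq 2$ minor of a product of two such Toeplitz matrices is a sum of products of order-$\leq 2$ minors of the factors.

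The only real obstacle is this last point, the log-concavity of $\prod_{j}(1+x+\dots+x^{k_{j}})$. In the Parsell--Vinogradov case $\calD=\sublevel_{k}$ of \cite{arxiv:1804.02488} one has $\Lambda_{l}=\binom{l+d-1}{d-1}$, and log-concavity in $l$ is a one-line computation; here there is no such closed form, so one genuinely needs either a reference for the convolution-stability of log-concave sequences or the short total-positivity argument sketched above. The remaining ingredients — the generating-function identity, the support computation, and the telescoping manipulation with the ratios $r_{l}$ — are entirely routine.
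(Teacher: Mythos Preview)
Your proof is correct and takes a genuinely different route from the paper. Both arguments begin by reducing to the one-step case $b'=b+1$, $a'=a-1$ (log-concavity of the sequence $(\Lambda^{\bfk}_{l})_{l}$); the paper does this by the same iteration you perform with the ratios $r_{l}$. The divergence is in how log-concavity itself is established. The paper proceeds by induction on $d$: it expands $\Lambda^{\bfk}_{a}\Lambda^{\bfk}_{b}-\Lambda^{\bfk}_{a-1}\Lambda^{\bfk}_{b+1}$ via the recursion~\eqref{eq:level-recursive2} in the last coordinate, regroups, and observes that every summand is non-negative by the inductive hypothesis for $\bfk'$. You instead identify the generating function $\sum_{l}\Lambda^{\bfk}_{l}x^{l}=\prod_{j=1}^{d}(1+x+\dots+x^{k_{j}})$ and invoke the classical closure of non-negative log-concave sequences with no internal zeros under convolution (equivalently, closure of TP$_{2}$ Toeplitz matrices under multiplication, via Cauchy--Binet). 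Your approach is more conceptual and situates the lemma as an instance of a well-known phenomenon, at the price of relying on an external fact or the short sketch you supply; the paper's argument is entirely self-contained and elementary. One small remark: your Cauchy--Binet sketch, once unpacked, already delivers the full TP$_{2}$ property of the Toeplitz matrix of $(\Lambda^{\bfk}_{l})_{l}$, which is precisely the statement of the lemma --- so the preliminary reduction to log-concavity, while correct, could in that variant be bypassed.
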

\begin{proof}
We induct on $d$.
For $d=0$ the right-hand side of the conclusion is non-zero only if $a'=b'=0$.
But in this case $a+b=a'+b'=0$ and $\max(a,b)\leq b' = 0$, so that $a=b=0$, and we obtain equality.

Suppose that the conclusion is known with $d$ replaced by $d-1$ and $\bfk$ replaced by $\bfk'$.
Assume without loss of generality $a \leq b$, so that $a' \leq a \leq b \leq b'$.
The case $b'=b$ is trivial, and it remains to consider the case $b'=b+1$, since for general $b'$ we can iterate the inequality as follows:
\[
\Lambda^{\bfk}_{a}\Lambda^{\bfk}_{b}
\geq
\Lambda^{\bfk}_{a-1}\Lambda^{\bfk}_{b+1}
\geq
\Lambda^{\bfk}_{a-2}\Lambda^{\bfk}_{b+2}
\geq \dotsb \geq
\Lambda^{\bfk}_{a'}\Lambda^{\bfk}_{b'}.
\]
In the case $b'=b+1$ we have $a'=a-1$ and
\begin{align*}
\Lambda^{\bfk}_{a}\Lambda^{\bfk}_{b} - \Lambda^{\bfk}_{a'}\Lambda^{\bfk}_{b'}
&=
\sum_{i=0}^{k_{d}} \sum_{j=0}^{k_{d}} \Lambda^{\bfk'}_{a-i} \Lambda^{\bfk'}_{b-j}
-
\sum_{i=0}^{k_{d}} \sum_{j=0}^{k_{d}} \Lambda^{\bfk'}_{a-1-i} \Lambda^{\bfk'}_{b+1-j}\\
&=
\sum_{i=0}^{k_{d}} \Lambda^{\bfk'}_{a-i} \Lambda^{\bfk'}_{b-k_{d}}
+
\sum_{j=0}^{k_{d}-1} \Lambda^{\bfk'}_{a} \Lambda^{\bfk'}_{b-j}
-
\sum_{i=0}^{k_{d}-1} \Lambda^{\bfk'}_{a-1-i} \Lambda^{\bfk'}_{b+1}
-
\sum_{j=0}^{k_{d}} \Lambda^{\bfk'}_{a-1-k_{d}} \Lambda^{\bfk'}_{b+1-j}\\
&=
\sum_{i=0}^{k_{d}} (\Lambda^{\bfk'}_{a-i} \Lambda^{\bfk'}_{b-k_{d}} - \Lambda^{\bfk'}_{a-1-k_{d}} \Lambda^{\bfk'}_{b+1-i})
+
\sum_{j=0}^{k_{d}-1} (\Lambda^{\bfk'}_{a} \Lambda^{\bfk'}_{b-j} - \Lambda^{\bfk'}_{a-1-j} \Lambda^{\bfk'}_{b+1}).
\end{align*}
Each summand is non-negative by the induction hypothesis.
\end{proof}

The following result generalizes the proof of \cite[Lemma 10.7]{arxiv:1804.02488}.

\begin{theorem}
\label{thm:upset-levels}
For every $d\in\N$, $\bfk\in\N^{d}$, $m\in\N$, and every subset $T \subset \level^{\bfk}_{m}$, we have
\begin{equation}
\label{eq:upset-levels}
\abs{\level^{\bfk}_{m+1}} \abs{T} \leq \abs{\level^{\bfk}_{m}} \abs{T^{+}},
\end{equation}
where $T^{+} := (\upset T) \cap \level_{m+1}^{\bfk}$.
\end{theorem}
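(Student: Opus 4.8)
The plan is to prove \eqref{eq:upset-levels} by induction on $d$, slicing in the last coordinate and invoking the log-concavity estimate Lemma~\ref{lem:level-set-concavity}. The base case $d=0$ is immediate: $\level^{()}_{1}=\emptyset$, so $\Lambda^{()}_{1}=0$ and the left-hand side of \eqref{eq:upset-levels} vanishes. For the inductive step write $\bfk=(\bfk',k_{d})$ and $\bfa=(\bfa',a_{d})$ for elements of $\N^{d}$, and for $T\subseteq\level^{\bfk}_{m}$ and $0\le j\le k_{d}$ let $T_{j}:=\Set{\bfa'\in\N^{d-1}\given(\bfa',j)\in T}\subseteq\level^{\bfk'}_{m-j}$ denote the $j$-th slice, so that $\abs{T}=\sum_{j=0}^{k_{d}}\abs{T_{j}}$. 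By the recursion \eqref{eq:level-recursive2} the asserted inequality is equivalent to $\abs{T^{+}}\ge c\,\abs{T}$, where $c:=\Lambda^{\bfk}_{m+1}/\Lambda^{\bfk}_{m}$; if $\Lambda^{\bfk}_{m}=0$ then $T=\emptyset$, and if $\Lambda^{\bfk}_{m+1}=0$ the claim is vacuous, so we may assume $c$ is well defined and positive.

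The first ingredient is the slice identity
\[
\Set{\bfc'\in\N^{d-1}\given(\bfc',j)\in T^{+}}=T_{j-1}\cup(T_{j})^{+},\qquad 0\le j\le k_{d},
\]
with $T_{-1}:=\emptyset$ and $(T_{j})^{+}:=(\upset T_{j})\cap\level^{\bfk'}_{m-j+1}$ the upper shadow in dimension $d-1$. Indeed, if $(\bfc',j)\in\upset T$, pick $(\bfb',i)\in T$ with $\bfb'\preceq\bfc'$ and $i\le j$; then $m-i=\abs{\bfb'}\le\abs{\bfc'}=m+1-j$ forces $i\in\Set{j-1,j}$, and the cases $i=j$ and $i=j-1$ put $\bfc'$ into $(T_{j})^{+}$ and into $T_{j-1}$ respectively (in the latter, $\abs{\bfb'}=\abs{\bfc'}$ and $\bfb'\preceq\bfc'$ give $\bfb'=\bfc'$), while the reverse inclusions are immediate. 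Summing over $j$ gives $\abs{T^{+}}=\sum_{j=0}^{k_{d}}\abs{T_{j-1}\cup(T_{j})^{+}}$.

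Next I would feed each slice $T_{j}\subseteq\level^{\bfk'}_{m-j}$ into the inductive hypothesis to get $\abs{(T_{j})^{+}}\ge\rho_{m-j}\abs{T_{j}}$, where $\rho_{l}:=\Lambda^{\bfk'}_{l+1}/\Lambda^{\bfk'}_{l}$ (this is vacuous when $\Lambda^{\bfk'}_{m-j}=0$, as then $T_{j}=\emptyset$). Combining with the elementary bound $\abs{A\cup B}\ge\theta\abs{A}+(1-\theta)\abs{B}$, valid for $\theta\in[0,1]$, gives for any weights $\theta_{0},\dotsc,\theta_{k_{d}+1}\in[0,1]$ with $\theta_{0}=\theta_{k_{d}+1}=0$ the estimate
\[
\abs{T^{+}}\ge\sum_{j=0}^{k_{d}}\bigl(\theta_{j}\abs{T_{j-1}}+(1-\theta_{j})\rho_{m-j}\abs{T_{j}}\bigr)=\sum_{j=0}^{k_{d}}\bigl(\theta_{j+1}+(1-\theta_{j})\rho_{m-j}\bigr)\abs{T_{j}},
\]
so it suffices to choose weights with $\theta_{j+1}+(1-\theta_{j})\rho_{m-j}\ge c$ for all $j$. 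Setting $L_{i}:=\Lambda^{\bfk'}_{m-i}$, so $\rho_{m-j}=L_{j-1}/L_{j}$, and $\psi_{j}:=\theta_{j}L_{j-1}$, this becomes $\psi_{j+1}-\psi_{j}\ge cL_{j}-L_{j-1}$; since $\sum_{i=0}^{k_{d}}(cL_{i}-L_{i-1})=c\Lambda^{\bfk}_{m}-\Lambda^{\bfk}_{m+1}=0$, the partial sums $\psi_{j}:=\sum_{i=0}^{j-1}(cL_{i}-L_{i-1})$ realise this with equality and have the correct endpoints $\psi_{0}=\psi_{k_{d}+1}=0$.

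The step I expect to be the main obstacle is verifying that the resulting weights lie in $[0,1]$, i.e.\ $0\le\psi_{j}\le L_{j-1}$. For the lower bound, write $cL_{i}-L_{i-1}=(c-\rho_{m-i})L_{i}$ and observe that by Lemma~\ref{lem:level-set-concavity} (with $(a,b,a',b')=(l+1,l+1,l,l+2)$ it yields $(\Lambda^{\bfk'}_{l+1})^{2}\ge\Lambda^{\bfk'}_{l}\Lambda^{\bfk'}_{l+2}$) the ratios $\rho_{l}$ are non-increasing, hence $\rho_{m-i}$ is non-decreasing in $i$ and $\Set{i\given cL_{i}-L_{i-1}\ge0}$ is an initial segment; a sequence of partial sums starting and ending at $0$ whose increments are first $\ge0$ and then $\le0$ stays $\ge0$. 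For the upper bound, set $\chi_{j}:=L_{j-1}-\psi_{j}$; then $\chi_{j}-\chi_{j+1}=(c-1)L_{j}$ has a sign not depending on $j$, so $\chi_{j}$ is monotone in $j$, and since $\chi_{0}=L_{-1}\ge0$ and $\chi_{k_{d}+1}=L_{k_{d}}\ge0$ we get $\chi_{j}\ge0$ for all $j$. (If some $L_{i}$ vanish, one first restricts to the interval of indices on which $\Lambda^{\bfk'}_{m-i}>0$, outside of which the slices are empty; this only shifts the endpoints of the recursion.) Plugging valid weights into the displayed estimate yields $\abs{T^{+}}\ge c\,\abs{T}$, which completes the induction and proves \eqref{eq:upset-levels}.
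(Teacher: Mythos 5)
Your proof is correct and shares the paper's skeleton: slice in the last coordinate, combine the inclusion $T_{j-1}\subseteq S_{j}(T^{+})$ with the inductive estimate $\abs{(T_{j})^{+}}\geq \rho_{m-j}\abs{T_{j}}$ using non-negative weights, and verify admissibility of the weights via Lemma~\ref{lem:level-set-concavity}. What differs, and is a genuine streamlining, is how the weights are constructed and checked. The paper introduces coefficients $A_{j},B_{j}$ subject to the system \eqref{eq:level-split}--\eqref{eq:Amax}, writes down the closed-form expressions \eqref{eq:Aj}--\eqref{eq:Bj}, and then verifies non-negativity in several sub-lemmas, ending with a double-sum expansion estimated term-by-term using Lemma~\ref{lem:level-set-concavity}. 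You instead use a single convex weight $\theta_{j}\in[0,1]$ per slice, exploiting the exact slice decomposition $S_{j}(T^{+})=T_{j-1}\cup(T_{j})^{+}$ and the bound $\abs{A\cup B}\geq\theta\abs{A}+(1-\theta)\abs{B}$; the substitution $\psi_{j}=\theta_{j}L_{j-1}$ turns the constraint into a telescoping inequality realized with equality by partial sums of $cL_{i}-L_{i-1}$, so the endpoint conditions $\psi_{0}=\psi_{k_{d}+1}=0$ are automatic, and the admissibility bounds $0\leq\psi_{j}\leq L_{j-1}$ follow from two elementary facts: unimodality of the partial sums (driven by the monotonicity of $\rho_{l}=\Lambda^{\bfk'}_{l+1}/\Lambda^{\bfk'}_{l}$, which is the log-concavity case $(a,b,a',b')=(l+1,l+1,l,l+2)$ of Lemma~\ref{lem:level-set-concavity} and, since $l\mapsto\Lambda^{\bfk'}_{l}$ has interval support, already implies the full lemma), and monotonicity of $\chi_{j}=L_{j-1}-\psi_{j}$, immediate from $\chi_{j}-\chi_{j+1}=(c-1)L_{j}$ having constant sign. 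This replaces the paper's explicit formula verification with cleaner structural observations. One small imprecision: $\Set{i\given cL_{i}-L_{i-1}\geq 0}$ is not literally an initial segment in general, since for indices past the support of $L$ the increments are zero and hence weakly non-negative even after one that is strictly negative; but what your unimodality argument actually needs — increments non-negative up to some cutoff and non-positive thereafter — does hold, and your parenthetical about degenerate indices already addresses it.
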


Theorem~\ref{thm:upset-levels}, applied with $T = \level^{\bfk}_{m} \cap U$, tells that the density of any up-set $U$ in the level sets $\level^{\bfk}_{m}$ increases with $m$:
\[
\frac{\abs{U \cap \level^{\bfk}_{m}}}{\abs{\level^{\bfk}_{m}}}
\leq
\frac{\abs{U \cap \level^{\bfk}_{m+1}}}{\abs{\level^{\bfk}_{m+1}}}.
\]
We prefer the formulation \eqref{eq:upset-levels} because it avoids division by zero for empty level sets.

\begin{proof}
We induct on $d$.
For $d=0$, the set $\N^{d}$ contains only the empty tuple, and one can see that the left-hand side of \eqref{eq:upset-levels} always vanishes.
Suppose that $d>0$ and the result is already known with $d$ replaced by $d-1$.

Recall the slice map \eqref{eq:slice-map}.
We have the inclusions
\[
S_{j}T \subseteq S_{j+1}(T^{+})
\quad
\text{ if } 0 \leq j < k_{d},
\]
\[
(S_{j}T)^{+} \subseteq S_{j}(T^{+})
\quad
\text{ if } 0 \leq j.
\]
(The indices here are different from \cite{arxiv:1804.02488}, where the convention $T_{j} := S_{m-j}T$ is used.)

These inclusions and the inductive hypothesis (for smaller $d$) give
\begin{equation}
\label{eq:6}
\abs{S_{j}T} \leq \abs{S_{j+1}(T^{+})},
\quad
\Lambda^{\bfk'}_{m-j+1} \abs{S_{j}T}
\leq \Lambda^{\bfk'}_{m-j} \abs{(S_{j}T)^{+}}
\leq \Lambda^{\bfk'}_{m-j} \abs{S_{j}(T^{+})}
\end{equation}
Let $j_{\min} := \max(0,m-k_{1}-\dotsb-k_{d-1})$, $j_{\max} := \min(m,k_{d})$.
Then
\[
\abs{T} = \sum_{j=j_{\min}}^{j_{\max}} \abs{S_{j}T}.
\]
The restrictions on $j$ reflect that some slices of $\level_{m}^{\bfk}$ are empty.
Suppose that we can find non-negative solutions $A_{j},B_{j} \geq 0$ with $j_{\min} \leq j \leq j_{\max}$ to the equations
\begin{equation}
\label{eq:level-split}
\Lambda_{m+1}^{\bfk} = A_{j} + \Lambda_{m-j+1}^{\bfk'} B_{j},
\quad
j_{\min} \leq j \leq j_{\max}
\end{equation}
\begin{equation}
\label{eq:level-recombine}
\Lambda_{m-j}^{\bfk'} B_{j} + A_{j-1} = \Lambda_{m}^{\bfk},
\quad
j_{\min}< j \leq j_{\max}.
\end{equation}
\begin{equation}
\label{eq:Bmin}
\Lambda^{\bfk}_{m} = B_{j_{\min}} \Lambda^{\bfk'}_{m-j_{\min}}
\end{equation}
\begin{equation}
\label{eq:Amax}
A_{j_{\max}} =
\begin{cases}
\Lambda^{\bfk}_{m} & \text{if } m < k_{d},\\
0 & \text{if } m \geq k_{d}.
\end{cases}
\end{equation}
Then we can finish the proof by estimating
\begin{align*}
\Lambda_{m+1}^{\bfk} \abs{T}
&= \sum_{j=j_{\min}}^{j_{\max}} (A_{j} + B_{j} \Lambda_{m-j+1}^{\bfk'})\abs{S_{j} T}\\
&\leq \sum_{j=j_{\min}}^{j_{\max}} A_{j} \abs{S_{j+1} (T^{+})} + \sum_{j=j_{\min}}^{j_{\max}} B_{j} \Lambda^{\bfk'}_{m-j} \abs{S_{j}(T^{+})}\\
&= \Lambda_{m}^{\bfk} \sum_{j=j_{\min}}^{\min(m+1,k_{d})} \abs{S_{j}(T^{+})}\\
&= \Lambda_{m}^{\bfk} \abs{T^{+}}.
\end{align*}
It remains to find positive solutions to the equations \eqref{eq:level-split}--\eqref{eq:Amax}.
There are more equations than unknowns, but this could have been expected, because we are comparing average densities of $T$ and $T^{+}$.

It is easy to verify that \eqref{eq:level-split} and \eqref{eq:level-recombine} hold for
\begin{align}
\label{eq:Aj}
A_{j}
&= %\Lambda^{k}_{m+1} - \Lambda^{\bfk'}_{m-j+1}B_{j} =
\frac{1}{\Lambda^{\bfk'}_{m-j}} \bigl( \Lambda^{\bfk'}_{m-j} \Lambda^{\bfk}_{m+1} - \Lambda^{\bfk'}_{m+1}\Lambda^{\bfk}_{m} + (\Lambda^{\bfk}_{m+1} - \Lambda^{\bfk}_{m})(\Lambda^{\bfk'}_{m-j+1} + \dotsb + \Lambda^{\bfk'}_{m}) \bigr),
\\
\label{eq:Bj}
B_{j} &=
\frac{1}{\Lambda^{\bfk'}_{m-j} \Lambda^{\bfk'}_{m-j+1}} \bigl( \Lambda^{\bfk'}_{m+1}\Lambda^{\bfk}_{m} - (\Lambda^{\bfk}_{m+1} - \Lambda^{\bfk}_{m})(\Lambda^{\bfk'}_{m-j+1} + \dotsb + \Lambda^{\bfk'}_{m}) \bigr)
\end{align}
for $j_{\min} \leq j \leq j_{\max}$ (notice that the denominators in the above formulas do not vanish in this range of $j$'s).

\begin{proof}[Proof of~\eqref{eq:Bmin}]
Notice that if $j_{\min}>0$, then $\Lambda^{\bfk'}_{m-j_{\min}+1} = \dotsb = \Lambda^{\bfk'}_{m+1}=0$.
Therefore, for any value of $j_{\min}\geq 0$, we obtain $\Lambda^{\bfk'}_{m-j_{\min}+1} = \Lambda^{\bfk'}_{m+1}$ and
\[
B_{j_{\min}}
=
\frac{1}{\Lambda^{\bfk'}_{m-j_{\min}} \Lambda^{\bfk'}_{m-j_{\min}+1}} \bigl( \Lambda^{\bfk'}_{m-j_{\min}+1}\Lambda^{\bfk}_{m} \bigr)
=
\frac{\Lambda^{\bfk}_{m}}{\Lambda^{\bfk'}_{m-j_{\min}}}.
\qedhere
\]
\end{proof}
\begin{proof}[Proof of~\eqref{eq:Amax}]
In the case $j_{\max} = m < k_{d}$ we have
\begin{equation}
\label{eq:Bjmax:1}
\begin{split}
B_{m}
&=
\frac{1}{\Lambda^{\bfk'}_{0} \Lambda^{\bfk'}_{1}} \bigl( \Lambda^{\bfk'}_{m+1}\Lambda^{\bfk}_{m} - (\Lambda^{\bfk}_{m+1} - \Lambda^{\bfk}_{m})(\Lambda^{\bfk'}_{1} + \dotsb + \Lambda^{\bfk'}_{m}) \bigr)
\\ &= \frac{1}{\Lambda^{\bfk'}_{0} \Lambda^{\bfk'}_{1}} \bigl( \Lambda^{\bfk'}_{m+1}\Lambda^{\bfk}_{m} - \Lambda^{\bfk'}_{m+1}(\Lambda^{\bfk}_{m} - \Lambda^{\bfk'}_{0}) \bigr)
= \frac{\Lambda^{\bfk'}_{m+1}}{\Lambda^{\bfk'}_{1}}.
\end{split}
\end{equation}
Hence, by \eqref{eq:level-split} with $j=m$, we obtain $A_{m} = \Lambda^{\bfk}_{m+1}-\Lambda^{\bfk'}_{1}B_{m} = \Lambda^{\bfk}_{m+1} - \Lambda^{\bfk'}_{m+1} = \Lambda^{\bfk}_{m}$, and this shows \eqref{eq:Amax}.

In the case $j_{\max} = k_{d}$ we compute
\[
\Lambda^{\bfk'}_{m-k_{d}+1} + \dotsb + \Lambda^{\bfk'}_{m}
=
\Lambda^{\bfk}_{m} - \Lambda^{\bfk'}_{m-k_{d}}
=
\Lambda^{\bfk}_{m+1} - \Lambda^{\bfk'}_{m+1}.
\]
Hence,
\begin{equation}
\label{eq:Bjmax:2}
\begin{split}
B_{k_{d}}
&= \frac{1}{\Lambda^{\bfk'}_{m-k_{d}} \Lambda^{\bfk'}_{m-k_{d}+1}} \bigl( \Lambda^{\bfk'}_{m+1}\Lambda^{\bfk}_{m} - \Lambda^{\bfk}_{m+1}(\Lambda^{\bfk}_{m} - \Lambda^{\bfk'}_{m-k_{d}}) + \Lambda^{\bfk}_{m} (\Lambda^{\bfk}_{m+1} - \Lambda^{\bfk'}_{m+1}) \bigr)
\\ &=
\frac{1}{\Lambda^{\bfk'}_{m-k_{d}} \Lambda^{\bfk'}_{m-k_{d}+1}} \bigl( \Lambda^{\bfk}_{m+1} \Lambda^{\bfk'}_{m-k_{d}} \bigr)
= \frac{\Lambda^{\bfk}_{m+1}}{\Lambda^{\bfk'}_{m-k_{d}+1}}.
\end{split}
\end{equation}
By \eqref{eq:level-split} with $j=k_{d}$, it follows that $A_{k_{d}} = 0$, and this shows \eqref{eq:Amax} also in this case.
\end{proof}

\begin{proof}[Proof of $B_{j} \geq 0$]
The sequence $B_{j}$ is the quotient of a monotonic sequence and a positive sequence.
By \eqref{eq:Bmin}, we know $B_{j_{\min}} \geq 0$.
From \eqref{eq:Bjmax:1} and \eqref{eq:Bjmax:2}, we also see $B_{j_{\max}} \geq 0$.
Hence $B_{j} \geq 0$ for all $j_{\min} \leq j \leq j_{\max}$.
\end{proof}

\begin{proof}[Proof of $A_{j} \geq 0$]
We pass to $A_{j}$ with $j_{\min} \leq j \leq j_{\max}$ and compute
\begin{align*}
\Lambda^{\bfk'}_{m-j} A_{j}
&=
\Lambda^{\bfk}_{m+1}(\Lambda^{\bfk'}_{m-j} + \dotsb + \Lambda^{\bfk'}_{m}) - \Lambda^{\bfk}_{m}(\Lambda^{\bfk'}_{m-j+1} + \dotsb + \Lambda^{\bfk'}_{m+1})
\\ &=
\Bigl( \sum_{a=0}^{k_{d}} \Lambda^{\bfk'}_{m+1-a} \Bigr)
\Bigl( \sum_{b=0}^{j} \Lambda^{\bfk'}_{m-b} \Bigr)
-
\Bigl( \sum_{b=0}^{k_{d}} \Lambda^{\bfk'}_{m-b} \Bigr)
\Bigl( \sum_{a=0}^{j} \Lambda^{\bfk'}_{m+1-a} \Bigr).
\end{align*}
Canceling summands that appear both with plus and with minus, we obtain
\begin{align*}
\Lambda^{\bfk'}_{m-j} A_{j}
&=
\Bigl( \sum_{a=j+1}^{k_{d}} \Lambda^{\bfk'}_{m+1-a} \Bigr)
\Bigl( \sum_{b=0}^{j} \Lambda^{\bfk'}_{m-b} \Bigr)
-
\Bigl( \sum_{b=j+1}^{k_{d}} \Lambda^{\bfk'}_{m-b} \Bigr)
\Bigl( \sum_{a=0}^{j} \Lambda^{\bfk'}_{m+1-a} \Bigr)
\\ &=
\sum_{a=j+1}^{k_{d}} \sum_{b=0}^{j}
\bigl( \Lambda^{\bfk'}_{m+1-a}\Lambda^{\bfk'}_{m-b}
-
\Lambda^{\bfk'}_{m-a}\Lambda^{\bfk'}_{m+1-b} \bigr).
\end{align*}
By Lemma~\ref{lem:level-set-concavity}, each summand is non-negative, so $A_{j} \geq 0$.
\end{proof}
We have verified that \eqref{eq:Aj} and \eqref{eq:Bj} are indeed positive solutions of the equations \eqref{eq:level-split}--\eqref{eq:Amax}.
This finishes the proof of Theorem~\ref{thm:upset-levels}.
\end{proof}

\begin{corollary}
\label{cor:sublevel-up-set}
Let $d\geq 1$, $\bfk = (k_{1},\dotsc,k_{d}) \in\N_{>0}^{d}$, and $1 \leq l < l'$.
Then, for every up-set $B \subset \N^{d}$, we have
\begin{equation}
\label{eq:sublevel-up-set}
\abs{\sublevel^{\bfk}_{l'}} \abs{B \cap \sublevel^{\bfk}_{l}}
\leq
\abs{\sublevel^{\bfk}_{l}} \abs{B \cap \sublevel^{\bfk}_{l'}}.
\end{equation}
If $\level^{\bfk}_{l'}\neq\emptyset$, then equality can only hold in \eqref{eq:sublevel-up-set} in the following cases:
\begin{enumerate}
\item\label{it:sublevel-up-set:full/empty} $B\cap\sublevel^{\bfk}_{l'} \in \Set{\emptyset, \sublevel^{\bfk}_{l'}}$, or
\item\label{it:sublevel-up-set:d=2} $d=2$, $1=k_{1}<k_{2}$, $B = \upset\Set{(1,0)}$, and $l'\le k_2$, or
\item\label{it:sublevel-up-set:d=2'} $d=2$, $1=k_{2}<k_{1}$, $B = \upset\Set{(0,1)}$, and $l'\le k_1$.
\end{enumerate}
\end{corollary}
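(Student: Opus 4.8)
The plan is to derive the inequality \eqref{eq:sublevel-up-set} by summing the level-set inequality of Theorem~\ref{thm:upset-levels} over degrees, and then to analyze equality by hand. Write $\sublevel^{\bfk}_{l} = \bigcup_{m=1}^{l} \level^{\bfk}_{m}$ (a disjoint union) and abbreviate $a_{m} := \abs{B \cap \level^{\bfk}_{m}}$, $b_{m} := \abs{\level^{\bfk}_{m}}$ for $m \geq 1$, so that $\abs{B \cap \sublevel^{\bfk}_{l}} = \sum_{m=1}^{l} a_{m}$ and $\abs{\sublevel^{\bfk}_{l}} = \sum_{m=1}^{l} b_{m}$. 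Since $B$ is an up-set, $\upset(B \cap \level^{\bfk}_{m}) \subseteq B$, and applying Theorem~\ref{thm:upset-levels} with $T = B \cap \level^{\bfk}_{m}$ (so that $T^{+} \subseteq B \cap \level^{\bfk}_{m+1}$) gives
\begin{equation}
\label{eq:abm-monotone}
a_{m} b_{m+1} \leq a_{m+1} b_{m}, \qquad m \geq 1.
\end{equation}
Also $b_{m} > 0$ exactly for $1 \leq m \leq K := k_{1} + \dotsb + k_{d}$ (for such $m$ a witness $\bfa \preceq \bfk$ with $\abs{\bfa} = m$ is found greedily), so the indices with $b_{m} > 0$ form an initial segment.

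First I would propagate \eqref{eq:abm-monotone} to the \emph{mediant} estimate $a_{i} b_{j} \leq a_{j} b_{i}$ for all $i < j$, by induction on $j - i$: the base case $j = i + 1$ is \eqref{eq:abm-monotone}, and for the step one combines $a_{i} b_{j-1} \leq a_{j-1} b_{i}$ with $a_{j-1} b_{j} \leq a_{j} b_{j-1}$ and cancels $b_{j-1}$ if $b_{j-1} > 0$, while if $b_{j-1} = 0$ then $b_{j} = 0$ too (the positive $b_{m}$ form an initial segment) and the estimate is trivial. Since $\bigl( \sum_{m=1}^{l} b_{m} \bigr) \bigl( \sum_{m=1}^{l'} a_{m} \bigr) - \bigl( \sum_{m=1}^{l'} b_{m} \bigr) \bigl( \sum_{m=1}^{l} a_{m} \bigr) = \sum_{i=1}^{l} \sum_{j=l+1}^{l'} (b_{i} a_{j} - b_{j} a_{i}) \geq 0$, this yields \eqref{eq:sublevel-up-set}.

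Now the equality case, which I expect to be the main obstacle. Assume $\level^{\bfk}_{l'} \neq \emptyset$, i.e.\ $l' \leq K$, so $b_{m} > 0$ for $1 \leq m \leq l'$. Equality in \eqref{eq:sublevel-up-set} forces $b_{i} a_{j} = b_{j} a_{i}$ for all $1 \leq i \leq l < j \leq l'$, hence there is one constant $c \in [0,1]$ with $a_{m} = c\, b_{m}$ for all $1 \leq m \leq l'$. The cases $c = 0$ and $c = 1$ give $B \cap \sublevel^{\bfk}_{l'} = \emptyset$ and $B \cap \sublevel^{\bfk}_{l'} = \sublevel^{\bfk}_{l'}$ respectively, i.e.\ alternative~\ref{it:sublevel-up-set:full/empty}. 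Assume $0 < c < 1$. Comparing degrees $m = 1$ and $m = 2$ (both $\leq l'$, as $l' > l \geq 1$): with $I := \Set{i : e_{i} \in B}$, $a := \abs{I}$, $N := \abs{\Set{i : k_{i} \geq 2}}$, $N_{I} := \abs{\Set{i \in I : k_{i} \geq 2}}$, one has $a_{1} = a$, $b_{1} = d$, $b_{2} = \binom{d}{2} + N$, and, by up-closure, $a_{2} \geq \binom{d}{2} - \binom{d-a}{2} + N_{I}$. Substituting these into $a_{1} b_{2} = a_{2} b_{1}$ and simplifying reduces (for $a \geq 1$) to $\tfrac{N}{d} - \tfrac{N_{I}}{a} \geq \tfrac{d-a}{2}$; since the left-hand side is at most $1$, this gives $a \geq d - 2$, and a short further case analysis (using $N \leq d$ and $N_{I} \geq \max(0, a + N - d)$) leaves only $d = 2$, $a = 1$, with the unique index $i_{0} \in I$ satisfying $k_{i_{0}} = 1$ and the other exponent $\geq 2$. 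By the symmetry of the hypotheses and of alternatives \ref{it:sublevel-up-set:d=2}, \ref{it:sublevel-up-set:d=2'} under permuting coordinates, we may assume $1 = k_{1} < k_{2}$, $(1,0) \in B$, $(0,1) \notin B$.

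Finally, in that situation I would read off $B \cap \sublevel^{\bfk}_{l'}$. An up-set $B \subseteq \N^{2}$ meets the columns $\Set{0} \times \N$ and $\Set{1} \times \N$ in $\Set{0} \times \Set{n \geq s_{0}}$ and $\Set{1} \times \Set{n \geq s_{1}}$ with $s_{1} \leq s_{0}$, and here $(1,0) \in B$, $(0,1) \notin B$ force $s_{1} = 0$ and $s_{0} \geq 2$. For $1 \leq m \leq k_{2}$ we have $\level^{\bfk}_{m} = \Set{(0,m),(1,m-1)}$, hence $b_{m} = 2$ and $a_{m} = \one[m \geq s_{0}] + 1$; so constancy of $a_{m}/b_{m}$ on $1 \leq m \leq l'$ forces $l' \leq k_{2}$ (otherwise $\level^{\bfk}_{k_{2}+1} = \Set{(1,k_{2})}$ contributes $a_{k_{2}+1}/b_{k_{2}+1} = 1 \neq 1/2$) and $s_{0} \geq l'+1$, whence $c = 1/2$. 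From $s_{1} = 0$ and $s_{0} > l'$ one checks $B \cap \sublevel^{\bfk}_{l'} = \upset\Set{(1,0)} \cap \sublevel^{\bfk}_{l'}$, which is alternative~\ref{it:sublevel-up-set:d=2} (the permuted case being \ref{it:sublevel-up-set:d=2'}). Conversely, each alternative plainly gives equality in \eqref{eq:sublevel-up-set}. The genuinely delicate step is the equality analysis, and within it the extraction of the degenerate geometry $d = 2$, $\min(k_{1},k_{2}) = 1$ from the comparison of the first two degrees.
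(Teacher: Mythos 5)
Your proof is correct and follows essentially the same route as the paper's: one sums the level-set inequality of Theorem~\ref{thm:upset-levels} (your mediant estimate $a_i b_j \le a_j b_i$ is precisely what the paper's iterated consecutive step produces), and the equality case is pinned down by comparing densities at degrees $1$ and $2$, your $\frac{N}{d}-\frac{N_I}{a}\ge\frac{d-a}{2}$ being an algebraic rewriting of the paper's $2ij'-2ji'\ge(i'+j')\,d\,(i+j)$ under $a=i+j$, $N=j+j'$, $N_I=j$. One precision worth noting: what your argument actually yields in the $d=2$ case is $B\cap\sublevel^{\bfk}_{l'}=\upset\Set{(1,0)}\cap\sublevel^{\bfk}_{l'}$ together with $l'\le k_2$, which is strictly weaker than the corollary's literal ``$B=\upset\Set{(1,0)}$'' --- an up-set that additionally contains $(0,n)$ for some $n>l'$ still gives equality --- and your formulation is the correct one; the literal form does hold in the paper's only application (Lemma~\ref{lem:powers-rank-est}), where $B$ is the up-closure of a subset of $\sublevel^{\bfk}_l$, in which case the two statements coincide.
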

Corollary~\ref{cor:sublevel-up-set} recovers \cite[Lemma 10.7]{arxiv:1804.02488}, including the equality condition, upon setting $k_{1}=\dotsb=k_{d}=l'$.

Before proving Corollary~\ref{cor:sublevel-up-set}, let us give an informal outline.
Theorem~\ref{thm:upset-levels} tells that the density of $B$ in $\level^{\bfk}_{m}$ increases with $m$, and \eqref{eq:sublevel-up-set} follows by averaging this statement.
By the averaging argument, if $B$ has equal densities in $\sublevel^{\bfk}_{l'}$ and $\sublevel^{\bfk}_{l}$ and the level set $\level^{\bfk}_{l'}$ is non-empty, then $B$ must have the same density in each level set $\level^{\bfk}_{m}$.
The equality condition follows with this observation applied to $m=1,2$.

\begin{proof}
We begin with the inequality \eqref{eq:sublevel-up-set}.
We may assume $B \cap \sublevel^{\bold{k}}_{l} \neq \emptyset$.
Then also $B\cap \sublevel^{\bold{k}}_{l'} \neq \emptyset$ for every $l'>l$.
Hence it suffices to consider the case $l' = l+1$, that is,
\begin{equation}
\label{eq:sublevel-up-set-consecutive}
\abs{B \cap \sublevel^{\bfk}_{l}}
\leq
\frac{\abs{\sublevel^{\bfk}_{l}}}{\abs{\sublevel^{\bfk}_{l+1}}} \abs{B \cap \sublevel^{\bfk}_{l+1}}.
\end{equation}
Indeed, \eqref{eq:sublevel-up-set} follows from \eqref{eq:sublevel-up-set-consecutive} applied $l'-l$ times.

In proving \eqref{eq:sublevel-up-set-consecutive}, we may assume $\level^{\bfk}_{l+1} \neq \emptyset$, since otherwise the left-hand side and the right-hand side coincide.
Then also $\level^{\bfk}_{l''} \neq \emptyset$ for all $0 \leq l'' \leq l+1$.
Let
\[
B_m := B \cap \level^{\bfk}_m,
\qquad 0\leq m \leq l+1.
\]
Since $B$ is an up-set, we have $B_{r} \supseteq \level^{\bfk}_{r} \cap \upset B_{m}$ for all $0 \leq m \leq r \leq l+1$.
By Theorem~\ref{thm:upset-levels}, we obtain
\[
\abs{B_{m}} \leq \frac{\abs{\level^{\bfk}_{m}}}{\abs{\level^{\bfk}_{r}}} \abs{B_{r}}.
\]
Substituting $r=l+1$ and summing these inequalities, we deduce
\[
\abs{B \cap \sublevel^{\bfk}_{l}}
=
\sum_{m=1}^l \abs{B_m}
\leq
\frac{\sum_{m=1}^l \abs{\level^{\bfk}_m}}{\abs{\level^{\bfk}_{l+1}}} \abs{B_{l+1}}
=
\frac{\abs{\sublevel^{\bfk}_l}}{\abs{\sublevel^{\bfk}_{l+1}} - \abs{\sublevel^{\bfk}_l}} (\abs{B \cap \sublevel^{\bfk}_{l+1}} - \abs{B \cap \sublevel^{\bfk}_{l}}).
\]
Rearranging, we obtain \eqref{eq:sublevel-up-set-consecutive}.

Next we verify the equality condition.
We may assume $l'\geq 2$.
If $\level^{\bfk}_{l'}\neq\emptyset$, then equality in~\ref{cor:sublevel-up-set} implies equality in each application of Theorem~\ref{thm:upset-levels}.
In particular, $\abs{\level^{\bfk}_{2}} \abs{B_{1}} = \abs{\level^{\bfk}_{1}} \abs{B_{2}}$.

Decompose $d=i+i'+j+j'$, where $i+i' = \abs{\Set{m \given k_{m}=1}}$, $j+j' = \abs{\Set{m \given k_{m}>1}}$, $i = \abs{\Set{m \given k_{m}=1, e_{m}\in B}}$, $j = \abs{\Set{m \given k_{m}>1, e_{m}\in B}}$.
Then
\[
\abs{\level^{\bfk}_{1}} = d = i + i' + j + j', \quad
\abs{\level^{\bfk}_{2}} = \binom{d}{2} + j + j', \quad
\abs{B_{1}} = i+j, \quad
\abs{B_{2}} \geq \binom{d}{2} - \binom{i'+j'}{2} + j.
\]
Thus we obtain
\begin{equation}
\label{eq:2}
(i+j) (\binom{d}{2} + j + j') \geq d (\binom{d}{2} - \binom{i'+j'}{2} + j).
\end{equation}
We simplify this inequality as follows:
\begin{align*}
\eqref{eq:2}
&\iff
2 (i+j) (j+j') \geq (i'+j') d (d-1) - d (i'+j')(i'+j'-1) + 2jd
\\ &\iff
2 i (j+j') - 2j(i+i') \geq (i'+j') d (i+j)
\\ &\iff
2ij' - 2ji' \geq (i'+j') d (i+j).
\end{align*}
In the case $ij'\neq 0$ this implies $d\leq 2$, so in fact $i=j'=1$ and $i'=j=0$.
Thus, up to interchanging coordinates, we are in the situation $d=2$, $k_{1}=1$, $k_{2}>1$ of case~\ref{it:sublevel-up-set:d=2}.
There are four possibilities for the set $B_{1}$ in this case, two of which are covered by case~\ref{it:sublevel-up-set:full/empty}.
It is easy to check the remaining two.

In the case $ij'=0$ we obtain
\[
(i'+j')(i+j) = 0.
\]
In the case $i'+j'=0$ we have $B_{1} = \sublevel^{\bfk}_{1}$.
In the case $i+j=0$ we have $B_{1} = \emptyset$.
Since the densities of $B$ in all level sets $\level^{\bfk}_{l}$ coincide, the conclusion follows.
\end{proof}

\subsection{Vandermode type matrix rank estimate}
We are now in position to estimate the rank of the matrices that appear on the right-hand side of~\eqref{eq:1}.
\begin{lemma}
\label{lem:powers-rank-est}
Let $d\geq 1$, $\bfk\in\N^{d}$, $1\leq l < l'$.
Let $\calA \subseteq \sublevel^{\bfk}_{l'}$.
Then
\begin{equation}
\label{eq:powers-rank-est}
\rank_{\R} (\bfa^{\bfi})_{\bfa\in\calA, \bfi\in\sublevel^{\bfk}_{l}}
\geq \frac{\abs{\sublevel^{\bfk}_{l}}}{\abs{\sublevel^{\bfk}_{l'}}} \abs{\calA}.
\end{equation}
If $\level^{\bfk}_{l'} \neq \emptyset$, then equality in \eqref{eq:powers-rank-est} can only hold in the following cases:
\begin{enumerate}
\item\label{it:powers-rank-est:full/empty} $\calA \in \Set{\emptyset, \sublevel^{\bfk}_{l'}}$, or
\item\label{it:powers-rank-est:d=2} $d=2$, $1=k_{1}<k_{2}$, and $\calA = \Set{(0,1),\dotsc,(0,l')}$, or
\item\label{it:powers-rank-est:d=2'} $d=2$, $1=k_{2}<k_{1}$, and $\calA = \Set{(1,0),\dotsc,(l',0)}$.
\end{enumerate}
\end{lemma}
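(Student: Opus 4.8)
The plan is to relate the rank of the matrix $N:=\bigl(\bfa^{\bfi}\bigr)_{\bfa\in\calA,\bfi\in\sublevel^{\bfk}_{l}}$ in \eqref{eq:powers-rank-est} to the ``staircase'' (complement of the leading term ideal) of the vanishing ideal of $\calA$, and then to feed that staircase, which is an up-set, into Corollary~\ref{cor:sublevel-up-set}.

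We may assume $\calA\neq\emptyset$, since otherwise both sides of \eqref{eq:powers-rank-est} vanish and we are in case~\ref{it:powers-rank-est:full/empty}. Adjoin the origin: set $\calA'':=\calA\cup\Set{0}$, so that $\calA''\subseteq\sublevel^{\bfk}_{l'}\cup\Set{0}$, a down-set with respect to $\preceq$. Fix any monomial order, let $I:=I(\calA'')\subseteq\R[x_{1},\dotsc,x_{d}]$ be the vanishing ideal, let $E\subseteq\N^{d}$ be the exponent set of the leading term ideal of $I$ (an up-set with respect to $\preceq$, not containing $0$ since $\calA''\neq\emptyset$), and let $S:=\N^{d}\setminus E$ be the staircase. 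Since $\calA''$ is a finite set of distinct points, evaluation induces an isomorphism $\R[\bfx]/I\cong\R^{\calA''}$ under which the classes of the monomials $\bfx^{\bfs}$, $\bfs\in S$, form a basis; hence $\card{S}=\card{\calA}+1$. The combinatorial heart of the argument is the claim
\[
S\subseteq\Set{\bfc\in\N^{d}\given \exists\,\bfa\in\calA'',\ \bfc\preceq\bfa}.
\]
Indeed, if $\bfc\not\preceq\bfa$ for every $\bfa\in\calA''$, choose for each $\bfa$ a coordinate $i(\bfa)$ with $a_{i(\bfa)}<c_{i(\bfa)}$, put $m_{i}:=\card{\Set{a_{i}\given\bfa\in\calA'',\ i(\bfa)=i}}$, and note $m_{i}\leq c_{i}$. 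Then $f:=\prod_{i=1}^{d}\prod_{v\in\{a_{i}:\,i(\bfa)=i\}}(x_{i}-v)$ is a nonzero non-unit element of $I$ (it kills every point of $\calA''$) whose leading monomial $\prod_{i}x_{i}^{m_{i}}$ divides $\bfx^{\bfc}$, so $\bfx^{\bfc}\in\mathrm{in}(I)$ and $\bfc\in E$. This proves the claim, and since $\sublevel^{\bfk}_{l'}\cup\Set{0}$ is a $\preceq$-down-set containing $\calA''$, we get $S\subseteq\sublevel^{\bfk}_{l'}\cup\Set{0}$.

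Now compare ranks. Let $N^{+}:=\bigl(\bfa^{\bfi}\bigr)_{\bfa\in\calA'',\ \bfi\in\sublevel^{\bfk}_{l}\cup\{0\}}$. The extra row $\bfa=0$ equals the standard basis vector at $\bfi=0$ (because $0^{\bfi}=0$ for $\bfi\neq0$) and the extra column $\bfi=0$ is constant equal to $1$, so clearing the $\bfi=0$ column with the $\bfa=0$ row gives $\rank N^{+}=\rank N+1$. On the other hand, the columns of $N^{+}$ indexed by $\bigl(\sublevel^{\bfk}_{l}\cup\{0\}\bigr)\cap S$ are linearly independent, since they are the evaluation vectors of the basis monomials $\bfx^{\bfs}$, $\bfs\in S$. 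As $0\in S$, this yields $\rank N\ge\card{\sublevel^{\bfk}_{l}\cap S}=\card{\sublevel^{\bfk}_{l}\setminus E}$. Applying Corollary~\ref{cor:sublevel-up-set} to the up-set $B:=E$, and using that $S\cap\sublevel^{\bfk}_{l'}=S\setminus\Set{0}$ has cardinality $\card{\calA}$, we obtain
\[
\rank N\ \ge\ \card{\sublevel^{\bfk}_{l}}-\card{E\cap\sublevel^{\bfk}_{l}}\ \ge\ \frac{\card{\sublevel^{\bfk}_{l}}}{\card{\sublevel^{\bfk}_{l'}}}\bigl(\card{\sublevel^{\bfk}_{l'}}-\card{E\cap\sublevel^{\bfk}_{l'}}\bigr)\ =\ \frac{\card{\sublevel^{\bfk}_{l}}}{\card{\sublevel^{\bfk}_{l'}}}\card{\calA},
\]
which is \eqref{eq:powers-rank-est}.

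It remains to treat equality. If $\level^{\bfk}_{l'}\neq\emptyset$ and equality holds in \eqref{eq:powers-rank-est}, then equality holds throughout, in particular in Corollary~\ref{cor:sublevel-up-set} for $B=E$. The alternative $E\cap\sublevel^{\bfk}_{l'}\in\Set{\emptyset,\sublevel^{\bfk}_{l'}}$ forces $\sublevel^{\bfk}_{l'}\subseteq S$ (whence $\calA=\sublevel^{\bfk}_{l'}$) or $S\cap\sublevel^{\bfk}_{l'}=\emptyset$ (whence $\calA=\emptyset$), i.e.\ case~\ref{it:powers-rank-est:full/empty}. The remaining alternatives of Corollary~\ref{cor:sublevel-up-set} occur only for $d=2$ with, after possibly swapping the coordinates, $k_{1}=1<k_{2}$ and $l'\leq k_{2}$, and force $S\cap\sublevel^{\bfk}_{l'}$ to lie on the $x_{2}$-axis; together with $\card{S}=\card{\calA}+1$ and $S\subseteq\sublevel^{\bfk}_{l'}\cup\{0\}$ this gives $S=\Set{0,(0,1),\dotsc,(0,l')}$ and $\card{\calA}=l'$. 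Combining this with the equality in $\rank N\ge\card{\sublevel^{\bfk}_{l}\setminus E}$ (which says that no column of $N^{+}$ indexed by $\sublevel^{\bfk}_{l}\setminus S$ increases the rank) and with the constraint $\calA\subseteq\sublevel^{\bfk}_{l'}$ (which bounds the first coordinate of every point of $\calA$ by $1$) pins down $\calA=\Set{(0,1),\dotsc,(0,l')}$, i.e.\ case~\ref{it:powers-rank-est:d=2}; the swapped situation gives case~\ref{it:powers-rank-est:d=2'}, and a direct computation confirms that these $\calA$ do attain equality. The main obstacle is precisely this last step: translating the equality conditions of Corollary~\ref{cor:sublevel-up-set}, reinforced by the equality condition in the rank comparison, into the exact list of exceptional sets $\calA$; the inequality \eqref{eq:powers-rank-est} itself is routine once the staircase containment above is established.
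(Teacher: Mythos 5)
Your proof of the inequality \eqref{eq:powers-rank-est} is correct and takes a genuinely different route from the paper. Where the paper constructs polynomials $f_q$ from a basis of the kernel of the evaluation matrix and feeds their lex-leading powers into the abstract Schwartz--Zippel Lemma~\ref{lem:SZ}, you instead pass to the vanishing ideal $I(\calA\cup\{0\})$, use the standard fact that the staircase $S$ of $\operatorname{in}(I)$ gives a monomial basis of $\R[\bfx]/I$ of cardinality $\abs{\calA}+1$, and prove directly (via the explicit interpolating product $f=\prod_i\prod_v(x_i-v)$) that $S$ is contained in the down-closure of $\calA\cup\{0\}$. This makes the role of Lemma~\ref{lem:SZ} superfluous for this lemma: the staircase fact and your down-closure containment together substitute for it, which is a clean and arguably more transparent argument for readers familiar with Gr\"obner bases. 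The bookkeeping ($\rank N^{+}=\rank N+1$, independence of the columns indexed by $(\sublevel^{\bfk}_l\cup\{0\})\cap S$, and the application of Corollary~\ref{cor:sublevel-up-set} to $B=E$) all check out.

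The equality analysis, however, has a genuine gap, as you yourself flag. After invoking Corollary~\ref{cor:sublevel-up-set} for $B=E$ you correctly deduce $S=\{0,(0,1),\dotsc,(0,l')\}$ and $\card{\calA}=l'$, together with the rank equality $\rank N=\card{\sublevel^{\bfk}_{l}\setminus E}$. But the step from these facts to $\calA=\{(0,1),\dotsc,(0,l')\}$ is only asserted (``pins down''), not proved. The staircase alone does not determine $\calA$: for instance $\calA=\{(1,1),(0,2),\dotsc,(0,l')\}$ (with $d=2$, $k_1=1<k_2\geq l'$) has exactly the same staircase $S$ with respect to lex $x_1>x_2$, yet is not of the claimed form; it is ruled out only because the \emph{rank} equality $\rank N=l$ fails for it, and extracting that from the structure of $\calA$ requires a real argument (e.g.\ the kind of Lagrange-interpolation analysis the paper carries out: pinning down the $a_2$-coordinates to $\{1,\dotsc,l'\}$, showing $(1,0)\notin\calA$, writing $a_1=g(a_2)$ for a degree-bounded $g$, and forcing $g\equiv 0$ by the vanishing at $0$ and $l'$). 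Moreover, since you fixed an \emph{arbitrary} monomial order, the structural consequence of $x_1\in\operatorname{in}(I)$ (namely that $I$ contains an element of the very specific form $x_1-g(x_2)$) is only available for a lex-type order; the paper's proof explicitly fixes lex for this reason, and Theorem~\ref{thm:BLcond} re-runs the argument with lex when it needs the equality dichotomy. Without making that choice and carrying out the interpolation analysis, the equality case is unproved.
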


\begin{proof}
Let $Q := \abs{\sublevel^{\bfk}_{l}} - \rank_{\R} (\bfa^{\bfi})_{\bfa\in\calA, \bfi\in\sublevel^{\bfk}_{l}}$.
Then there is a subspace $W \subseteq \R^{\sublevel^{\bfk}_{l}}$ such that $\dim W = Q$ and for every vector $w \in W$ we have $\sum_{\bfi\in\sublevel^{\bfk}_{l}} w_{\bfi} \bfa^{\bfi} = 0$ for every $\bfa \in \calA$.
Choose a basis $w_{1},\dotsc,w_{d}$ of $W$ such that the indices
\[
\bfb_{q} = \operatorname{lex\,max} \Set{ \bfb \given w_{q,\bfb} \neq 0 }
\]
are pairwise distinct, where $\operatorname{lex\,max}$ denotes the maximum with respect to the lexicographic order on $\N^{d}$.
%We may assume $w_{q,\bfb_{q}}=1$.
For each $1 \leq q \leq Q$ let $f_{q}(\bfx) := \sum_{\bfi\in\sublevel^{\bfk}_{l}} w_{q,\bfi} \bfx^{\bfi}$.
Then each polynomial $f_{q}$ vanishes on $\calA$ and also at $\vec{0}$, since it lacks a constant term.

We construct sets $R_{*;*;\bfb_{q}}$ with which we will be able to apply Lemma~\ref{lem:SZ}.
Fix $\bfb=\bfb_{q}=(b_{1},\dotsc,b_{d})$.
For $l=d,\dotsc,1$ (in descending order) and
\begin{equation}
\label{eq:3}
n_d \in \N \setminus R_{d; \bfb}, n_{d-1} \in \N \setminus R_{d-1; n_{d}; \bfb}, \dotsc, n_{l+1} \in \N \setminus R_{l+1; n_{l+2}, \ldots, n_d; \bfb}
\end{equation}
define inductively $R_{l; n_{l+1}, \ldots, n_{d}; \bfb}$ to be the set of those $n_{l}\in\N$ such that the coefficient of $x_{1}^{b_{1}} \dotsm x_{l-1}^{b_{l-1}}$ in $f_{q}(x_{1},\dotsc,x_{l-1},n_{l},\dotsc,n_{d})$ vanishes.

Then, by downward induction on $l$, one sees that, whenever \eqref{eq:3} holds, the lexicographically leading power of the polynomial $f_{q}(x_{1},\dotsc,x_{l},n_{l+1},\dotsc,n_{d})$ is $(b_{1},\dotsc,b_{l})$, and therefore $\abs{R_{l;n_{l},\dotsc,n_{d};\bfb}} \leq b_{l}$.
This verifies hypothesis~\ref{lem:SZ:1} of Lemma~\ref{lem:SZ}.

Let $\Dset := \sublevel^{\bfk}_{l'} \cup \Set{\vec{0}}$, $A := \calA \cup \Set{\vec{0}} \subset \N^{d}$, and $B := \Set{\bfb_{1}, \dotsc, \bfb_{Q}} \subseteq \sublevel^{\bfk}_{l} \subset \N^{d}\setminus\Set{\vec{0}}$.
Then hypothesis~\ref{lem:SZ:2} of Lemma~\ref{lem:SZ} holds with the sets $R_{*;*;\bfb}$ constructed above, since each $f_{q}$ vanishes on $A$.
By Lemma~\ref{lem:SZ}, we obtain
\begin{equation}
\label{eq:A<=D-B}
\abs{\calA} + 1
= \abs{A}
\leq \abs{\Dset \setminus \upset B}
= \abs{\sublevel^{\bfk}_{l'}} + 1 - \abs{\sublevel^{\bfk}_{l'} \cap \upset B}.
\end{equation}

By Corollary~\ref{cor:sublevel-up-set} and \eqref{eq:A<=D-B}, we have
\begin{equation}
\label{eq:Q<=}
Q
=
\abs{B}
\leq
\abs{\sublevel^{\bfk}_{l} \cap \upset B}
\leq
\frac{\abs{\sublevel^{\bfk}_{l}}}{\abs{\sublevel^{\bfk}_{l'}}} \abs{\sublevel^{\bfk}_{l'} \cap \upset B}
\leq
\frac{\abs{\sublevel^{\bfk}_{l}}}{\abs{\sublevel^{\bfk}_{l'}}}
(\abs{\sublevel^{\bfk}_{l'}} - \abs{\calA})
=
\abs{\sublevel^{\bfk}_{l}} - \frac{\abs{\sublevel^{\bfk}_{l}}}{\abs{\sublevel^{\bfk}_{l'}}} \abs{\calA}.
\end{equation}
Rearranging the inequality \eqref{eq:Q<=}, we obtain \eqref{eq:powers-rank-est}.
It remains to discuss the possible equality cases in \eqref{eq:powers-rank-est}.

Suppose that $\level^{\bfk}_{l'} \neq \emptyset$ and equality holds in \eqref{eq:powers-rank-est}.
Then in the above proof equality holds in \eqref{eq:Q<=} and \eqref{eq:A<=D-B}, and in particular we have equality in the above application of Corollary~\ref{cor:sublevel-up-set}.
We consider the possible equality cases in Corollary~\ref{cor:sublevel-up-set} separately.

In the case~\ref{it:sublevel-up-set:full/empty} of the equality condition in Corollary~\ref{cor:sublevel-up-set}, we have $\upset B\cap\sublevel^{\bfk}_{l'} \in \Set{\emptyset, \sublevel^{\bfk}_{l'}}$.
In the subcase $\upset B\cap\sublevel^{\bfk}_{l'} = \emptyset$ we have $B = \emptyset$, so $\rank_{\R} (\bfa^{\bfi})_{\bfa\in\calA, \bfi\in\sublevel^{\bfk}_{l}} = \abs{\sublevel^{\bfk}_{l}}$, and, since we have equality in \eqref{eq:powers-rank-est}, this implies $\calA = \sublevel^{\bfk}_{l'}$.
In the subcase $\upset B\cap\sublevel^{\bfk}_{l'} = \sublevel^{\bfk}_{l'}$ we have $\level^{\bfk}_{1} \subseteq B$.
By induction on $j=1,\dotsc,d$ we see that for every $\bfa \in \calA$ we have $a_{j}=0$.
Hence $\calA \subseteq \Set{0}$, but on the other hand $0 \not\in \sublevel^{\bfk}_{l'}$, so in fact $\calA=\emptyset$.
Thus both subcases fall in the case~\ref{it:powers-rank-est:full/empty} of the equality condition in Lemma~\ref{lem:powers-rank-est}.

In the case~\ref{it:sublevel-up-set:d=2} of the equality condition in Corollary~\ref{cor:sublevel-up-set}, we have $d=2$, $1=k_{1}<k_{2}$, $\upset B = \upset \Set{(1,0)}$, and $l' \leq k_{2}$.
We have to show that $\calA$ has the form claimed in the case~\ref{it:powers-rank-est:d=2} of the equality condition of Lemma~\ref{lem:powers-rank-est}.
In this case we have
\[
\card{\calD \setminus \upset B}
=
\card{\Set{(0,0),\dotsc,(0,l')}}
=
l'+1,
\]
and, since equality holds in \eqref{eq:A<=D-B}, we obtain $\abs{\calA}=l'$.
Since equality holds in \eqref{eq:powers-rank-est}, this implies
\[
\card{B}
=
Q
=
2l - \rank_{\R} (\bfa^{\bfi})_{\bfa\in\calA, \bfi\in\sublevel^{\bfk}_{l}}
=
2l - \frac{\abs{\sublevel^{\bfk}_{l}}}{\abs{\sublevel^{\bfk}_{l'}}} \abs{\calA}
=
2l - \frac{2l}{2l'} l'
=
l.
\]
Since $B \subseteq \sublevel^{\bfk}_{l} \cap \upset\Set{(1,0)}$, this can only hold if $B = \sublevel^{\bfk}_{l} \cap \upset\Set{(1,0)} = \Set{(1,0),\dotsc,(1,l-1)}$, so we may assume $\bfb_{q}=(1,q-1)$ for $1 \leq q \leq Q$ and write $f_{q}(\bfa) = f_{q,0}(a_{2})+a_{1} f_{q,1}(a_{2})$, where $f_{q,0},f_{q,1}$ are polynomials in one variable with $\deg f_{q,0} \leq l$ and $\deg f_{q,1} = q-1$.

Since $f_{1,1}$ is a non-zero constant, for each $a_{2}$ we can have $f_{1}(a_{1},a_{2})=0$ for at most one value of $a_{1}$.
Since $f_{1}$ vanishes on $A$, this implies that for each $a_{2}$ there is at most one value of $a_{1}$ such that $(a_{1},a_{2}) \in A$.
Since $\abs{A} = l'+1$ and $A \subset \sublevel^{\bfk}_{l'}$, for each $a_{2} \in \Set{0,\dotsc,l'}$ there is in fact \emph{exactly} one $a_{1}\in\Set{0,1}$ such that $(a_{1},a_{2}) \in A$.
In the case $a_{2}=l'$ this implies $(0,l') \in A$ since $(1,l') \not\in \sublevel^{\bfk}_{l'}$.
Moreover, $(0,0) \in A$, since $f_{1}(1,0) = f_{1,0}(0) + f_{1,1}(0) = f_{1,1}(0) \neq 0$.

We claim that $f_{1,0}=0$.
This will imply $f_{1}(1,a_{2})=f_{1,1}(a_{2}) \neq 0$ for all $a_{2}$, so that $(1,a_{2}) \not\in A$, and therefore $(0,a_{2}) \in A$ for all $a_{2} \in \Set{0,\dotsc,l'}$.

Since the polynomials $f_{q,1}$ have distinct degrees, they form a basis of the space of polynomials of degree $\leq l-1$.
Hence there is a basis $\tilde{f}_{1},\dotsc,\tilde{f}_{Q}$ of the space spanned by $f_{1},\dotsc,f_{Q}$ such that, writing $\tilde{f}_{q}(\bfa) = \tilde{f}_{q,0}(a_{2})+a_{1}\tilde{f}_{q,1}(a_{2})$, the one-variable polynomial $\tilde{f}_{q,1}$ is the Lagrange interpolation polynomial of degree $l-1$ that vanishes on $\Set{1,\dotsc,l}\setminus\Set{q}$ and takes the value $1$ at $q$.
It suffices to show that each $\tilde{f}_{q,0}$, which is a polynomial of degree $\leq l$ in one variable, vanishes identically.
We already know $\tilde{f}_{q,0}(0) = \tilde{f}_{q,0}(l') = 0$, since $(0,0),(0,l') \in A$.
Moreover, for each $a_{2} \in \Set{1,\dots,l} \setminus \Set{q}$, we have either $(0,a_{2}) \in A$ or $(1,a_{2}) \in A$.
Since $\tilde{f}_{q,1}(a_{2})=0$, in both cases we have $\tilde{f}_{q,0}(a_{2})=\tilde{f}_{q}(0,a_{2})=\tilde{f}_{q}(1,a_{2})=0$.
Therefore, $\tilde{f}_{q,0}$ vanishes on $\Set{0,\dots,l,l'} \setminus \Set{q}$.
Since this set has cardinality $l+1$ and $\tilde{f}_{q,0}$ has degree $\leq l$, the polynomial $\tilde{f}_{q,0}$ vanishes identically.
This finishes the proof of the claim and shows that the equality condition~\ref{it:powers-rank-est:d=2} of Lemma~\ref{lem:powers-rank-est} holds.

The case when the equality condition~\ref{it:sublevel-up-set:d=2'} of Corollary~\ref{cor:sublevel-up-set} holds is similar to the previous case.
\end{proof}

\begin{proof}[Proof of Theorem~\ref{thm:BLcond}]
The condition \eqref{eq:BLcond} does not depend on the choice of a spanning set $\Set{v_1,\dotsc,v_{H}}$ of $V$.
Hence, fixing a monomial order on $\N^{d}$, we may assume that $H=\dim V$ and the maximal indices of non-vanishing entries
\[
\bfa_{h} := \max \Set{ \bfa \given v_{h,\bfa} \neq 0 }
\]
are pairwise distinct, where the maximum is taken with respect to the fixed monomial order.
By Lemma~\ref{lem:rank-est-by-leading-monomials}, we obtain that the left-hand side of \eqref{eq:BLcond} is bounded below by
\[
\rank_{\R}
( \bfa_{h}^{\bfj} )_{\bfj \in \calD_{l},1\leq h \leq H}
=
\rank_{\R}
( \bfa_{h}^{\bfj} )_{\bfj \in \sublevel^{\bfk}_{l},1\leq h \leq H}.
\]
Also, $\calA = \Set{\bfa_{1},\dotsc,\bfa_{H}} \subseteq \calD_{k} = \sublevel^{\bfk}_{l'}$ with $l':=k$.
By Lemma~\ref{lem:powers-rank-est}, the latter rank is bounded below by
\[
\frac{\abs{\sublevel^{\bfk}_{l}}}{\abs{\sublevel^{\bfk}_{l'}}} \abs{\calA}
=
\frac{\card{\calD_{l}}}{\card{\calD_{k}}} \dim V.
\]
This shows \eqref{eq:BLcond}.

Suppose that equality holds in \eqref{eq:BLcond}.
Since $k \leq k_{1} + \dotsb + k_{d}$, we have $\level^{\bfk}_{l'} \neq \emptyset$, so one of the equality conditions of Lemma~\ref{lem:powers-rank-est} must hold.

If Condition~\ref{it:powers-rank-est:full/empty} of Lemma~\ref{lem:powers-rank-est} holds, then Condition~\ref{thm:BLcond:full/empty} of Theorem~\ref{thm:BLcond} holds.

If Condition~\ref{it:powers-rank-est:d=2} of Lemma~\ref{lem:powers-rank-est} holds, then we can repeat the above proof of the inequality \eqref{eq:BLcond} with the lexicographic order as the monomial order.
If Condition~\ref{it:powers-rank-est:d=2} of Lemma~\ref{lem:powers-rank-est} still holds, then we conclude that Condition~\ref{thm:BLcond:d=2} of Theorem~\ref{thm:BLcond} holds.

The case when Condition~\ref{it:powers-rank-est:d=2'} of Lemma~\ref{lem:powers-rank-est} holds is similar to the case when Condition~\ref{it:powers-rank-est:d=2} of Lemma~\ref{lem:powers-rank-est} holds.
\end{proof}

\section{Lower bounds}
\label{sec:lower}

In this section we substantiate the claim from Section~\ref{sec:multilinear} that the exponents $\tGamma_{\calD}(p)$ defined in \eqref{eq:tGamma-recursion} coincide with the exponents $\tgamma$ defined in \eqref{eq:tgamma}.
This will follow from Corollary~\ref{cor:upper-bd=lower-bd} and \eqref{eq:gamma-j-large}.

Recall that the exponents $\tGamma_{\calD}(p)$ naturally appear in the proof of Theorem~\ref{thm:main}.
The exponents $\tgamma$, which appear in the statement of Theorem~\ref{thm:main}, are the lower bounds suggested by the solution counting argument in \cite{MR3132907}.

Let $\calK_{\bfk,k} := \calK(\calD(\bfk,\leq k))$.
For a natural number $d \geq 0$ and $\bfk = (k_{1},\dotsc,k_{d})\in\N_{>0}^{d}$, for $k\geq 0$ and $2\leq p<\infty$, let
\begin{equation}
\label{eq:gamma-recursion}
\gamma_{\bfk,k}(p) := \begin{cases}
0 & \text{if } d=0 \text{ or } k=0,\\
\max(\frac{d}{2}, d - \frac{\calK_{\bfk,k}}{p}, \max\limits_{1 \leq j \leq d} \gamma_{\bfk_{(j)},k}(p) + \frac{1}{p})
& \text{otherwise,}
\end{cases}
\end{equation}
where $\bfk_{(j)} = (k_{1},\dotsc,k_{j-1},k_{j},\dotsc,k_{d})$.

By \cite[Theorem 3.1]{MR3132907} we have the lower bound
\begin{equation}
\label{eq:J-lower-est}
J_{s}(X;\calD(\bfk,\leq k)) \gtrsim X^{p \gamma_{\bfk,k}(p)}
\end{equation}
on the number of solutions in \eqref{eq:J-est}, where $p=2s$.

Since $\calK_{\bfk,1}=d$, by induction on $d$ we obtain
\begin{equation}
\label{eq:gamma:k=1}
\gamma_{\bfk,1}(p)
=
\max(\frac{d}{2}, d - \frac{d}{p}, \max_{1 \leq j \leq d} \gamma_{\bfk_{(j)},1}(p) + \frac{1}{p})
=
d \Bigl(1 - \frac{1}{p}\Bigr).
\end{equation}
For $k\geq 1$ we have
\begin{equation}
\label{eq:K>=d}
\calK_{\bfk,k} \geq \calK_{\bfk,1} = d,
\end{equation}
so, by induction on $d$, we obtain
\begin{multline}
\label{eq:gamma:p=2}
\gamma_{\bfk,k}(2)
=
\max(\frac{d}{2}, d - \frac{\calK_{\bfk,k}}{2}, \max\limits_{1 \leq j \leq d} \gamma_{\bfk_{(j)},k}(2) + \frac{1}{2})
=
\max(\frac{d}{2}, \max\limits_{1 \leq j \leq d} \gamma_{\bfk_{(j)},k}(2) + \frac{1}{2})
=
\frac{d}{2}.
\end{multline}
Abbreviating $\tGamma_{\bfk,k} := \tGamma_{\calD(\bfk,\leq k)}$, we obtain
\begin{equation}
\label{eq:tGamma-recursion-cube}
\tGamma_{\bfk,k}(p) =
\begin{cases}
0 & \text{if } d=0 \text{ or } k=0,\\
d \bigl( 1 - \frac{1}{p} \bigr) & \text{if } k=1,\\
\max( \max\limits_{1\leq j\leq d}\tGamma_{\bfk_{(j)},k}(p) + \frac{1}{p}, \tGamma_{\bfk,k-1}(\max(2,p \frac{\calK_{d,k-1}}{\calK_{d,k}})) )
& \text{otherwise.}
\end{cases}
\end{equation}
By induction on $d$ and $k$, one sees that $\tGamma_{\bfk,k}(2) = \frac{d}{2}$.

The upper bound \eqref{eq:J-est} for the number of solutions of multidimensional Vinogradov systems can only be consistent with the lower bound \eqref{eq:J-lower-est} if $\tGamma \ge \gamma$ for $p \in \Set{2,4,6,\dotsc}$.
We will now show that this inequality in fact holds for all $p\geq 2$.
\begin{lemma}
\label{lem:upper-bd>lower-bd}
For all $d,k\geq 0$ and $2\leq p < \infty$, we have
\begin{equation}
\label{eq:upper-bd>lower-bd}
\tGamma_{\bfk,k}(p) \ge \gamma_{\bfk,k}(p).
\end{equation}
\end{lemma}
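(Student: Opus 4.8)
The plan is to prove \eqref{eq:upper-bd>lower-bd} by induction on $d+k$, comparing the recursion \eqref{eq:gamma-recursion} for $\gamma_{\bfk,k}$ with the recursion \eqref{eq:tGamma-recursion-cube} for $\tGamma_{\bfk,k}$ term by term. The base cases $d=0$ and $k=0$ are immediate since both sides vanish, and the case $k=1$, $d\geq 1$ is handled by \eqref{eq:gamma:k=1}, which gives $\gamma_{\bfk,1}(p)=d(1-1/p)=\tGamma_{\bfk,1}(p)$. So suppose $d\geq 1$, $k\geq 2$, and that \eqref{eq:upper-bd>lower-bd} holds for all pairs with smaller value of $d+k$. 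By \eqref{eq:gamma-recursion} it then suffices to bound each of $\frac d2$, $\;d-\frac{\calK_{\bfk,k}}{p}$, and $\;\max_{1\leq j\leq d}\gamma_{\bfk_{(j)},k}(p)+\frac1p$ from above by $\tGamma_{\bfk,k}(p)$.

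Two of these are easy. For the last term, the inductive hypothesis applied in dimension $d-1$ gives $\gamma_{\bfk_{(j)},k}(p)\leq\tGamma_{\bfk_{(j)},k}(p)$ for each $j$, and then the first branch of the maximum in \eqref{eq:tGamma-recursion-cube} yields $\max_{j}\gamma_{\bfk_{(j)},k}(p)+\frac1p\leq\max_j\tGamma_{\bfk_{(j)},k}(p)+\frac1p\leq\tGamma_{\bfk,k}(p)$. For the term $\frac d2$, put $p':=\max\bigl(2,\;p\frac{\calK_{\bfk,k-1}}{\calK_{\bfk,k}}\bigr)$, so that the second branch of \eqref{eq:tGamma-recursion-cube} gives $\tGamma_{\bfk,k}(p)\geq\tGamma_{\bfk,k-1}(p')$; since $k-1\geq 1$ the inductive hypothesis applies, and reading off \eqref{eq:gamma-recursion} for $(\bfk,k-1)$ that $\gamma_{\bfk,k-1}(p')\geq\frac d2$, we conclude $\tGamma_{\bfk,k}(p)\geq\gamma_{\bfk,k-1}(p')\geq\frac d2$.

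The remaining term $d-\frac{\calK_{\bfk,k}}{p}$ is the crux, and I would split according to which argument attains the maximum in the definition of $p'$. If $p'=p\frac{\calK_{\bfk,k-1}}{\calK_{\bfk,k}}\geq 2$, the point is the exact cancellation $\frac{\calK_{\bfk,k-1}}{p'}=\frac{\calK_{\bfk,k}}{p}$: combining $\tGamma_{\bfk,k}(p)\geq\tGamma_{\bfk,k-1}(p')$ with the inductive hypothesis and the bound $\gamma_{\bfk,k-1}(p')\geq d-\frac{\calK_{\bfk,k-1}}{p'}$ from \eqref{eq:gamma-recursion} gives $\tGamma_{\bfk,k}(p)\geq d-\frac{\calK_{\bfk,k}}{p}$. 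If instead $p'=2$, i.e.\ $p<2\frac{\calK_{\bfk,k}}{\calK_{\bfk,k-1}}$, then $\tGamma_{\bfk,k}(p)\geq\tGamma_{\bfk,k-1}(2)=\frac d2$ by the identity recorded after \eqref{eq:tGamma-recursion-cube}, and it remains to see $d-\frac{\calK_{\bfk,k}}{p}\leq\frac d2$, i.e.\ $\calK_{\bfk,k}\geq\frac{pd}{2}$; this holds because $\frac{pd}{2}<\frac{d\,\calK_{\bfk,k}}{\calK_{\bfk,k-1}}\leq\calK_{\bfk,k}$, the last step using $\calK_{\bfk,k-1}\geq\calK_{\bfk,1}=d$ from \eqref{eq:K>=d} (valid since $k-1\geq 1$). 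Assembling the three bounds gives $\gamma_{\bfk,k}(p)\leq\tGamma_{\bfk,k}(p)$ and closes the induction. I expect the only genuinely delicate step to be this last term: in the first case one needs the degree-lowering branch of \eqref{eq:tGamma-recursion-cube} to land, after the substitution $p\mapsto p'$, exactly on the competing $\gamma$-exponent, while in the second case one must exploit that lowering the degree drives the exponent all the way down to $2$, where the elementary estimate $\calK_{\bfk,k-1}\geq d$ is exactly what is needed.
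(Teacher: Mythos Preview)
Your proof is correct and follows essentially the same route as the paper's: induction on the pair $(d,k)$, matching the dimension-lowering branch of \eqref{eq:gamma-recursion} against the first branch of \eqref{eq:tGamma-recursion-cube} via the inductive hypothesis, and handling the term $d-\calK_{\bfk,k}/p$ by the degree-lowering branch with the same two-case split on whether $p\,\calK_{\bfk,k-1}/\calK_{\bfk,k}\geq 2$. The only cosmetic difference is that you treat the $\tfrac d2$ term explicitly (via $\tGamma_{\bfk,k-1}(p')\geq\gamma_{\bfk,k-1}(p')\geq\tfrac d2$), whereas the paper absorbs it into the same two cases without singling it out.
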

\begin{proof}
By definition \eqref{eq:gamma-recursion}, \eqref{eq:gamma:k=1}, and \eqref{eq:gamma:p=2} we have equality in \eqref{eq:upper-bd>lower-bd} if $d=0$, or $k\leq 1$, or $p=2$.
In the other cases we proceed by induction on $k$.
Let $d \geq 1$ and $k\geq 2$ and suppose that \eqref{eq:upper-bd>lower-bd} is already known for smaller values of $k$ and $d$.
In view of the recursive formulas \eqref{eq:gamma-recursion} and \eqref{eq:tGamma-recursion-cube} and the inductive hypothesis, it suffices to verify
\begin{equation}
\label{eq:upper-bd>lower-bd:recursion}
d - \frac{\calK_{\bfk,k}}{p}
\le
\tGamma_{\bfk,k-1}(\max(2,p \frac{\calK_{\bfk,k-1}}{\calK_{\bfk,k}})).
\end{equation}
If $p \frac{\calK_{\bfk,k-1}}{\calK_{\bfk,k}} \geq 2$, then by the inductive hypothesis we have
\[
\tGamma_{\bfk,k-1}(p \frac{\calK_{\bfk,k-1}}{\calK_{\bfk,k}})
\geq
\gamma_{\bfk,k-1}(p \frac{\calK_{\bfk,k-1}}{\calK_{\bfk,k}})
\geq
d - \frac{\calK_{\bfk,k-1}}{p \frac{\calK_{\bfk,k-1}}{\calK_{\bfk,k}}}
=
d - \frac{\calK_{\bfk,k}}{p},
\]
and this implies \eqref{eq:upper-bd>lower-bd:recursion}.
If $p \frac{\calK_{\bfk,k-1}}{\calK_{\bfk,k}} < 2$, then \eqref{eq:upper-bd>lower-bd:recursion} follows from
\[
d - \frac{\calK_{\bfk,k}}{p}
<
d - \frac{\calK_{\bfk,k-1}}{2}
\leq
d - \frac{d}{2}
=
\frac{d}{2}
=
\tGamma_{\bfk,k-1}(2).
\qedhere
\]
\end{proof}

In all our examples, we in fact have equality in \eqref{eq:upper-bd>lower-bd}.
This relies on the following extension of \cite[Lemma 9.4]{arxiv:1804.02488}.
\begin{lemma}
\label{lem:gamma}
For every $d\geq 1$, $\bfk\in\N_{>0}^{d}$, $k\geq 2$, and $2\leq p < \infty$, we have
\[
\gamma_{\bfk,k-1}(p)
\leq
\gamma_{\bfk,k}(\frac{p \calK_{\bfk,k}}{\calK_{\bfk,k-1}}).
\]
\end{lemma}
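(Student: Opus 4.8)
The plan is to bypass the recursive structure of \eqref{eq:gamma-recursion} entirely and reduce the lemma to a numerical inequality among homogeneous dimensions of initial segments of $\bfk$ and a single level-set cardinality, which is then dispatched via the log-concavity established in Lemma~\ref{lem:level-set-concavity}. Two preliminary observations are needed. If $k>k_{1}+\dotsb+k_{d}$ then $\level^{\bfk}_{k}=\emptyset$, so $\calD(\bfk,\leq k)=\calD(\bfk,\leq k-1)$, hence $\calK_{\bfk,k}=\calK_{\bfk,k-1}$ and $\gamma_{\bfk,k}=\gamma_{\bfk,k-1}$, and the claim becomes an equality since then $p\calK_{\bfk,k}/\calK_{\bfk,k-1}=p$. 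So I may assume $\Lambda^{\bfk}_{k}:=\abs{\level^{\bfk}_{k}}>0$, and I will freely use that $\calK_{\bfm,k}-\calK_{\bfm,k-1}=k\abs{\level^{\bfm}_{k}}$ for any multiindex vector $\bfm$ (the homogeneous dimension grows by $k$ for each new degree-$k$ monomial).

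Next, unfolding \eqref{eq:gamma-recursion} and using $k_{1}\leq\dotsb\leq k_{d}$, so that among $j$-element subsets of the coordinates an initial segment minimizes the homogeneous dimension, gives the closed form (this is essentially \eqref{eq:gamma-j-large})
\[
\gamma_{\bfk,m}(p)=\max\Bigl(\tfrac d2,\ \max_{0\leq j\leq d}\bigl(j+\tfrac{d-j}{p}-\tfrac{\calK_{(k_{1},\dotsc,k_{j}),m}}{p}\bigr)\Bigr),\qquad 2\leq p<\infty.
\]
Since $\calK_{(k_{1},\dotsc,k_{j}),m}\geq j$ and $p\geq 2$, every summand with $j\leq d/2$ is at most $\tfrac d2$, so only $\tfrac d2$ and the indices $j>d/2$ contribute to the maximum.

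Now set $p':=p\,\calK_{\bfk,k}/\calK_{\bfk,k-1}$, so $1/p'=\calK_{\bfk,k-1}/(p\,\calK_{\bfk,k})$. With this choice the $j=d$ summand $d-\calK_{\bfk,k-1}/p$ of $\gamma_{\bfk,k-1}(p)$ equals the $j=d$ summand $d-\calK_{\bfk,k}/p'$ of $\gamma_{\bfk,k}(p')$, and the $\tfrac d2$ terms match, so these are harmless. For each remaining $d/2<j<d$ I plan to bound the $j$-th summand $j+\tfrac{d-j}{p}-\tfrac{\calK_{(k_{1},\dotsc,k_{j}),k-1}}{p}$ of $\gamma_{\bfk,k-1}(p)$ by a suitable summand of $\gamma_{\bfk,k}(p')$ depending on the range of $p$: comparing with the $j$-th summand of $\gamma_{\bfk,k}(p')$ gives an inequality that, after substituting $1/p'$ and clearing the (positive) denominators, reads $\calK_{\bfk,k-1}\,\Lambda^{(k_{1},\dotsc,k_{j})}_{k}\leq(\calK_{(k_{1},\dotsc,k_{j}),k-1}-(d-j))\,\Lambda^{\bfk}_{k}$ (using $\calK_{\bfm,k}-\calK_{\bfm,k-1}=k\abs{\level^{\bfm}_{k}}$); when this fails, comparing separately with the $\tfrac d2$ term for small $p$ and with the $d$-th summand of $\gamma_{\bfk,k}(p')$ for large $p$ covers all $p\geq 2$, provided a second inequality of the same flavor holds. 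Both inequalities involve only $\calK_{(k_{1},\dotsc,k_{j}),k-1}$, $\calK_{\bfk,k-1}$, $\Lambda^{\bfk}_{k}$, and $\Lambda^{(k_{1},\dotsc,k_{j})}_{k}$, and I would derive them by writing $\Lambda^{\bfk}_{l}=\sum_{m}\Lambda^{(k_{j+1},\dotsc,k_{d})}_{m}\,\Lambda^{(k_{1},\dotsc,k_{j})}_{l-m}$, feeding in the log-concavity $\Lambda^{(k_{1},\dotsc,k_{j})}_{a}\Lambda^{(k_{1},\dotsc,k_{j})}_{b}\geq\Lambda^{(k_{1},\dotsc,k_{j})}_{a'}\Lambda^{(k_{1},\dotsc,k_{j})}_{b'}$ from Lemma~\ref{lem:level-set-concavity}, and summing against the weights $1,2,\dotsc,k-1$.

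The hard part is this last step. The summand of $\gamma_{\bfk,k-1}(p)$ realizing the maximum is in general \emph{not} majorized by the ``same-$j$'' summand of $\gamma_{\bfk,k}(p')$ for all $p$ — this already fails for $\bfk=(1,2,2)$, $k=2$ — so one genuinely must split into ranges of $p$ and use different comparison summands, and the condition that these ranges overlap turns out to be sharp; this is exactly why the optimal log-concavity estimate of Lemma~\ref{lem:level-set-concavity}, rather than any cruder bound, is forced. Everything else — the unfolding of \eqref{eq:gamma-recursion}, the matching of the $\tfrac d2$ and $j=d$ terms, and the algebra with the homogeneous dimensions — is routine bookkeeping.
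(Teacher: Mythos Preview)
Your overall strategy --- pass to the closed form \eqref{eq:gamma-j-large}, set $p'=p\,\calK_{\bfk,k}/\calK_{\bfk,k-1}$, match the $\tfrac{d}{2}$ and $j=d$ terms, and for each remaining $(d+1)/2\le j<d$ compare the $j$-th summand of $\gamma_{\bfk,k-1}(p)$ with a summand of $\gamma_{\bfk,k}(p')$ --- is exactly the paper's approach. Your algebra reducing the ``same-$j$'' comparison to
\[
\calK_{\bfk,k-1}\,\Lambda^{(k_{1},\dotsc,k_{j})}_{k}\ \le\ (\calK_{(k_{1},\dotsc,k_{j}),k-1}-(d-j))\,\Lambda^{\bfk}_{k}
\]
is also correct; in the paper this is \eqref{eq:K-est2}.

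The gap is in what you do next. You observe (correctly) that the same-$j$ inequality can fail --- your example $\bfk=(1,2,2)$, $k=2$ is valid --- and propose to cover that case by splitting the $p$-range and comparing instead with the $\tfrac{d}{2}$ and $j=d$ summands, ``provided a second inequality of the same flavor holds''. But that second inequality (the condition that the two $p$-ranges cover $[2,\infty)$), once you write it out, is
\[
d\bigl(\calK_{(k_{1},\dotsc,k_{j}),k-1}-(d-j)\bigr)\ \ge\ \calK_{\bfk,k-1}\,(2j-d),
\]
and this is \emph{not} true in general either: take $\bfk=(1,1,1,K)$ with $K$ large, $k=4$, $j=3$; then the left side is $4\cdot 11=44$ while the right side is $2\cdot 42=84$. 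So neither of your two inequalities holds universally, and your sketch of ``deriving both'' from the convolution identity and Lemma~\ref{lem:level-set-concavity} cannot work as stated. What is true is the \emph{disjunction}: the second inequality is an equality when $k=2$ (since then $\calK_{\cdot,k-1}=\calK_{\cdot,1}$ equals the dimension), and the first inequality holds for all $k\ge 3$. But establishing the first inequality for $k\ge 3$ is precisely the hard part, and you have not done it.

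The paper avoids your detour by separating out $k=2$ at the outset (two lines, using $\gamma_{\bfk,1}(p)=d(1-1/p)$), and then for $k\ge 3$ proves the same-$j$ inequality \eqref{eq:K-est2} directly. The mechanism is a downward induction on $j$, reducing to the ``consecutive'' inequality \eqref{eq:K-est3}; after expanding via \eqref{eq:level-recursive2} this becomes \eqref{eq:K-est6}, and the $l\ge 3$ terms on the left are discarded as nonnegative by Lemma~\ref{lem:level-set-concavity}, leaving only the $l=1,2$ contributions to beat the right-hand side. That endgame requires a further case split on whether $k_{j+1}\ge 2$ (where a short double-counting argument for $\Lambda_{j,k-2}\Lambda_{j,2}$ finishes it) or $k_{j+1}=1$ (where everything is a binomial coefficient and one checks a polynomial inequality). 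Your phrase ``summing against the weights $1,2,\dotsc,k-1$'' points in the right direction --- those are exactly the weights in $\calK_{j,k-1}=\sum_{l}l\Lambda_{j,l}$ --- but the log-concavity of Lemma~\ref{lem:level-set-concavity} alone is not enough to close the argument; the $l=1,2$ terms have to be handled by hand.
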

\begin{corollary}
\label{cor:upper-bd=lower-bd}
For every $d\geq 1$, $\bfk\in\N_{>0}^{d}$, $k\geq 0$, and $2\leq p < \infty$ we have
\begin{equation}
\label{eq:upper-bd=lower-bd}
\tGamma_{\bfk,k}(p) = \gamma_{\bfk,k}(p).
\end{equation}
\end{corollary}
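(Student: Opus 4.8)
The plan is to deduce Corollary~\ref{cor:upper-bd=lower-bd} from Lemma~\ref{lem:upper-bd>lower-bd}, which already provides $\tGamma_{\bfk,k}(p) \geq \gamma_{\bfk,k}(p)$, so that only the reverse inequality $\tGamma_{\bfk,k}(p) \leq \gamma_{\bfk,k}(p)$ remains to be shown. I would prove this by a double induction on $d$ and $k$, using the recursions \eqref{eq:tGamma-recursion-cube} and \eqref{eq:gamma-recursion} together with Lemma~\ref{lem:gamma}.

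First I would clear the base cases. If $d=0$ or $k=0$, both quantities vanish by definition. If $k=1$, then by \eqref{eq:gamma:k=1} and \eqref{eq:tGamma-recursion-cube} both equal $d(1-1/p)$. If $p=2$, then $\gamma_{\bfk,k}(2)=d/2$ by \eqref{eq:gamma:p=2} and $\tGamma_{\bfk,k}(2)=d/2$ by the observation following \eqref{eq:tGamma-recursion-cube}. These cases also serve as the anchors of the induction.

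For the inductive step, fix $d\geq 1$, $k\geq 2$, and $p>2$, and assume the claimed equality is known in all strictly smaller dimensions (any degree) and in the same dimension with strictly smaller degree. By \eqref{eq:tGamma-recursion-cube},
\[
\tGamma_{\bfk,k}(p)=\max\Bigl(\max_{1\leq j\leq d}\tGamma_{\bfk_{(j)},k}(p)+\tfrac1p,\ \tGamma_{\bfk,k-1}\bigl(\max(2,p\tfrac{\calK_{\bfk,k-1}}{\calK_{\bfk,k}})\bigr)\Bigr).
\]
By the inductive hypothesis in dimension $d-1$ the first term equals $\max_{1\leq j\leq d}\gamma_{\bfk_{(j)},k}(p)+1/p$, which is $\leq\gamma_{\bfk,k}(p)$ directly from \eqref{eq:gamma-recursion}. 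By the inductive hypothesis in degree $k-1$ the second term equals $\gamma_{\bfk,k-1}(\max(2,p\calK_{\bfk,k-1}/\calK_{\bfk,k}))$. If $p\calK_{\bfk,k-1}/\calK_{\bfk,k}\geq 2$, I would apply Lemma~\ref{lem:gamma} with $p$ there replaced by $p'=p\calK_{\bfk,k-1}/\calK_{\bfk,k}\in[2,\infty)$, obtaining
\[
\gamma_{\bfk,k-1}(p')\leq\gamma_{\bfk,k}\bigl(p'\tfrac{\calK_{\bfk,k}}{\calK_{\bfk,k-1}}\bigr)=\gamma_{\bfk,k}(p).
\]
If instead $p\calK_{\bfk,k-1}/\calK_{\bfk,k}<2$, the second term is $\tGamma_{\bfk,k-1}(2)=d/2\leq\gamma_{\bfk,k}(p)$, again by \eqref{eq:gamma-recursion}. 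In both cases $\tGamma_{\bfk,k}(p)\leq\gamma_{\bfk,k}(p)$, and combined with Lemma~\ref{lem:upper-bd>lower-bd} this yields the claimed equality.

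The only ingredient in this argument that is not pure bookkeeping is Lemma~\ref{lem:gamma}, the monotonicity of $\gamma$ under the rescaling $p\mapsto p\calK_{\bfk,k}/\calK_{\bfk,k-1}$; this is the main obstacle. After substituting the recursion \eqref{eq:gamma-recursion} it reduces to the identity $d-\calK_{\bfk,k}/(p\calK_{\bfk,k}/\calK_{\bfk,k-1})=d-\calK_{\bfk,k-1}/p$ for the solution-counting term, together with the observation that $\calK_{\bfk,k}\geq\calK_{\bfk,k-1}$ only enlarges the exponent, so that the remaining $d/2$ and lower-dimensional terms are handled by an inner induction on $d$ and $k$. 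Since Lemma~\ref{lem:gamma} is available, the deduction of Corollary~\ref{cor:upper-bd=lower-bd} above is then purely formal.
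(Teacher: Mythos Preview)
Your proposal is correct and follows essentially the same approach as the paper's own proof: reduce to $\tGamma_{\bfk,k}(p)\leq\gamma_{\bfk,k}(p)$ via Lemma~\ref{lem:upper-bd>lower-bd}, induct on $d$ and $k$, apply the inductive hypothesis to rewrite the two branches of \eqref{eq:tGamma-recursion-cube} in terms of $\gamma$, bound the lower-dimensional branch directly by \eqref{eq:gamma-recursion}, and handle the lower-degree branch by Lemma~\ref{lem:gamma} (or by $\tGamma_{\bfk,k-1}(2)=d/2$ when the rescaled exponent drops below $2$). Your final paragraph sketching Lemma~\ref{lem:gamma} is commentary rather than part of the proof, and the actual proof of that lemma in the paper is considerably more delicate than your sketch suggests, but since you correctly treat it as a given input this does not affect the validity of your argument for the corollary.
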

\begin{proof}[Proof of Corollary~\ref{cor:upper-bd=lower-bd}]
We already know \eqref{eq:upper-bd=lower-bd} if $d=0$ or $k\leq 1$.
We proceed by induction on $d$ and $k$.
Let $d\geq 1$, $k\geq 2$, and suppose that the claim is known for smaller values of $d$ and $k$.
In view of the lower bound \eqref{eq:upper-bd>lower-bd}, it remains to show
\[
\tGamma_{\bfk,k}(p) \leq \gamma_{\bfk,k}(p).
\]
By the recursive formula \eqref{eq:tGamma-recursion-cube} and the inductive hypothesis, this is equivalent to
\[
\max( \max\limits_{1\leq j\leq d}\gamma_{\bfk_{(j)},k}(p) + \frac{1}{p}, \gamma_{\bfk,k-1}(\max(2,p \frac{\calK_{\bfk,k-1}}{\calK_{\bfk,k}})) )
\leq \gamma_{\bfk,k}(p).
\]
The first term on the left-hand side is $\leq \gamma_{\bfk,k}(p)$ by definition \eqref{eq:gamma-recursion}.
In the second term we distinguish two cases.
If $p \frac{\calK_{\bfk,k-1}}{\calK_{\bfk,k}} \leq 2$, then this term equals $\frac{d}{2}$, and the claim follows by definition \eqref{eq:gamma-recursion}.
Otherwise we can conclude by Lemma~\ref{lem:gamma}.
\end{proof}

\subsection{Reduction to monotonic $\bfk$}
Before showing Lemma~\ref{lem:gamma}, we will obtain a more explicit description of $\gamma_{\bfk,k}(p)$.
This quantity is invariant under permutations of entries of $\bfk$, and it will be convenient to bring $\bfk$ in a canonical order.
This will be facilitated by the following result.
\begin{lemma}
\label{lem:gamma-monotone}
Let $d \geq 0$ and $\bfk, \bfk'\in\N_{>0}^{d}$ with $\bfk \preceq \bfk'$.
Then, for every $k\geq 0$ and $2\leq p<\infty$, we have
\begin{equation}
\label{eq:gamma-monotone}
\gamma_{\bfk,k}(p) \geq \gamma_{\bfk',k}(p).
\end{equation}
\end{lemma}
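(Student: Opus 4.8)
The plan is to prove \eqref{eq:gamma-monotone} by induction on the dimension $d$, using the fact that the recursion \eqref{eq:gamma-recursion} defining $\gamma_{\bfk,k}(p)$ lowers $d$ (it replaces $\bfk$ by $\bfk_{(j)}\in\N_{>0}^{d-1}$) while leaving $k$ and $p$ untouched; in particular no separate induction on $k$ or $p$ is needed. For the base case $d=0$, and likewise whenever $k=0$, both sides of \eqref{eq:gamma-monotone} vanish by definition \eqref{eq:gamma-recursion}, so there is nothing to prove.

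For the inductive step I would take $d\geq 1$ and $k\geq 1$, assume \eqref{eq:gamma-monotone} is already known in dimension $d-1$ (for all admissible $\bfk,\bfk',k,p$), and compare the three quantities occurring in the maximum in \eqref{eq:gamma-recursion} for $\bfk$ and for $\bfk'$ term by term. The term $d/2$ is the same on both sides. For the term $d-\calK_{\bfk,k}/p$, since $\bfk\preceq\bfk'$ we have the inclusion of exponent sets $\calD(\bfk,\leq k)\subseteq\calD(\bfk',\leq k)$, hence $\calK_{\bfk,k}\leq\calK_{\bfk',k}$ and therefore $d-\calK_{\bfk,k}/p\geq d-\calK_{\bfk',k}/p$. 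For the term $\max_{1\leq j\leq d}\gamma_{\bfk_{(j)},k}(p)+1/p$, note that $\bfk\preceq\bfk'$ gives $\bfk_{(j)}\preceq\bfk'_{(j)}$ in $\N_{>0}^{d-1}$ for each $j$, so the inductive hypothesis yields $\gamma_{\bfk_{(j)},k}(p)\geq\gamma_{\bfk'_{(j)},k}(p)$, and taking the maximum over $j$ and adding $1/p$ preserves the inequality. Since the maximum of three numbers is monotone in each of them, combining the three comparisons gives $\gamma_{\bfk,k}(p)\geq\gamma_{\bfk',k}(p)$, which closes the induction.

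I do not expect a serious obstacle in this argument. The only two points that deserve a line of explanation are that the $\gamma$-recursion decreases only the dimension parameter, so that a single induction on $d$ suffices, and the monotonicity $\calK_{\bfk,k}\leq\calK_{\bfk',k}$, which is immediate from the inclusion $\calD(\bfk,\leq k)\subseteq\calD(\bfk',\leq k)$ together with the definition \eqref{eq:homdim} of $\calK$ as a sum of nonnegative terms over the exponent set.
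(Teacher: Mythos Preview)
Your proposal is correct and follows essentially the same approach as the paper's proof: induction on $d$, with the inductive step comparing the three terms in the maximum \eqref{eq:gamma-recursion} using $\calK_{\bfk,k}\leq\calK_{\bfk',k}$ and $\bfk_{(j)}\preceq\bfk'_{(j)}$. The paper's version is just more terse.
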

\begin{proof}[Proof of Lemma~\ref{lem:gamma-monotone}]
We use induction on $d$.
For $d=0$ both sides in \eqref{eq:gamma-monotone} equal $0$.
Suppose now $d>0$ and \eqref{eq:gamma-monotone} is known with $d$ replaced by $d-1$.
Recalling the definition \eqref{eq:gamma-recursion}, the claim \eqref{eq:gamma-monotone} follows from $\calK_{\bfk,k} \leq \calK_{\bfk',k}$ and $\bfk_{(j)} \preceq \bfk'_{(j)}$ for every $1\leq j\leq d$.
\end{proof}
\begin{corollary}
For $d \geq 1$ and $\bfk = (k_{1},\dotsc,k_{d})\in\N_{>0}^{d}$ with $k_{1} \leq k_{2} \leq \dotsb \leq k_{d}$, we have
\begin{equation}
\label{eq:gamma-recursion-monotone}
\gamma_{\bfk,k}(p) = \max(\frac{d}{2}, d - \frac{\calK_{\bfk,k}}{p}, \gamma_{\bfk',k}(p) + \frac{1}{p}),
\end{equation}
where $\bfk' = (k_{1},\dotsc,k_{d-1})$.
\end{corollary}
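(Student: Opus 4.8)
The plan is to read off \eqref{eq:gamma-recursion-monotone} directly from the recursive definition \eqref{eq:gamma-recursion}: for $d\ge 1$ and $k\ge 1$ the last clause of \eqref{eq:gamma-recursion} applies, and I claim that under the monotonicity hypothesis $k_{1}\le\dots\le k_{d}$ the inner maximum $\max_{1\le j\le d}\gamma_{\bfk_{(j)},k}(p)$ is attained at $j=d$. Since $\bfk_{(d)}=\bfk'$, this identifies \eqref{eq:gamma-recursion} with \eqref{eq:gamma-recursion-monotone}, so it suffices to establish
\[
\gamma_{\bfk_{(j)},k}(p)\le\gamma_{\bfk',k}(p),\qquad 1\le j\le d.
\]

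To prove this I would compare the two $(d-1)$-tuples $\bfk_{(j)}$ and $\bfk'=\bfk_{(d)}$ entry by entry. Deleting the $j$-th coordinate of $\bfk$ leaves a tuple whose $i$-th entry is $k_i$ for $i<j$ and $k_{i+1}$ for $j\le i\le d-1$, whereas $\bfk'$ has $i$-th entry $k_i$ throughout. Because the sequence is nondecreasing, $k_{i+1}\ge k_i$, so every coordinate of $\bfk_{(j)}$ is at least the corresponding coordinate of $\bfk'$; that is, $\bfk'\preceq\bfk_{(j)}$ in the product order of Definition~\ref{def:product-order}. Now $\gamma_{\cdot,k}(p)$ is order-reversing for $\preceq$ by Lemma~\ref{lem:gamma-monotone} (applied in dimension $d-1$), which gives $\gamma_{\bfk',k}(p)\ge\gamma_{\bfk_{(j)},k}(p)$, the desired inequality. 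Substituting back into \eqref{eq:gamma-recursion} then yields \eqref{eq:gamma-recursion-monotone}.

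I do not anticipate a genuine obstacle here: the argument is a one-line application of Lemma~\ref{lem:gamma-monotone} once the comparison $\bfk'\preceq\bfk_{(j)}$ is set up. The only subtlety worth flagging is the interplay of two opposing conventions — deleting a \emph{smaller} index from a sorted vector makes the remaining tuple \emph{larger} in $\preceq$, while $\gamma$ \emph{decreases} under $\preceq$ — so that the maximum over $j$ lands on the tuple $\bfk_{(d)}=\bfk'$ obtained by deleting the largest entry, and not on any $\bfk_{(j)}$ with $j<d$.
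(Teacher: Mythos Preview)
Your argument is correct and is precisely the approach the paper intends: the corollary is stated immediately after Lemma~\ref{lem:gamma-monotone} with no separate proof, and is meant to follow from it by exactly the coordinatewise comparison $\bfk'\preceq\bfk_{(j)}$ you describe. There is nothing to add.
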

So far we have finished the reduction to monotonic $\bfk$.

\subsection{The case of monotonic $\bfk$}
In the remaining part of this section we assume
\begin{equation}
\label{eq:kj-increasing}
k_{1} \leq k_{2} \leq \dotsc
\end{equation}
and abbreviate
\begin{align*}
\calK_{d,k} &:= \calK(\calD((k_{1},\dotsc,k_{d}), \leq k)),\\
\gamma_{d,k}(p) &:= \gamma_{(k_{1},\dotsc,k_{d}), k}(p).
\end{align*}

Unwinding the recursion \eqref{eq:gamma-recursion-monotone}, we obtain
\begin{equation}
\label{eq:gamma-j-1}
\gamma_{d,k}(p) = \max(\frac{d}{2}, \max_{1\leq j\leq d} (j + \frac{d-j}{p} - \frac{\calK_{j,k}}{p})).
\end{equation}
By \eqref{eq:K>=d}, for $k\geq 1$ and $1 \leq j \leq d/2$ we have
\[
j + \frac{d-j}{p} - \frac{\calK_{j,k}}{p}
\leq
j + \frac{d-j}{p} - \frac{j}{p}
=
(d-2j) (\frac{1}{p}-\frac{1}{2}) + \frac{d}{2}
\leq
\frac{d}{2},
\]
so that in fact
\begin{equation}
\label{eq:gamma-j-large}
\gamma_{d,k}(p)
=
\max(\frac{d}{2}, \max_{(d+1)/2\leq j\leq d} (j + \frac{d-j}{p} - \frac{\calK_{j,k}}{p})).
\end{equation}

\begin{proof}[Proof of Lemma~\ref{lem:gamma}]
For $k=2$, by \eqref{eq:gamma:k=1}, we have
\[
\gamma_{d,k-1}(p)
=
d - \frac{d}{p}
=
d - \frac{\calK_{d,2}}{\frac{p \calK_{d,2}}{\calK_{d,1}}}
\leq
\gamma_{d,2}(\frac{p \calK_{d,2}}{\calK_{d,1}}),
\]
where we have used \eqref{eq:gamma-j-large} in the last step.
In the remaining part of the proof we will assume $k\geq 3$.

By \eqref{eq:gamma-j-large}, it suffices to show that, for every integer $(d+1)/2 \leq j \leq d$, we have
\[
j + \frac{d-j}{p} - \frac{\calK_{j,k-1}}{p}
\leq
j + \frac{d-j}{\frac{p \calK_{d,k}}{\calK_{d,k-1}}} - \frac{\calK_{j,k}}{\frac{p \calK_{d,k}}{\calK_{d,k-1}}}.
\]
This is equivalent to
\begin{equation}
\label{eq:K-est0}
(d-j) (\frac{\calK_{d,k}}{\calK_{d,k-1}} - 1 )
\leq
\calK_{j,k-1} (\frac{\calK_{d,k}}{\calK_{d,k-1}} - \frac{\calK_{j,k}}{\calK_{j,k-1}}).
\end{equation}
This trivially holds if $j=d$, so we may assume $j<d$.
In this case necessarily $d\geq 3$ and $j\geq 2$.

Let $\Lambda_{j,k} := \Lambda^{(k_{1},\dotsc,k_{j})}_{k}$ denote the cardinality of the $k$-th level set as in \eqref{eq:level-recursive}.
Using that $\calK_{d,k} = \calK_{d,k-1} + k \Lambda_{d,k}$, we can reformulate \eqref{eq:K-est0} as
\begin{equation}
\label{eq:K-est1}
(d-j) (\frac{\Lambda_{d,k}}{\calK_{d,k-1}} )
\leq
\calK_{j,k-1} (\frac{\Lambda_{d,k}}{\calK_{d,k-1}} - \frac{\Lambda_{j,k}}{\calK_{j,k-1}}).
\end{equation}
This is in turn equivalent to
\begin{equation}
\label{eq:K-est2}
\frac{\Lambda_{j,k}}{\calK_{j,k-1} - (d-j)}
\leq
\frac{\Lambda_{d,k}}{\calK_{d,k-1}}.
\end{equation}
By downward induction on $j$, the inequality \eqref{eq:K-est2} will follow from
\begin{equation}
\label{eq:K-est3}
\frac{\Lambda_{j,k}}{\calK_{j,k-1} - (d-j)}
\leq
\frac{\Lambda_{j+1,k}}{\calK_{j+1,k-1} - (d-(j+1))}
\end{equation}
for $(d+1)/2 \leq j < d$.
The inequality \eqref{eq:K-est3} can be equivalently written as
\begin{equation}
\label{eq:K-est5}
\calK_{j,k-1} \Lambda_{j+1,k} - \calK_{j+1,k-1} \Lambda_{j,k}
\geq
(d-j) \Lambda_{j+1,k} - (d-j-1) \Lambda_{j,k}.
\end{equation}
Expanding $\calK$ on the left-hand side and using the recursive formula \eqref{eq:level-recursive2} on the right-hand side, we write \eqref{eq:K-est5} as
\begin{equation}
\label{eq:K-est6}
\sum_{l=1}^{k-1} l \cdot (\Lambda_{j+1,k}\Lambda_{j,l}-\Lambda_{j,k}\Lambda_{j+1,l})
\geq
\Lambda_{j,k} + (d-j)(\Lambda_{j,k-1}+\dotsb+\Lambda_{j,k-k_{j+1}}).
\end{equation}
By \eqref{eq:level-recursive2} and Lemma~\ref{lem:level-set-concavity}, each summand on the left-hand side of \eqref{eq:K-est6} is non-negative:
\begin{equation}
\label{eq:4}
\Lambda_{j+1,k}\Lambda_{j,l}-\Lambda_{j,k}\Lambda_{j+1,l}
=
\sum_{i=0}^{k_{j+1}} \Lambda_{j,k-i}\Lambda_{j,l}-\Lambda_{j,k}\Lambda_{j,l-i}
\geq 0.
\end{equation}
We were able to apply Lemma~\ref{lem:level-set-concavity} since $k \geq \max(l,k-i)$.
Using \eqref{eq:4} and $k\geq 3$, the estimate \eqref{eq:K-est6} will follow from
\begin{equation}
\label{eq:K-est7}
\sum_{l=1}^{2} l \cdot (\Lambda_{j+1,k}\Lambda_{j,l}-\Lambda_{j,k}\Lambda_{j+1,l})
\geq
\Lambda_{j,k} + (d-j)(\Lambda_{j,k-1}+\dotsb+\Lambda_{j,k-k_{j+1}}).
\end{equation}
The $l=1$ term on the left-hand side of \eqref{eq:K-est7} equals
\begin{equation}
\begin{split}
\Lambda_{j+1,k}\Lambda_{j,1} - \Lambda_{j,k}\Lambda_{j+1,1}
& =
(\Lambda_{j,k}+\dotsb+\Lambda_{j,k-k_{j+1}}) j - \Lambda_{j,k}(j+1)\\
& =
(\Lambda_{j,k-1}+\dotsb+\Lambda_{j,k-k_{j+1}}) j - \Lambda_{j,k}.
\end{split}
\end{equation}
Thus, since $2j-d\geq 1$, \eqref{eq:K-est7} will follow from
\begin{equation}
\label{eq:K-est8}
2 (\Lambda_{j+1,k}\Lambda_{j,2}-\Lambda_{j,k}\Lambda_{j+1,2})
\geq
2 \Lambda_{j,k} - (\Lambda_{j,k-1}+\dotsb+\Lambda_{j,k-k_{j+1}}).
\end{equation}
We distinguish two cases.

\paragraph{Case I: $k_{j+1} \geq 2$}
Expanding the left-hand side of \eqref{eq:K-est8} using \eqref{eq:level-recursive2}, we see that \eqref{eq:K-est8} will follow from
\begin{equation}
\label{eq:K-est9}
2 \sum_{m=0}^{2} (\Lambda_{j,k-m}\Lambda_{j,2}-\Lambda_{j,k}\Lambda_{j,2-m})
\geq
2 \Lambda_{j,k} - 2\min(\Lambda_{j,k-1},\Lambda_{j,k-2}).
\end{equation}
The terms $m=0,1$ on the left-hand side of \eqref{eq:K-est9} are non-negative by Lemma~\ref{lem:level-set-concavity}.
Hence it suffices to show
\begin{equation}
\label{eq:K-est10}
\Lambda_{j,k-2}\Lambda_{j,2}-\Lambda_{j,k}\Lambda_{j,0}
\geq
\Lambda_{j,k} - \min(\Lambda_{j,k-1},\Lambda_{j,k-2}),
\end{equation}
which can be written as
\begin{equation}
\label{eq:K-est11}
\Lambda_{j,k-2}\Lambda_{j,2}
\geq
2 \Lambda_{j,k} - \min(\Lambda_{j,k-1},\Lambda_{j,k-2}).
\end{equation}
The inequality \eqref{eq:K-est11} can be verified by a double counting argument.
Indeed, $\Lambda_{j,k}$ counts the number of ways to write $k=\abs{\bfa}$ with $\bfa \preceq \bfk$.
Each such $\bfa$ can be written as $\bfa = \bfa' + \bfa''$ with $\abs{\bfa'}=k-2$ and $\abs{\bfa''}=2$.
Those $\bfa$ with at least two non-zero entries have at least two such decompositions (since $k\geq 3$).
Those $\bfa$ with only one non-zero entry have exactly one such decomposition, but the number of such $\bfa$ is bounded by $\min(\Lambda_{j,k-1},\Lambda_{j,k-2})$ (again since $k\geq 3$).
On the other hand, the total number of decompositions is counted by the left-hand side of \eqref{eq:K-est11}.
This finishes the proof of \eqref{eq:K-est8} in Case I.

\paragraph{Case II: $k_{j+1}=1$}
In this case, by \eqref{eq:level-recursive2}, the inequality \eqref{eq:K-est8} is equivalent to
\begin{equation}
\label{eq:K-est12}
l \cdot (\Lambda_{j,k-1}\Lambda_{j,l}-\Lambda_{j,k}\Lambda_{j,l-1})
\geq
2 \Lambda_{j,k} - \Lambda_{j,k-1},
\quad l=2.
\end{equation}
Since $k_{j+1}=1$, by \eqref{eq:kj-increasing} also $k_{1}=\dotsb=k_{j}=1$.
We may assume $k\leq j$, since otherwise $\Lambda_{j,k} = 0$, so the right-hand side of \eqref{eq:K-est8} is negative and we can conclude by \eqref{eq:4}.
In this case we have $\Lambda_{j,k}=\binom{j}{k}$ for all pairs of arguments $(j,k)$ that we use.
Hence we can write the left-hand side of the required inequality \eqref{eq:K-est12} in the form
\begin{equation}
\begin{split}
\label{eq:7}
l \cdot (\binom{j}{k-1}\binom{j}{l}-\binom{j}{k}\binom{j}{l-1})
& =
\binom{j}{k}\binom{j}{l-1} (\frac{k(j-l+1)}{j-k+1}-l)\\
& =
\binom{j}{k} \binom{j}{l-1} \frac{(k-l)(j+1)}{j-k+1}
\end{split}
\end{equation}
and the right-hand side of \eqref{eq:K-est12} in the form
\[
2\binom{j}{k} - \binom{j}{k-1}
=
\binom{j}{k} (2 - \frac{k}{j-k+1})
=
\frac{1}{j-k+1}\binom{j}{k} (2(j-k+1) - k).
\]
Hence the claim \eqref{eq:K-est12} reduces to
\[
\binom{j}{l-1} (k-l)(j+1)
\geq
2(j+1) - 3k,
\]
which is a valid inequality for $l=2$ and $3 \leq k \leq j$.
This finishes the proof of \eqref{eq:K-est8} in Case II.
\end{proof}

\appendix
\section{\texorpdfstring{$\ell^{2}L^{p}$}{\9041\023\9000\262 Lp} decoupling}
\label{sec:l2}
Let us call an estimate for $\avDec(\calD, p, q, \delta)$ an \emph{$\ell^{q}L^{p}$ decoupling inequality}.
Theorem~\ref{thm:main} is an $\ell^{p}L^{p}$ decoupling inequality.
While sharp $\ell^{p}L^{p}$ decoupling inequalities are adequate for counting solutions of Diophantine equations, they can be sometimes strengthened to $\ell^{2}L^{p}$ decoupling inequalities, as in the case of the paraboloid \cite{MR3374964} and the moment curve \cite{MR3658168,MR3548534}.
In these cases the sharp $\ell^{p}L^{p}$ inequalities follow from the sharp $\ell^{2}L^{p}$ inequalities by an application of H\"older's inequality in the sum over $\Part{\delta}$.

In this section we indicate how our argument yields the following $\ell^{2}L^{p}$ decoupling inequality:
\begin{equation}
\label{eq:l2Lp-dec}
\avDec(\calD, p, 2, \delta)
\lesssim_{\epsilon}
\delta^{-\tGamma_{\calD}^{(2)}(p)-\epsilon},
\end{equation}
where $\tGamma_{\calD}^{(2)}$ is defined by the recursive relations
\begin{equation}
\label{eq:tGamma(2)-recursion}
\tGamma_{\calD}^{(2)}(p) :=
\begin{cases}
0 & \text{if } d=0 \text{ or } k=0,\\
d \bigl( 1 - \frac{1}{p} \bigr) & \text{if } k=1,\\
\max( \max\limits_{1 \leq j \leq d} \tGamma_{\bfP_{j} \calD}^{(2)}(p) + \frac{1}{2}, \tGamma_{\calD\cap\calS_{k-1}}^{(2)}(\max(2,p \frac{\calK (\calD \cap \calS_{k-1})}{\calK (\calD)})) )
& \text{otherwise.}
\end{cases}
\end{equation}
Note that $\tGamma_{\calD}^{(2)}(p) \geq \tGamma_{\calD}(p)$, so in general one expects better estimates in \eqref{eq:J-est} from using $\ell^{p}L^{p}$ decoupling rather than $\ell^{2}L^{p}$ decoupling.
However, in dimension $d=1$ it turns out that $\tGamma_{\calD}^{(2)}(p) = \tGamma_{\calD}(p)$, and in fact we recover the result in \cite{MR3548534}.

Most arguments in Sections \ref{sec:multilinear}, \ref{sec:induction-on-scales}, and \ref{sec:transverse} work equally well when we consider $\ell^{2}L^{p}$ decoupling, upon replacing all $\ell^{p}$ sums by $\ell^{2}$ sums.
There are only two substantial changes.
\begin{enumerate}
\item Proposition~\ref{prop:bourgain-guth-arg} is applied with $q=2$ rather than $q=p$.
This leads to $1/p$ being replaced by $1/2$ in \eqref{eq:tGamma'}, and this in turn effects the change of the exponent \eqref{eq:tGamma-recursion} to \eqref{eq:tGamma(2)-recursion}.
\item The ball inflation Lemma~\ref{lem:ball-inflation} has to be replaced by the more general Corollary~\ref{cor:ball-inflation:ell-r} below.
\end{enumerate}
\begin{corollary}[Ball inflation, $\ell^{q}$ version]
\label{cor:ball-inflation:ell-r}
In the setting of Lemma~\ref{lem:ball-inflation}, let $1 \leq q \leq t < \infty$.
Then
\begin{multline}
\label{eq:ball-inflation:ell-r}
\avL^{p}_{x \in B} \avprod \ell^{q}_{J \in \Part[R_i]{\rho}} \norm{f_{J}}_{\avL^{t}(w_{B(x,\rho^{-l})})}\\
\lesssim \nu^{-n_{l}/(t n_{k})} \abs{\log_{+} \rho}^{K^{d}}
\avprod \ell^{q}_{J \in \Part[R_i]{\rho}} \norm{f_{J}}_{\avL^{t}(w_B)}.
\end{multline}
\end{corollary}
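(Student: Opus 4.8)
We sketch the plan; the $\ell^{q}$ version ($q\leq t$) will be deduced from the $\ell^{t}$ version in Lemma~\ref{lem:ball-inflation} by a flattening/pigeonholing argument that is by now routine in the decoupling literature. First, since the left-hand side of \eqref{eq:ball-inflation:ell-r} is monotonically increasing in $p$, it suffices to treat the endpoint $p=t\frac{n_{k}}{n_{l}}$, which is exactly the exponent at which \eqref{eq:ball-inflation} applies.

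Next I would pigeonhole in each factor. Fix the functions $f_{J}$, $J\in\Part[R_{i}]{\rho}$. A crude comparison of the weights $w_{B(x,\rho^{-l})}$ and $w_{B}$ shows that each $\norm{f_{J}}_{\avL^{t}(w_{B(x,\rho^{-l})})}$ is at most a fixed power of $1/\rho$ times $\norm{f_{J}}_{\avL^{t}(w_{B})}$; since $\#\Part[R_{i}]{\rho}\lesssim\rho^{-d}$, discarding from each $\Part[R_{i}]{\rho}$ those $J$ with $\norm{f_{J}}_{\avL^{t}(w_{B})}$ smaller than $\rho^{C}$ times $\max_{J\in\Part[R_{i}]{\rho}}\norm{f_{J}}_{\avL^{t}(w_{B})}$ changes both sides of \eqref{eq:ball-inflation:ell-r} negligibly, for $C$ large. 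After this reduction one may sort the surviving $J$ in each $\Part[R_{i}]{\rho}$ into $\lesssim\abs{\log_{+}\rho}$ classes $\calJ_{i,1},\calJ_{i,2},\dotsc$ on each of which $\norm{f_{J}}_{\avL^{t}(w_{B})}$ is constant up to a factor $2$. Since $q\geq1$, one has $\ell^{q}_{J\in\Part[R_{i}]{\rho}}\norm{f_{J}}_{\avL^{t}(w_{B(x,\rho^{-l})})}\leq\sum_{s}\ell^{q}_{J\in\calJ_{i,s}}\norm{f_{J}}_{\avL^{t}(w_{B(x,\rho^{-l})})}$, and by subadditivity of $u\mapsto u^{1/M}$ together with the triangle inequality in $L^{p}_{x}$ this reduces the estimate to a sum of $\lesssim\abs{\log_{+}\rho}^{M}\leq\abs{\log_{+}\rho}^{K^{d}}$ terms, one for each choice of a class $\calJ_{i,s_{i}}$ in each of the $M$ factors.

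It then remains to bound each such term, namely $\avL^{p}_{x\in B}\avprod\ell^{q}_{J\in\calJ_{i,s_{i}}}\norm{f_{J}}_{\avL^{t}(w_{B(x,\rho^{-l})})}$, by a constant multiple of $\nu^{-n_{l}/(tn_{k})}\avprod\ell^{q}_{J\in\Part[R_{i}]{\rho}}\norm{f_{J}}_{\avL^{t}(w_{B})}$. Write $N_{i}:=\#\calJ_{i,s_{i}}$. The power mean inequality (valid since $1\leq q\leq t$) gives, pointwise in $x$, that $\ell^{q}_{J\in\calJ_{i,s_{i}}}\norm{f_{J}}_{\avL^{t}(w_{B(x,\rho^{-l})})}\leq N_{i}^{1/q-1/t}\,\ell^{t}_{J\in\calJ_{i,s_{i}}}\norm{f_{J}}_{\avL^{t}(w_{B(x,\rho^{-l})})}$, and the constant $\avprod N_{i}^{1/q-1/t}$ can be pulled out of $\avL^{p}_{x\in B}$. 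Applying Lemma~\ref{lem:ball-inflation} to the functions $f_{J}\one_{J\in\calJ_{i,s_{i}}}$ bounds the remaining $\ell^{t}$ expression by $\nu^{-n_{l}/(tn_{k})}\avprod\ell^{t}_{J\in\calJ_{i,s_{i}}}\norm{f_{J}}_{\avL^{t}(w_{B})}$. Finally, because $\norm{f_{J}}_{\avL^{t}(w_{B})}$ is constant up to a factor $2$ on $\calJ_{i,s_{i}}$, we have $\ell^{t}_{J\in\calJ_{i,s_{i}}}\norm{f_{J}}_{\avL^{t}(w_{B})}\approx N_{i}^{1/t-1/q}\,\ell^{q}_{J\in\calJ_{i,s_{i}}}\norm{f_{J}}_{\avL^{t}(w_{B})}$, so the two powers of $N_{i}$ cancel and we are left with $\ell^{q}_{J\in\calJ_{i,s_{i}}}\norm{f_{J}}_{\avL^{t}(w_{B})}\leq\ell^{q}_{J\in\Part[R_{i}]{\rho}}\norm{f_{J}}_{\avL^{t}(w_{B})}$, as needed. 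Summing the $\lesssim\abs{\log_{+}\rho}^{K^{d}}$ terms yields \eqref{eq:ball-inflation:ell-r}.

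The one point requiring care is the discarding step, so that only $\lesssim\abs{\log_{+}\rho}$ dyadic size classes remain in each factor; this is standard but uses the weight comparison above. Everything else is bookkeeping: the key feature is that the $\ell^{q}\!\to\!\ell^{t}$ conversion on the $\avL^{t}(w_{B(x,\rho^{-l})})$ side and the $\ell^{t}\!\to\!\ell^{q}$ conversion on the $\avL^{t}(w_{B})$ side produce reciprocal, $x$‑independent powers of the class sizes $N_{i}$ that cancel, so that Lemma~\ref{lem:ball-inflation}—applied only at the exponent $p=t n_{k}/n_{l}$—is the sole substantive input.
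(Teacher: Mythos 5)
Your proof is correct and uses essentially the same dyadic pigeonholing strategy as the paper: sort the $J$'s into $O(\log_+\rho)$ size classes, split the geometric mean into $\lesssim\abs{\log_+\rho}^{M}$ terms, apply H\"older to pass from $\ell^{q}$ to $\ell^{t}$ within each class, invoke the $\ell^{t}$ ball inflation of Lemma~\ref{lem:ball-inflation}, and recombine using that the summands within a class are comparable. The only cosmetic difference is the handling of the tail: you discard the small $J$'s first (which requires the pointwise comparison $w_{B(x,\rho^{-l})}\lesssim\rho^{-O(1)}w_{B}$ for $x\in B$, and forces $C$ to depend on $K$ since there can be $\lesssim 2^{K^{d}}$ subsets in the product expansion), whereas the paper avoids that comparison by keeping the small $J$'s in an explicit tail class $\calJ_{i,\infty}$ and observing that the H\"older loss $\abs{\calJ_{i,\infty}}^{1/q-1/t}$ is absorbed by the smallness of the norms there; also, your reduction to $p=t n_{k}/n_{l}$ is harmless but unnecessary, as Lemma~\ref{lem:ball-inflation} already covers all $p\leq t n_{k}/n_{l}$.
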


\begin{proof}
We use a dyadic pigeonholing argument from \cite{MR3374964}.
Partition
\[
\Part[R_i]{\rho} = \calJ_{i,\infty} \cup \bigcup_{k=0}^{\floor{\log_{+}\rho}} \calJ_{i,k},
\]
where
\[
\calJ_{i,k} := \Set{ J\in\Part[R_{i}]{\rho} \given C^{-k-1} < \frac{\norm{f_{J}}_{\avL^{t}(w_B)}}{\max_{J' \in \Part[R_i]{\rho}}\norm{f_{J'}}_{\avL^{t}(w_B)}} \leq C^{-k}}
\]
with a large constant $C$.
It suffices to show
\begin{equation}
\label{eq:ball-inflation:ell-r'}
\avL^{p}_{x \in B} \avprod \ell^{q}_{J \in \calJ_{i,k_{i}}} \norm{f_{J}}_{\avL^{t}(w_{B(x,\rho^{-l})})}
\lesssim \nu^{-n_{l}/(t n_{k})}
\avprod \ell^{q}_{J \in \calJ_{i}} \norm{f_{J}}_{\avL^{t}(w_B)}
\end{equation}
for every choice of $k_{1},\dotsc,k_{M} \in \Set{0,\dotsc,\floor{\log_{+}\rho},\infty}$.

Since $q \leq t$, by H\"older's inequality the left hand side of \eqref{eq:ball-inflation:ell-r'} is at most
\begin{equation}
\bigl( \avprod \abs{\calJ_{i,k_{i}}}^{\frac{1}{q}-\frac{1}{t}} \bigr)
\avL^{p}_{x \in B} \avprod \ell^{t}_{J \in \calJ_{i,k_{i}}} \norm{f_{J}}_{\avL^{t}(w_{B(x,\rho^{-l})})}.
\end{equation}
By Lemma~\ref{lem:ball-inflation}, this is dominated by
\[
\bigl( \avprod \abs{\calJ_{i,k_{i}}}^{\frac{1}{q}-\frac{1}{t}} \bigr) \nu^{-n_{l}/(t n_{k})}
\avprod \ell^{t}_{J \in \calJ_{i,k_{i}}} \norm{f_{J}}_{\avL^{t}(w_{B})}.
\]
It remains to observe that, by definition of $\calJ_{i,k}$, we have
\[
\abs{\calJ_{i,k_{i}}}^{\frac{1}{q}-\frac{1}{t}}
\ell^{t}_{J \in \calJ_{i,k}} \norm{f_{J}}_{\avL^{t}(w_{B})}
\lesssim
\ell^{q}_{J \in \Part[R_{i}]{\rho}} \norm{f_{J}}_{\avL^{t}(w_{B})}
\]
for every $k$.
Indeed, for $k \neq \infty$ this holds because all summands have comparable size, while for $k=\infty$ this holds because each summand on the left-hand side is bounded by $C^{-\log_{+}\rho}$ times the largest summand on the right-hand side, and the number of summands on the left-hand side is bounded by $C^{\log_{+}\rho}$ if $C$ is large enough.
\end{proof}

\section{Decoupling for $k=1$: $L^2$ orthogonality}
\label{sec:k=1}
Let $2\le p\le \infty$ and $\delta\in (0, 1]$.
For every $\theta\in \Part{\delta}$, let $f_\theta: \R^d\to \C$ be a function with $\supp \widehat{f_\theta} \subseteq \theta$.
In this section we will prove
\begin{equation}
\norm[\Big]{\sum_{\theta\in\Part{\delta}}f_\theta }_{L^p(w_B)}
\lesssim_p
\delta^{-d(1-\frac{1}{p})} \Big(\avsum_{\theta\in\Part{\delta}}\norm{f_\theta }_{L^p(w_B)}^{2} \Big)^{1/2}
\end{equation}
for every ball $B$ of radius $\delta^{-1}$.
This implies the case $k=1$ of Theorem~\ref{thm:main}, because the normalized $\ell^{2}$ norm is bounded by the normalized $\ell^{p}$ norm.

Let $\psi, \theta : \R^d\to \C$ be Schwartz functions with $\one_{B(0,10)} \leq \hat{\psi} \leq \one_{B(0,20)}$, $\abs{\theta} \geq \one_{B(0,1)}$, and $\supp\hat{\theta} \subset B(0,C)$.
Define $\psi_{\theta}$ by $\widehat{\psi_\theta}(\xi)=\hat{\psi}(\delta^{-1}(\xi-c_\theta))$, with $c_\theta$ being the center of the cube $\theta$.
We will prove that
\begin{equation}
\norm[\Big]{\sum_{\theta\in\Part{\delta}}F_\theta*\psi_\theta }_{L^p(w_B)}
\lesssim_p
\delta^{-d(1-\frac{1}{p})} \Big(\avsum_{\theta\in\Part{\delta}} \norm{F_\theta }_{L^p(w_B)}^{2} \Big)^{1/2}
\end{equation}
for arbitrary functions $F_\theta$.
By complex interpolation, it suffices to consider only $p=2$ and $p=\infty$.
The case $p=\infty$ follows from the Cauchy--Schwarz inequality.
To prove the case $p=2$, by Lemma~\ref{lem:1-w}, it suffices to prove
\begin{equation}
\label{eq:k=1:p=2}
\norm[\Big]{\sum_{\theta\in\Part{\delta}} F_\theta*\psi_\theta }_{L^2(B)}
\lesssim
\Big(\sum_{\theta\in\Part{\delta}} \norm{F_\theta }^2_{L^2(w_B)} \Big)^{1/2}.
\end{equation}
By definition of $\theta$, we have
\begin{equation}
\norm[\Big]{\sum_{\theta\in\Part{\delta}}F_\theta*\psi_\theta }_{L^2(B)}
\le
\norm[\Big]{\sum_{\theta\in\Part{\delta}} \big(F_\theta*\psi_\theta\big) \theta_{B} }_{L^2_{x}(\R^{d})},
\end{equation}
where $\theta_{B}(x) := \theta(\frac{x-c_B}{r_B})$, with $c_B$ and $r_B$ being the center and the radius of $B$, respectively.
The summands on the right-hand side have boundedly overlapping Fourier supports, with bound independent of $\delta$.
Hence, by $L^2$ orthogonality, the right hand side can be bounded by
\begin{equation}
\Big( \sum_{\theta\in\Part{\delta}} \norm[\Big]{ \big(F_\theta*\psi_\theta\big) \theta_{B} }^2_{L^2_{x}(\R^{d})} \Big)^{1/2}.
\end{equation}
This can be in turn estimated by the right-hand side of \eqref{eq:k=1:p=2} for every $E<\infty$.

\printbibliography
\end{document}